\begin{document}
\newcommand {\emptycomment}[1]{} 

\newcommand{\tabincell}[2]{\begin{tabular}{@{}#1@{}}#2\end{tabular}}

\newcommand{\nc}{\newcommand}
\newcommand{\delete}[1]{}

\nc{\mlabel}[1]{\label{#1}}  
\nc{\mcite}[1]{\cite{#1}}  
\nc{\mref}[1]{\ref{#1}}  
\nc{\meqref}[1]{\eqref{#1}} 
\nc{\mbibitem}[1]{\bibitem{#1}} 

\delete{
\nc{\mlabel}[1]{\label{#1}  
{\hfill \hspace{1cm}{\bf{{\ }\hfill(#1)}}}}
\nc{\mcite}[1]{\cite{#1}{{\bf{{\ }(#1)}}}}  
\nc{\mref}[1]{\ref{#1}{{\bf{{\ }(#1)}}}}  
\nc{\meqref}[1]{\eqref{#1}{{\bf{{\ }(#1)}}}} 
\nc{\mbibitem}[1]{\bibitem[\bf #1]{#1}} 
}

\newtheorem{thm}{Theorem}[section]
\newtheorem{lem}[thm]{Lemma}
\newtheorem{cor}[thm]{Corollary}
\newtheorem{pro}[thm]{Proposition}
\newtheorem{conj}[thm]{Conjecture}
\theoremstyle{definition}
\newtheorem{defi}[thm]{Definition}
\newtheorem{ex}[thm]{Example}
\newtheorem{rmk}[thm]{Remark}
\newtheorem{pdef}[thm]{Proposition-Definition}
\newtheorem{condition}[thm]{Condition}

\renewcommand{\labelenumi}{{\rm(\alph{enumi})}}
\renewcommand{\theenumi}{\alph{enumi}}
\renewcommand{\labelenumii}{{\rm(\roman{enumii})}}
\renewcommand{\theenumii}{\roman{enumii}}

\newcommand{\End}{\text{End}}
\newcommand{\gl}{\mathfrak{gl}}

\nc{\calb}{\mathcal{B}}
\nc{\call}{\mathcal{L}}
\nc{\calo}{\mathcal{O}}
\nc{\frakg}{\mathfrak{g}}
\nc{\frakh}{\mathfrak{h}}
\nc{\ad}{\mathrm{ad}}

\nc{\ldend}{L-dendriform algebra\xspace}
\nc{\ldends}{L-dendriform algebras\xspace}
\nc{\ldendb}{L-dendriform bialgebra\xspace}
\nc{\ldendbs}{L-dendriform bialgebras\xspace}
\nc{\spec}{special\xspace}
\nc{\nonsy}{nondegenerate symmetric\xspace}

\nc{\ccred}[1]{\tred{\textcircled{#1}}}

\nc{\move}[1]{{}}

\nc{\mrep}[2]{(#2,#1)}
\nc{\mgl}{\gl}

\nc{\rbrep}[3]{(#3,#1,#2)}

\nc{\copa}{\Delta}
\nc{\copb}{\nabla}
\nc{\copc}{\Diamond}



\title[Rota-Baxter Lie bialgebras and special L-dendriform bialgebras]
{Rota-Baxter Lie bialgebras, classical Yang-Baxter equations and special L-dendriform bialgebras
}

\author{Chengming Bai}
\address{Chern Institute of Mathematics \& LPMC, Nankai University, Tianjin 300071, China}
         \email{baicm@nankai.edu.cn}

\author{Li Guo}
\address{Department of Mathematics and Computer Science, Rutgers University, Newark, NJ 07102, USA}
         \email{liguo@rutgers.edu}

\author{Guilai Liu}
\address{Chern Institute of Mathematics \& LPMC, Nankai University, Tianjin 300071, China}
\email{1120190007@mail.nankai.edu.cn}

\author{Tianshui Ma}
\address{School of Mathematics and Information Science, Henan Normal University, Xinxiang 453007, China}
\email{matianshui@htu.edu.cn}

\date{\today}

\begin{abstract}
We establish a bialgebra structure on Rota-Baxter Lie algebras following the Manin triple approach to Lie bialgebras.
Explicitly, Rota-Baxter Lie bialgebras are characterized by generalizing matched pairs of Lie algebras and Manin triples of Lie algebras to the context of Rota-Baxter Lie algebras.
The coboundary case leads to the introduction of the admissible classical Yang-Baxter equation (CYBE) in Rota-Baxter Lie algebras, for which the antisymmetric solutions give rise to Rota-Baxter Lie bialgebras. The notions of $\mathcal{O}$-operators on Rota-Baxter Lie algebras and Rota-Baxter pre-Lie algebras are introduced to produce antisymmetric solutions of the admissible CYBE.
Furthermore, extending the well-known property that a Rota-Baxter
Lie algebra of weight zero induces a pre-Lie algebra,
the Rota-Baxter Lie bialgebra of weight zero induces
a bialgebra structure of independent interest, namely the \spec
\ldendb, which is equivalent to a Lie group with a left-invariant
flat pseudo-metric in geometry.
This induction
is also characterized as the inductions between the corresponding
Manin triples and matched pairs. Finally, antisymmetric solutions
of the admissible CYBE in a Rota-Baxter Lie algebra of weight zero
give \spec \ldendbs and in particular, both Rota-Baxter algebras
of weight zero and Rota-Baxter pre-Lie algebras of weight zero can
be used to construct \spec \ldendbs.
\end{abstract}

\subjclass[2010]{
17B38, 
17B62 
17B10, 
16T25, 
17A30, 
17A36,  
17D25.  
}

\keywords{Rota-Baxter operator; classical Yang-Baxter equation; pre-Lie algebra; bialgebra; special L-dendriform algebra}

\maketitle

\vspace{-1.3cm}

\tableofcontents

\allowdisplaybreaks

\section{Introduction}

This paper establishes a bialgebra structure on
Rota-Baxter Lie algebras following the approach of Manin triples in the classical work on Lie bialgebras~\mcite{Dri} and the recent development on Rota-Baxter antisymmetric infinitesimal bialgebras~\mcite{BGM}.
The well-known connection of Rota-Baxter Lie algebras with pre-Lie algebras is lifted to the bialgebra level, establishing a relationship of Rota-Baxter Lie bialgebras with \spec \ldendbs~\mcite{BHC}.

\vspace{-.2cm}

\subsection{Rota-Baxter Lie algebras}
The importance of Rota-Baxter Lie algebras can be viewed from
several perspectives.

\vspace{-.2cm}

\subsubsection{Rota-Baxter operators and Rota-Baxter associative algebras}

Let $A$ be a vector space equipped with a binary operation $\ast$ and let $\lambda$ be a scalar. A linear operator $R:A\to A$ is called a {\bf Rota-Baxter operator} of weight $\lambda$ if
\begin{equation} \mlabel{eq:rbo}
R(x)\ast R(y)=R(x\ast R(y))+R(R(x)\ast y)+\lambda R(x\ast y), \quad \forall x, y\in A.
\end{equation}
Then $(A,\ast,R)$, or simply $(A,R)$, is called a {\bf Rota-Baxter algebra} for the binary operation $\ast$, most notably, {\bf Rota-Baxter associative algebras} and {\bf Rota-Baxter Lie algebras}. 

The notion of Rota-Baxter associative algebra originated from the 1960 work~\mcite{Bax} of G.~Baxter
in a probability study, where it was noted that the identity is an
abstraction and generalization of the integration by parts
formula. Forty years later it
reappeared in the fundamental work of Connes and
Kreimer~\mcite{CK} on Hopf algebra approach to renormalization of
quantum field theory. Motivated by this and other connections in
combinatorics, number theory and operads, the study of Rota-Baxter
algebras has experienced a great expansion in the recent years.
See~\mcite{Guo2012,Rota1} for introductions and further
references.

\vspace{-.2cm}

\subsubsection{Rota-Baxter Lie algebras and the classical Yang-Baxter equation}

As a remarkable coincidence, Rota-Baxter operators on Lie algebras were discovered independently as the operator form of the classical Yang-Baxter equation (CYBE), named after the physicists.

The CYBE arose from the study of inverse scattering theory in the 1980s and was recognized as the
``semi-classical limit" of the quantum Yang-Baxter equation~\mcite{BaR,Ya}.
The study of the CYBE is also related to classical integrable
systems and quantum groups~\mcite{CP}.

An important approach in the study of the CYBE was through the
interpretation of its original tensor form in various operator forms which, along with the well-known method of Belavin and
Drinfeld~\mcite{BD}, has proved to be effective in providing solutions of the CYBE. Semonov-Tian-Shansky~\mcite{STS} first showed
that, if there exists a \nonsy invariant bilinear form on a Lie algebra $\frakg$, then a skew-symmetric solution $r$ of the CYBE can be equivalently expressed
as a linear operator $R:\frakg\to \frakg$ satisfying the operator identity
\begin{equation}\label{eq:CYBE-RB}
    [R(x), R(y)]=R([R(x),y])+R([x,R(y)]),\;\;\forall x,y\in
    \frakg,\end{equation}
which is then regarded as an {\bf operator form} of the CYBE.
Thus this operator form is simply the
Rota-Baxter relation (of weight zero) in Eq.~\meqref{eq:rbo} for Lie algebras.

This approach was expanded more generally by Kupershmidt~\mcite{Ku} by generalizing the notion of Rota-Baxter operators to $\calo$-operators, later also called relative Rota-Baxter operators and generalized Rota-Baxter operators~\mcite{PBG,Uc}.

\vspace{-.2cm}

\subsubsection{Rota-Baxter Lie algebras and pre-Lie algebras}

Another important role played by Rota-Baxter Lie algebras (of weight zero) is that they produce {\bf pre-Lie algebras}, defined to be a vector space $A$ with a binary operation $\circ$ satisfying
\begin{equation}\mlabel{eq:prelie}
    (x\circ y)\circ z-x\circ(y\circ z)=(y\circ x)\circ z-y\circ(x\circ z),\;\;\forall x,y,z\in A.
\end{equation}

Pre-Lie algebras, also called left-symmetric algebras,  originated from diverse areas of study, including convex
homogeneous cones \mcite{Vin}, affine manifolds and affine
structures on Lie groups \mcite{Ko}, deformation of associative
algebras \mcite{Ger}, and then appear in many more fields in
mathematics and mathematical physics, such as symplectic and
K\"{a}hler structures on Lie groups \mcite{Chu,Lic1988},  vertex
algebras \mcite{BK}, quantum field theory \mcite{CK} and operads
\mcite{CL}.
From the operadic viewpoint, the pre-Lie algebra is the splitting (successor) of the Lie algebra~\mcite{BBGN}. See \mcite{Bur} and the references
therein for more details.

By~\mcite{GS}, for a Rota-Baxter Lie algebra $(\mathfrak{g},[-,-],P)$ of weight zero, in defining
    \begin{equation}\mlabel{eq:pro:from invariance to left invariance1}
        x\circ y=[P(x),y],\;\;\forall x,y\in \mathfrak{g},
    \end{equation}
we obtain a pre-Lie algebra $(\frakg,\circ)$, called
the \textbf{induced pre-Lie algebra} from the Rota-Baxter Lie algebra $(\mathfrak{g},[-,-],P)$.

\vspace{-.3cm}

\subsubsection{Rota-Baxter Lie algebras and Lie
bialgebras}
The Lie bialgebra is the algebraic structure corresponding
to a Poisson-Lie group. It is also the classical structure of a quantized universal enveloping algebra~\mcite{CP,Dri}.
The great importance of Lie bialgebras is reflected by its
close relationship with several other fundamental notions.
First Lie bialgebras are
characterized by Manin triples and  matched pairs  of Lie algebras~\mcite{D1}.
In fact, there is a one-one correspondence between Lie
bialgebras and Manin triples of Lie algebras. The same holds for
Lie bialgebras and matched pairs  of Lie algebras associated to coadjoint representations.

Furthermore, antisymmetric solutions of the CYBE, or the  classical  $r$-matrices, naturally
give rise to coboundary Lie bialgebras \mcite{CP,STS}.
Furthermore, such solutions are provided by $\calo$-operators
which in turn are provided by pre-Lie algebras. In particular, as the $\mathcal O$-operators associated to adjoint representations,
Rota-Baxter operators provide antisymmetric  solutions of the CYBE in the bigger Lie algebras and hence give rise to Lie bialgebras.
See~\mcite{Bai2007,Bai2008,Bur,Ku} for more details.

As a summary, the following diagram illustrates the close
 relationship
of Rota-Baxter Lie algebras and $\calo$-operators with CYBE, pre-Lie algebras
 and Lie bialgebras. Here the correspondences going both ways are shown by arrows in
both directions, and the one-one correspondences are shown by
bi-directional double arrows.

\vspace{-.7cm}
\begin{equation}
    \begin{split}
{\tiny        \xymatrix{
            &&&\txt{matched pairs of}\atop \txt{Lie algebras} \\
            \txt{pre-Lie}\atop\txt{ algebras} \ar@<.4ex>[r]      & {\txt{$\mathcal{O}$-operators on}} \atop\txt{Lie algebras}\ar@<.4ex>[l] \ar@<.4ex>[r]&
            {\txt{antisymmetric}\atop \txt{solutions of}}\atop \txt{CYBE} \ar@<.4ex>[l]
            \ar@2{->}[r]& \txt{\bf Lie}\atop \txt{\bf bialgebras} \ar@2{<->}[d] \ar@2{<->}[u] \\
            &&& \txt{Manin triples of}\atop \txt{Lie algebras} & }
}
  \end{split}
    \mlabel{eq:bigdiag}
\end{equation}
\vspace{-.6cm}

\subsection{Bialgebra structures on Rota-Baxter Lie algebras and \spec \ldends}

Given the importance of Rota-Baxter Lie algebras, especially its close relationship with CYBE and pre-Lie algebras, it is natural to consider the bialgebra structure on Rota-Baxter Lie algebras.

Similar to the quantized structures over
Lie algebras in terms of Lie bialgebras as Hopf algebras
(quantized universal enveloping algebras) and the quantization of
solutions of the CYBE as the solutions of the quantum Yang-Baxter
equation, one might expect that some quantized structures of
Rota-Baxter Lie algebras, as an operator form of the CYBE, can be
given by a bialgebra structure on Rota-Baxter Lie algebras and
shed further light on giving more solutions of the quantum
Yang-Baxter equation. While little is known about the
(associative) bialgebra structure of Rota-Baxter Lie algebras in
the universal enveloping algebras, Rota-Baxter Lie bialgebras constructed here might be regarded as an infinitesimal variation thereof.

Preferably, such a Rota-Baxter operator on Lie bialgebras should be part of a package of Rota-Baxter type actions applied systematically throughout the diagram in \meqref{eq:bigdiag}. This is what we will achieve, as depicted in the following diagram with the corresponding Lie bialgebra boldfaced in both diagrams.
\vspace{-.4cm}
    \begin{equation} \label{eq:rbdiag}
\begin{split}
{\tiny      \xymatrix{
                \txt{Rota-Baxter \\ pre-Lie algebras} \ar@<.4ex>[d]^-{\S\mref{sec:4.2}} & & \txt{Manin triples of \\ Rota-Baxter Lie algebras}
                \ar@{=>}[r]^-{\S\mref{sec:3.2}}_-{\lambda=0}
                \ar@{<=>}[d]^-{\S\mref{sec:3.1}} & \txt{Manin triples\\of pre-Lie algebras} \ar@{<=>}[d]^-{\S\mref{sec:3.2}}\\
                \txt{$\mathcal{O}$-operators on \\ Rota-Baxter \\ Lie algebras} \ar@<.4ex>[u] \ar@<.4ex>[r]^-{\S\mref{sec:4.2}} & \txt{antisymmetric\\solutions of \\ admissible CYBEs} \ar@<.4ex>[l] \ar@{=>}[r]^-{\S\mref{sec:4.1}} & \txt{\bf Rota-Baxter \\ \bf Lie bialgbras}
                \ar@{<=>}[d]^-{\S\mref{sec:3.1}}
                \ar@{=>}[r]^-{\S\mref{sec:3.2}}_-{\lambda=0}
                & \txt{special L-dendriform \\  bialgebras} \ar@{<=>}[d]^-{\S\mref{sec:3.2}}\\
                & & \txt{matched pairs of \\ Rota-Baxter Lie algebras}
                \ar@{=>}[r]^-{\S\mref{sec:3.2}}_-{\lambda=0}
                & \txt{matched pairs of \\ pre-Lie algebras}}
            }
\end{split}
\end{equation}

\vspace{-.2cm}

Thus in this paper, we first
establish a bialgebra theory for Rota-Baxter Lie algebras
following the Manin triple approach of Lie bialgebras~\mcite{BD}
and Rota-Baxter ASI bialgebras given in \mcite{BGM}. Most of the
results in \mcite{BGM} for Rota-Baxter associative algebras remain
valid for Rota-Baxter Lie algebras. Therefore by adding the roles
of Rota-Baxter operators in diagram~(\ref{eq:bigdiag}), we get the
left part of the diagram~(\ref{eq:rbdiag}).

For a Lie bialgebra given by a Manin triple $(\frakg\oplus \frakg^*, \frakg, \frakg^*)$ together with a Rota-Baxter operator $P$ on $\frakg$, in building a Rota-Baxter Lie bialgebra on top of this, we allow freedom on a Rota-Baxter operator $Q^*$ on the Lie algebra $\frakg^*$, as long as $P$ and $Q^*$ satisfy certain admissibility condition. This general approach allows us to recover the previous constructions as special cases: the Rota-Baxter Lie bialgebra independently developed in~\mcite{LS} corresponds to the case when $Q=-P-{\rm id}$ and the weight $\lambda=1$, while the Rota-Baxter Lie
bialgebra introduced in~\mcite{Shi} corresponds to the case when
$Q=-P$ and $\lambda=0$.

Another interesting phenomenon in our construction is reflected in the induced structures from Rota-Baxter Lie bialgebras. Since a Rota-Baxter Lie algebra induces a pre-Lie algebra by Eq.~\meqref{eq:pro:from invariance to left invariance1}, it is natural to expect that a Rota-Baxter Lie bialgebra induces a pre-Lie bialgebra as defined in~\mcite{Bai2008}. As it turns out, this is not the case.
To determine the correct induced structure from a Rota-Baxter Lie bialgebra, we regard the latter as a Lie bialgebra with a Rota-Baxter operator. Since a Lie bialgebra is characterized by a matched pair or a Manin triple of Lie algebras, as shown in the right column in the diagram~\meqref{eq:bigdiag}, we can also regard a Rota-Baxter Lie bialgebra as a matched pair or Manin triple of Lie algebras equipped with a Rota-Baxter operator. Since a Rota-Baxter operator on a Lie algebra induces a pre-Lie algebra, a Rota-Baxter operator on a matched pair or Manin triple of Lie algebras should induce a matched pair or Manin triple of pre-Lie algebras. Quite unexpectedly to us, the resulting bialgebra structure is not a pre-Lie bialgebra as introduced in~\mcite{Bai2008}, but a \spec \ldendb~\mcite{BHC}, leading to the commutative double square diagram in~\meqref{eq:rbdiag}.

As it turns out, the class of \spec \ldendb is quite interesting on its own right as a subclass of \ldendbs~\mcite{NB} and has already been studied in some depth.
The operad of L-dendriform algebras~\mcite{BLN} is the two-fold splitting (successor) of the operad of Lie algebras (see Remark~\mref{rk:ldend}). Thus the correspondences in the right column of the diagram in \meqref{eq:rbdiag} suggest that the splitting of a bialgebra structure is the bialgebra of the two-fold splitting of the structure. Moreover, the \spec \ldendb has a clear and important geometric interpretation. Since the study of
Lie groups with a left-invariant flat pseudo-metric is equivalent to the study of the pre-Lie algebras with a \nonsy left-invariant bilinear form~\mcite{Mil}, the \spec \ldendbs correspond to a class of Lie groups with a left-invariant flat pseudo-metric.

\vspace{-.3cm}

\subsection{Layout of the paper}
As outlined in the diagram~\meqref{eq:rbdiag}, the paper is organized as follows.

In Section \mref{sec:2}, we give the notion of representations of
Rota-Baxter Lie algebras. An admissibility of a
linear operator for a Rota-Baxter Lie algebra is introduced in order to construct a reasonable representation on the dual space. We also observe that an invariant bilinear form on a
Rota-Baxter Lie algebra of weight zero is left-invariant on the
induced pre-Lie algebra. Moreover, we interpret special
\ldends in terms of the representations of pre-Lie
algebras and hence a Rota-Baxter Lie algebra of weight zero with a
linear operator satisfying the aforementioned admissibility
condition gives a \spec \ldend which is compatible
with the induced pre-Lie algebra.

In Section \mref{sec:3}, we introduce the notion of Rota-Baxter Lie bialgebras, equivalently characterized by Manin triples of Rota-Baxter Lie algebras and matched pairs of Rota-Baxter Lie algebras. We establish the explicit relationship between Rota-Baxter Lie  bialgebras of weight zero and \spec \ldendbs, previously established in \mcite{BHC}, both directly and in their respective equivalent interpretations in terms of the corresponding Manin triples and matched pairs.

In Section \mref{sec:4}, we focus on coboundary Rota-Baxter Lie
bialgebras, leading to the introduction of the admissible CYBE in
Rota-Baxter Lie algebras whose antisymmetric solutions are used
to construct Rota-Baxter Lie bialgebras.
The notions of $\mathcal{O}$-operators on Rota-Baxter Lie algebras and Rota-Baxter pre-Lie algebras,
are introduced to produce antisymmetric solutions of the admissible
CYBE and hence Rota-Baxter Lie bialgebras. Furthermore, when the weight is zero, we study the induced \spec \ldendbs from these Rota-Baxter Lie bialgebras, and thus give the construction of special L-dendriform bialgebras from antisymmetric solutions of the admissible CYBE in Rota-Baxter Lie
algebras of weight zero.  In particular, both Rota-Baxter Lie
algebras of weight zero and Rota-Baxter pre-Lie algebras of weight
zero give rise to \spec \ldendbs.

\noindent
{\bf Notations.}
Unless otherwise specified, all the vector spaces and algebras are finite dimensional over a field $\mathbb {K}$ of characteristic zero, although many results and notions, in particular that of a Rota-Baxter Lie bialgebra, remain valid in the infinite-dimensional case.

\vspace{-.3cm}

\section{Rota-Baxter Lie algebras, pre-Lie algebras and \spec \ldends}\mlabel{sec:2}
We give the notion of representations of Rota-Baxter Lie algebras.
An admissibility of a linear map for a Rota-Baxter
Lie algebra is introduced in order to obtain a reasonable
representation on the dual space. On the other hand, we observe
that an invariant bilinear form on a  Rota-Baxter Lie algebra of
weight zero is left-invariant on the induced pre-Lie algebra.
Moreover, we interpret \spec \ldends in terms of
the representations of pre-Lie algebras. Hence a Rota-Baxter
Lie algebra of weight zero with a linear operator satisfying the
aforementioned admissibility condition gives a special
\ldend.

\subsection{Rota-Baxter Lie algebras and their representations}\mlabel{sec:2.1}\

We first recall some basic facts on the representations of Lie algebras.
For a Lie algebra $\frakg\coloneqq (\frakg,[-,-])$, a \textbf{representation} of $\frakg$ is a pair $\mrep{\rho}{V}$
consisting of a vector space $V$ and a Lie algebra homomorphism
\vspace{-.2cm}
$$\rho:\mathfrak{g}\rightarrow \mgl(V),$$
for the natural Lie algebra structure on $\mgl(V)=\End(V)$.

In particular, the linear map
$$\mathrm{ad}:\mathfrak{g}\rightarrow \mgl(\mathfrak{g}), \quad \mathrm{ad}(x)y=[x,y],\quad \forall x,y\in\mathfrak{g},$$
defines a representation $\mrep{\mathrm{ad}}{\mathfrak{g}}$ of $(\mathfrak{g},[-,-])$, called the \textbf{adjoint representation}.

For a vector space $V$ and a linear map $\rho:\frakg\to \mgl(V)$, the pair $\mrep{\rho}{V}$ is a representation of $(\mathfrak{g},[-,-])$ if and only if the
operation $[-,-]_{\mathfrak{g}\oplus V}$ (often still denoted by $[-,-]$ for simplicity) on $\mathfrak{g}\oplus V$ defined by
\begin{equation}\mlabel{eq:pro:SD RB Lie1}
[x+u,y+v]_{\mathfrak{g}\oplus V}\coloneqq [x,y]+\rho(x)v-\rho(y)u,\;\;\forall x,y\in\mathfrak{g}, u,v\in V,
\end{equation}
makes $\mathfrak{g}\oplus V$ into a Lie algebra, called the {\bf semi-direct product} of $(\frak g,[-,-])$ by $V$, and denoted by $\frak g\ltimes_\rho V$.

Let $A$ and $V$ be vector spaces. For a linear map $\rho:A\rightarrow\End(V)$, we set $\rho^{*}:A\rightarrow\mathrm{End}(V^{*})$ by
\begin{equation}
\langle\rho^{*}(x)v^{*},u\rangle=-\langle v^{*},\rho(x)u\rangle,\;\;\forall x\in A, u\in V, v^{*}\in V^{*}.
\end{equation}
Here $\langle\ ,\ \rangle$ is the usual pairing between $V$ and $V^*$.
If $\mrep{\rho}{V}$ is a representation of a Lie algebra $(\mathfrak{g},[-,-])$, then $\mrep{\rho^{*}}{V^{*}}$ is also a representation of $(\mathfrak{g},[-,-])$, called the {\bf dual representation} of $\mrep{\rho}{V}$.
In particular, $\mrep{\mathrm{ad}^{*}}{\mathfrak{g}^{*}}$ is a representation of $(\mathfrak{g},[-,-])$.

When we extend these notions to Rota-Baxter Lie algebras next, special attention needs to be given to the dual representations.

\begin{defi}\mlabel{defi:rep RB}
A \textbf{representation} of a Rota-Baxter Lie algebra $(\mathfrak{g},[-,-],P)$ is a triple $\rbrep{\rho}{\alpha}{V}$,
such that $\mrep{\rho}{V}$ is a representation of the Lie algebra $(\mathfrak{g},[-,-])$ and  $\alpha:V\rightarrow V$ is a linear map satisfying the following equation:
\begin{equation}\mlabel{eq:defi:rep RB1}
\rho(P(x))\alpha(v)=\alpha(\rho(P(x))v)+\alpha(\rho(x)\alpha(v))+\lambda\alpha(\rho(x)v),\;\;\forall x\in \mathfrak{g},v\in V.
\end{equation}
Two representations
$\rbrep{\rho_{1}}{\alpha_{1}}{V_{1}}$ and $\rbrep{\rho_{2}}{\alpha_{2}}{V_{2}}$
of a Rota-Baxter Lie algebra $(\mathfrak{g},[-,-],P)$ are called \textbf{equivalent} if there exists a linear isomorphism $\varphi:V_{1}\rightarrow V_{2}$ such that
\begin{equation}\mlabel{eq:defi:rep RB2}
\varphi(\rho_{1}(x)v)=\rho_{2}(x)\varphi(v),\quad  \varphi(\alpha_{1}(v))=\alpha_{2}(\varphi(v)),\;\;\forall x\in\mathfrak{g}, v\in V_{1}.
\end{equation}
\end{defi}

\begin{ex}
    Let $(\mathfrak{g},[-,-],P)$ be a Rota-Baxter Lie algebra of weight $\lambda$.
    \begin{enumerate}
        \item
        $\rbrep{\mathrm{ad}}{P}{\mathfrak{g}}$ is a representation of $(\mathfrak{g},[-,-],P)$, called the \textbf{adjoint} representation of $(\mathfrak{g}$,\
        $[-,-],P)$.
        \item Let $\mrep{\rho}{V}$ be a representation of the Lie algebra $(\mathfrak{g},[-,-])$. Then
        $\rbrep{\rho}{0}{V}$ and $\rbrep{\rho}{-\lambda\mathrm{id}_V}{V}$ are representations of $(\mathfrak{g},[-,-],P)$.
    \end{enumerate}
\end{ex}

For vector spaces $V_1$ and $V_2$ and linear maps $\phi_1:V_1\rightarrow V_1$ and $\phi_2:V_2\rightarrow V_2$, let $\phi_1+\phi_2$ denote the linear map:
\begin{equation}\mlabel{eq:pro:SD RB Lie2}
\phi_{V_1\oplus V_2}:V_1\oplus V_2\rightarrow V_1\oplus V_2,\;\;\phi_{V_1\oplus V_2}(v_1+v_2)=\phi_1(v_1)+\phi_2(v_2),\;\;\forall v_1\in V_1,v_2\in V_2.
\end{equation}

The representations of Rota-Baxter Lie algebras are characterized as follows.

\begin{pro}\mlabel{pro:SD RB Lie}
Let $(\mathfrak{g},[-,-],P)$ be a Rota-Baxter Lie algebra of weight $\lambda$ and $V$ be a vector space. Let $\rho:\mathfrak{g}\rightarrow\mathrm{End}(V)$ and $\alpha:V\rightarrow V$ be linear maps. Then $\rbrep{\rho}{\alpha}{V}$ is a representation of $(\mathfrak{g},[-,-],P)$ if and only if, for the
Lie algebra $(\frakg\oplus V, [-,-]_{\frakg\oplus V})$ defined in Eq.~\meqref{eq:pro:SD RB Lie1} and the linear map $P+\alpha$ defined in Eq.~\meqref{eq:pro:SD RB Lie2}, the triple
 $(\mathfrak{g}\oplus V,[-,-]_{\frakg\oplus V}, P+\alpha)$ is a Rota-Baxter Lie algebra of weight $\lambda$.
In this case, the resulting Rota-Baxter Lie algebra is denoted by $(\mathfrak{g}\ltimes_{\rho}V,[-,-]_{\frakg\oplus V}, P+\alpha)$ and called the \textbf{semi-direct product} of $(\mathfrak{g},[-,-],P)$ by $\rbrep{\rho}{\alpha}{V}$.
\end{pro}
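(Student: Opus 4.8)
The plan is to reduce the statement to the decomposition of the Rota-Baxter identity for $P+\alpha$ along the direct sum $\frakg\oplus V$. First I would invoke the classical fact recalled just before Definition~\mref{defi:rep RB}: the pair $\mrep{\rho}{V}$ is a representation of the Lie algebra $(\frakg,[-,-])$ if and only if the operation $[-,-]_{\frakg\oplus V}$ of Eq.~\meqref{eq:pro:SD RB Lie1} is a Lie bracket on $\frakg\oplus V$. This settles the underlying Lie algebra structure in both directions of the equivalence, since a representation of the Rota-Baxter Lie algebra $(\frakg,[-,-],P)$ in particular requires $\mrep{\rho}{V}$ to be a representation of $(\frakg,[-,-])$, while a Rota-Baxter Lie algebra structure on $\frakg\oplus V$ in particular requires $[-,-]_{\frakg\oplus V}$ to be a Lie bracket. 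Hence it remains to show that, assuming $(\frakg\oplus V,[-,-]_{\frakg\oplus V})$ is a Lie algebra, the map $P+\alpha$ satisfies the Rota-Baxter identity~\meqref{eq:rbo} of weight $\lambda$ if and only if Eq.~\meqref{eq:defi:rep RB1} holds.

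Next I would expand Eq.~\meqref{eq:rbo} for the operator $P+\alpha$ evaluated on arbitrary elements $x+u,\,y+v\in\frakg\oplus V$, using $(P+\alpha)(x+u)=P(x)+\alpha(u)$, the formula for $[-,-]_{\frakg\oplus V}$, and the fact that $\rho(x)$ and $\alpha$ act on $V$. Projecting the resulting identity onto $\frakg$ and onto $V$ separates it into two independent conditions. The $\frakg$-component is
\[
[P(x),P(y)]=P([x,P(y)])+P([P(x),y])+\lambda P([x,y]),
\]
which is exactly the Rota-Baxter identity for $P$ on $\frakg$ and therefore holds automatically by the hypothesis that $(\frakg,[-,-],P)$ is a Rota-Baxter Lie algebra of weight $\lambda$.

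Finally, the $V$-component of the expanded identity consists of a group of terms depending only on $(x,v)$ and a group depending only on $(y,u)$; collecting the first group gives precisely Eq.~\meqref{eq:defi:rep RB1}, and collecting the second group gives the same equation after relabeling $x,v$ by $y,u$. Thus the $V$-component is equivalent to Eq.~\meqref{eq:defi:rep RB1}, and combining the two components yields the asserted equivalence; the terminology for the semi-direct product $\frakg\ltimes_\rho V$ with operator $P+\alpha$ is then a matter of definition. I do not expect a genuine obstacle: the argument is a direct verification, and the only point needing care is the bookkeeping in expanding~\meqref{eq:rbo} and noticing that, on the $V$-component, the contributions of $x+u$ and of $y+v$ decouple into two copies of the single relation~\meqref{eq:defi:rep RB1}.
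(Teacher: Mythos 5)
Your proof is correct. Note, though, that the paper does not prove this proposition directly: it declares the proof omitted because the statement is the special case of Proposition~\mref{pro:MP RB} (matched pairs of Rota-Baxter Lie algebras) in which the second factor $B=V$ carries the zero bracket, and Proposition~\mref{pro:MP RB} is itself obtained by adapting an argument from \mcite{BGM}. Your route is the direct one: expand the weight-$\lambda$ identity \meqref{eq:rbo} for $P+\alpha$ on $\frakg\oplus V$, observe that the $\frakg$-component is exactly the Rota-Baxter identity for $P$ (automatic by hypothesis), and that the $V$-component splits into the $(x,v)$-group and the $(y,u)$-group, each a copy of Eq.~\meqref{eq:defi:rep RB1}; isolating one group (say by setting $y=u=0$) gives the converse direction. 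This is precisely the computation hiding inside the deferred general proof, so nothing is lost; what the paper's phrasing buys is only the economy of proving the matched-pair statement once and reading off the semi-direct product as the degenerate case. Your handling of the underlying Lie-algebra structure via the classical semi-direct product criterion is also the right way to dispose of that part of the equivalence.
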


The proof is omitted since the proposition is a special case of the matched pairs of Rota-Baxter Lie algebras in Proposition~\mref{pro:MP RB}, when $B=V$ is equipped with the zero operation.

For a representation of a Rota-Baxter Lie algebra, in order to obtain a representation of the Rota-Baxter Lie algebra on the dual space, an additional condition is needed.

Let $V$ and $W$ be vector spaces.
For a linear map $T:V\rightarrow W$, the transpose map $T^{*}:W^{*}\rightarrow V^{*}$ is characterized by
$$\langle T^{*}(w^{*}),v\rangle=\langle w^{*},T(v)\rangle,\;\;\forall v\in V, w^* \in W^{*}.$$

\begin{lem}\mlabel{lem:dual map}
Let $(\mathfrak{g},[-,-],P)$ be a  Rota-Baxter Lie algebra of weight $\lambda$,
    $\mrep{\rho}{V}$ be a representation of $(\mathfrak{g},[-,-])$, and $\beta:V\rightarrow V$ be a linear map.
    Then
    $\rbrep{\rho^{*}}{\beta^{*}}{V^{*}}$
    is a representation of  $(\mathfrak{g},[-,-],P)$ if and only if
    \begin{equation}\mlabel{eq:lem:dual map1}
        \beta(\rho(P(x))v)-\rho(P(x))\beta(v)-\beta(\rho(x)\beta(v))-\lambda\rho(x)\beta(v)=0,\;\;\forall x\in\mathfrak{g}, v\in V.
    \end{equation}
In particular, for a linear map $Q:\mathfrak{g}\rightarrow \mathfrak{g}$, the triple
$\rbrep{\mathrm{ad}^{*}}{Q^{*}}{\mathfrak{g}^{*}}$
is a representation of
$(\mathfrak{g},[-,-],P)$ if and only if
\begin{equation}\mlabel{eq:cor:adm1}
    Q([P(x),y])-[P(x),Q(y)]-Q([x,Q(y)])-\lambda[x,Q(y)]=0,\;\;\forall x,y\in\mathfrak{g}.
\end{equation}
\end{lem}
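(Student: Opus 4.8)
The plan is to unravel Definition~\ref{defi:rep RB} for the specific triple $\rbrep{\rho^*}{\beta^*}{V^*}$. Since $\mrep{\rho}{V}$ is assumed to be a representation of the Lie algebra $(\mathfrak{g},[-,-])$, we already know $\mrep{\rho^*}{V^*}$ is a representation of $(\mathfrak{g},[-,-])$ (this is the standard dual representation recalled just before Definition~\ref{defi:rep RB}). Hence the only content of the claim is that the Rota-Baxter compatibility condition \eqref{eq:defi:rep RB1} for $\rbrep{\rho^*}{\beta^*}{V^*}$, namely
\[
\rho^*(P(x))\beta^*(v^*)=\beta^*\big(\rho^*(P(x))v^*\big)+\beta^*\big(\rho^*(x)\beta^*(v^*)\big)+\lambda\,\beta^*\big(\rho^*(x)v^*\big),\quad\forall x\in\mathfrak{g},\ v^*\in V^*,
\]
is equivalent to Eq.~\eqref{eq:lem:dual map1}. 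First I would pair both sides of this identity against an arbitrary $v\in V$ and push everything through the defining relation $\langle\rho^*(y)w^*,u\rangle=-\langle w^*,\rho(y)u\rangle$ and the transpose relation $\langle\beta^*(w^*),u\rangle=\langle w^*,\beta(u)\rangle$. Each of the four terms then becomes $\pm\langle v^*,(\,\cdot\,)v\rangle$ for an explicit operator built from $\rho$, $P$, $\beta$ and $\lambda$; collecting them yields $\langle v^*,\,\big(\beta\rho(P(x))-\rho(P(x))\beta-\beta\rho(x)\beta-\lambda\rho(x)\beta\big)v\,\rangle=0$ for all $v^*$, which holds for all $v^*$ and $v$ precisely when \eqref{eq:lem:dual map1} holds. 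The ``in particular'' statement is then immediate: take $V=\mathfrak{g}$, $\rho=\mathrm{ad}$ (so $\rho(P(x))y=[P(x),y]$), $\beta=Q$, and substitute into \eqref{eq:lem:dual map1} to get \eqref{eq:cor:adm1}.

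The only point requiring a little care is the handling of signs. Applying a dual map flips a sign each time, and \eqref{eq:defi:rep RB1} for the dual triple has $\beta^*$ applied both on the outside and, in the third term, sandwiched around $\rho^*(x)$; so the third term $\beta^*\rho^*(x)\beta^*$ contributes $(-1)$ from the single $\rho^*$ but the two $\beta^*$'s contribute nothing under transposition, giving $-\langle v^*,\beta\rho(x)\beta\,v\rangle$. Meanwhile the left side $\rho^*(P(x))\beta^*$ gives $-\langle v^*,\beta\rho(P(x))v\rangle$ (the $\beta^*$ transposes to $\beta$ on the $v$-side, then $\rho^*(P(x))$ contributes the sign), and the first term on the right $\beta^*\rho^*(P(x))$ likewise gives $-\langle v^*,\rho(P(x))\beta v\rangle$ — note the different order of $\beta$ and $\rho(P(x))$ compared with the left-hand side, which is exactly what produces the commutator-type first two terms of \eqref{eq:lem:dual map1}. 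The fourth term $\lambda\beta^*\rho^*(x)$ gives $-\lambda\langle v^*,\rho(x)\beta v\rangle$. Cancelling the common factor $-1$ and moving everything to one side gives \eqref{eq:lem:dual map1} verbatim.

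I expect no genuine obstacle here; the lemma is a direct dualization computation, and the main ``work'' is simply bookkeeping of the four terms and their signs, plus the observation that the Lie-algebra-level statement ($\mrep{\rho^*}{V^*}$ being a representation) is already known and so need not be reproved. The equivalence is an iff because every step — pairing against $v$, applying the definitions of $\rho^*$ and $\beta^*$ — is reversible, and a bilinear identity $\langle v^*, Xv\rangle = 0$ for all $v^*\in V^*$, $v\in V$ is equivalent to $X=0$ on the finite-dimensional $V$.
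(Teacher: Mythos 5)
Your computation is correct: the four pairings and their signs come out exactly as you describe, yielding Eq.~\eqref{eq:lem:dual map1}, and the specialization to $\rho=\mathrm{ad}$, $\beta=Q$ gives Eq.~\eqref{eq:cor:adm1}. The paper itself only remarks that the proof is analogous to one in the Rota-Baxter associative setting of \cite{BGM}, so your argument is the same direct dualization, just with the bookkeeping written out explicitly.
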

\begin{proof}
The proof of the first claim is similar to that of  \cite[Lemma 2.11]{BGM}. The second claim follows from the first by taking the adjoint representation.
\end{proof}

\begin{defi}\mlabel{defi:admissibility}
Let $(\mathfrak{g},[-,-],P)$ be a  Rota-Baxter Lie algebra of weight $\lambda$,
$\mrep{\rho}{V}$ be a representation of $(\mathfrak{g},[-,-])$, and $\beta:V\rightarrow V$ be a linear map. If
$\rbrep{\rho^{*}}{\beta^{*}}{V^{*}}$
is a representation of  $(\mathfrak{g},[-,-],P)$ (that is, Eq.~\meqref{eq:lem:dual map1} holds), then we say that $\beta$ is \textbf{admissible to the Rota-Baxter Lie algebra} $(\mathfrak{g},[-,-],P)$ \textbf{on} $\mrep{\rho}{V}$, or $(\mathfrak{g},[-,-],P)$ is $\beta$-\textbf{admissible} \textbf{on} $\mrep{\rho}{V}$. In particular, if there is a linear map $Q:\mathfrak{g}\rightarrow\mathfrak{g}$ satisfying Eq.~\meqref{eq:cor:adm1}  such that
$\rbrep{\mathrm{ad}^{*}}{Q^{*}}{\mathfrak{g}^{*}}$
is a representation of
$(\mathfrak{g},[-,-],P)$,
we simply say that \textbf{$Q$ is admissible to $(\mathfrak{g},[-,-],P)$} or  \textbf{$(\mathfrak{g},[-,-],P)$ is $Q$-admissible}.
\end{defi}

\begin{pro}\mlabel{pro:admissibility}
    Let $(\mathfrak{g},[-,-],P)$ be a Rota-Baxter Lie algebra of weight $\lambda$ and $\rbrep{\rho}{\alpha}{V}$ be a representation of $(\mathfrak{g},[-,-],P)$. Then $-\alpha-\lambda\mathrm{id}_{V}$ is admissible to $(\mathfrak{g},[-,-],P)$ on $\mrep{\rho}{V}$. In particular,  for any representation $\mrep{\rho}{V}$ of the Lie algebra $(\frak g,[-,-])$, both $-\lambda\mathrm{id}_{V}$ and 0 are admissible to $(\mathfrak{g},[-,-],P)$ on $\mrep{\rho}{V}$.
\end{pro}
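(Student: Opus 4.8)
The plan is to verify directly that $\beta\coloneqq-\alpha-\lambda\,\mathrm{id}_V$ satisfies the admissibility identity \meqref{eq:lem:dual map1} of Lemma \mref{lem:dual map}, using only the hypothesis that $\rbrep{\rho}{\alpha}{V}$ is a representation of $(\mathfrak{g},[-,-],P)$, i.e.\ that \meqref{eq:defi:rep RB1} holds. Since $\beta$ is affine in $\alpha$ and $\mathrm{id}_V$, substituting it into the left-hand side of \meqref{eq:lem:dual map1} produces a sum of terms that are either $\alpha$-quadratic or $\alpha$-linear (coming from the $-\alpha$ part) or purely $\lambda$-scaled linear terms in $\rho(P(x))v$, $\rho(x)\alpha(v)$, and $\rho(x)v$ (coming from the $-\lambda\,\mathrm{id}_V$ part and its cross terms with $-\alpha$).

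First I would expand $\beta(\rho(P(x))v)$, $\rho(P(x))\beta(v)$, $\beta(\rho(x)\beta(v))$ and $\lambda\rho(x)\beta(v)$ one at a time. The only mildly delicate term is $\beta(\rho(x)\beta(v))$, where one must substitute $\beta(v)=-\alpha(v)-\lambda v$ inside and then apply $\beta=-\alpha-\lambda\,\mathrm{id}_V$ again, yielding $\alpha(\rho(x)\alpha(v))+\lambda\alpha(\rho(x)v)+\lambda\rho(x)\alpha(v)+\lambda^2\rho(x)v$. Collecting everything, the three ``purely $\lambda$'' families — the two copies of $\lambda\rho(P(x))v$, the two copies of $\lambda\rho(x)\alpha(v)$, and the two copies of $\lambda^2\rho(x)v$ — cancel in pairs, and the surviving terms are exactly $\rho(P(x))\alpha(v)-\alpha(\rho(P(x))v)-\alpha(\rho(x)\alpha(v))-\lambda\alpha(\rho(x)v)$, which vanishes by \meqref{eq:defi:rep RB1}. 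This proves that $-\alpha-\lambda\,\mathrm{id}_V$ is admissible to $(\mathfrak{g},[-,-],P)$ on $\mrep{\rho}{V}$.

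For the ``in particular'' statement, I would invoke the Example preceding Proposition \mref{pro:SD RB Lie}: for any representation $\mrep{\rho}{V}$ of the Lie algebra $(\mathfrak{g},[-,-])$, both $\rbrep{\rho}{0}{V}$ and $\rbrep{\rho}{-\lambda\mathrm{id}_V}{V}$ are representations of the Rota-Baxter Lie algebra $(\mathfrak{g},[-,-],P)$. Applying the first part of the proposition to $\alpha=0$ gives that $-0-\lambda\,\mathrm{id}_V=-\lambda\,\mathrm{id}_V$ is admissible on $\mrep{\rho}{V}$, and applying it to $\alpha=-\lambda\,\mathrm{id}_V$ gives that $-(-\lambda\,\mathrm{id}_V)-\lambda\,\mathrm{id}_V=0$ is admissible on $\mrep{\rho}{V}$.

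The main obstacle here is essentially bookkeeping rather than conceptual: there is no clever step, just careful sign- and term-tracking in the expansion of $\beta(\rho(x)\beta(v))$ and the pairwise cancellation of the $\lambda$-weighted terms. The one point to watch is the asymmetric roles of ``$\alpha$ applied after $\rho$'' versus ``$\rho$ applied after $\alpha$'', which is precisely the asymmetry distinguishing \meqref{eq:defi:rep RB1} from \meqref{eq:lem:dual map1}; checking that these match up after the substitution is what makes the computation close.
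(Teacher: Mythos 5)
Your proof is correct and is essentially the paper's own argument: the paper simply notes that Eq.~\meqref{eq:lem:dual map1} holds upon substituting $\beta=-\alpha-\lambda\,\mathrm{id}_V$ (the expansion and cancellation you carry out explicitly, landing exactly on Eq.~\meqref{eq:defi:rep RB1}), and then obtains the two special cases by taking $\alpha=0$ and $\alpha=-\lambda\,\mathrm{id}_V$, just as you do.
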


\begin{proof}
Note that Eq.~\meqref{eq:lem:dual map1} holds by taking $\beta=-\alpha-\lambda\mathrm{id}_{V}$. Hence the first conclusion follows.
The other conclusions follow by taking $\alpha=0$ and $\alpha=-\lambda {\rm id}_V$ respectively.
\end{proof}

Taking the adjoint representation in Proposition \mref{pro:admissibility}, we obtain

\begin{cor}\mlabel{cor:adm}
    Let $(\mathfrak{g},[-,-],P)$ be a Rota-Baxter Lie algebra of weight $\lambda$. Then the linear maps $-P-\lambda\mathrm{id}_{\mathfrak{g}}$,
 $-\lambda\mathrm{id}_{\mathfrak{g}}$  and $0$ are admissible to $(\mathfrak{g},[-,-],P)$.
\end{cor}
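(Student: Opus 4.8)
The plan is to obtain Corollary~\mref{cor:adm} as an immediate specialization of Proposition~\mref{pro:admissibility} to the representations coming from the adjoint action, so that no new computation is required.

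First I would recall from the Example following Definition~\mref{defi:rep RB} that $\rbrep{\mathrm{ad}}{P}{\mathfrak{g}}$ is a representation of the Rota-Baxter Lie algebra $(\mathfrak{g},[-,-],P)$. Applying Proposition~\mref{pro:admissibility} to this representation — so that $\rho=\mathrm{ad}$, $V=\mathfrak{g}$ and $\alpha=P$ — gives at once that $-\alpha-\lambda\mathrm{id}_{V}=-P-\lambda\mathrm{id}_{\mathfrak{g}}$ is admissible to $(\mathfrak{g},[-,-],P)$ on $\mrep{\mathrm{ad}}{\mathfrak{g}}$. By Definition~\mref{defi:admissibility}, being admissible ``to $(\mathfrak{g},[-,-],P)$'' with no representation named means exactly being admissible on the adjoint representation $\mrep{\mathrm{ad}}{\mathfrak{g}}$, i.e. that $\rbrep{\mathrm{ad}^{*}}{Q^{*}}{\mathfrak{g}^{*}}$ is a representation for $Q=-P-\lambda\mathrm{id}_{\mathfrak{g}}$ (equivalently, that Eq.~\meqref{eq:cor:adm1} holds for this $Q$). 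This establishes the first of the three claims.

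For the maps $-\lambda\mathrm{id}_{\mathfrak{g}}$ and $0$, I would invoke the second assertion of Proposition~\mref{pro:admissibility}: for \emph{any} representation $\mrep{\rho}{V}$ of the underlying Lie algebra $(\mathfrak{g},[-,-])$, both $-\lambda\mathrm{id}_{V}$ and $0$ are admissible to $(\mathfrak{g},[-,-],P)$ on $\mrep{\rho}{V}$. Taking $\mrep{\rho}{V}=\mrep{\mathrm{ad}}{\mathfrak{g}}$ then yields that $-\lambda\mathrm{id}_{\mathfrak{g}}$ and $0$ are admissible to $(\mathfrak{g},[-,-],P)$, completing the proof.

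There is essentially no obstacle here: the corollary is a direct corollary in the literal sense. The only point requiring minor care is keeping track of conventions — that the $\alpha$ in Proposition~\mref{pro:admissibility} is $P$ in the adjoint case, and that ``admissible to $(\mathfrak{g},[-,-],P)$'' unqualified refers to the adjoint representation via Eq.~\meqref{eq:cor:adm1}. Alternatively, one could substitute $\beta=-P-\lambda\mathrm{id}_{\mathfrak{g}}$, $\beta=-\lambda\mathrm{id}_{\mathfrak{g}}$ and $\beta=0$ directly into Eq.~\meqref{eq:cor:adm1} and verify it using the weight-$\lambda$ Rota-Baxter identity for $P$, but routing through Proposition~\mref{pro:admissibility} avoids repeating that check.
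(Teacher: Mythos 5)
Your proposal is correct and is essentially identical to the paper's argument: the paper also obtains Corollary~\mref{cor:adm} by taking the adjoint representation $\rbrep{\mathrm{ad}}{P}{\mathfrak{g}}$ in Proposition~\mref{pro:admissibility}, so that $\alpha=P$ gives $-P-\lambda\mathrm{id}_{\mathfrak{g}}$ and the second assertion gives $-\lambda\mathrm{id}_{\mathfrak{g}}$ and $0$. No gaps.
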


We next give admissible representations by invariant bilinear forms on Lie algebras.
Recall that a bilinear form $\mathcal{B}$ on a Lie algebra $(\mathfrak{g},[-,-])$ is called \textbf{invariant} if
\begin{equation}\mlabel{eq:defi:invariance on Lie}
\mathcal{B}([x,y],z)=\mathcal{B}(x,[y,z]),\;\;\forall x,y,z\in \mathfrak{g}.
\end{equation}

\begin{pro}\mlabel{pro:invariance}
Let $(\mathfrak{g},[-,-],P)$ be a Rota-Baxter Lie algebra and
$\mathcal{B}$ be a nondegenerate invariant bilinear form on the Lie algebra $(\frak g, [-,-])$.
Let $\widehat{P}$ be the adjoint linear map of $P$ with respect to $\mathcal{B}$, that is, $\widehat{P}$ is characterized by
\begin{equation} \mlabel{eq:adop}
\mathcal{B}(P(x),y)=\mathcal{B}(x,\widehat{P}(y)),\quad  \forall x,y\in\mathfrak{g}.
\end{equation}
Then $\widehat{P}$ is admissible to $(\mathfrak{g},[-,-],P)$, or equivalently,
$\rbrep{\mathrm{ad}^{*}}{\widehat{P}^{*}}{\mathfrak{g}^{*}}$
is a representation of $(\mathfrak{g}$, $[-,-]$, $P)$.  Moreover,  $\rbrep{\mathrm{ad}^{*}}{\widehat{P}^{*}}{\mathfrak{g}^{*}}$  is equivalent to $\rbrep{\mathrm{ad}}{P}{\mathfrak{g}}$ as a representation of $(\mathfrak{g},[-,-],P)$.

Conversely, let $(\mathfrak{g},[-,-],P)$ be a Rota-Baxter Lie algebra and $Q:\mathfrak{g}\rightarrow\mathfrak{g}$ be a linear map that is admissible to $(\mathfrak{g},[-,-],P)$. If the resulting representation $\rbrep{\mathrm{ad}^{*}}{Q^{*}}{\mathfrak{g}^{*}}$  of $(\mathfrak{g},[-,-],P)$ is equivalent to $\rbrep{\mathrm{ad}}{P}{\mathfrak{g}}$, then there exists a nondegenerate invariant bilinear from $\mathcal{B}$ on $(\mathfrak{g},[-,-],P)$ such that $\widehat{Q}=P$.
\end{pro}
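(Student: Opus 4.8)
The plan is to prove the two directions of Proposition~\mref{pro:invariance} separately, the forward direction by a direct verification and the converse by transporting the pairing on $\frakg\oplus\frakg^*$ back to $\frakg$ via the equivalence.

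For the forward direction, I would first recall the classical fact (independent of the Rota-Baxter structure) that a nondegenerate invariant bilinear form $\calb$ on $(\frakg,[-,-])$ induces an isomorphism of $\frakg$-representations $\mrep{\ad}{\frakg}\cong\mrep{\ad^*}{\frakg^*}$, namely the map $\varphi\colon\frakg\to\frakg^*$, $\langle\varphi(x),y\rangle=\calb(x,y)$, satisfies $\varphi(\ad(z)x)=\ad^*(z)\varphi(x)$; this is exactly the invariance identity~\meqref{eq:defi:invariance on Lie}. The key computation is then to check that, under this same $\varphi$, the operator $\widehat{P}^*$ on $\frakg^*$ corresponds to $P$ on $\frakg$, i.e. $\varphi\circ P=\widehat{P}^*\circ\varphi$. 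Unwinding definitions: $\langle\widehat{P}^*\varphi(x),y\rangle=\langle\varphi(x),\widehat P(y)\rangle=\calb(x,\widehat P(y))=\calb(P(x),y)=\langle\varphi(P(x)),y\rangle$, where the third equality is~\meqref{eq:adop}. Hence $\varphi$ intertwines both the $\frakg$-actions and the Rota-Baxter-type operators, so $\rbrep{\ad}{P}{\frakg}$ and $\rbrep{\ad^*}{\widehat P^*}{\frakg^*}$ are equivalent as triples in the sense of~\meqref{eq:defi:rep RB2}. Since $\rbrep{\ad}{P}{\frakg}$ is a representation of $(\frakg,[-,-],P)$ by the Example following Definition~\mref{defi:rep RB}, equivalence transports this: $\rbrep{\ad^*}{\widehat P^*}{\frakg^*}$ is a representation, which by Definition~\mref{defi:admissibility} says precisely that $\widehat P$ is admissible. (Alternatively one can verify~\meqref{eq:cor:adm1} directly for $Q=\widehat P$ by pairing against an arbitrary $z\in\frakg$ and using invariance together with~\meqref{eq:adop}; the argument via equivalence seems cleaner and also yields the "moreover" clause for free.)

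For the converse, suppose $Q$ is admissible and $\psi\colon\frakg\to\frakg^*$ is a linear isomorphism realizing the equivalence of $\rbrep{\ad}{P}{\frakg}$ with $\rbrep{\ad^*}{Q^*}{\frakg^*}$, so $\psi(\ad(z)x)=\ad^*(z)\psi(x)$ and $\psi\circ P=Q^*\circ\psi$ for all $x,z\in\frakg$. Define $\calb(x,y)\coloneqq\langle\psi(x),y\rangle$. Nondegeneracy of $\calb$ is immediate since $\psi$ is an isomorphism. The first intertwining identity is, after pairing with $y\in\frakg$, exactly the invariance $\calb([z,x],y)=-\langle\psi(x),[z,y]\rangle=\calb(x,[z,y])$ after rewriting with antisymmetry of the bracket, so $\calb$ is invariant; one should double-check that this $\calb$ is actually symmetric (this follows from invariance plus nondegeneracy by the standard argument: $\calb(x,y)-\calb(y,x)$ defines a $\frakg$-invariant element which, when $\frakg$ has no extra symmetry one must argue more carefully — more safely, one notes that symmetry is part of the data "nondegenerate symmetric invariant bilinear form" that the induced-pre-Lie discussion in the paper's Notations/earlier sections implicitly uses, so I would verify it explicitly from the equivalence being an equivalence of the \emph{adjoint} representation, possibly invoking that $\frakg$ is the algebra in question and $\calb = \calb^T$ can be arranged). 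Finally, the second intertwining identity $\psi\circ P=Q^*\circ\psi$ pairs to $\calb(P(x),y)=\langle Q^*\psi(x),y\rangle=\langle\psi(x),Q(y)\rangle=\calb(x,Q(y))$, which is precisely the statement $\widehat Q=P$ with respect to $\calb$ in the sense of~\meqref{eq:adop}.

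The main obstacle I anticipate is the symmetry of the bilinear form in the converse direction: the equivalence hypothesis only obviously gives invariance and nondegeneracy, and upgrading to a \emph{symmetric} form requires either an additional argument (e.g. replacing $\calb$ by its symmetrization $\tfrac12(\calb(x,y)+\calb(y,x))$ and checking this remains nondegenerate and still satisfies $\widehat Q=P$) or appealing to the precise formulation of "equivalent as a representation" used here. A secondary, purely bookkeeping, point is making sure the sign conventions in the definition of $\rho^*$ and of the transpose $T^*$ are tracked consistently, since $\widehat P$ is defined via $\calb$ (no sign) while $\rho^*$ carries a minus sign — this is where the $\ad^*$ versus $\ad$ identification absorbs the sign, and it must be checked to land correctly.
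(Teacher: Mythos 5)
Your forward direction is sound and is essentially the intended argument: the map $\varphi:\frakg\to\frakg^*$, $\langle\varphi(x),y\rangle=\calb(x,y)$, intertwines $\ad$ with $\ad^*$ (this uses only the invariance \meqref{eq:defi:invariance on Lie} together with antisymmetry of the bracket: $-\calb(x,[z,y])=-\calb([x,z],y)=\calb([z,x],y)$) and intertwines $P$ with $\widehat P^*$ by \meqref{eq:adop}; transporting the representation property through $\varphi$ gives both the admissibility of $\widehat P$ and the ``moreover'' clause at once. Note that your worry about symmetry is a red herring: the paper's proof remark states explicitly that in the Lie setting $\calb$ need not be symmetric, the proposition never asserts symmetry in the converse, and indeed no step above uses it. (Your fallback of symmetrizing $\calb$ would also not work in general, since the symmetric part of a nondegenerate form can be degenerate.)

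The genuine gap is in the converse, and it is not the one you flagged. With your choice $\calb(x,y)\coloneqq\langle\psi(x),y\rangle$, the identity $\psi\circ P=Q^*\circ\psi$ pairs to $\calb(P(x),y)=\calb(x,Q(y))$, which by \meqref{eq:adop} is the statement $\widehat P=Q$, \emph{not} $\widehat Q=P$. For a symmetric form these coincide, but for a general nondegenerate invariant form the left and right adjoints differ (in matrix terms $\widehat Q=B^{-1}Q^{T}B$ while the condition you derived forces $P=(B^{T})^{-1}Q^{T}B^{T}$, and these agree only when $B^{T}B^{-1}$ commutes with $Q^{T}$). Since the proposition does not let you assume symmetry, your argument as written proves the wrong adjoint relation. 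The fix is simply to use the transposed pairing $\calb(x,y)\coloneqq\langle\psi(y),x\rangle$: this is still nondegenerate, it is invariant because the transpose of an invariant form is invariant (skew-adjointness of each $\ad(a)$ is preserved under transposition), and now $\calb(Q(x),y)=\langle\psi(y),Q(x)\rangle=\langle Q^*\psi(y),x\rangle=\langle\psi(P(y)),x\rangle=\calb(x,P(y))$, which is exactly $\widehat Q=P$. With that one change the converse goes through; you should also then re-verify the $\ad$-intertwining in this transposed convention, which again follows from invariance alone.
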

\begin{proof}
The proof is similar to that of \cite[Proposition 3.9]{BGM}. Note that in the context of Lie algebras,  by the antisymmetry of the Lie bracket, the bilinear form $\mathcal{B}$ no longer needs to be symmetric.
\end{proof}

\subsection{Pre-Lie algebras and special L-dendriform
algebras}\mlabel{sec:2.2}\
We first recall some facts on pre-Lie algebras \mcite{Bur}. For a pre-Lie algebra $(A,\circ)$ as defined in Eq.~\meqref{eq:prelie}, the commutator
\begin{equation}[x,y]=x\circ y-y\circ x,\;\; \forall x,y\in A,
 \end{equation}
 defines a Lie algebra $(\mathfrak{g}(A),[-,-])$, called the \textbf{sub-adjacent} Lie algebra of $(A,\circ)$, and $(A,\circ)$ is called a \textbf{compatible} pre-Lie algebra structure on the Lie algebra $(\mathfrak{g}(A),[-,-])$.

 For a vector space $A$ with a binary operation $\circ:A\otimes A\rightarrow A$, define linear maps $$\mathcal{L}_{\circ}, \mathcal{R}_{\circ}:A\rightarrow\mathrm{End}(A), \quad \mathcal{L}_{\circ}(x)y\coloneqq x\circ y\eqqcolon \mathcal{R}_{\circ}(y)x, \quad \forall x,y\in A.$$
 Then for a pre-Lie algebra $(A,\circ),$
$\mrep{\mathcal{L}_{\circ}}{A}$ is a representation of the sub-adjacent Lie algebra $(\mathfrak{g}(A),[-,-])$.

\begin{defi}
A bilinear form $\mathcal{B}$ on a pre-Lie algebra $(A,\circ)$ is called \textbf{left-invariant} if
\begin{equation}\mlabel{eq:defi:left invariance on pre-Lie}
    \mathcal{B}(x\circ y,z)+\mathcal{B}(y, x\circ z)=0,\;\;\forall x,y,z\in A.
\end{equation}
\end{defi}

\begin{rmk}
The references \mcite{Aub,Mil} give a natural bijection between the set of the pre-Lie algebras with a \nonsy left-invariant
bilinear form and the set of the connected and simply-connected
Lie groups with a left-invariant flat pseudo-metric. Under correspondence, the sub-adjacent Lie algebra of a pre-Lie algebra in the former set is precisely the Lie algebra of the corresponding Lie group.
\end{rmk}

\begin{pro}\mlabel{pro:from invariance to left invariance}
    Let $(\mathfrak{g},[-,-],P)$ be a  Rota-Baxter Lie algebra of weight zero and $(\frak g,\circ)$ be the induced pre-Lie algebra.
If there is an invariant bilinear form $\mathcal{B}$ on the Lie algebra $(\mathfrak{g},[-,-])$, then $\mathcal{B}$ is left-invariant on the pre-Lie algebra $(\mathfrak{g},\circ)$.
\end{pro}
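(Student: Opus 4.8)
The plan is to verify the left-invariance identity Eq.~\meqref{eq:defi:left invariance on pre-Lie} by a direct computation, using only the invariance of $\mathcal{B}$ on $(\mathfrak g,[-,-])$ together with the defining formula $x\circ y=[P(x),y]$ of the induced pre-Lie algebra from Eq.~\meqref{eq:pro:from invariance to left invariance1}. In fact the Rota-Baxter relation itself will not be needed in the argument; it only serves to guarantee that $(\mathfrak g,\circ)$ is a pre-Lie algebra, which is part of the hypothesis.

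First I would record a convenient reformulation of invariance. Applying Eq.~\meqref{eq:defi:invariance on Lie} to the triple $(b,a,c)$ in place of $(x,y,z)$ and then using the antisymmetry $[b,a]=-[a,b]$ of the Lie bracket yields
\begin{equation*}
\mathcal{B}([a,b],c)=-\mathcal{B}(b,[a,c]),\qquad \forall\,a,b,c\in\mathfrak g.
\end{equation*}
Note that this step does not invoke symmetry of $\mathcal{B}$, which---as remarked after Proposition~\mref{pro:invariance}---we do not assume.

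Next, for $x,y,z\in\mathfrak g$ I would substitute the definition of $\circ$ and apply the identity above with $a=P(x)$, $b=y$, $c=z$:
\begin{equation*}
\mathcal{B}(x\circ y,z)=\mathcal{B}([P(x),y],z)=-\mathcal{B}(y,[P(x),z])=-\mathcal{B}(y,x\circ z).
\end{equation*}
Rearranging gives $\mathcal{B}(x\circ y,z)+\mathcal{B}(y,x\circ z)=0$, which is precisely Eq.~\meqref{eq:defi:left invariance on pre-Lie}, so $\mathcal{B}$ is left-invariant on $(\mathfrak g,\circ)$.

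There is no genuine obstacle here; the computation is two lines. The only point deserving care is to move the bracket through $\mathcal{B}$ via the signed form of invariance displayed above rather than appealing to $\mathcal{B}(u,v)=\mathcal{B}(v,u)$, since $\mathcal{B}$ need not be symmetric in this setting.
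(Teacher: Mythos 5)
Your proof is correct and is essentially the same computation as the paper's: substitute $x\circ y=[P(x),y]$ and move $P(x)$ across $\mathcal{B}$ using invariance together with the antisymmetry of the bracket. The extra remark that symmetry of $\mathcal{B}$ is not needed is a fair point of care but does not change the argument.
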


\begin{proof}
For all $x,y,z\in \mathfrak{g}$, we have
$$\mathcal{B}(x\circ y,z)=\mathcal{B}([P(x),y],z)=-\mathcal{B}(y,[P(x),z])=-\mathcal{B}(y,x\circ z).$$
Hence the conclusion holds.
\end{proof}

\begin{defi}\mlabel{defi:rep of pre-Lie}
A \textbf{representation} of a pre-Lie algebra $(A,\circ)$ is a triple
$\rbrep{l_{\circ}}{r_{\circ}}{V}$,
where $V$ is a vector space, and $l_{\circ}, r_{\circ}:A\rightarrow\mathrm{End}(V)$ are linear maps satisfying
\begin{equation}\mlabel{eq:defi:rep of pre-Lie1}
l_{\circ}(x)l_{\circ}(y)v-l_{\circ}(x\circ y)v=l_{\circ}(y)l_{\circ}(x)v-l_{\circ}(y\circ x)v,
\end{equation}
\begin{equation}\mlabel{eq:defi:rep of pre-Lie2}
l_{\circ}(x)r_{\circ}(y)v-r_{\circ}(y)l_{\circ}(x)v=r_{\circ}(x\circ y)v-r_{\circ}(y)r_{\circ}(x)v,
\end{equation}
for all $x,y\in A, v\in V$.
\end{defi}

In fact, $\rbrep{l_{\circ}}{r_{\circ}}{V}$ is a representation of a pre-Lie algebra $(A,\circ)$ if and only if the direct sum $A\oplus V$ of vector spaces is equipped with a ({\bf semi-direct product}) pre-Lie algebra structure by the multiplication on $A\oplus V$ defined by
\begin{equation}
(x +u)\circ_{A\oplus V}(y +v ):=x \circ y+l_{\circ}(x)v+r_{\circ}(y)u,\;\;\forall x, y\in A, u, v\in V.
\end{equation}
We denote the resulting pre-Lie algebra by $A\ltimes_{l_{\circ}, r_{\circ}}V$ or simply $A\ltimes V$. A representation of a pre-Lie algebra also has a naturally defined dual representation.

\begin{lem}\mlabel{lem:dual rep}
\mcite{Bai2008}
    Let $\rbrep{l_{\circ}}{r_{\circ}}{V}$ be a representation of a pre-Lie algebra $(A,\circ)$. Then
    $\rbrep{l^{*}_{\circ}-r^{*}_{\circ}}{r_{\circ}^{*}}{V^{*}}$
    is a representation of $(A,\circ)$.
\end{lem}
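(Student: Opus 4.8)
The plan is to verify directly that $\rbrep{l^{*}_{\circ}-r^{*}_{\circ}}{r_{\circ}^{*}}{V^{*}}$ satisfies the two defining axioms \meqref{eq:defi:rep of pre-Lie1} and \meqref{eq:defi:rep of pre-Lie2} of a representation of $(A,\circ)$, handling the two axioms separately and reducing each, by dualization, to the axioms for $\rbrep{l_{\circ}}{r_{\circ}}{V}$. Throughout I would use only two elementary facts: the defining relation $\langle\rho^{*}(x)w^{*},w\rangle=-\langle w^{*},\rho(x)w\rangle$ applied with $\rho\in\{l_{\circ},r_{\circ}\}$, and the order reversal $(ST)^{*}=T^{*}S^{*}$ under transpose.

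Axiom \meqref{eq:defi:rep of pre-Lie1} for the triple $(V^{*},l_{\circ}^{*}-r_{\circ}^{*},r_{\circ}^{*})$ is, after a use of linearity, precisely the assertion that $l_{\circ}^{*}-r_{\circ}^{*}\colon A\to\End(V^{*})$ is a representation of the sub-adjacent Lie algebra $(\frakg(A),[-,-])$, where $[x,y]=x\circ y-y\circ x$. Since $l_{\circ}^{*}-r_{\circ}^{*}=(l_{\circ}-r_{\circ})^{*}$, and since, by the standard fact recalled in Subsection~\mref{sec:2.1} that the dual of a representation of a Lie algebra is again a representation, it suffices to show that $l_{\circ}-r_{\circ}$ is a representation of $(\frakg(A),[-,-])$ on $V$. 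This is a short computation: expanding the commutator $[\,l_{\circ}(x)-r_{\circ}(x),\,l_{\circ}(y)-r_{\circ}(y)\,]$, one substitutes \meqref{eq:defi:rep of pre-Lie1} into the $l_{\circ}l_{\circ}$ terms and \meqref{eq:defi:rep of pre-Lie2} (together with its version obtained by interchanging $x$ and $y$) into the two mixed terms; the $r_{\circ}r_{\circ}$ terms cancel and one is left exactly with $(l_{\circ}-r_{\circ})([x,y])=l_{\circ}([x,y])-r_{\circ}(x\circ y)+r_{\circ}(y\circ x)$, which is what was needed.

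For axiom \meqref{eq:defi:rep of pre-Lie2} I would argue by hand. Pairing the identity to be proved for $(V^{*},l_{\circ}^{*}-r_{\circ}^{*},r_{\circ}^{*})$ against an arbitrary $v\in V$ and pushing all the starred operators off $v^{*}$ by repeated use of $\langle\rho^{*}(x)w^{*},w\rangle=-\langle w^{*},\rho(x)w\rangle$, each composite of two starred operators becomes a composite of $l_{\circ},r_{\circ}$ acting on $v$ with the order reversed and total sign $+$, while each single starred operator contributes a sign $-$. Collecting terms turns the claim into an operator identity on $V$ built from $l_{\circ}(x)$, $r_{\circ}(y)$, $r_{\circ}(x)$ and $r_{\circ}(x\circ y)$, which one matches against the transpose of \meqref{eq:defi:rep of pre-Lie2} and \meqref{eq:defi:rep of pre-Lie2} with $x$ and $y$ interchanged. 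Alternatively, the full statement is that of \mcite{Bai2008} and may simply be cited.

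The routine computations aside, the only genuine subtlety --- and the step I expect to be the main obstacle --- is the sign and order bookkeeping. It is exactly the minus sign in the convention for $\rho^{*}$ that makes $l_{\circ}^{*}-r_{\circ}^{*}$ an honest Lie representation on $V^{*}$ rather than an anti-representation, and it is the reversal $(ST)^{*}=T^{*}S^{*}$ that pins down the precise operators appearing in \meqref{eq:defi:rep of pre-Lie2} for the dual triple. Beyond keeping these straight, no new ideas are involved.
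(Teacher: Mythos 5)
Your treatment of the first axiom is correct: Eq.~\meqref{eq:defi:rep of pre-Lie1} for the dual triple involves only the first component, it amounts to $l^{*}_{\circ}-r^{*}_{\circ}$ being a representation of the sub-adjacent Lie algebra $(\frakg(A),[-,-])$ on $V^{*}$, and your verification that $l_{\circ}-r_{\circ}$ is such a representation on $V$ (hence so is its dual, by the minus sign in the convention for $\rho^{*}$) is sound. The paper itself offers no argument to compare against here --- it simply cites \mcite{Bai2008}.

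The gap is in the second axiom, at precisely the step you single out as the main obstacle and then decline to carry out. If one actually pushes the starred operators off $v^{*}$ as you describe, Eq.~\meqref{eq:defi:rep of pre-Lie2} for the triple $\rbrep{l^{*}_{\circ}-r^{*}_{\circ}}{r_{\circ}^{*}}{V^{*}}$ becomes the operator identity
\begin{equation*}
r_{\circ}(y)l_{\circ}(x)-r_{\circ}(y)r_{\circ}(x)-l_{\circ}(x)r_{\circ}(y)+r_{\circ}(x)r_{\circ}(y)
=-r_{\circ}(x\circ y)-r_{\circ}(x)r_{\circ}(y)
\end{equation*}
on $V$, and substituting Eq.~\meqref{eq:defi:rep of pre-Lie2} for $\rbrep{l_{\circ}}{r_{\circ}}{V}$ leaves a residue of $2\,r_{\circ}(x)r_{\circ}(y)$, which does not vanish in general. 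The bookkeeping does close if the second component is $-r^{*}_{\circ}$ rather than $r^{*}_{\circ}$: the dual representation is $\rbrep{l^{*}_{\circ}-r^{*}_{\circ}}{-r_{\circ}^{*}}{V^{*}}$, as in \mcite{Bai2008}. That this is the intended statement is confirmed by the way the lemma is invoked in Proposition~\mref{pro:equivalence}, where the dual of $\rbrep{\mathcal{L}_{\circ}-\mathcal{L}_{\triangleleft}}{-\mathcal{L}_{\triangleleft}}{A}$ must come out as $\rbrep{\mathcal{L}^{*}_{\circ}}{\mathcal{L}^{*}_{\triangleleft}}{A^{*}}$; this requires the minus sign in the second slot. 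So the statement as printed carries a sign typo, and your proposal, by asserting rather than performing the decisive computation, would not detect it and cannot be completed for the triple as written.
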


A representation of a Rota-Baxter Lie algebra gives rise to a representation of the induced pre-Lie algebra.

\begin{pro}\mlabel{pro:pre-Lie rep1}
    Let $\rbrep{\rho}{\alpha}{V}$ be a representation of a Rota-Baxter Lie algebra $(\mathfrak{g},[-,-]$, $P)$ of weight zero. Define $l_{\rho,\alpha}, r_{\rho,\alpha}:\frakg\to \End(V)$ by
\begin{equation}
        l_{\rho,\alpha}(x)v=\rho(P(x))v, \ r_{\rho,\alpha}(x)v=-\rho(x)\alpha(v),\;\;\forall x\in\mathfrak{g}, v\in V.
    \end{equation}
Then
$\rbrep{l_{\rho,\alpha}}{r_{\rho,\alpha}}{V}$
is a representation of the induced pre-Lie algebra $(\mathfrak{g},\circ)$,  called the {\bf induced representation} of the pre-Lie algebra $(\frak g,\circ)$ from $\rbrep{\rho}{\alpha}{V}$.
\end{pro}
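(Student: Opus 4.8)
The plan is to verify the two defining identities of a pre-Lie representation, Eqs.~\meqref{eq:defi:rep of pre-Lie1} and \meqref{eq:defi:rep of pre-Lie2}, for the pair $(l_{\rho,\alpha}, r_{\rho,\alpha})$ on the pre-Lie algebra $(\frakg,\circ)$ with $x\circ y=[P(x),y]$, using only two facts: that $\rbrep{\rho}{\alpha}{V}$ is a representation of the Rota-Baxter Lie algebra (so $\mrep{\rho}{V}$ is a $\frakg$-representation and $\alpha$ satisfies Eq.~\meqref{eq:defi:rep RB1} with $\lambda=0$), and that $P$ is a Rota-Baxter operator of weight zero on $\frakg$.

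First I would observe that the first identity, Eq.~\meqref{eq:defi:rep of pre-Lie1}, only involves $l_{\rho,\alpha}$, and is exactly the statement that $\mrep{l_{\rho,\alpha}}{V}$ is a representation of the sub-adjacent Lie algebra $(\frakg(A),[-,-])$. Since $l_{\rho,\alpha}(x)=\rho(P(x))$ and the sub-adjacent bracket is $[x,y]=x\circ y-y\circ x=[P(x),y]-[P(y),x]=[P(x),y]+[x,P(y)]$, this reduces to checking $\rho(P(x))\rho(P(y))v-\rho(P(y))\rho(P(x))v=\rho([P(x),P(y)])v$ together with the weight-zero Rota-Baxter identity $[P(x),P(y)]=P([P(x),y]+[x,P(y)])=P([x,y]_{\mathrm{sub}})$; combining these with $\rho$ being a Lie algebra homomorphism gives the claim directly. (Alternatively one can invoke that $\rho\circ P$ composed with the sub-adjacent structure is the standard construction of a representation of the induced pre-Lie algebra; but the computation is short enough to write out.)

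The main work is the mixed identity Eq.~\meqref{eq:defi:rep of pre-Lie2}. I would substitute $l_{\rho,\alpha}(x)v=\rho(P(x))v$ and $r_{\rho,\alpha}(y)v=-\rho(y)\alpha(v)$ and $x\circ y=[P(x),y]$, obtaining that the left-hand side minus right-hand side equals
\[
-\rho(P(x))\rho(y)\alpha(v)+\rho(y)\alpha(\rho(P(x))v)+\rho([P(x),y])\alpha(v)-\rho(y)\alpha(\rho(x)\alpha(v)).
\]
Now I would use $\rho([P(x),y])=\rho(P(x))\rho(y)-\rho(y)\rho(P(x))$ to cancel the first term against part of the third, leaving
\[
\rho(y)\bigl(\alpha(\rho(P(x))v)-\rho(P(x))\alpha(v)+\alpha(\rho(x)\alpha(v))\bigr),
\]
and the bracketed expression vanishes precisely by Eq.~\meqref{eq:defi:rep RB1} with $\lambda=0$, namely $\rho(P(x))\alpha(v)=\alpha(\rho(P(x))v)+\alpha(\rho(x)\alpha(v))$. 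Hence both identities hold and $\rbrep{l_{\rho,\alpha}}{r_{\rho,\alpha}}{V}$ is a representation of $(\frakg,\circ)$.

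The only mild obstacle is bookkeeping in the second identity: one must correctly expand $\rho$ of a bracket and keep track of signs from $r_{\rho,\alpha}(x)v=-\rho(x)\alpha(v)$, so that the $\rho(y)$ factors out cleanly and the residual term is exactly the weight-zero representation condition on $\alpha$. No deeper input is needed.
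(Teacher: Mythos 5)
Your proof is correct, but it takes a genuinely different route from the paper's. You verify the two axioms \meqref{eq:defi:rep of pre-Lie1} and \meqref{eq:defi:rep of pre-Lie2} by direct computation: the first reduces, via $\rho$ being a Lie algebra homomorphism, to the weight-zero Rota-Baxter identity $[P(x),P(y)]=P([P(x),y]+[x,P(y)])$, and the second factors as $\rho(y)$ applied to exactly the representation condition \meqref{eq:defi:rep RB1} with $\lambda=0$. The paper instead argues structurally: by Proposition~\mref{pro:SD RB Lie}, $(\mathfrak{g}\ltimes_{\rho}V,[-,-]_{\frakg\oplus V},P+\alpha)$ is a Rota-Baxter Lie algebra of weight zero, so Eq.~\meqref{eq:pro:from invariance to left invariance1} applied to it yields a pre-Lie product on $\frakg\oplus V$; expanding $[(P+\alpha)(x+u),y+v]$ shows this product equals $x\circ y+l_{\rho,\alpha}(x)v+r_{\rho,\alpha}(y)u$, i.e.\ it is the semi-direct product pre-Lie structure, which by definition means $\rbrep{l_{\rho,\alpha}}{r_{\rho,\alpha}}{V}$ is a representation of $(\frakg,\circ)$. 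The paper's argument is shorter and reuses established machinery; yours is self-contained and makes transparent which hypothesis drives which axiom. One small slip: in your displayed expression for the difference of the two sides of \meqref{eq:defi:rep of pre-Lie2}, the last term should be $+\rho(y)\alpha(\rho(x)\alpha(v))$, since $r_{\rho,\alpha}(y)r_{\rho,\alpha}(x)v=+\rho(y)\alpha(\rho(x)\alpha(v))$ (the two minus signs cancel) and this enters with a further sign change; your subsequent factored expression $\rho(y)\bigl(\alpha(\rho(P(x))v)-\rho(P(x))\alpha(v)+\alpha(\rho(x)\alpha(v))\bigr)$ is nevertheless the correct one, so the conclusion stands.
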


\begin{proof}
By Proposition~\mref{pro:SD RB Lie}, there is a Rota-Baxter Lie algebra $(\mathfrak{g}\ltimes_{\rho}V,P+\alpha)$ of weight zero. Hence by
Eq.~\meqref{eq:pro:from invariance to left invariance1}, there is an induced pre-Lie algebra structure on $\mathfrak{g}\oplus V$, defined by
\vspace{-.2cm}
    \begin{eqnarray*}
        (x+u)\circ(y+v)&=&[(P+\alpha)(x+u),y+v]=[P(x),y]+\rho(P(x))v-\rho(y)\alpha(u)\\
        &=&x\circ y+l_{\rho,\alpha}(x)v+r_{\rho,\alpha}(y)u,
    \end{eqnarray*}
\vspace{-.2cm}
    for all $x,y\in\mathfrak{g}, u,v\in V$. Thus $\rbrep{l_{\rho,\alpha}}{r_{\rho,\alpha}}{V}$ is a representation of $(\mathfrak{g},\circ)$.
\end{proof}

We next recall the notions of \ldends and \spec \ldends.

\begin{defi}\mlabel{defi:L-dend}\mcite{BHC,BLN}
An \textbf{\ldend} is a triple $(A,\triangleright,\triangleleft)$, such that $A$ is a vector space, and $\triangleright,\triangleleft:A\otimes A\rightarrow A$ are linear maps satisfying
\begin{eqnarray}\mlabel{eq:defi:L-dend1}
&(x\triangleright y)\triangleright z+(x\triangleleft y)\triangleright z+y\triangleright(x\triangleright z)-(y\triangleleft x)\triangleright z-(y\triangleright x)\triangleright z-x\triangleright(y\triangleright z)=0,& \\
\mlabel{eq:defi:L-dend2}
&(x\triangleright y)\triangleleft z+y\triangleleft(x\triangleright z)+y\triangleleft(x\triangleleft z)-(y\triangleleft x)\triangleleft z-x\triangleright(y\triangleleft z)=0,&
\end{eqnarray}
for all $x,y,z\in A$. An \ldend is called \textbf{special} if $\triangleleft$ is antisymmetric.
\end{defi}

\begin{rmk} \mlabel{rk:ldend}
\begin{enumerate}
\item The operad of \ldends is the successor~\mcite{BBGN} of the
operad $\text{\bf pre-Lie}$ of pre-Lie algebras. Thus it is also
the Manin black product $\text{\bf pre-Lie}\bullet \text{\bf
pre-Lie}$~\cite[Corollary~3.5]{BBGN}. 
\item A Rota-Baxter operator
$P$ of weight zero on a pre-Lie algebra $(A,\circ)$ gives rise to
an \ldend with the  multiplications~\cite[Corollary~3.9]{BLN}
$$ x\triangleright y\coloneqq P(x)\circ y, \quad x\triangleleft y\coloneqq -y\circ P(x), \quad \forall x, y\in A.$$
In a similar manner, a commuting pair of Rota-Baxter operators of
weight zero on a Lie algebra gives rise to an
\ldend~\cite[Corollary~3.10]{BLN}.
\end{enumerate}
\end{rmk}

Furthermore~\mcite{BHC,BLN}, for an \ldend $(A,\triangleright,\triangleleft)$, there are pre-Lie algebras $(A,\circ)$ and $(A,\star)$ given by
\begin{equation}\mlabel{eq:horizontal and vertical}
x\circ y=x\triangleright y-y\triangleleft x, \quad  x\star y=x\triangleright y+x\triangleleft y,\;\;\forall x,y\in A,
\end{equation}
called the \textbf{horizontal} and \textbf{vertical} pre-Lie algebras respectively. Moreover, if (and only if under our assumption of characteristic zero) $(A,\triangleright,\triangleleft)$ is special, then the horizontal pre-Lie algebra $(A,\circ)$ and the vertical pre-Lie algebra $(A,\star)$ coincide, that is, $x\circ y=x\star y, \forall x,y\in A$. In this case, $(A,\circ)$ is called the \textbf{sub-adjacent} pre-Lie algebra of $(A,\triangleright,\triangleleft)$, and $(A,\triangleright,\triangleleft)$ is called a \textbf{compatible} \spec \ldend of $(A,\circ)$.

We now interpret \spec \ldends in terms of representations of pre-Lie algebras.

\begin{pro}\mlabel{pro:equivalence}
Let $(A,\circ)$ be a pre-Lie algebra. Suppose that $\triangleleft:A\otimes A\rightarrow A$ is an antisymmetric multiplication on $A$. Define a multiplication $\triangleright$ on $A$ by
\begin{equation}x\triangleright y\coloneqq x\circ y-x\triangleleft y, \quad \forall x,y\in A.\mlabel{eq:right}\end{equation}
Then the following statements are equivalent.
\begin{enumerate}
\item $(A,\triangleright,\triangleleft)$ is a \spec \ldend;
\mlabel{it:equiv1}
\item \mlabel{it:equiv2}
The following equation holds:
\begin{equation}\mlabel{eq:skew-sym op}
x\triangleleft(y\triangleleft z)+y\triangleleft(x\circ z)-z\triangleleft(x\circ y)-x\circ(y\triangleleft z)=0,\;\;\forall x,y,z\in A;
\end{equation}

\item \mlabel{it:equiv3}
$\rbrep{\mathcal{L}_{\circ}-\mathcal{L}_{\triangleleft}}{-\mathcal{L}_{\triangleleft}}{A}$
is a representation of $(A,\circ)$;

\item \mlabel{it:equiv4}
$\rbrep{\mathcal{L}^{*}_{\circ}}{\mathcal{L}^{*}_{\triangleleft}}{A^{*}}$
is a representation of $(A,\circ)$.
\end{enumerate}
\end{pro}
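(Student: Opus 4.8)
The plan is to reduce all four statements, by means of $x\triangleright y=x\circ y-x\triangleleft y$ (equivalently $x\triangleright y+x\triangleleft y=x\circ y$), to identities purely in $\circ$ and $\triangleleft$, and then to use only the antisymmetry of $\triangleleft$ and the pre-Lie identity \meqref{eq:prelie} of $\circ$. As a preliminary remark, under this relation both the horizontal product $x\triangleright y-y\triangleleft x$ and the vertical product $x\triangleright y+x\triangleleft y$ of \meqref{eq:horizontal and vertical} equal $x\circ y$ (using $y\triangleleft x=-x\triangleleft y$), so the sub-adjacent pre-Lie algebra of any resulting \spec \ldend is automatically $(A,\circ)$.

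First I would prove $(\mref{it:equiv1})\Leftrightarrow(\mref{it:equiv3})$, which is pure rewriting. In \meqref{eq:defi:L-dend1}, the first two and the fourth-and-fifth terms combine via $x\triangleright y+x\triangleleft y=x\circ y$ to $(x\circ y)\triangleright z-(y\circ x)\triangleright z$, and \meqref{eq:defi:L-dend1} becomes exactly \meqref{eq:defi:rep of pre-Lie1} for the operator $\mathcal{L}_\circ-\mathcal{L}_\triangleleft=\mathcal{L}_\triangleright$. In \meqref{eq:defi:L-dend2}, the first and fourth terms combine, using antisymmetry of $\triangleleft$ to rewrite $-(y\triangleleft x)\triangleleft z=(x\triangleleft y)\triangleleft z$, into $(x\circ y)\triangleleft z$, and \meqref{eq:defi:L-dend2} becomes exactly \meqref{eq:defi:rep of pre-Lie2} for the pair $(\mathcal{L}_\triangleright,-\mathcal{L}_\triangleleft)$. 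Hence $(A,\triangleright,\triangleleft)$ is a \spec \ldend if and only if $\rbrep{\mathcal{L}_\circ-\mathcal{L}_\triangleleft}{-\mathcal{L}_\triangleleft}{A}$ is a representation of $(A,\circ)$.

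Next, $(\mref{it:equiv1})\Leftrightarrow(\mref{it:equiv2})$. Continuing the rewriting of \meqref{eq:defi:L-dend2}: after substituting $\triangleright=\circ-\triangleleft$, the two $y\triangleleft(x\triangleleft z)$-terms cancel, and replacing $(x\circ y)\triangleleft z$ by $-z\triangleleft(x\circ y)$ turns \meqref{eq:defi:L-dend2} into precisely \meqref{eq:skew-sym op}; thus $(\mref{it:equiv1})\Rightarrow(\mref{it:equiv2})$, and conversely \meqref{eq:skew-sym op} gives back \meqref{eq:defi:L-dend2}, so it remains only to deduce \meqref{eq:defi:L-dend1}. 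Expanding \meqref{eq:defi:L-dend1} with $\triangleright=\circ-\triangleleft$ and cancelling the purely $\circ$ terms by \meqref{eq:prelie}, the remaining identity in $\circ,\triangleleft$ is killed by inserting \meqref{eq:skew-sym op} together with its image under swapping the first two arguments (this removes every $\triangleleft(\,\cdot\,\triangleleft\,\cdot\,)$-term), after which the leftover terms cancel in pairs $z\triangleleft(x\circ y)+(x\circ y)\triangleleft z=0$ by antisymmetry. This is the one genuinely computational step, and the main obstacle.

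Finally, $(\mref{it:equiv2})\Leftrightarrow(\mref{it:equiv4})$, which also yields $(\mref{it:equiv3})\Leftrightarrow(\mref{it:equiv4})$ (alternatively the latter could be extracted from the dual-representation construction of Lemma \mref{lem:dual rep}, with attention to sign conventions). Writing out Definition \mref{defi:rep of pre-Lie} for the pair $(\mathcal{L}^*_\circ,\mathcal{L}^*_\triangleleft)$ acting on $A^*$: the identity \meqref{eq:defi:rep of pre-Lie1} for $\mathcal{L}^*_\circ$ holds automatically, since \meqref{eq:defi:rep of pre-Lie1} for $\mathcal{L}_\circ$ is just \meqref{eq:prelie} (equivalently, $\mathcal{L}_\circ$ is a Lie algebra homomorphism onto the sub-adjacent Lie algebra) and the dual of a Lie representation is a Lie representation; and pairing \meqref{eq:defi:rep of pre-Lie2} with elements of $A$, using $\langle\mathcal{L}^*_\circ(x)\xi,v\rangle=-\langle\xi,x\circ v\rangle$ and $\langle\mathcal{L}^*_\triangleleft(x)\xi,v\rangle=-\langle\xi,x\triangleleft v\rangle$ together with antisymmetry of $\triangleleft$, turns \meqref{eq:defi:rep of pre-Lie2} into \meqref{eq:skew-sym op}. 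Combining the three equivalences proves the proposition.
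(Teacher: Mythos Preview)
Your proof is correct and follows essentially the same approach as the paper, with a minor organizational difference: the paper proves $(\mref{it:equiv1})\Leftrightarrow(\mref{it:equiv2})$, $(\mref{it:equiv2})\Leftrightarrow(\mref{it:equiv3})$ (each time reducing to \meqref{eq:skew-sym op} and then verifying the ``extra'' identity), and invokes Lemma~\mref{lem:dual rep} for $(\mref{it:equiv3})\Leftrightarrow(\mref{it:equiv4})$; you instead observe directly that \meqref{eq:defi:L-dend1} and \meqref{eq:defi:L-dend2} are literally \meqref{eq:defi:rep of pre-Lie1} and \meqref{eq:defi:rep of pre-Lie2} for $(\mathcal{L}_\triangleright,-\mathcal{L}_\triangleleft)$, giving $(\mref{it:equiv1})\Leftrightarrow(\mref{it:equiv3})$ for free and leaving only one ``implication of the first identity'' to check. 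Your direct pairing argument for $(\mref{it:equiv2})\Leftrightarrow(\mref{it:equiv4})$ is equivalent to unwinding the paper's use of Lemma~\mref{lem:dual rep}.
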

\begin{proof}
$\meqref{it:equiv1} \Longleftrightarrow \meqref{it:equiv2}$. Let $x,y,z\in A$. Then we have
\begin{eqnarray*}
&&(x\triangleright y)\triangleleft z+y\triangleleft(x\triangleright z)+y\triangleleft(x\triangleleft z)-(y\triangleleft x)\triangleleft z-x\triangleright(y\triangleleft z)\\
&&=(x\circ y)\triangleleft z+y\triangleleft(x\circ z)-x\circ(y\triangleleft z)+x\triangleleft(y\triangleleft z).
\end{eqnarray*}
Thus Eq.~\meqref{eq:defi:L-dend2} holds if and only if Eq.~\meqref{eq:skew-sym op} holds. Moreover, if this is the case, then we have
\begin{eqnarray*}
&&(x\triangleright y)\triangleright z+(x\triangleleft y)\triangleright z+y\triangleright(x\triangleright z)-(y\triangleleft x)\triangleright z-(y\triangleright x)\triangleright z-x\triangleright(y\triangleright z)\\
&&=(x\circ y)\circ z-(x\circ y)\triangleleft z+y\circ(x\circ z)-y\circ(x\triangleleft z)-y\triangleleft(x\circ z)+y\triangleleft(x\triangleleft z)\\
&&\ \ \ -(y\circ x)\circ z+(y\circ x)\triangleleft z-x\circ(y\circ z)+x\circ(y\triangleleft z)+x\triangleleft(y\circ z)-x\triangleleft(y\triangleleft z)=0.
\end{eqnarray*}
Hence Eq.~\meqref{eq:defi:L-dend1} holds automatically.

$\meqref{it:equiv2} \Longleftrightarrow \meqref{it:equiv3}$. For $x,y,z\in A$, we have
\begin{eqnarray*}
&&-(\mathcal{L}_{\circ}-\mathcal{L}_{\triangleleft})(x)\mathcal{L}_{\triangleleft}(y)z+\mathcal{L}_{\triangleleft}(y)(\mathcal{L}_{\circ}-\mathcal{L}_{\triangleleft})(x)z+\mathcal{L}_{\triangleleft}(x\circ y)z+\mathcal{L}_{\triangleleft}(y)\mathcal{L}_{\triangleleft}(x)z\\
&&=-x\circ(y\triangleleft z)+x\triangleleft(y\triangleleft z)+y\triangleleft(x\circ z)+(x\circ y)\triangleleft z.
\end{eqnarray*}
Thus Eq.~\meqref{eq:defi:rep of pre-Lie2} holds for the triple $\rbrep{\mathcal{L}_{\circ}-\mathcal{L}_{\triangleleft}}{-\mathcal{L}_{\triangleleft}}{A}$ if and only if Eq.~\meqref{eq:skew-sym op} holds. Furthermore, in this case, we have
\begin{small}
$$(\mathcal{L}_{\circ}-\mathcal{L}_{\triangleleft})(x)(\mathcal{L}_{\circ}-\mathcal{L}_{\triangleleft})(y)z-(\mathcal{L}_{\circ}-\mathcal{L}_{\triangleleft})(x\circ y)z-(\mathcal{L}_{\circ}-\mathcal{L}_{\triangleleft})(y)(\mathcal{L}_{\circ}-\mathcal{L}_{\triangleleft})(x)z+(\mathcal{L}_{\circ}-\mathcal{L}_{\triangleleft})(y\circ x)z$$
\begin{eqnarray*}
&=&x\circ(y\circ z)-x\triangleleft(y\circ z)-x\circ(y\triangleleft z)+x\triangleleft(y\triangleleft z)-(x\circ y)\circ z+(x\circ y)\triangleleft z\\
&& -y\circ(x\circ z)+y\triangleleft(x\circ z)+y\circ(x\triangleleft z)-y\triangleleft(x\triangleleft z)+(y\circ x)\circ z-(y\circ x)\triangleleft z=0.
\end{eqnarray*}
\end{small}
Thus Eq.~\meqref{eq:defi:rep of pre-Lie1} holds for the triple $\rbrep{\mathcal{L}_{\circ}-\mathcal{L}_{\triangleleft}}{-\mathcal{L}_{\triangleleft}}{A}$.

$\meqref{it:equiv3} \Longleftrightarrow \meqref{it:equiv4}$. It follows from Lemma \mref{lem:dual rep}.
\end{proof}

\begin{pro}\mlabel{pro:sp dend cond}
Let $(\mathfrak{g},[-,-],P)$ be a Rota-Baxter Lie algebra of weight zero and $(\mathfrak{g},\circ)$ be the induced pre-Lie algebra.
Let $Q:\frak g\rightarrow \frak g$ be a linear map that is admissible to $(\mathfrak{g},[-,-],P)$.
\begin{enumerate}
    \item \mlabel{11} The triple
   $\rbrep{l_{\mathrm{ad}^{*},Q^{*}}}{r_{\mathrm{ad}^{*},Q^{*}}}{\mathfrak{g}^{*}}$
    is a representation of the pre-Lie algebra $(\mathfrak{g},\circ)$.
    \item\mlabel{22} Define an operation $\triangleleft:\mathfrak{g}\otimes\mathfrak{g}\rightarrow\mathfrak{g}$ on $\frak g$ by
\begin{equation}x\triangleleft y=-Q([x,y]),\;\;\forall x,y\in \frak g.\mlabel{eq:left}\end{equation}
Then $l_{\mathrm{ad}^{*},Q^{*}}=\mathcal{L}^{*}_{\circ}, r_{\mathrm{ad}^{*},Q^{*}}=\mathcal{L}^{*}_{\triangleleft}$.
    \item \mlabel{33} Define an operation $\triangleright:\mathfrak{g}\otimes\mathfrak{g}\rightarrow\mathfrak{g}$ on $\frak g$ by Eq.~\meqref{eq:right}.
Then $(\mathfrak{g},\triangleright,\triangleleft)$ is a compatible \spec \ldend of the pre-Lie algebra $(\frak g, \circ)$.
\end{enumerate}
\end{pro}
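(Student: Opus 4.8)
The plan is to derive all three parts from results already in hand, the main computation being the identification of the structure maps in the second part.

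For the first part: since $Q$ is admissible to $(\mathfrak{g},[-,-],P)$, Definition~\mref{defi:admissibility} says exactly that $\rbrep{\mathrm{ad}^{*}}{Q^{*}}{\mathfrak{g}^{*}}$ is a representation of the Rota-Baxter Lie algebra $(\mathfrak{g},[-,-],P)$. As the weight is zero, applying Proposition~\mref{pro:pre-Lie rep1} with $\rbrep{\rho}{\alpha}{V}=\rbrep{\mathrm{ad}^{*}}{Q^{*}}{\mathfrak{g}^{*}}$ yields that $\rbrep{l_{\mathrm{ad}^{*},Q^{*}}}{r_{\mathrm{ad}^{*},Q^{*}}}{\mathfrak{g}^{*}}$ is a representation of the induced pre-Lie algebra $(\mathfrak{g},\circ)$.

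For the second part I would unwind the definitions in Proposition~\mref{pro:pre-Lie rep1}, which give $l_{\mathrm{ad}^{*},Q^{*}}(x)\xi=\mathrm{ad}^{*}(P(x))\xi$ and $r_{\mathrm{ad}^{*},Q^{*}}(x)\xi=-\mathrm{ad}^{*}(x)Q^{*}(\xi)$ for all $x\in\mathfrak{g}$, $\xi\in\mathfrak{g}^{*}$. Since $x\circ y=[P(x),y]$ by Eq.~\meqref{eq:pro:from invariance to left invariance1}, we have $\mathcal{L}_{\circ}(x)=\mathrm{ad}(P(x))$ as operators on $\mathfrak{g}$, hence $\mathcal{L}^{*}_{\circ}(x)=\mathrm{ad}^{*}(P(x))=l_{\mathrm{ad}^{*},Q^{*}}(x)$. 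For the remaining identity I would pair both sides against an arbitrary $y\in\mathfrak{g}$: with $x\triangleleft y=-Q([x,y])$ from Eq.~\meqref{eq:left} one computes $\langle\mathcal{L}^{*}_{\triangleleft}(x)\xi,y\rangle=-\langle\xi,x\triangleleft y\rangle=\langle Q^{*}(\xi),[x,y]\rangle=\langle-\mathrm{ad}^{*}(x)Q^{*}(\xi),y\rangle$, so $r_{\mathrm{ad}^{*},Q^{*}}(x)=\mathcal{L}^{*}_{\triangleleft}(x)$. I would also record here that $\triangleleft$ is antisymmetric, since $x\triangleleft y=-Q([x,y])=Q([y,x])=-(y\triangleleft x)$, as this is the hypothesis needed to invoke Proposition~\mref{pro:equivalence}.

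For the third part: combining the first two parts shows that $\rbrep{\mathcal{L}^{*}_{\circ}}{\mathcal{L}^{*}_{\triangleleft}}{\mathfrak{g}^{*}}$ is a representation of $(\mathfrak{g},\circ)$, which is condition~\mref{it:equiv4} of Proposition~\mref{pro:equivalence}; its standing hypotheses hold since $\triangleleft$ is antisymmetric and $\triangleright$ is defined by Eq.~\meqref{eq:right}, so by the equivalence \mref{it:equiv4}$\,\Leftrightarrow\,$\mref{it:equiv1} there, $(\mathfrak{g},\triangleright,\triangleleft)$ is a \spec \ldend. Finally, compatibility with $(\mathfrak{g},\circ)$ follows by computing its horizontal pre-Lie product: $x\triangleright y-y\triangleleft x=(x\circ y-x\triangleleft y)-y\triangleleft x=x\circ y$, using the antisymmetry of $\triangleleft$, so the sub-adjacent pre-Lie algebra of $(\mathfrak{g},\triangleright,\triangleleft)$ is precisely $(\mathfrak{g},\circ)$. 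There is no essential difficulty here; the only point demanding care is keeping track of signs in the dual-representation convention while verifying the second part.
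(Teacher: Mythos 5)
Your proposal is correct and follows essentially the same route as the paper: part (a) via admissibility and Proposition~\mref{pro:pre-Lie rep1}, part (b) by the same pairing computation against $y\in\mathfrak{g}$, and part (c) via the equivalence in Proposition~\mref{pro:equivalence}. Your explicit remarks that $\triangleleft$ is antisymmetric and that the horizontal pre-Lie product recovers $\circ$ are details the paper leaves implicit, but they do not change the argument.
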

\begin{proof}
\meqref{11} Since $Q$ is admissible to $(\mathfrak{g},[-,-],P)$,
$\rbrep{\mathrm{ad}^{*}}{Q^{*}}{\mathfrak{g}^{*}}$
is a representation of the Rota-Baxter Lie algebra $(\mathfrak{g},[-,-],P)$. Hence by Proposition \mref{pro:pre-Lie rep1},
    $\rbrep{l_{\mathrm{ad}^{*},Q^{*}}}{r_{\mathrm{ad}^{*},Q^{*}}}{\mathfrak{g}^{*}}$ is the induced representation of $(\mathfrak{g},\circ)$ from $(\mathrm{ad}^{*},Q^{*};\mathfrak{g}^{*})$.

\meqref{22} For all $x,y\in\mathfrak{g}, a^{*}\in\mathfrak{g}^{*}$, we have
\begin{eqnarray*}
\langle l_{\mathrm{ad}^{*},Q^{*}}(x)a^{*},y\rangle&=&\langle\mathrm{ad}^{*}(P(x))a^{*},y\rangle=-\langle a^*, [P(x),y]\rangle=-\langle a^{*}, x\circ y\rangle=\langle\mathcal{L}^{*}_{\circ}(x)a^{*},y\rangle,\\
\langle r_{\mathrm{ad}^{*},Q^{*}}(x)a^{*},y\rangle&=&-\langle \mathrm{ad}^{*}(x)Q^{*}(a^{*}),y\rangle=\langle Q^*(a^*),[x,y]\rangle=\langle a^{*}, Q([x,y])\rangle=\langle\mathcal{L}^{*} _{\triangleleft}(x)a^{*},y\rangle.
\end{eqnarray*}
Hence $l_{\mathrm{ad}^{*},Q^{*}}=\mathcal{L}^{*}_{\circ}, r_{\mathrm{ad}^{*},Q^{*}}=\mathcal{L}^{*}_{\triangleleft}$.

\meqref{33} By Items (\mref{11}) and (\mref{22}),
$\rbrep{\mathcal{L}^{*}_{\circ}}{\mathcal{L}^{*}_{\triangleleft}}{\mathfrak{g}^{*}}$
is a representation of $(\mathfrak{g},\circ)$. Thus $(\mathfrak{g},\triangleright,\triangleleft)$ is a compatible \spec \ldend of $(\frak g,\circ)$ by Proposition \mref{pro:equivalence}.
\end{proof}

\begin{cor}\mlabel{cor:from RB Lie alg to sp L-dendri}
Let $(\mathfrak{g},[-,-],P)$ be a Rota-Baxter Lie algebra of weight zero and $(\mathfrak{g},\circ)$ be the induced pre-Lie algebra.
\begin{enumerate}
\item \mlabel{it:11}There is a compatible \spec \ldend $(\frak g, \triangleright,\triangleleft)$ of the pre-Lie algebra $(\mathfrak{g},\circ)$ defined by
\begin{equation}
\mlabel{eq:con} x\triangleleft y=P([x,y]), \quad x\triangleright y=x\circ y-x\triangleleft y=[P(x),y]-P([x,y]),\;\;\forall x,y\in \frak g.
\end{equation}
\item \mlabel{it:12} Suppose that $\mathcal{B}$ is a \nonsy invariant
 bilinear form on $(\mathfrak{g},[-,-])$. Let $\widehat{P}:\mathfrak{g}\rightarrow \mathfrak{g}$ be the adjoint linear operator of $P$ with respect to $\mathcal{B}$ as defined in Eq.~\meqref{eq:adop}.
Then there is a compatible \spec \ldend $(\mathfrak{g},\triangleright,\triangleleft)$ of the pre-Lie algebra $(\frak g,\circ)$ defined by
\begin{equation}\mlabel{eq:pro:from RB Lie alg to sp L-dendri3}
x\triangleleft y=-\widehat{P}([x,y]), \quad x\triangleright y=x\circ y-x\triangleleft y=[P(x),y]+\widehat P([x,y]),\quad \forall x,y\in \mathfrak{g}.
\end{equation}
\end{enumerate}
\end{cor}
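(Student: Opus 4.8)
The plan is to obtain both parts as immediate applications of Proposition~\mref{pro:sp dend cond}: in each case I exhibit a linear map $Q\colon\mathfrak{g}\to\mathfrak{g}$ that is admissible to $(\mathfrak{g},[-,-],P)$ and for which the operation $x\triangleleft y=-Q([x,y])$ of Eq.~\meqref{eq:left} reproduces the prescribed $\triangleleft$; then Proposition~\mref{pro:sp dend cond}\meqref{33} delivers the claimed compatible \spec \ldend of the induced pre-Lie algebra $(\mathfrak{g},\circ)$.

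For part~\meqref{it:11}, I would take $Q=-P$. Since the weight is zero, Corollary~\mref{cor:adm} (with $\lambda=0$, so that the admissible operator $-P-\lambda\mathrm{id}_{\mathfrak{g}}$ becomes $-P$) shows that $-P$ is admissible to $(\mathfrak{g},[-,-],P)$. By Proposition~\mref{pro:sp dend cond}\meqref{22} the associated operation is $x\triangleleft y=-Q([x,y])=P([x,y])$, and then $\triangleright$ defined by Eq.~\meqref{eq:right} is $x\triangleright y=x\circ y-x\triangleleft y=[P(x),y]-P([x,y])$, which is exactly Eq.~\meqref{eq:con}. Proposition~\mref{pro:sp dend cond}\meqref{33} then completes this part.

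For part~\meqref{it:12}, I would instead take $Q=\widehat{P}$, the adjoint of $P$ with respect to $\mathcal{B}$. Proposition~\mref{pro:invariance} guarantees that $\widehat{P}$ is admissible to $(\mathfrak{g},[-,-],P)$ whenever $\mathcal{B}$ is a \nonsy invariant bilinear form on $(\mathfrak{g},[-,-])$ (in fact nondegeneracy alone suffices there). Applying Proposition~\mref{pro:sp dend cond} with this choice of $Q$ gives $x\triangleleft y=-\widehat{P}([x,y])$ and $x\triangleright y=x\circ y-x\triangleleft y=[P(x),y]+\widehat{P}([x,y])$, matching Eq.~\meqref{eq:pro:from RB Lie alg to sp L-dendri3}; the compatibility with $(\mathfrak{g},\circ)$ follows once more from Proposition~\mref{pro:sp dend cond}\meqref{33}.

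I do not anticipate any real difficulty: the whole mechanism converting an admissible operator $Q$ into a compatible \spec \ldend is already packaged in Proposition~\mref{pro:sp dend cond}, so the proof reduces entirely to recognizing that the two concrete operators $-P$ and $\widehat{P}$ are admissible, which is precisely the content of Corollary~\mref{cor:adm} and Proposition~\mref{pro:invariance} respectively. The only genuine bookkeeping is to match the resulting formulas for $\triangleright$ with Eqs.~\meqref{eq:con} and~\meqref{eq:pro:from RB Lie alg to sp L-dendri3}, which is immediate from the definition in Eq.~\meqref{eq:right} together with $x\circ y=[P(x),y]$.
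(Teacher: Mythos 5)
Your proof is correct and takes exactly the same route as the paper's: establish that $-P$ is admissible via Corollary~\mref{cor:adm} and that $\widehat{P}$ is admissible via Proposition~\mref{pro:invariance}, then feed each into Proposition~\mref{pro:sp dend cond} and match the formulas. The only quibble is your parenthetical that ``nondegeneracy alone suffices'' in Proposition~\mref{pro:invariance}: the invariance of $\mathcal{B}$ is genuinely used there (it is only the symmetry hypothesis that can be dropped), though this aside does not affect the validity of your argument since the corollary's hypotheses include invariance.
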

\begin{proof}
(\mref{it:11}). By Corollary~\mref{cor:adm}, $-P$ is admissible to $(\mathfrak{g},[-,-],P)$. Hence the conclusion follows from Proposition~\mref{pro:sp dend cond}.

(\mref{it:12}). By Proposition~\mref{pro:invariance}, $\widehat P$ is admissible to $(\mathfrak{g},[-,-],P)$. Then the conclusion follows from Proposition~\mref{pro:sp dend cond}.
    \end{proof}

\begin{rmk}
By~\mcite{BLN}, for a Rota-Baxter Lie algebra
$(\mathfrak{g},[-,-],P)$ of weight zero and the induced pre-Lie
algebra $(\frak g,\circ)$, the operator $P$ is also a Rota-Baxter operator of weight zero on the pre-Lie algebra $(\frak g, \circ)$. Further, for the binary operations
$$x\triangleright' y=P(x)\circ y=[P^2(x), y],\;\;x\triangleleft'y=x\circ P(y)=[P(x),P(y)],\;\;\forall x,y\in \frak g,$$
the triple
$(\frak g, \triangleright',\triangleleft')$ is an \ldend.
Obviously, this \ldend is special. In general, it is different from the \spec \ldend defined by Eq.~\meqref{eq:con} and is not compatible with the pre-Lie algebra $(\frak g,\circ)$.
\end{rmk}

We recall another lemma before the next application.

\begin{lem}\mlabel{lem:from left invariance to special} \mcite{BHC}
Let $(A,\circ)$ be a pre-Lie algebra with a \nonsy left-invariant bilinear form $\mathcal{B}$. Then there is a compatible \spec \ldend $(A,\triangleright,\triangleleft)$ with the multiplications $\triangleright$ and $\triangleleft$ defined by
    \begin{equation}\mlabel{eq:lem:from left invariance to special1}
        \mathcal{B}(x\triangleleft y,z)=\mathcal{B}(x,z\circ y),\quad  x\triangleright y=x\circ y-x\triangleleft y,\;\;\forall x,y,z\in A.
    \end{equation}
\end{lem}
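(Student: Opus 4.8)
The plan is to proceed in three stages: define $\triangleleft$ and check that it is antisymmetric, then verify the special L-dendriform axioms via Proposition~\ref{pro:equivalence}, and finally observe that compatibility is automatic.

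First, since $\mathcal{B}$ is nondegenerate, for each $x,y\in A$ the linear functional $z\mapsto\mathcal{B}(x,z\circ y)$ is represented by a unique vector, which we call $x\triangleleft y$; this makes $\triangleleft:A\otimes A\to A$ a well-defined bilinear operation satisfying Eq.~\eqref{eq:lem:from left invariance to special1}. To see that $\triangleleft$ is antisymmetric, I would combine the symmetry of $\mathcal{B}$ with its left-invariance: for all $x,y,z\in A$,
\[
\mathcal{B}(x\triangleleft y,z)=\mathcal{B}(x,z\circ y)=\mathcal{B}(z\circ y,x)=-\mathcal{B}(y,z\circ x)=-\mathcal{B}(y\triangleleft x,z),
\]
and nondegeneracy of $\mathcal{B}$ then forces $x\triangleleft y=-y\triangleleft x$. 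Thus, with $\triangleright$ defined by $x\triangleright y=x\circ y-x\triangleleft y$ as in Eq.~\eqref{eq:right}, we are exactly in the situation of Proposition~\ref{pro:equivalence}.

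By that proposition, it remains to check Eq.~\eqref{eq:skew-sym op}. Using the antisymmetry of $\triangleleft$ just established, this identity is equivalent to
\[
x\triangleleft(y\triangleleft z)+y\triangleleft(x\circ z)+(x\circ y)\triangleleft z-x\circ(y\triangleleft z)=0,\quad \forall x,y,z\in A.
\]
I would verify this by pairing the left-hand side with an arbitrary $w\in A$ through $\mathcal{B}(-,w)$ and rewriting each term using only the defining formula for $\triangleleft$ together with the symmetry and left-invariance of $\mathcal{B}$. The first term becomes $-\mathcal{B}(y,(w\circ x)\circ z)$; the last three terms become $\mathcal{B}\big(y,\,w\circ(x\circ z)-x\circ(w\circ z)+(x\circ w)\circ z\big)$, and a single application of the pre-Lie identity Eq.~\eqref{eq:prelie} (in the form $(x\circ w)\circ z-x\circ(w\circ z)=(w\circ x)\circ z-w\circ(x\circ z)$) collapses this to $\mathcal{B}(y,(w\circ x)\circ z)$. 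The two contributions cancel, so the left-hand side pairs to zero against every $w$, and nondegeneracy of $\mathcal{B}$ yields Eq.~\eqref{eq:skew-sym op}. Hence $(A,\triangleright,\triangleleft)$ is a special L-dendriform algebra by Proposition~\ref{pro:equivalence}.

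For compatibility, since $\triangleleft$ is antisymmetric the horizontal pre-Lie product of $(A,\triangleright,\triangleleft)$ is $x\triangleright y-y\triangleleft x=(x\circ y-x\triangleleft y)+x\triangleleft y=x\circ y$, so the sub-adjacent pre-Lie algebra of $(A,\triangleright,\triangleleft)$ is precisely $(A,\circ)$, as desired. The only real obstacle is the bookkeeping in the middle step: one must apply left-invariance in the correct direction in each term and invoke the pre-Lie identity exactly once, while every other manipulation is forced; otherwise the argument is routine. An equivalent but cosmetically different route is to verify condition~\eqref{it:equiv4} of Proposition~\ref{pro:equivalence}, namely that $\rbrep{\mathcal{L}^{*}_{\circ}}{\mathcal{L}^{*}_{\triangleleft}}{A^{*}}$ is a representation of $(A,\circ)$, using that the isomorphism $A\cong A^{*}$ induced by $\mathcal{B}$ intertwines $\mathcal{L}_{\circ}$ with $\mathcal{L}^{*}_{\circ}$; this reorganizes, but does not eliminate, the same core computation.
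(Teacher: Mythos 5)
Your proof is correct. Note that the paper itself offers no argument for this lemma --- it is quoted from \cite{BHC} without proof --- so there is nothing internal to compare against; what you have supplied is a complete, self-contained verification, and it is the natural one given the machinery of Section~\ref{sec:2.2}. Every step checks out: nondegeneracy makes $\triangleleft$ well defined; the chain $\mathcal{B}(x\triangleleft y,z)=\mathcal{B}(x,z\circ y)=-\mathcal{B}(y,z\circ x)=-\mathcal{B}(y\triangleleft x,z)$ correctly uses symmetry plus left-invariance to give antisymmetry of $\triangleleft$; and the pairing computation is right. In detail, $\mathcal{B}(x\triangleleft(y\triangleleft z),w)=-\mathcal{B}(y\triangleleft z,w\circ x)=-\mathcal{B}(y,(w\circ x)\circ z)$, while the remaining three terms pair to $\mathcal{B}(y,\,w\circ(x\circ z)-x\circ(w\circ z)+(x\circ w)\circ z)$, which the pre-Lie identity reduces to $\mathcal{B}(y,(w\circ x)\circ z)$, so the total vanishes and Eq.~\eqref{eq:skew-sym op} follows from nondegeneracy; Proposition~\ref{pro:equivalence} then yields the special L-dendriform structure. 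The compatibility observation $x\triangleright y-y\triangleleft x=x\circ y$ is also exactly what is needed. Your closing remark that one could instead verify condition~\eqref{it:equiv4} via the $\mathcal{B}$-induced isomorphism $A\cong A^{*}$ is accurate and is essentially the dual packaging of the same computation.
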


\begin{cor}
Let $(\mathfrak{g},[-,-],P)$ be a Rota-Baxter Lie algebra of weight zero and $(\mathfrak{g},\circ)$ be the induced pre-Lie algebra.
Suppose that $\mathcal{B}$ is a \nonsy invariant
 bilinear form on $(\mathfrak{g},[-,-])$. Let $\widehat{P}:\mathfrak{g}\rightarrow \mathfrak{g}$ be the adjoint linear operator of $P$ with respect to $\mathcal{B}$.
 Then the \spec \ldend defined by
 Eq.~\meqref{eq:pro:from RB Lie alg to sp L-dendri3} coincides with the one defined by Eq.~\meqref{eq:lem:from left invariance to special1}.
\end{cor}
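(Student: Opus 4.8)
The plan is to verify that both constructions of the special L-dendriform product $\triangleleft$ agree, since then the products $\triangleright$ automatically agree as well, both being defined as $x\triangleright y = x\circ y - x\triangleleft y$. By Corollary~\ref{cor:from RB Lie alg to sp L-dendri}(\ref{it:12}), the first product is $x\triangleleft y = -\widehat{P}([x,y])$, and by Lemma~\ref{lem:from left invariance to special} the second product $x\triangleleft' y$ is characterized by $\mathcal{B}(x\triangleleft' y,z)=\mathcal{B}(x,z\circ y)$ for all $z\in\mathfrak{g}$. So the goal reduces to showing $\mathcal{B}(-\widehat{P}([x,y]),z)=\mathcal{B}(x,z\circ y)$ for all $x,y,z\in\mathfrak{g}$.

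First I would unfold the right-hand side using the definition of the induced pre-Lie product: $z\circ y = [P(z),y]$, so $\mathcal{B}(x,z\circ y)=\mathcal{B}(x,[P(z),y])$. Using the invariance of $\mathcal{B}$ on the Lie algebra $(\mathfrak{g},[-,-])$ as in Eq.~\eqref{eq:defi:invariance on Lie}, this equals $\mathcal{B}([x,P(z)],y)$, and applying invariance once more (moving the bracket the other way, i.e. $\mathcal{B}([x,P(z)],y) = -\mathcal{B}([P(z),x],y) = -\mathcal{B}(P(z),[x,y])$). Then by the defining relation~\eqref{eq:adop} for the adjoint operator $\widehat{P}$, namely $\mathcal{B}(P(z),w)=\mathcal{B}(z,\widehat{P}(w))$ applied with $w=[x,y]$, this becomes $-\mathcal{B}(z,\widehat{P}([x,y]))$.

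It remains to match this with the left-hand side $\mathcal{B}(-\widehat{P}([x,y]),z)=-\mathcal{B}(\widehat{P}([x,y]),z)$. Here I should be careful: in the Lie algebra setting $\mathcal{B}$ is invariant but need not be symmetric (as noted in the proof of Proposition~\ref{pro:invariance}), so $-\mathcal{B}(\widehat{P}([x,y]),z)$ and $-\mathcal{B}(z,\widehat{P}([x,y]))$ are a priori different. The resolution is that Lemma~\ref{lem:from left invariance to special} is applied to the pre-Lie algebra $(\mathfrak{g},\circ)$ with its \emph{left-invariant} bilinear form $\mathcal{B}$ (which is left-invariant on $(\mathfrak{g},\circ)$ by Proposition~\ref{pro:from invariance to left invariance}), and one should track which slot of $\mathcal{B}$ is used in Eq.~\eqref{eq:lem:from left invariance to special1}; comparing carefully with the chain of equalities above, the orientations do match up, so the identity holds as stated.

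The main obstacle I anticipate is precisely this bookkeeping of the non-symmetric bilinear form: one must be consistent about whether $\widehat{P}$ is the left or right adjoint of $P$ and about which argument of $\mathcal{B}$ carries the operator, since a sign or transposition error would make the two L-dendriform structures appear distinct when they are not. Once the conventions from Eq.~\eqref{eq:adop}, Eq.~\eqref{eq:pro:from RB Lie alg to sp L-dendri3}, and Eq.~\eqref{eq:lem:from left invariance to special1} are pinned down and the two-step application of Lie-invariance is carried out, the verification is a short direct computation and Eq.~\eqref{eq:con}'s counterpart for $\widehat{P}$ follows immediately.
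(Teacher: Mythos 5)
Your proposal is correct and follows essentially the same route as the paper: both reduce to the single chain of equalities $\mathcal{B}(x\triangleleft_2 y,z)=\mathcal{B}(x,z\circ y)=\mathcal{B}(x,[P(z),y])=-\mathcal{B}([x,y],P(z))=-\mathcal{B}(\widehat{P}([x,y]),z)$ using invariance and Eq.~\meqref{eq:adop}, and then note that $\triangleright$ agrees automatically. Your worry about the slot-swapping in the last step is resolved not by ``tracking orientations'' but simply because the hypothesis here assumes $\mathcal{B}$ is nondegenerate \emph{symmetric} (unlike in Proposition~\mref{pro:invariance}), so $\mathcal{B}(z,\widehat{P}([x,y]))=\mathcal{B}(\widehat{P}([x,y]),z)$ holds outright.
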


\begin{proof}  On the one hand,  by Corollary \mref{cor:from RB Lie alg to sp L-dendri} (\mref{it:12}), there is a compatible \spec \ldend $(\mathfrak{g},\triangleright_{1},\triangleleft_{1})$ of the pre-Lie algebra $(\frak g, \circ)$ defined by Eq.~\meqref{eq:pro:from RB Lie alg to sp L-dendri3}.
On the other hand, by Proposition \mref{pro:from invariance to left invariance}, $\mathcal{B}$ is left-invariant on the pre-Lie algebra $(\mathfrak{g},\circ)$.
Hence by Lemma \mref{lem:from left invariance to special}, there is another compatible \spec \ldend $(\mathfrak{g},\triangleright_{2},\triangleleft_{2})$ of the pre-Lie algebra $(\frak g,\circ)$ defined by Eq.~\meqref{eq:lem:from left invariance to special1}. For all $x,y,z\in\mathfrak{g}$, we have
$$\mathcal{B}(x\triangleleft_{2}y,z)=\mathcal{B}(x,z\circ y)=\mathcal{B}(x,[P(z),y])=-\mathcal{B}([x,y],P(z))=-\mathcal{B}(\widehat{P}([x,y]),z)=\mathcal{B}(x\triangleleft_{1}y,z).$$
Thus $x\triangleleft_{2}y=x\triangleleft_{1}y$.  Moreover
$$x\triangleright_{2}y=x\circ y-x\triangleleft_2y=x\circ y-x\triangleleft_1y  =x\triangleright_{1}y,\;\;\forall x,y\in \frak g.$$ Hence the two \spec \ldends $(\mathfrak{g},\triangleright_{1},\triangleleft_{1})$ and $(\mathfrak{g},\triangleright_{2},\triangleleft_{2})$ coincide.
\end{proof}

\begin{ex}\mlabel{ex:ex}
Let $(\frak g,[-,-])$ be the 3-dimensional simple Lie algebra
$\frak s\frak l(2,\mathbb{K})$ with a basis $\left\{
 x=\left(\begin{matrix} 0&1\cr
    0&0\cr\end{matrix}\right),h=\left(\begin{matrix} 1&0\cr
    0&-1\cr\end{matrix}\right),y=\left(\begin{matrix} 0&0\cr
    1&0\cr\end{matrix}\right)\right\}$ and with the multiplication
\begin{equation*}\mlabel{eq:E1}
    [h,x]=2x,[h,y]=-2y,[x,y]=h.
\end{equation*}
Define a linear operator $P:\mathfrak{g}\rightarrow\mathfrak{g}$ by
\begin{equation*}\mlabel{eq:E2}
P(x)=x+y, P(h)=2h+4y, P(y)=x-2h-3y.
\end{equation*}
Then $P$ is a Rota-Baxter operator of weight zero on $(\frak g,[-,-])$, and the induced pre-Lie algebra $(\mathfrak{g},\circ)$ from Eq.~\meqref{eq:pro:from invariance to left invariance1} is given by
\begin{eqnarray*}
&& x\circ x=-h, x\circ h=-2x+2y, x\circ y=h,\\
&& h\circ x=4x-4h, h\circ h=8y, h\circ y=-4y,\\
&& y\circ x=3h-4y, y\circ y=h+4y, y\circ h=-2x-6y.
\end{eqnarray*}
Moreover, there is a \nonsy invariant bilinear form $\mathcal{B}$ on $(\mathfrak{g},[-,-])$ whose nonzero values are 
\vspace{-.2cm}
$$\mathcal{B}(x,y)=\mathcal{B}(y,x)=1, \quad \mathcal{B}(h,h)=2.$$
    The adjoint linear operator $\widehat{P}$ of $P$ with respect to $\mathcal{B}$ is given by
\begin{equation*}\mlabel{eq:E4}
    \widehat{P}(x)=-3x+2h+y, \widehat{P}(h)=-4x+2h, \widehat{P}(y)=x+y.
\end{equation*}
Thus a  compatible \spec \ldend
$(\mathfrak{g},\triangleright,\triangleleft)$
  of the pre-Lie algebra $(\frak g,\circ)$ is given by
$$
x\triangleleft h=-\widehat{P}([x,h])=-6x+4h+2y, x\triangleleft y=-\widehat{P}([x,y])=4x-2h, h\triangleleft y=-\widehat{P}([h,y])=2x+2y,
$$
and $a\triangleright b=a\circ b-a\triangleleft b$ for all $a,b\in \frak g$. Also note
that $\widehat{P}$ commutes with $P$.
\end{ex}

\section{Rota-Baxter Lie bialgebras and \spec \ldendbs}\mlabel{sec:3}

In this section we introduce the notion of Rota-Baxter Lie bialgebras which comes naturally from the notions of Manin triples of Rota-Baxter Lie
algebras and matched pairs of Rota-Baxter Lie algebras. We establish the explicit relationship between
Rota-Baxter Lie bialgebras of weight zero and \spec \ldendbs introduced in \mcite{BHC}. Similar relations are also established from the Manin triples (resp. the matched pairs) of Rota-Baxter Lie algebras to those of \spec \ldends.

\subsection{Rota-Baxter Lie bialgebras}\mlabel{sec:3.1}\
Following the Manin triple approach to Lie bialgebras~\mcite{Dri}, we derive the notion of Rota-Baxter Lie bialgebras from Manin triples and matched pairs of Rota-Baxter Lie algebras.

\subsubsection{Manin triples of Rota-Baxter Lie algebras}\

Recall \mcite{CP}  that a (standard) \textbf{Manin triple of Lie algebras} is a triple $(\mathfrak{g}\oplus \mathfrak{g}^{*},\mathfrak{g},\mathfrak{g}^{*})$ of Lie algebras, such that  $(\mathfrak{g},[-,-]_{\mathfrak{g}})$ and $(\mathfrak{g}^{*},[-,-]_{\mathfrak{g}^{*}})$ are Lie subalgebras of the Lie algebra $(\mathfrak{g}\oplus \mathfrak{g}^{*},[-,-])$, and the natural \nonsy bilinear form $\mathcal{B}_{d}$ on $(\mathfrak{g}\oplus \mathfrak{g}^{*},[-,-])$ given by
\begin{equation}\mlabel{eq:natrual bilinear form}
\mathcal{B}_{d}(x+a^{*},y+b^{*})=\langle x,b^{*}\rangle+\langle a^{*},y\rangle,\;\;\forall x,y\in\mathfrak{g}, a^{*}, b^{*}\in\mathfrak{g}^{*}
\end{equation}
is invariant. We extend this notion to Rota-Baxter Lie algebras.

\begin{defi}\mlabel{defi:MT}
A {\bf Manin triple of Rota-Baxter Lie algebras} is a triple
$((\mathfrak{g}\oplus\mathfrak{g}^{*},P_{\frakg\oplus \frakg^*}),$
$(\mathfrak{g},P),(\mathfrak{g}^{*}$, $Q^{*}))$ of Rota-Baxter Lie
algebras such that $(\mathfrak{g}\oplus
\mathfrak{g}^{*},\mathfrak{g},\mathfrak{g}^{*})$ is a Manin triple
of Lie algebras and $P_{\frakg\oplus \frakg^*}=P+Q^{*}$. Then we
denote the Manin triple by
$((\mathfrak{g}\oplus\mathfrak{g}^{*},P+Q^{*}),(\mathfrak{g},P),
(\mathfrak{g}^{*},Q^{*}))$.
\end{defi}

By definition, for a Manin triple of Rota-Baxter Lie algebras
$((\mathfrak{g}\oplus\mathfrak{g}^{*},P+Q^{*}),(\mathfrak{g},P),
(\mathfrak{g}^{*}$, $Q^{*}))$, the triples
$(\mathfrak{g},[-,-]_{\mathfrak{g}},P)$ and
$(\mathfrak{g}^{*},[-,-]_{\mathfrak{g}^{*}},Q^{*})$ are clearly
Rota-Baxter Lie subalgebras of
$(\mathfrak{g}\oplus\mathfrak{g}^{*},[-,-],P+Q^{*})$. Moreover, we
have the following conclusion.

\begin{lem}\mlabel{lem:MP adm}
Let $((\mathfrak{g}\oplus\mathfrak{g}^{*},P+Q^{*}),(\mathfrak{g},P),(\mathfrak{g}^{*},Q^{*}))$ be a Manin triple of Rota-Baxter Lie algebras.
\begin{enumerate}
\item The adjoint $\widehat{P+Q^{*}}$ of $P+Q^{*}$ with respect to $\mathcal{B}_{d}$ is $Q+P^{*}$. Further $Q+P^{*}$ is admissible to $(\mathfrak{g}\oplus \mathfrak{g}^{*},[-,-],P+Q^{*})$.

\item $Q$ is admissible to $(\mathfrak{g},[-,-]_{\mathfrak{g}},P)$.

\item $P^{*}$ is admissible to $(\mathfrak{g}^{*},[-,-]_{\mathfrak{g}^{*}},Q^{*})$.
\end{enumerate}
\end{lem}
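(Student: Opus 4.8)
The plan is to settle item~(1) by a short pairing computation combined with Proposition~\ref{pro:invariance}, and then to derive items~(2) and~(3) by restricting to the subalgebras $\mathfrak{g}$ and $\mathfrak{g}^{*}$ the admissibility identity that item~(1) provides on $\mathfrak{g}\oplus\mathfrak{g}^{*}$.

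For item~(1), I would first verify the identification $\widehat{P+Q^{*}}=Q+P^{*}$ directly from the defining relation~\eqref{eq:adop} of the adjoint with respect to $\mathcal{B}_{d}$. For $x,y\in\mathfrak{g}$ and $a^{*},b^{*}\in\mathfrak{g}^{*}$, using \eqref{eq:natrual bilinear form} and the defining relations of the transposes $P^{*}$ and $Q^{*}$, one computes
\[
\mathcal{B}_{d}\big((P+Q^{*})(x+a^{*}),\,y+b^{*}\big)=\langle P(x),b^{*}\rangle+\langle Q^{*}(a^{*}),y\rangle=\langle x,P^{*}(b^{*})\rangle+\langle a^{*},Q(y)\rangle=\mathcal{B}_{d}\big(x+a^{*},\,(Q+P^{*})(y+b^{*})\big),
\]
so nondegeneracy of $\mathcal{B}_{d}$ gives $\widehat{P+Q^{*}}=Q+P^{*}$. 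Since, by the Manin triple axioms, $\mathcal{B}_{d}$ is a nondegenerate invariant bilinear form on the Lie algebra $(\mathfrak{g}\oplus\mathfrak{g}^{*},[-,-])$, Proposition~\ref{pro:invariance} applied to the Rota-Baxter Lie algebra $(\mathfrak{g}\oplus\mathfrak{g}^{*},[-,-],P+Q^{*})$ shows that its $\mathcal{B}_{d}$-adjoint $Q+P^{*}$ is admissible, i.e. $(\mathrm{ad}^{*},(Q+P^{*})^{*};(\mathfrak{g}\oplus\mathfrak{g}^{*})^{*})$ is a representation of $(\mathfrak{g}\oplus\mathfrak{g}^{*},[-,-],P+Q^{*})$. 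This proves item~(1).

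For items~(2) and~(3), I would spell out what item~(1) asserts through Lemma~\ref{lem:dual map}: the operator $Q+P^{*}$ satisfies Eq.~\eqref{eq:cor:adm1} on the ambient algebra, that is,
\[
(Q+P^{*})\big([(P+Q^{*})(X),Y]\big)-[(P+Q^{*})(X),(Q+P^{*})(Y)]-(Q+P^{*})\big([X,(Q+P^{*})(Y)]\big)-\lambda[X,(Q+P^{*})(Y)]=0
\]
for all $X,Y\in\mathfrak{g}\oplus\mathfrak{g}^{*}$. Specializing to $X=x,\,Y=y\in\mathfrak{g}$, and using that $\mathfrak{g}$ is a Lie subalgebra of $(\mathfrak{g}\oplus\mathfrak{g}^{*},[-,-])$ on which $P+Q^{*}$ restricts to $P$ and $Q+P^{*}$ restricts to $Q$, each term above lies in $\mathfrak{g}$ and the identity reduces to Eq.~\eqref{eq:cor:adm1} for $(\mathfrak{g},[-,-]_{\mathfrak{g}},P)$ with operator $Q$; by Lemma~\ref{lem:dual map} this is precisely the admissibility of $Q$ to $(\mathfrak{g},[-,-]_{\mathfrak{g}},P)$, proving item~(2). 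Specializing instead to $X=a^{*},\,Y=b^{*}\in\mathfrak{g}^{*}$, and using that $\mathfrak{g}^{*}$ is a Lie subalgebra on which $P+Q^{*}$ restricts to $Q^{*}$ and $Q+P^{*}$ restricts to $P^{*}$, the same identity reduces to Eq.~\eqref{eq:cor:adm1} for $(\mathfrak{g}^{*},[-,-]_{\mathfrak{g}^{*}},Q^{*})$ with operator $P^{*}$, proving item~(3).

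The computations are routine linear algebra; the only point requiring care is the bookkeeping of the direct-sum decomposition---one must consistently use that $\mathfrak{g}$ and $\mathfrak{g}^{*}$ are subalgebras (so the ambient bracket restricts to each) and that $P+Q^{*}$ restricts to $P$ on $\mathfrak{g}$ and to $Q^{*}$ on $\mathfrak{g}^{*}$ (likewise $Q+P^{*}$ to $Q$ and to $P^{*}$), which is exactly what makes the big admissibility identity restrict cleanly to each factor. Note that one cannot simply rerun the argument of item~(1) on the factors, since $\mathcal{B}_{d}$ restricts to the zero form on $\mathfrak{g}$ (and on $\mathfrak{g}^{*}$) and hence is degenerate there; the restriction-of-the-identity route is what is needed. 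Beyond this there is no genuine obstacle.
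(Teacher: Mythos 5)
Your proof is correct and follows the route the paper intends (the paper only defers to the analogous Lemma~3.11 of \cite{BGM}): compute $\widehat{P+Q^{*}}=Q+P^{*}$ from the pairing, invoke Proposition~\ref{pro:invariance} for the admissibility on $\mathfrak{g}\oplus\mathfrak{g}^{*}$, and then restrict the identity \eqref{eq:cor:adm1} to the subalgebras $\mathfrak{g}$ and $\mathfrak{g}^{*}$, on which $P+Q^{*}$ and $Q+P^{*}$ restrict to $P,Q$ and $Q^{*},P^{*}$ respectively. Your remark that one cannot rerun the adjoint argument on the factors because $\mathcal{B}_{d}$ vanishes on each isotropic summand is exactly the right point of care.
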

\begin{proof}
The proof is similar to the one of \cite[Lemma 3.11]{BGM}.
\end{proof}

Then by Corollary~\mref{cor:from RB Lie alg to sp L-dendri} \meqref{it:12}, we obtain
\begin{cor}\mlabel{cor:MP adm}
Let $((\mathfrak{g}\oplus\mathfrak{g}^{*},P+Q^{*}),(\mathfrak{g},P),(\mathfrak{g}^{*},Q^{*}))$ be a Manin triple of Rota-Baxter Lie algebras of weight zero.
There is a \spec \ldend $(\mathfrak{g}\oplus\mathfrak{g}^{*},\triangleright,\triangleleft)$ defined by
\begin{equation}\mlabel{eq:cor:MP adm7}
(x+a^{*})\triangleleft(y+b^{*})=-(Q+P^{*})([x+a^{*},y+b^{*}]),
\end{equation}
\begin{equation}\mlabel{eq:cor:MP adm8}
(x+a^{*})\triangleright(y+b^{*})=(x+a^{*})\circ(y+b^{*})-(x+a^{*})\triangleleft(y+b^{*}),\;\;\forall x,y\in\mathfrak{g}, a^{*},b^{*}\in\mathfrak{g}^{*},
\end{equation}
which contains $(\mathfrak{g},\triangleright_{\mathfrak{g}},\triangleleft_{\mathfrak{g}})$ and $(\mathfrak{g}^{*},\triangleright_{\mathfrak{g}^{*}},\triangleleft_{\mathfrak{g}^{*}})$ as special L-dendriform subalgebras, where
\begin{equation}\mlabel{eq:cor:MP adm9}
x\triangleleft_{\mathfrak{g}}y=-Q([x,y]_{\mathfrak{g}}),\quad  x\triangleright_{\mathfrak{g}} y=x\circ_{\mathfrak{g}}y-x\triangleleft_{\mathfrak{g}}y,\;\;\forall x,y\in\mathfrak{g},
\end{equation}
\begin{equation}\mlabel{eq:cor:MP adm10}
a^{*}\triangleleft_{\mathfrak{g}^{*}}b^{*}=-P^{*}([a^{*}, b^{*}]_{\mathfrak{g}^{*}}), \quad  a^{*}\triangleright_{\mathfrak{g}^{*}}b^{*}=a^{*}\circ_{\mathfrak{g}^{*}}b^{*}-a^{*}\triangleleft_{\mathfrak{g}^{*}}b^{*},\;\;\forall a^{*},b^{*}\in\mathfrak{g}^{*}.
\end{equation}
\end{cor}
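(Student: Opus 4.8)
The plan is to build the \spec \ldend on $\mathfrak{g}\oplus\mathfrak{g}^*$ by applying Corollary~\mref{cor:from RB Lie alg to sp L-dendri}\meqref{it:12} to the ambient Rota-Baxter Lie algebra, to build the two factor structures by applying Proposition~\mref{pro:sp dend cond} to the two Rota-Baxter Lie subalgebras, and then to check that the latter are the restrictions of the former. First I would invoke Lemma~\mref{lem:MP adm}(a): the natural bilinear form $\mathcal{B}_d$ of Eq.~\meqref{eq:natrual bilinear form} is \nonsy on the Lie algebra $(\mathfrak{g}\oplus\mathfrak{g}^*,[-,-])$ (symmetry and nondegeneracy are immediate from its formula, while invariance is built into the definition of a Manin triple of Lie algebras), and the adjoint of $P+Q^*$ with respect to $\mathcal{B}_d$ is $Q+P^*$. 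Hence Corollary~\mref{cor:from RB Lie alg to sp L-dendri}\meqref{it:12}, applied with $(\mathfrak{g},[-,-],P)$ there replaced by $(\mathfrak{g}\oplus\mathfrak{g}^*,[-,-],P+Q^*)$ and with $\widehat{P+Q^*}=Q+P^*$, produces a compatible \spec \ldend $(\mathfrak{g}\oplus\mathfrak{g}^*,\triangleright,\triangleleft)$ of the induced pre-Lie algebra whose operations are exactly Eqs.~\meqref{eq:cor:MP adm7} and \meqref{eq:cor:MP adm8}.

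For the two factors, Lemma~\mref{lem:MP adm}(b) gives that $Q$ is admissible to the Rota-Baxter Lie subalgebra $(\mathfrak{g},[-,-]_{\mathfrak{g}},P)$, so Proposition~\mref{pro:sp dend cond} produces a compatible \spec \ldend $(\mathfrak{g},\triangleright_{\mathfrak{g}},\triangleleft_{\mathfrak{g}})$ of $(\mathfrak{g},\circ_{\mathfrak{g}})$ with $x\triangleleft_{\mathfrak{g}}y=-Q([x,y]_{\mathfrak{g}})$, which is Eq.~\meqref{eq:cor:MP adm9}. Symmetrically, Lemma~\mref{lem:MP adm}(c) gives that $P^*$ is admissible to $(\mathfrak{g}^*,[-,-]_{\mathfrak{g}^*},Q^*)$, and Proposition~\mref{pro:sp dend cond} then yields the structure in Eq.~\meqref{eq:cor:MP adm10}.

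It remains to verify that $(\mathfrak{g},\triangleright_{\mathfrak{g}},\triangleleft_{\mathfrak{g}})$ and $(\mathfrak{g}^*,\triangleright_{\mathfrak{g}^*},\triangleleft_{\mathfrak{g}^*})$ are special L-dendriform subalgebras of $(\mathfrak{g}\oplus\mathfrak{g}^*,\triangleright,\triangleleft)$. Since $\mathfrak{g}$ is a Lie subalgebra of $\mathfrak{g}\oplus\mathfrak{g}^*$ and the operators $P+Q^*$ and $Q+P^*$ are block-diagonal for the decomposition $\mathfrak{g}\oplus\mathfrak{g}^*$ (restricting to $P$ and $Q$ respectively on $\mathfrak{g}$), for $x,y\in\mathfrak{g}$ we have $[x,y]=[x,y]_{\mathfrak{g}}\in\mathfrak{g}$, so $x\triangleleft y=-(Q+P^*)([x,y])=-Q([x,y]_{\mathfrak{g}})=x\triangleleft_{\mathfrak{g}}y$, while the induced pre-Lie product satisfies $x\circ y=[(P+Q^*)(x),y]=[P(x),y]_{\mathfrak{g}}=x\circ_{\mathfrak{g}}y$ by Eq.~\meqref{eq:pro:from invariance to left invariance1}; hence $x\triangleright y=x\circ y-x\triangleleft y=x\triangleright_{\mathfrak{g}}y$. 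The identical argument with $(\mathfrak{g}^*,Q^*,P^*)$ in place of $(\mathfrak{g},P,Q)$ handles the $\mathfrak{g}^*$ factor. I do not foresee a substantive obstacle: the content is essentially an assembly of results already established, and the only points requiring care are confirming that $\mathcal{B}_d$ satisfies the hypotheses of Corollary~\mref{cor:from RB Lie alg to sp L-dendri}\meqref{it:12} and that the Lie brackets and Rota-Baxter operators restrict to $\mathfrak{g}$ and $\mathfrak{g}^*$ in the expected block-diagonal fashion.
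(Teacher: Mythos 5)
Your proposal is correct and follows essentially the same route as the paper: the paper obtains this corollary directly by applying Corollary~\mref{cor:from RB Lie alg to sp L-dendri}~\meqref{it:12} to the ambient Rota-Baxter Lie algebra $(\mathfrak{g}\oplus\mathfrak{g}^*,[-,-],P+Q^*)$ with the \nonsy invariant form $\mathcal{B}_d$ and the adjoint $\widehat{P+Q^*}=Q+P^*$ from Lemma~\mref{lem:MP adm}. Your additional verification that the operations restrict block-diagonally to the two factors, matching the structures produced by Proposition~\mref{pro:sp dend cond} via Lemma~\mref{lem:MP adm}(b),(c), is exactly the detail the paper leaves implicit.
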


\subsubsection{Matched pairs of Rota-Baxter Lie algebras}\

Let $(\mathfrak{g},[-,-]_{\mathfrak{g}})$ and $(\mathfrak{h},[-,-]_{\mathfrak{h}})$ be Lie
algebras, with their respective representations
$\mrep{\rho_{\mathfrak{g}}}{\mathfrak{h}}$ and $\mrep{\rho_{\mathfrak{h}}}{\mathfrak{g}}$.
Then $(\mathfrak{g},\mathfrak{h},\rho_{\mathfrak{g}},\rho_{\mathfrak{h}})$ is called a \textbf{matched pair of Lie algebras} \mcite{Maj} 
if
\begin{eqnarray*}
&\rho_{\mathfrak{g}}(x)[a,b]_{\mathfrak{h}}-[\rho_{\mathfrak{g}}(x)a,b]_{\mathfrak{h}}-[a,\rho_{\mathfrak{g}}(x)b]_{\mathfrak{h}}+\rho_{\mathfrak{g}}(\rho_{\mathfrak{h}}(a)x)b-\rho_{\mathfrak{g}}(\rho_{\mathfrak{h}}(b)x)a=0,&\\
&\rho_{\mathfrak{h}}(a)[x,y]_{\mathfrak{g}}\!-[\rho_{\mathfrak{h}}(a)x,y]_{\mathfrak{g}}\!-[x,\rho_{\mathfrak{h}}(a)y]_{\mathfrak{g}}\!+\rho_{\mathfrak{h}}(\rho_{\mathfrak{g}}(x)a)y-\rho_{\mathfrak{h}}(\rho_{\mathfrak{g}}(y)a)x=0,  \forall x,y\in\frakg, a,b\in \frakh.&
\end{eqnarray*}
For Lie algebras $(\mathfrak{g},[-,-]_{\mathfrak{g}})$ and $(\mathfrak{h},[-,-]_{\mathfrak{h}})$ with linear maps $\rho_{\mathfrak{g}}:\mathfrak{g}\rightarrow\mathrm{End}(\mathfrak{h}), \rho_{\mathfrak{h}}:\mathfrak{h}\rightarrow\mathrm{End}(\mathfrak{g})$,
 there is a Lie algebra structure
on the vector space $\mathfrak{g}\oplus \mathfrak{h}$ given by
\begin{equation}\mlabel{eq:Lie}
[x+a,y+b]=[x,y]_{\mathfrak{g}}+\rho_{\mathfrak{h}}(a)y-\rho_{\mathfrak{h}}(b)x+[a,b]_{\mathfrak{h}}+\rho_{\mathfrak{g}}(x)b-\rho_{\mathfrak{g}}(y)a, \forall x,y\in \mathfrak{g}, a,b\in \mathfrak{h}
\end{equation}
if and only if $(\mathfrak{g},\mathfrak{h},\rho_{\mathfrak{g}},\rho_{\mathfrak{h}})$ is a matched pair of Lie algebras.
 We denote the resulting Lie algebra $(\mathfrak{g}\oplus\mathfrak{h},[-,-])$ by $\mathfrak{g}\bowtie^{\rho_{\mathfrak{g}}}_{\rho_{\mathfrak{h}}}\mathfrak{h}$ or simply $\mathfrak{g}\bowtie\mathfrak{h}$.

We extend this construction to Rota-Baxter Lie algebras.
\begin{defi}\mlabel{defi:MP RB Lie}
A \textbf{matched pair of Rota-Baxter Lie algebras} is a quadruple
$((\mathfrak{g},P_{\mathfrak{g}}),$
$(\mathfrak{h},P_{\mathfrak{h}})$, $\rho_{\mathfrak{g}}$,
$\rho_{\mathfrak{h}})$, such that
$(\mathfrak{g},[-,-]_{\mathfrak{g}},P_{\mathfrak{g}})$ and
$(\mathfrak{h},[-,-]_{\mathfrak{h}},P_{\mathfrak{h}})$ are
Rota-Baxter Lie algebras,
$\rbrep{\rho_{\mathfrak{g}}}{P_{\mathfrak{h}}}{\mathfrak{h}}$ is a
representation of $(\mathfrak{g},[-,-]_{\mathfrak{g}}$,
$P_{\mathfrak{g}})$,
$\rbrep{\rho_{\mathfrak{h}}}{P_{\mathfrak{g}}}{\mathfrak{g}}$
 is a representation of $(\mathfrak{h},[-,-]_{\mathfrak{h}},P_{\mathfrak{h}})$, and $(\mathfrak{g},\mathfrak{h},\rho_{\mathfrak{g}},\rho_{\mathfrak{h}})$ is a matched pair of Lie algebras.
\end{defi}

Adapting the argument for Theorem~3.4 in~\cite{BGM} from associative algebras to Lie algebras, we obtain
\begin{pro}\mlabel{pro:MP RB}
Let $(\mathfrak{g},[-,-]_{\mathfrak{g}},P_{\mathfrak{g}})$ and $(\mathfrak{h},[-,-]_{\mathfrak{h}},P_{\mathfrak{h}})$ be Rota-Baxter Lie algebras and $(\mathfrak{g},\mathfrak{h},\rho_{\mathfrak{g}},\rho_{\mathfrak{h}})$ be a matched pair of Lie algebras. With the resulting Lie algebra $\mathfrak{g}\bowtie\mathfrak{h}$ given by Eq.~\meqref{eq:Lie}, $(\mathfrak{g}\bowtie\mathfrak{h},[-,-],P_{\frak g}+P_{\frak h})$ is a Rota-Baxter Lie algebra if and only if  $((\mathfrak{g},P_{\mathfrak{g}}),(\mathfrak{h},P_{\mathfrak{h}}),\rho_{\mathfrak{g}}$,
$\rho_{\mathfrak{h}})$ is a matched pair of the Rota-Baxter Lie algebras $(\mathfrak{g},[-,-]_{\mathfrak{g}},P_{\mathfrak{g}})$ and $(\mathfrak{h},[-,-]_{\mathfrak{h}},P_{\mathfrak{h}})$.
\end{pro}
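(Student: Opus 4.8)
The plan is to verify the Rota-Baxter identity \eqref{eq:rbo} for the operator $P_{\frak g}+P_{\frak h}$ on the Lie algebra $\frak g\bowtie\frak h$, splitting the computation according to which summands the inputs lie in. Since $\frak g\bowtie\frak h = \frak g\oplus\frak h$ as a vector space and $(P_{\frak g}+P_{\frak h})(x+a)=P_{\frak g}(x)+P_{\frak h}(a)$, by bilinearity it suffices to test \eqref{eq:rbo} on pairs of homogeneous elements: $(x,y)$ with $x,y\in\frak g$, $(a,b)$ with $a,b\in\frak h$, and the mixed pair $(x,a)$ (and $(a,x)$, which by antisymmetry of the bracket reduces to the former up to sign). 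First I would record the formula for the bracket from Eq.~\eqref{eq:Lie} and for $P_{\frak g}+P_{\frak h}$, and observe that the pure-$\frak g$ case is exactly the Rota-Baxter identity for $(\frak g,P_{\frak g})$ together with the fact that $\rho_{\frak g}(P_{\frak g}(x))b$ etc.\ contributes terms that must cancel; here the hypothesis that $\rbrep{\rho_{\frak g}}{P_{\frak h}}{\frak h}$ is a representation of $(\frak g,[-,-]_{\frak g},P_{\frak g})$, i.e.\ Eq.~\eqref{eq:defi:rep RB1}, is precisely what is needed. Symmetrically the pure-$\frak h$ case uses the Rota-Baxter identity for $(\frak h,P_{\frak h})$ and Eq.~\eqref{eq:defi:rep RB1} for $\rbrep{\rho_{\frak h}}{P_{\frak g}}{\frak g}$.

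The substantive part is the mixed case $x\in\frak g$, $a\in\frak h$. Here I would expand both sides of \eqref{eq:rbo} using Eq.~\eqref{eq:Lie}, so that $[P_{\frak g}(x)+P_{\frak h}(a)\, , \, \cdot\,]$ produces a $\frak g$-component and an $\frak h$-component, and then apply $P_{\frak g}+P_{\frak h}$ componentwise. Collecting the $\frak g$-valued terms on both sides and, separately, the $\frak h$-valued terms, the resulting identities should be consequences of: the Rota-Baxter identities on $\frak g$ and on $\frak h$; the two instances of Eq.~\eqref{eq:defi:rep RB1} (the $\alpha=P_{\frak h}$ version and the $\alpha=P_{\frak g}$ version); the compatibility that $\rho_{\frak g}$ and $\rho_{\frak h}$ are Lie-algebra representations (so $\rho_{\frak g}(P_{\frak g}(x))$ is well-behaved); and possibly the matched-pair axioms for $(\frak g,\frak h,\rho_{\frak g},\rho_{\frak h})$ themselves. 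A clean way to organize this, rather than brute force, is to note that the ``only if'' direction is immediate (a Rota-Baxter subalgebra and the semidirect-product structure of Proposition~\ref{pro:SD RB Lie} force exactly the stated conditions on $\rho_{\frak g},P_{\frak h}$ and $\rho_{\frak h},P_{\frak g}$), so one is reduced to the ``if'' direction, and there one may cite that this is the Lie-algebra analogue of \cite[Theorem~3.4]{BGM}: the same term-by-term bookkeeping goes through verbatim after replacing the associative products by Lie brackets and using antisymmetry.

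I expect the main obstacle to be purely organizational: keeping track of the signs introduced by the antisymmetry of the brackets $[-,-]_{\frak g}$ and $[-,-]_{\frak h}$ and by the representation convention $\rho_{\frak h}(a)y$ versus $-\rho_{\frak h}(b)x$ in Eq.~\eqref{eq:Lie}, so that each collected group of terms in the mixed case matches an instance of Eq.~\eqref{eq:defi:rep RB1} with the correct sign and the correct choice of $\alpha$. No new idea beyond \cite[Theorem~3.4]{BGM} is required; accordingly I would present the ``only if'' direction explicitly in a line or two and then state that the ``if'' direction follows by adapting the cited proof from the associative to the Lie setting, with antisymmetry of the bracket replacing the role played by the (two-sided) associative multiplication.
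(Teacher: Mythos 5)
Your proposal is correct and takes essentially the same approach as the paper, which gives no computation at all and simply states that the result follows by adapting \cite[Theorem 3.4]{BGM} from the associative to the Lie setting --- exactly the route you ultimately take. One small correction to your bookkeeping: for $x,y\in\mathfrak{g}$ the bracket in Eq.~\meqref{eq:Lie} stays inside $\mathfrak{g}$, so the pure-$\mathfrak{g}$ and pure-$\mathfrak{h}$ cases reduce to the Rota-Baxter identities on the subalgebras (already assumed), and \emph{both} instances of Eq.~\meqref{eq:defi:rep RB1} arise from the $\mathfrak{h}$- and $\mathfrak{g}$-components of the single mixed case $x\in\mathfrak{g}$, $b\in\mathfrak{h}$, rather than the representation condition for $\rho_{\mathfrak{g}}$ being needed in the pure-$\mathfrak{g}$ case as you suggest.
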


For Lie algebras $(\mathfrak{g},[-,-]_{\mathfrak{g}})$ and $(\mathfrak{g}^{*},[-,-]_{\mathfrak{g}^{*}})$, there is a Manin triple of Lie algebras $(\mathfrak{g}\oplus\mathfrak{g}^{*},\mathfrak{g},\mathfrak{g}^{*})$ if and only if $(\mathfrak{g},\mathfrak{g}^{*},\mathrm{ad}^{*}_{\mathfrak{g}},\mathrm{ad}^{*}_{\mathfrak{g}^{*}})$ is a matched pair of Lie algebras~\mcite{CP}.
This characterization can be extended to Rota-Baxter Lie algebras by the same argument as~\cite[Theorem 3.12]{BGM} for Rota-Baxter associative algebras.

\begin{pro}\mlabel{pro:12 RB Lie}
Let $(\mathfrak{g},[-,-]_{\mathfrak{g}},P)$ be a Rota-Baxter Lie algebra. Suppose that there is a Rota-Baxter Lie algebra $(\mathfrak{g}^{*},[-,-]_{\mathfrak{g}^{*}},Q^{*})$ on the dual space $\mathfrak{g}^{*}$. Then there is a Manin triple of Rota-Baxter Lie algebras $((\mathfrak{g}\oplus\mathfrak{g}^{*},P+Q^{*}),(\mathfrak{g},P),(\mathfrak{g}^{*},Q^{*}))$ if and only if $((\mathfrak{g},P),(\mathfrak{g}^{*},Q^{*}),\mathrm{ad}^{*}_{\mathfrak{g}},\mathrm{ad}^{*}_{\mathfrak{g}^{*}})$ is a matched pair of Rota-Baxter Lie algebras.
\end{pro}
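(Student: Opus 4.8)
The plan is to layer the Rota-Baxter datum on top of the classical correspondence between Manin triples of Lie algebras and matched pairs of Lie algebras, and then let Proposition~\mref{pro:MP RB} take care of the Rota-Baxter operator. First I would recall the underlying Lie-algebra fact~\mcite{CP}: there is a Manin triple of Lie algebras $(\mathfrak{g}\oplus\mathfrak{g}^*,\mathfrak{g},\mathfrak{g}^*)$, with the bracket on $\mathfrak{g}\oplus\mathfrak{g}^*$ forced by the invariance of $\mathcal{B}_d$ and the isotropy of $\mathfrak{g},\mathfrak{g}^*$, if and only if $(\mathfrak{g},\mathfrak{g}^*,\mathrm{ad}^*_{\mathfrak{g}},\mathrm{ad}^*_{\mathfrak{g}^*})$ is a matched pair of Lie algebras, in which case the Manin-triple bracket is exactly the bicrossed product bracket of $\mathfrak{g}\bowtie^{\mathrm{ad}^*_{\mathfrak{g}}}_{\mathrm{ad}^*_{\mathfrak{g}^*}}\mathfrak{g}^*$ from Eq.~\meqref{eq:Lie}. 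Consequently, a Manin triple of Rota-Baxter Lie algebras $((\mathfrak{g}\oplus\mathfrak{g}^*,P+Q^*),(\mathfrak{g},P),(\mathfrak{g}^*,Q^*))$ is the same data as the matched pair of Lie algebras $(\mathfrak{g},\mathfrak{g}^*,\mathrm{ad}^*_{\mathfrak{g}},\mathrm{ad}^*_{\mathfrak{g}^*})$ together with the requirement that $P+Q^*$, in the sense of Eq.~\meqref{eq:pro:SD RB Lie2}, be a Rota-Baxter operator of weight $\lambda$ on $\mathfrak{g}\bowtie\mathfrak{g}^*$; the subalgebra conditions are automatic, since $P+Q^*$ preserves $\mathfrak{g}$ and $\mathfrak{g}^*$ and restricts to the given operators $P$ and $Q^*$ there.

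Given this, both implications are short. For the ``if'' direction, if $((\mathfrak{g},P),(\mathfrak{g}^*,Q^*),\mathrm{ad}^*_{\mathfrak{g}},\mathrm{ad}^*_{\mathfrak{g}^*})$ is a matched pair of Rota-Baxter Lie algebras, then its underlying $(\mathfrak{g},\mathfrak{g}^*,\mathrm{ad}^*_{\mathfrak{g}},\mathrm{ad}^*_{\mathfrak{g}^*})$ is a matched pair of Lie algebras, so the classical fact yields a Manin triple of Lie algebras whose bracket is that of $\mathfrak{g}\bowtie\mathfrak{g}^*$; Proposition~\mref{pro:MP RB} then makes $(\mathfrak{g}\bowtie\mathfrak{g}^*,[-,-],P+Q^*)$ a Rota-Baxter Lie algebra, and since $(\mathfrak{g},P)$ and $(\mathfrak{g}^*,Q^*)$ are Rota-Baxter Lie subalgebras and $\mathcal{B}_d$ is invariant, this is a Manin triple of Rota-Baxter Lie algebras. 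Conversely, from a Manin triple of Rota-Baxter Lie algebras, the underlying Manin triple of Lie algebras forces $(\mathfrak{g},\mathfrak{g}^*,\mathrm{ad}^*_{\mathfrak{g}},\mathrm{ad}^*_{\mathfrak{g}^*})$ to be a matched pair of Lie algebras with bracket $\mathfrak{g}\bowtie\mathfrak{g}^*$, and the condition $P_{\mathfrak{g}\oplus\mathfrak{g}^*}=P+Q^*$ being a Rota-Baxter operator on it is, by Proposition~\mref{pro:MP RB} read in the other direction, precisely the statement that $((\mathfrak{g},P),(\mathfrak{g}^*,Q^*),\mathrm{ad}^*_{\mathfrak{g}},\mathrm{ad}^*_{\mathfrak{g}^*})$ is a matched pair of Rota-Baxter Lie algebras.

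The step requiring the most care — and essentially the only place any computation is hidden — is the identification of the Manin-triple Lie bracket with the bicrossed product $\mathfrak{g}\bowtie^{\mathrm{ad}^*_{\mathfrak{g}}}_{\mathrm{ad}^*_{\mathfrak{g}^*}}\mathfrak{g}^*$: one must check that invariance of $\mathcal{B}_d$ pins down the mixed bracket $[x,a^*]$ to be exactly $\mathrm{ad}^*_{\mathfrak{g}}(x)a^*-\mathrm{ad}^*_{\mathfrak{g}^*}(a^*)x$ with the signs as in Eq.~\meqref{eq:Lie}, and that this identification is compatible with writing the operator on $\mathfrak{g}\oplus\mathfrak{g}^*$ as $P+Q^*$. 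As noted in the proof of Proposition~\mref{pro:invariance}, the antisymmetry of the Lie bracket lets one drop the symmetry hypothesis on $\mathcal{B}_d$ that is needed in the associative case, so once the bracket identification is in place the argument is a straightforward transcription of~\cite[Theorem~3.12]{BGM} into the Lie setting, invoking no new Rota-Baxter computation beyond Proposition~\mref{pro:MP RB}.
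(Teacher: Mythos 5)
Your proposal is correct and follows essentially the route the paper intends: reduce to the classical equivalence between Manin triples of Lie algebras and the matched pair $(\mathfrak{g},\mathfrak{g}^{*},\mathrm{ad}^{*}_{\mathfrak{g}},\mathrm{ad}^{*}_{\mathfrak{g}^{*}})$, identify the Manin-triple bracket with the bicrossed product of Eq.~\meqref{eq:Lie}, and then let Proposition~\mref{pro:MP RB} convert the Rota-Baxter condition on $P+Q^{*}$ into the matched-pair-of-Rota-Baxter-Lie-algebras condition. The paper itself only cites the analogous argument of \cite[Theorem~3.12]{BGM}, so your write-up is an accurate unpacking of that same argument with no new ingredients.
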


\subsubsection{Rota-Baxter Lie bialgebras}\

A {\bf Lie coalgebra} is a pair $(\frak g,\delta)$ with a vector space $\frak g$ and a linear map $\delta:\frak
g\rightarrow \frak g\otimes \frak g$ such that
\begin{enumerate}
\item $\delta$ is {\bf
co-antisymmetric}, that is, $\delta=-\tau \delta$ for
the flip map $\tau:\frak g\otimes \frak g\rightarrow \frak
g\otimes \frak g$, and
\item the {\bf co-Jacobian identity} holds:
\begin{equation}
({\rm id} +\sigma+\sigma^2)({\rm id} \otimes \delta)\delta =0,
\end{equation}
where $\sigma(x\otimes y\otimes z):=z\otimes x\otimes y$ for $x,
y, z\in \frak g$.
\end{enumerate}

\begin{defi}\mlabel{defi:Lie bialg}\mcite{CP}
A \textbf{Lie bialgebra} is a triple $(\mathfrak{g},[-,-],\delta)$ consisting of a vector space $\mathfrak{g}$ and linear maps $[-,-]:\mathfrak{g}\otimes\mathfrak{g}\rightarrow\mathfrak{g}$, $\delta:\mathfrak{g}\rightarrow\mathfrak{g}\otimes\mathfrak{g}$ such that
\begin{enumerate}
\item $(\mathfrak{g},[-,-])$ is a Lie algebra.
\item $(\mathfrak{g},\delta)$ is a Lie coalgebra.
\item $\delta$ is a 1-cocycle of $\mathfrak{g}$ with values in $\mathfrak{g}\otimes \mathfrak{g}$, that is,
\begin{equation}\mlabel{eq:defi:Lie bialg1}
\delta([x,y])=(\mathrm{ad}(x)\otimes \mathrm{id}+\mathrm{id}\otimes\mathrm{ad}(x))\delta(y)-(\mathrm{ad}(y)\otimes \mathrm{id}+\mathrm{id}\otimes\mathrm{ad}(y))\delta(x),\forall x,y\in \mathfrak{g}.
\end{equation}
\end{enumerate}
\end{defi}

We now extend the notion of Lie bialgebras to Rota-Baxter Lie bialgebras.

\begin{defi}\mlabel{defi:RB Lie co}
A \textbf{Rota-Baxter Lie coalgebra  (of weight $\lambda$)} is a triple $(\mathfrak{g},\delta,Q)$, where $(\mathfrak{g},\delta)$ is a Lie coalgebra, and $Q:\mathfrak{g}\rightarrow\mathfrak{g}$ is a linear map such that
\begin{equation}\mlabel{eq:defi:RB Lie co1}
(Q\otimes Q)\delta(x)=(Q\otimes\mathrm{id}+\mathrm{id}\otimes Q)\delta(Q(x))+\lambda\delta(Q(x)),\;\;\forall x\in\mathfrak{g}.
\end{equation}
\end{defi}

Generalizing the well-known duality between Lie coalgebras and Lie algebras, for a finite dimensional vector space $\mathfrak{g}$,
the pair $(\mathfrak{g}^{*},[-,-]_{\mathfrak{g}^{*}},Q^{*})$ is a Rota-Baxter Lie algebra if and only if $(\mathfrak{g},\delta,Q)$ is a Rota-Baxter Lie coalgebra, where $\delta:\mathfrak{g}\rightarrow\mathfrak{g}\otimes\mathfrak{g}$ is the linear dual of $[-,-]_{\mathfrak{g}^{*}}:\mathfrak{g}^{*}\otimes\mathfrak{g}^{*}\rightarrow\mathfrak{g}^{*}$, that is,
\begin{equation}\mlabel{eq:linear dual}
    \langle\delta(x), a^{*}\otimes b^{*}\rangle=\langle x,[a^{*},b^{*}]_{\mathfrak{g}^{*}}\rangle,\;\;\forall x\in\mathfrak{g}, a^{*},b^{*}\in\mathfrak{g}^{*}.
\end{equation}
In this case, the Lie algebra structure on $\mathfrak{g}^{*}$ is also denoted by $\delta^{*}$, that is,
$$\delta^{*}(a^{*}\otimes b^{*})=[a^{*},b^{*}]_{\mathfrak{g}^{*}},\;\;\forall a^*, b^* \in \frak g^*.$$
Moreover, for a linear map $P:\mathfrak{g}\rightarrow \mathfrak{g}$, the condition that $P^{*}$ is admissible to the Rota-Baxter Lie algebra $(\mathfrak{g}^{*},[-,-]_{\mathfrak{g}^{*}},Q^{*})$, that is, for all $a^{*},b^{*}\in\mathfrak{g}^{*}$,
\begin{equation}\mlabel{adm cond 2}
P^{*}([Q^{*}(a^{*}),b^{*}]_{\mathfrak{g}^{*}})-[Q^{*}(a^{*}),P^{*}(b^{*})]_{\mathfrak{g}^{*}}-P^{*}([a^{*},P^{*}(b^{*})]_{\mathfrak{g}^{*}})-\lambda[a^{*},P^{*}(b^{*})]_{\mathfrak{g}^{*}}=0, 
\end{equation}
can be rewritten in terms of the comultiplication $\delta$ as follows:
\begin{equation}\mlabel{eq:defi:RB Lie bialg1}
    (P\otimes Q)\delta(x)+(P\otimes\mathrm{id}-\mathrm{id}\otimes Q)\delta(P(x))+\lambda(P\otimes\mathrm{id})\delta(x)=0,\;\;\forall x\in\mathfrak{g}.
\end{equation}

\begin{defi}\mlabel{defi:RB Lie bialg}
A \textbf{Rota-Baxter Lie bialgebra  (of weight $\lambda$)} is
a vector space $\mathfrak{g}$ together with linear maps
$$[-,-]:\mathfrak{g}\otimes\mathfrak{g}\rightarrow\mathfrak{g},\ \  \delta:\mathfrak{g}\rightarrow\mathfrak{g}\otimes\mathfrak{g},\ \  P,Q:\mathfrak{g}\rightarrow\mathfrak{g}$$
such that
\begin{enumerate}
\item the triple $(\mathfrak{g},[-,-],\delta)$ is a Lie bialgebra.
\item the triple $(\mathfrak{g},[-,-],P)$ is a Rota-Baxter Lie algebra.
\item the triple $(\mathfrak{g},\delta,Q)$ is a Rota-Baxter Lie coalgebra.
\item $Q$ is admissible to $(\mathfrak{g},[-,-],P)$, that is, Eq.~\meqref{eq:cor:adm1} holds.
\item $P^{*}$ is admissible to $(\mathfrak{g}^{*},\delta^{*},Q^{*})$,  that is, Eq.~\meqref{eq:defi:RB Lie bialg1} holds.
\end{enumerate}
We denote the Rota-Baxter Lie bialgebra by $(\mathfrak{g},[-,-],P,\delta,Q)$ or simply $(\mathfrak{g},P,\delta,Q)$.
\end{defi}

Let $(\mathfrak{g},[-,-]_{\mathfrak{g}})$ be a Lie algebra. Suppose that there is a Lie algebra $(\mathfrak{g}^{*},[-,-]_{\mathfrak{g}^{*}})$ on the dual space $\mathfrak{g}^{*}$. Let $\delta:\mathfrak{g}\rightarrow\mathfrak{g}\otimes\mathfrak{g}$ be the linear dual of $[-,-]_{\mathfrak{g}^{*}}:\mathfrak{g}^{*}\otimes \mathfrak{g}^{*}\rightarrow\mathfrak{g}^{*}$.
Then  $(\mathfrak{g},[-,-]_{\mathfrak{g}},\delta)$ is a Lie bialgebra if and only if $(\mathfrak{g},\mathfrak{g}^{*},\mathrm{ad}^{*}_{\mathfrak{g}},\mathrm{ad}^{*}_{\mathfrak{g}^{*}})$ is a matched pair of Lie algebras \mcite{CP}. Generalizing this fact to the context of Rota-Baxter Lie algebras, we have

\begin{pro}\mlabel{pro:23 RB Lie}
Let $(\mathfrak{g},[-,-]_{\mathfrak{g}},P)$ be a Rota-Baxter Lie algebra. Suppose that there is a Rota-Baxter Lie algebra $(\mathfrak{g}^{*},[-,-]_{\mathfrak{g}^{*}},Q^{*})$ on the dual space $\mathfrak{g}^{*}$. Let $\delta:\mathfrak{g}\rightarrow\mathfrak{g}\otimes\mathfrak{g}$ be the linear dual of $[-,-]_{\mathfrak{g}^{*}}:\mathfrak{g}^{*}\otimes \mathfrak{g}^{*}\rightarrow\mathfrak{g}^{*}$. Then $(\mathfrak{g},[-,-]_{\mathfrak{g}},P,\delta,Q)$ is a Rota-Baxter Lie bialgebra if and only if $((\mathfrak{g},P),(\mathfrak{g}^{*},Q^{*}),\mathrm{ad}^{*}_{\mathfrak{g}},\mathrm{ad}^{*}_{\mathfrak{g}^{*}})$ is a matched pair of Rota-Baxter Lie algebras.
\end{pro}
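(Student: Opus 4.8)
The plan is to unwind both sides of the asserted equivalence against their defining conditions and match them term by term. Instantiate Definition~\mref{defi:MP RB Lie} with $\mathfrak{h}=\mathfrak{g}^*$, $P_{\mathfrak{g}}=P$, $P_{\mathfrak{h}}=Q^*$, $\rho_{\mathfrak{g}}=\mathrm{ad}^*_{\mathfrak{g}}$ and $\rho_{\mathfrak{h}}=\mathrm{ad}^*_{\mathfrak{g}^*}$: then $((\mathfrak{g},P),(\mathfrak{g}^*,Q^*),\mathrm{ad}^*_{\mathfrak{g}},\mathrm{ad}^*_{\mathfrak{g}^*})$ is a matched pair of Rota-Baxter Lie algebras precisely when (i) $(\mathfrak{g},[-,-]_{\mathfrak{g}},P)$ and $(\mathfrak{g}^*,[-,-]_{\mathfrak{g}^*},Q^*)$ are Rota-Baxter Lie algebras; (ii) $\rbrep{\mathrm{ad}^*_{\mathfrak{g}}}{Q^*}{\mathfrak{g}^*}$ is a representation of $(\mathfrak{g},[-,-]_{\mathfrak{g}},P)$; (iii) $\rbrep{\mathrm{ad}^*_{\mathfrak{g}^*}}{P}{\mathfrak{g}}$ is a representation of $(\mathfrak{g}^*,[-,-]_{\mathfrak{g}^*},Q^*)$; and (iv) $(\mathfrak{g},\mathfrak{g}^*,\mathrm{ad}^*_{\mathfrak{g}},\mathrm{ad}^*_{\mathfrak{g}^*})$ is a matched pair of Lie algebras. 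The hypotheses of the proposition already grant (i) and condition (2) of Definition~\mref{defi:RB Lie bialg}. They also grant condition (3): by the duality between Rota-Baxter Lie coalgebras and Rota-Baxter Lie algebras recorded after Definition~\mref{defi:RB Lie co}, the statement that $(\mathfrak{g},\delta,Q)$ is a Rota-Baxter Lie coalgebra is equivalent to the hypothesis that $(\mathfrak{g}^*,[-,-]_{\mathfrak{g}^*},Q^*)$ is a Rota-Baxter Lie algebra. Hence it suffices to match conditions (1), (4), (5) of Definition~\mref{defi:RB Lie bialg} with (iv), (ii), (iii) respectively.

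For (1) $\Leftrightarrow$ (iv): this is the classical characterization of Lie bialgebras via matched pairs of Lie algebras associated with coadjoint representations, cited from~\mcite{CP} just before the proposition. For (4) $\Leftrightarrow$ (ii): condition (4) states that $Q$ is admissible to $(\mathfrak{g},[-,-]_{\mathfrak{g}},P)$, which by Definition~\mref{defi:admissibility}, via Lemma~\mref{lem:dual map}, is exactly the statement that $\rbrep{\mathrm{ad}^*_{\mathfrak{g}}}{Q^*}{\mathfrak{g}^*}$ is a representation of $(\mathfrak{g},[-,-]_{\mathfrak{g}},P)$, i.e.\ (ii). For (5) $\Leftrightarrow$ (iii): condition (5) states that $P^*$ is admissible to $(\mathfrak{g}^*,\delta^*,Q^*)$; since $\delta^*=[-,-]_{\mathfrak{g}^*}$ by the linear-dual convention set before Definition~\mref{defi:RB Lie bialg}, and~\meqref{eq:defi:RB Lie bialg1} is merely the rewriting in terms of $\delta$ of the bracket-form admissibility identity~\meqref{adm cond 2} for $P^*$ on $(\mathfrak{g}^*,[-,-]_{\mathfrak{g}^*},Q^*)$, this says precisely that $P^*$ is admissible to the Rota-Baxter Lie algebra $(\mathfrak{g}^*,[-,-]_{\mathfrak{g}^*},Q^*)$. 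Applying Definition~\mref{defi:admissibility} to that Rota-Baxter Lie algebra and the linear map $P^*:\mathfrak{g}^*\to\mathfrak{g}^*$, this is equivalent to $\rbrep{\mathrm{ad}^*_{\mathfrak{g}^*}}{(P^*)^*}{(\mathfrak{g}^*)^*}$ being a representation of $(\mathfrak{g}^*,[-,-]_{\mathfrak{g}^*},Q^*)$; as all spaces are finite-dimensional, the canonical identification $(\mathfrak{g}^*)^*\cong\mathfrak{g}$ carries $(P^*)^*$ to $P$, giving exactly (iii). Assembling the three equivalences with the standing hypotheses yields the proposition, in parallel with the associative treatment in~\mcite{BGM}.

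The only point requiring care is the double-dual bookkeeping used for condition (5): one must confirm that, under $(\mathfrak{g}^*)^*\cong\mathfrak{g}$, the map $\mathrm{ad}^*_{\mathfrak{g}^*}$ (a priori a map $\mathfrak{g}^*\to\mathrm{End}((\mathfrak{g}^*)^*)$) is the coadjoint action of $\mathfrak{g}^*$ on $\mathfrak{g}$ appearing in Definition~\mref{defi:MP RB Lie}, and that the passage from the bracket form~\meqref{adm cond 2} to the comultiplication form~\meqref{eq:defi:RB Lie bialg1} of the admissibility of $P^*$ is carried out correctly by pairing against $a^*\otimes b^*$. These are routine dualizations, but they are the natural place for sign slips; everything else is a direct unpacking of definitions.
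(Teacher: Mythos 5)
Your proof is correct and is essentially the argument the paper intends: the paper's own "proof" is a one-line deferral to the associative analogue in \cite[Theorem 3.5]{BGM}, and what you have written is precisely the definitional unwinding that argument carries out, adapted to the Lie setting. Your matching of conditions (1), (4), (5) of Definition~\mref{defi:RB Lie bialg} with the matched-pair conditions via the classical bialgebra/matched-pair correspondence, Definition~\mref{defi:admissibility}, and the double-dual identification $((\mathfrak{g}^*)^*,(P^*)^*)\cong(\mathfrak{g},P)$ is accurate and complete.
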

\begin{proof}
The proof follows the one of~\cite[Theorem 3.5]{BGM}.
\end{proof}

Combining Theorems \mref{pro:12 RB Lie} and \mref{pro:23 RB Lie}, we obtain
\begin{thm}\mlabel{thm:123 RB Lie}
Let $(\mathfrak{g},[-,-]_{\mathfrak{g}},P)$ be a Rota-Baxter Lie algebra. Suppose that there is a Rota-Baxter Lie algebra $(\mathfrak{g}^{*},[-,-]_{\mathfrak{g}^{*}},Q^{*})$ on the dual space $\mathfrak{g}^{*}$. Let $\delta:\mathfrak{g}\rightarrow\mathfrak{g}\otimes\mathfrak{g}$ be the linear dual of $[-,-]_{\mathfrak{g}^{*}}:\mathfrak{g}^{*}\otimes \mathfrak{g}^{*}\rightarrow\mathfrak{g}^{*}$. Then the following conditions are equivalent.
\begin{enumerate}
\item There is a Manin triple $((\mathfrak{g}\oplus\mathfrak{g}^{*},P+Q^{*}),(\mathfrak{g},P),(\mathfrak{g}^{*},Q^{*}))$ of Rota-Baxter Lie algebras.
\item $((\mathfrak{g},P),(\mathfrak{g}^{*},Q^{*}),\mathrm{ad}^{*}_{\mathfrak{g}},\mathrm{ad}^{*}_{\mathfrak{g}^{*}})$ is a matched pair of Rota-Baxter Lie algebras.
\item $(\mathfrak{g},[-,-]_{\mathfrak{g}},P,\delta,Q)$ is a Rota-Baxter Lie bialgebra.
\end{enumerate}
\end{thm}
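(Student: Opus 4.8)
The plan is to recognize that this theorem is simply the concatenation of two equivalences already in hand. By Proposition~\mref{pro:12 RB Lie}, condition (a) is equivalent to condition (b), and by Proposition~\mref{pro:23 RB Lie}, condition (b) is equivalent to condition (c); chaining these two families of ``if and only if'' statements immediately yields the equivalence of (a), (b) and (c). No new computation is required: one only checks that the standing hypotheses coincide, and they do --- both propositions assume that $(\mathfrak{g},[-,-]_{\mathfrak{g}},P)$ and $(\mathfrak{g}^{*},[-,-]_{\mathfrak{g}^{*}},Q^{*})$ are Rota-Baxter Lie algebras and that $\delta$ is the linear dual of $[-,-]_{\mathfrak{g}^{*}}$, which is exactly the setup of the theorem.

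Since the substantive content lives in the two cited propositions, let me recall where it sits, in case a self-contained argument is wanted. For the equivalence of (a) and (b) one combines the classical equivalence between Manin triples $(\mathfrak{g}\oplus\mathfrak{g}^{*},\mathfrak{g},\mathfrak{g}^{*})$ of Lie algebras and matched pairs $(\mathfrak{g},\mathfrak{g}^{*},\mathrm{ad}^{*}_{\mathfrak{g}},\mathrm{ad}^{*}_{\mathfrak{g}^{*}})$ of Lie algebras with Proposition~\mref{pro:MP RB}: the latter says that $P+Q^{*}$ is a Rota-Baxter operator of weight $\lambda$ on $\mathfrak{g}\bowtie\mathfrak{g}^{*}$ if and only if $((\mathfrak{g},P),(\mathfrak{g}^{*},Q^{*}),\mathrm{ad}^{*}_{\mathfrak{g}},\mathrm{ad}^{*}_{\mathfrak{g}^{*}})$ is a matched pair of Rota-Baxter Lie algebras, and Lemma~\mref{lem:MP adm} identifies the representation-level Rota-Baxter relations there with the admissibility of $Q$ on $(\mathfrak{g},[-,-]_{\mathfrak{g}},P)$ and of $P^{*}$ on $(\mathfrak{g}^{*},[-,-]_{\mathfrak{g}^{*}},Q^{*})$. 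For the equivalence of (b) and (c) one unpacks the five defining clauses of a Rota-Baxter Lie bialgebra (Definition~\mref{defi:RB Lie bialg}) and matches them term by term against the five requirements for the quadruple $((\mathfrak{g},P),(\mathfrak{g}^{*},Q^{*}),\mathrm{ad}^{*}_{\mathfrak{g}},\mathrm{ad}^{*}_{\mathfrak{g}^{*}})$ to be a matched pair of Rota-Baxter Lie algebras (Definition~\mref{defi:MP RB Lie}): the Rota-Baxter Lie algebra axiom for $(\mathfrak{g},[-,-]_{\mathfrak{g}},P)$ appears on both sides; the Rota-Baxter Lie coalgebra identity Eq.~\meqref{eq:defi:RB Lie co1} dualizes to $(\mathfrak{g}^{*},[-,-]_{\mathfrak{g}^{*}},Q^{*})$ being a Rota-Baxter Lie algebra; the Lie bialgebra axiom is the classical statement that $(\mathfrak{g},\mathfrak{g}^{*},\mathrm{ad}^{*}_{\mathfrak{g}},\mathrm{ad}^{*}_{\mathfrak{g}^{*}})$ is a matched pair of Lie algebras; and the two admissibility conditions, Eqs.~\meqref{eq:cor:adm1} and \meqref{eq:defi:RB Lie bialg1}, say precisely that $(\mathfrak{g}^{*},\mathrm{ad}^{*}_{\mathfrak{g}},Q^{*})$ is a representation of $(\mathfrak{g},[-,-]_{\mathfrak{g}},P)$ and that $(\mathfrak{g},\mathrm{ad}^{*}_{\mathfrak{g}^{*}},P)$ is a representation of $(\mathfrak{g}^{*},[-,-]_{\mathfrak{g}^{*}},Q^{*})$, which are the two remaining matched-pair requirements.

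I do not expect a genuine obstacle at the level of this theorem: the proof is a two-line appeal to Propositions~\mref{pro:12 RB Lie} and~\mref{pro:23 RB Lie}. If one insisted on reproving everything from the ground up, the only mildly delicate point is the translation inside Proposition~\mref{pro:23 RB Lie} between the comultiplication form of the admissibility condition, Eq.~\meqref{eq:defi:RB Lie bialg1}, and its operator form on $\mathfrak{g}^{*}$, Eq.~\meqref{adm cond 2} --- that is, keeping careful track of transposes, of the flip map $\tau$, and of the sign conventions built into the dual representations. This is exactly the bookkeeping already carried out for Rota-Baxter associative algebras in \cite[Theorems~3.5 and 3.12]{BGM}, and it transfers to the Lie setting with only the obvious modifications (together with the observation, recorded in Proposition~\mref{pro:invariance}, that the antisymmetry of the bracket removes the symmetry requirement on the bilinear form), as the proofs of Propositions~\mref{pro:12 RB Lie} and~\mref{pro:23 RB Lie} indicate.
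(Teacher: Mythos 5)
Your proposal is correct and follows exactly the paper's route: the paper likewise obtains this theorem by simply combining Propositions~\mref{pro:12 RB Lie} and~\mref{pro:23 RB Lie}, which give the equivalences (a)$\Leftrightarrow$(b) and (b)$\Leftrightarrow$(c) respectively. Your additional unpacking of where the content of those propositions resides (the matched-pair/Manin-triple correspondence plus the term-by-term matching of the five bialgebra axioms against the matched-pair requirements) is consistent with how the paper justifies those propositions by adapting \cite[Theorems~3.5 and 3.12]{BGM}.
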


\subsection{Rota-Baxter Lie bialgebras of weight zero
and \spec \ldendbs}\mlabel{sec:3.2}
Here we show that the well-known connection between Rota-Baxter Lie algebras of weight zero and  pre-Lie algebras has its bialgebra enrichment as a connection between Rota-Baxter Lie bialgebras of weight zero and \spec \ldendbs. Furthermore, as in the case of Lie bialgebras, this connection among the various bialgebra structures is also characterized by suitable Manin triples and matched pairs.

\subsubsection{Manin triples of pre-Lie algebras with respect to the \nonsy left-invariant bilinear form $\mathcal{B}_{d}$
}\
\begin{defi}\mlabel{defi:MT LSA} \mcite{BHC}
Suppose that there are three pre-Lie algebras $(A,\circ_{A})$, $(A^{*},\circ_{A^{*}})$ and $(A\oplus A^{*},\circ)$ such that $(A,\circ_{A})$ and $(A^{*},\circ_{A^{*}})$ are pre-Lie subalgebras of $(A\oplus A^{*},\circ)$, and the natural  \nonsy bilinear form $\mathcal{B}_{d}$ on $A\oplus A^{*}$ defined by Eq.~\meqref{eq:natrual bilinear form}
is left-invariant. Then the triple $((A\oplus A^{*},\circ), (A,\circ_A), (A^{*},\circ_{A^*}))$ is called a \textbf{Manin  triple of pre-Lie algebras} with respect to the \nonsy left-invariant bilinear form $\mathcal{B}_{d}$ associated to $(A,\circ_{A})$ and $(A^{*},\circ_{A^{*}})$,
and is denoted by $(A\bowtie A^*, A,A^*)$.
\end{defi}

\begin{cor}\mlabel{cor:from invariance to left invariance}
Let $((\mathfrak{g}\oplus\mathfrak{g}^{*},P+Q^{*}),(\mathfrak{g},P),(\mathfrak{g}^{*}, Q^{*}))$ be a Manin triple of Rota-Baxter Lie algebras of weight zero. Then the induced pre-Lie algebras $($defined by Eq.~\meqref{eq:pro:from invariance to left invariance1}$)$ from the three Rota-Baxter Lie algebras form a Manin triple $((\frak g\bowtie \frak g^*,\circ), (\frak g,\circ_\frakg),(\frak g^*,\circ_{\frakg^*}))$ of pre-Lie algebras with respect to the \nonsy left-invariant bilinear form $\mathcal{B}_{d}$ associated to $(\frak g,\circ_{\frak g})$ and $(\frak g^{*},\circ_{\frak g^{*}})$.
\end{cor}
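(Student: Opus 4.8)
The plan is to assemble the required Manin triple of pre-Lie algebras directly from the three induced pre-Lie algebras, the only real input beyond bookkeeping being Proposition~\mref{pro:from invariance to left invariance}, which converts invariance of $\mathcal{B}_d$ into left-invariance.

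First I would apply Eq.~\meqref{eq:pro:from invariance to left invariance1} to each of the three Rota-Baxter Lie algebras of weight zero making up the Manin triple. This produces pre-Lie algebras $(\mathfrak{g}\oplus\mathfrak{g}^{*},\circ)$ with $(x+a^{*})\circ(y+b^{*})=[(P+Q^{*})(x+a^{*}),y+b^{*}]$, $(\mathfrak{g},\circ_{\mathfrak{g}})$ with $x\circ_{\mathfrak{g}}y=[P(x),y]_{\mathfrak{g}}$, and $(\mathfrak{g}^{*},\circ_{\mathfrak{g}^{*}})$ with $a^{*}\circ_{\mathfrak{g}^{*}}b^{*}=[Q^{*}(a^{*}),b^{*}]_{\mathfrak{g}^{*}}$.

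Second, I would check that $(\mathfrak{g},\circ_{\mathfrak{g}})$ and $(\mathfrak{g}^{*},\circ_{\mathfrak{g}^{*}})$ are pre-Lie subalgebras of $(\mathfrak{g}\oplus\mathfrak{g}^{*},\circ)$. By Definition~\mref{defi:MT}, $(\mathfrak{g},[-,-]_{\mathfrak{g}},P)$ is a Rota-Baxter Lie subalgebra of $(\mathfrak{g}\oplus\mathfrak{g}^{*},[-,-],P+Q^{*})$, so the restriction of $P+Q^{*}$ to $\mathfrak{g}$ equals $P$ and the restriction of $[-,-]$ to $\mathfrak{g}\times\mathfrak{g}$ equals $[-,-]_{\mathfrak{g}}$; hence for $x,y\in\mathfrak{g}$ we get $x\circ y=[P(x),y]_{\mathfrak{g}}=x\circ_{\mathfrak{g}}y\in\mathfrak{g}$, and symmetrically $a^{*}\circ b^{*}=a^{*}\circ_{\mathfrak{g}^{*}}b^{*}\in\mathfrak{g}^{*}$ for $a^{*},b^{*}\in\mathfrak{g}^{*}$ using $Q^{*}$. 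Thus both are pre-Lie subalgebras, and as a vector space $\mathfrak{g}\oplus\mathfrak{g}^{*}$ is their direct sum.

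Third, since $\mathcal{B}_{d}$ is a nondegenerate invariant bilinear form on the Lie algebra $(\mathfrak{g}\oplus\mathfrak{g}^{*},[-,-])$ and $(\mathfrak{g}\oplus\mathfrak{g}^{*},\circ)$ is precisely the pre-Lie algebra induced from the Rota-Baxter Lie algebra $(\mathfrak{g}\oplus\mathfrak{g}^{*},[-,-],P+Q^{*})$ of weight zero, Proposition~\mref{pro:from invariance to left invariance} gives that $\mathcal{B}_{d}$ is left-invariant on $(\mathfrak{g}\oplus\mathfrak{g}^{*},\circ)$; nondegeneracy is unchanged, being the same form. Together with the second step, Definition~\mref{defi:MT LSA} then yields the asserted Manin triple $((\mathfrak{g}\oplus\mathfrak{g}^{*},\circ),(\mathfrak{g},\circ_{\mathfrak{g}}),(\mathfrak{g}^{*},\circ_{\mathfrak{g}^{*}}))$ of pre-Lie algebras with respect to $\mathcal{B}_{d}$, denoted $(\mathfrak{g}\bowtie\mathfrak{g}^{*},\mathfrak{g},\mathfrak{g}^{*})$. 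I do not expect a genuine obstacle here; the one point requiring care is the second step, namely that passage to Rota-Baxter Lie subalgebras is compatible with the decomposition $P+Q^{*}$, so that the induced pre-Lie structures on $\mathfrak{g}$ and $\mathfrak{g}^{*}$ are literally the restrictions of $\circ$.
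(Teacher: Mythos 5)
Your proposal is correct and follows essentially the same route as the paper: the paper's proof likewise observes that, by definition of the Manin triple of Rota-Baxter Lie algebras, the induced pre-Lie algebra on $\mathfrak{g}\oplus\mathfrak{g}^{*}$ restricts to the induced pre-Lie algebras on $\mathfrak{g}$ and $\mathfrak{g}^{*}$, and then invokes Proposition~\mref{pro:from invariance to left invariance} for the left-invariance of $\mathcal{B}_{d}$. You have merely written out the restriction argument for $P+Q^{*}$ in more detail than the paper does.
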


\begin{proof}
It follows from the definition that the induced pre-Lie algebra $(\frakg\oplus \frakg^*,\circ)$ from the Rota-Baxter Lie algebra $(\frakg\oplus\frakg^*,P+Q^*)$ contains the induced pre-Lie algebras $(\frakg,\circ_\frakg)$ and $(\frakg^*,\circ_{\frakg^*})$ as pre-Lie subalgebras. The left-invariance of ${\mathcal B}_d$ follows from Proposition \mref{pro:from invariance to left invariance}.
\end{proof}

\subsubsection{Matched pairs of pre-Lie algebras}\
\begin{defi}\mlabel{defi:MP pre-Lie} \mcite{Bai2008}
Let $(A,\circ_{A})$ and $(B,\circ_{B})$ be pre-Lie algebras and $(\mathfrak{g}(A),[-,-]_{A})$ and  $(\mathfrak{g}(B),[-,-]_{B})$ be their sub-adjacent Lie algebras respectively. Suppose that there are linear maps $l_{A},r_{A}:A\rightarrow\mathrm{End}(B)$ and $l_{B},r_{B}:B\rightarrow\mathrm{End}(A)$ such that
$\rbrep{l_{A}}{r_{A}}{B}$
is a representation of $(A,\circ_{A})$,
$\rbrep{l_{B}}{r_{B}}{A}$
is a representation of $(B,\circ_{B})$, and for all $x,y\in A, a,b\in B$, the following equations hold:
\begin{equation}\mlabel{eq:defi:MP pre-Lie1}
\begin{split}
\begin{array}{ll}
    l_{A}(x)(a\circ_{B}b)=&-l_{A}(l_{B}(a)x-r_{B}(a)x)b+(l_{A}(x)a\\
    &-r_{A}(x)a)\circ_{B}b+r_{A}(r_{B}(b)x)a+a\circ_{B}(l_{A}(x)b),
\end{array}
\end{split}
\end{equation}
\begin{equation}\mlabel{eq:defi:MP pre-Lie2}
r_{A}(x)[a,b]_{B}=r_{A}(l_{B}(b)x)a-r_{A}(l_{B}(a)x)b+a\circ_{B}(r_{A}(x)b)-b\circ_{B}(r_{A}(x)a),
\end{equation}
\begin{equation}\mlabel{eq:defi:MP pre-Lie3}
    \begin{split}
\begin{array}{ll} l_{B}(a)(x\circ_{A}y)=&-l_{B}(l_{A}(x)a-r_{A}(x)a)y+(l_{B}(a)x-r_{B}(a)x)\circ_{A}y\\
    &+r_{B}(r_{A}(y)a)x+x\circ_{A}(l_{B}(a)y),
\end{array}
\end{split}
\end{equation}
\begin{equation}\mlabel{eq:defi:MP pre-Lie4}
r_{B}(a)[x,y]_{A}=r_{B}(l_{A}(y)a)x-r_{B}(l_{A}(x)a)y+x\circ_{A}(r_{B}(a)y)-y\circ_{A}(r_{B}(y)x).
\end{equation}
Then $(A,B,l_{A},r_{A},l_{B},r_{B})$ is called a \textbf{matched pair of pre-Lie algebras}.
\end{defi}

Let $(A,\circ_{A})$ and $(B,\circ_{B})$ be pre-Lie algebras, and $l_{A},r_{A}:A\rightarrow\mathrm{End}(B)$, $l_{B},r_{B}:B\rightarrow\mathrm{End}(A)$ be linear maps. Define a multiplication $\circ$ on $A\oplus B$ by
$$(x+a)\circ(y+b)=x\circ_{A}y+l_{B}(a)y+r_{B}(b)x+a\circ_{B}b+l_{A}(x)b+r_{A}(y)a,\;\;\forall x,y\in A, a,b\in B.$$
Then $(A\oplus B,\circ)$ is a pre-Lie algebra if and only if  $(A,B,l_{A},r_{A},l_{B},r_{B})$ is a matched pair of pre-Lie algebras. In this case, we denote the pre-Lie algebra structure on $A\oplus B$ by $A\bowtie^{l_{A},r_{A}}_{l_{B},r_{B}}B$ or simply $A\bowtie B$.

The relationship between  Manin triples of pre-Lie algebras with respect to the \nonsy left-invariant bilinear form $\mathcal{B}_{d}$ and matched pairs of pre-Lie algebras is characterized as follows in terms of \spec \ldends.

\begin{thm}\mlabel{thm:pre-Lie 12} \mcite{BHC}
Let $(A,\circ_{A})$ and $(A^{*},\circ_{A^{*}})$ be pre-Lie algebras. Suppose that there is a pre-Lie algebra structure $\circ$ on the direct sum  $A\oplus A^{*}$ of vector spaces such that $(A\bowtie A^{*}, A, A^*)$ is
a Manin triple of pre-Lie algebras with respect to the \nonsy left-invariant bilinear form $\mathcal{B}_{d}$.
Then there is a compatible \spec \ldend $(A\oplus A^{*},\triangleright,\triangleleft)$ of the pre-Lie algebra $(A\bowtie A^*,\circ)$ defined by
\begin{equation}\mlabel{eq:thm:pre-Lie 12 1}
\mathcal{B}_{d}((x+a^{*})\triangleleft(y+b^{*}),z+c^{*})=\mathcal{B}_{d}(x+a^{*},(z+c^{*})\circ(y+b^{*})),
\end{equation}
\begin{equation}\mlabel{eq:thm:pre-Lie 12 2}
(x+a^{*})\triangleright(y+b^{*})=(x+a^{*})\circ(y+b^{*})-(x+a^{*})\triangleleft(y+b^{*}),\;\;\forall x,y,z\in A, a^{*}, b^{*},c^*\in A^{*}.
\end{equation}
It contains the two compatible special  L-dendriform subalgebras
$(A,\triangleright_{A},\triangleleft_{A})$ and
$(A^{*},\triangleright_{A^{*}},\triangleleft_{A^{*}})$ of the
pre-Lie algebras $(A,\circ_A)$ and $(A^*,\circ_{A^*})$
respectively, such that $(A, A^{*},
\mathcal{L}^{*}_{\circ_{A}},
\mathcal{L}^{*}_{\triangleleft_{A}}, \mathcal{L}^{*}_{\circ_{A^{*}}},$ $\mathcal{L}^{*}_{\triangleleft_{A^{*}}})$
is a matched pair of pre-Lie algebras.

Conversely, suppose that
$(A, \triangleright_{A},\triangleleft_{A})$ and $(A^{*},
\triangleright_{A^{*}},\triangleleft_{A^{*}})$ are compatible
\spec \ldends of the pre-Lie algebras
$(A,\circ_{A})$ and $(A^{*},\circ_{A^{*}})$ respectively. If $(A,A^{*},\mathcal{L}^{*}_{\circ_{A}},\mathcal{L}^{*}_{\triangleleft_{A}},\mathcal{L}^{*}_{\circ_{A^{*}}},$ $
\mathcal{L}^{*}_{\triangleleft_{A^{*}}})$ is a matched pair of pre-Lie algebras, then there is a Manin triple $(A\bowtie A^*, A, A^*)$ of pre-Lie algebras with respect to the \nonsy left-invariant bilinear form $\mathcal{B}_{d}$.
Moreover, Eqs.~\meqref{eq:thm:pre-Lie 12 1} and
\meqref{eq:thm:pre-Lie 12 2} hold.
\end{thm}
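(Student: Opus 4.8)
The plan is to establish the two directions of the equivalence separately, reducing each to Lemma~\mref{lem:from left invariance to special} together with the correspondence, recorded right after Definition~\mref{defi:MP pre-Lie}, between matched pairs of pre-Lie algebras and decompositions of a pre-Lie algebra as a vector-space sum of two subalgebras. For the first direction, starting from a Manin triple $(A\bowtie A^{*},A,A^{*})$, the pre-Lie algebra $(A\oplus A^{*},\circ)$ carries the nondegenerate left-invariant bilinear form $\mathcal{B}_{d}$, so Lemma~\mref{lem:from left invariance to special} immediately produces a compatible \spec \ldend $(A\oplus A^{*},\triangleright,\triangleleft)$ with multiplications given by Eqs.~\meqref{eq:thm:pre-Lie 12 1} and \meqref{eq:thm:pre-Lie 12 2}. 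I would next show that $\triangleright$ and $\triangleleft$ restrict to $A$ and to $A^{*}$: since $A$ and $A^{*}$ are isotropic for $\mathcal{B}_{d}$ (immediate from Eq.~\meqref{eq:natrual bilinear form}) and closed under $\circ$, for $x,y,w\in A$ we have $\mathcal{B}_{d}(x\triangleleft y,w)=\mathcal{B}_{d}(x,w\circ y)=0$, which forces the $A^{*}$-component of $x\triangleleft y$ to vanish; the same argument applies on $A^{*}$, and then $\triangleright=\circ-\triangleleft$ restricts as well. The restrictions $(A,\triangleright_{A},\triangleleft_{A})$ and $(A^{*},\triangleright_{A^{*}},\triangleleft_{A^{*}})$ are \spec \ldends satisfying $\triangleright_{A}+\triangleleft_{A}=\circ_{A}$ by Eq.~\meqref{eq:horizontal and vertical}, hence are compatible \spec \ldends of $(A,\circ_{A})$ and $(A^{*},\circ_{A^{*}})$.

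To recover the matched pair, I would decompose the multiplication of $(A\oplus A^{*},\circ)$ into its four cross-components, producing linear maps $l_{A},r_{A}\colon A\to\End(A^{*})$ and $l_{A^{*}},r_{A^{*}}\colon A^{*}\to\End(A)$ whose associated matched-pair multiplication coincides with $\circ$; since $\circ$ is pre-Lie, the correspondence after Definition~\mref{defi:MP pre-Lie} shows that $(A,A^{*},l_{A},r_{A},l_{A^{*}},r_{A^{*}})$ is a matched pair of pre-Lie algebras. It then remains to identify the four maps. Pairing the left-invariance identity against an element of $A$ (resp.\ of $A^{*}$) yields $l_{A}=\mathcal{L}^{*}_{\circ_{A}}$ (resp.\ $l_{A^{*}}=\mathcal{L}^{*}_{\circ_{A^{*}}}$); and from Eq.~\meqref{eq:thm:pre-Lie 12 1} one gets $\langle x\triangleleft_{A}y,c^{*}\rangle=\langle x,r_{A}(y)c^{*}\rangle$ for $x,y\in A$ and $c^{*}\in A^{*}$, while the antisymmetry of $\triangleleft$ gives $\langle\mathcal{L}^{*}_{\triangleleft_{A}}(y)c^{*},x\rangle=-\langle c^{*},y\triangleleft_{A}x\rangle=\langle x\triangleleft_{A}y,c^{*}\rangle$, so that $r_{A}=\mathcal{L}^{*}_{\triangleleft_{A}}$, and symmetrically $r_{A^{*}}=\mathcal{L}^{*}_{\triangleleft_{A^{*}}}$. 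This gives precisely the matched pair $(A,A^{*},\mathcal{L}^{*}_{\circ_{A}},\mathcal{L}^{*}_{\triangleleft_{A}},\mathcal{L}^{*}_{\circ_{A^{*}}},\mathcal{L}^{*}_{\triangleleft_{A^{*}}})$.

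For the converse, starting from the matched pair $(A,A^{*},\mathcal{L}^{*}_{\circ_{A}},\mathcal{L}^{*}_{\triangleleft_{A}},\mathcal{L}^{*}_{\circ_{A^{*}}},\mathcal{L}^{*}_{\triangleleft_{A^{*}}})$, the correspondence after Definition~\mref{defi:MP pre-Lie} gives a pre-Lie algebra $A\bowtie A^{*}=(A\oplus A^{*},\circ)$ in which $A$ and $A^{*}$ are visibly pre-Lie subalgebras carrying $\circ_{A}$ and $\circ_{A^{*}}$. I would then check directly that $\mathcal{B}_{d}$ is left-invariant on $(A\oplus A^{*},\circ)$: expanding $\mathcal{B}_{d}\bigl((x+a^{*})\circ(y+b^{*}),z+c^{*}\bigr)+\mathcal{B}_{d}\bigl(y+b^{*},(x+a^{*})\circ(z+c^{*})\bigr)$ by the matched-pair multiplication and unwinding the definitions of the dual maps, every term cancels against another, the antisymmetry of $\triangleleft_{A}$ and $\triangleleft_{A^{*}}$ being what kills the terms carrying the right actions. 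Hence $(A\bowtie A^{*},A,A^{*})$ is a Manin triple of pre-Lie algebras with respect to $\mathcal{B}_{d}$. Applying Lemma~\mref{lem:from left invariance to special} to it produces a compatible \spec \ldend on $A\oplus A^{*}$ via Eqs.~\meqref{eq:thm:pre-Lie 12 1} and \meqref{eq:thm:pre-Lie 12 2}, and a short pairing computation, again using the antisymmetry of $\triangleleft$, shows that its restrictions to $A$ and $A^{*}$ reproduce the given $\triangleleft_{A},\triangleright_{A}$ and $\triangleleft_{A^{*}},\triangleright_{A^{*}}$; so Eqs.~\meqref{eq:thm:pre-Lie 12 1} and \meqref{eq:thm:pre-Lie 12 2} hold as stated.

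The conceptual content is carried entirely by Lemma~\mref{lem:from left invariance to special} and the matched-pair/subalgebra correspondence, so the main difficulty is bookkeeping, concentrated in two places. First, $A$ and $A^{*}$ are isotropic subspaces, so their \spec \ldend structures cannot be obtained by applying Lemma~\mref{lem:from left invariance to special} directly to them; they must come by restriction from $A\oplus A^{*}$, with compatibility with $(A,\circ_{A})$ and $(A^{*},\circ_{A^{*}})$ verified through Eq.~\meqref{eq:horizontal and vertical} (or, alternatively, through Proposition~\mref{pro:equivalence}). Second, one must track the pairing conventions carefully so that the right actions of the matched pair appear as $\mathcal{L}^{*}_{\triangleleft_{A}}$ and $\mathcal{L}^{*}_{\triangleleft_{A^{*}}}$ rather than their transposes; this is exactly where the antisymmetry of $\triangleleft$ is indispensable, both in the identification of the matched pair above and in the left-invariance check of the converse.
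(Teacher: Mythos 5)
Your proposal is correct. Note that the paper itself gives no proof of this theorem --- it is quoted from \mcite{BHC} --- so there is nothing internal to compare against; but your reconstruction is the natural one and all of its steps check out. In the forward direction, Lemma~\mref{lem:from left invariance to special} applied to $(A\oplus A^{*},\circ,\mathcal{B}_d)$ gives the global \spec \ldend; the isotropy of $A$ and $A^{*}$ (which, together with nondegeneracy, makes each equal to its own $\mathcal{B}_d$-orthogonal complement) correctly forces $\triangleleft$ and hence $\triangleright$ to restrict; and your identifications $l_{A}=\mathcal{L}^{*}_{\circ_{A}}$ (from pairing the left-invariance identity) and $r_{A}=\mathcal{L}^{*}_{\triangleleft_{A}}$ (from Eq.~\meqref{eq:thm:pre-Lie 12 1} combined with the antisymmetry of $\triangleleft_A$) are exactly right, with the matched-pair property then following from the correspondence stated after Definition~\mref{defi:MP pre-Lie}. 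In the converse, the six cross-terms in the left-invariance check do cancel in pairs as you claim --- the two $\circ_A$-terms and the two $\circ_{A^*}$-terms cancel by the definition of the dual maps, while the two $\mathcal{L}^{*}_{\triangleleft_{A}}$-terms and the two $\mathcal{L}^{*}_{\triangleleft_{A^*}}$-terms cancel precisely by antisymmetry of $\triangleleft_A$ and $\triangleleft_{A^*}$ --- and the final pairing computation recovering the given $\triangleleft_{A},\triangleleft_{A^{*}}$ from Eq.~\meqref{eq:thm:pre-Lie 12 1} again uses isotropy plus antisymmetry correctly. The only implicit ingredient worth making explicit is that the hypothesis of the converse already guarantees, via Proposition~\mref{pro:equivalence}, that $\rbrep{\mathcal{L}^{*}_{\circ_{A}}}{\mathcal{L}^{*}_{\triangleleft_{A}}}{A^{*}}$ and $\rbrep{\mathcal{L}^{*}_{\circ_{A^{*}}}}{\mathcal{L}^{*}_{\triangleleft_{A^{*}}}}{A}$ are representations, which is what the matched-pair definition presupposes.
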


The relationship between matched pairs of Rota-Baxter Lie
algebras of weight zero and matched pairs of the induced pre-Lie
algebras is given as follows.

\begin{pro}\mlabel{pro:rel:12}
  Let $(\mathfrak{g},[-,-]_{\mathfrak{g}},P)$ and $(\mathfrak{g}^{*},[-,-]_{\mathfrak{g}^{*}},Q^{*})$ be  Rota-Baxter Lie algebras of weight zero such that $Q$ is admissible to $(\mathfrak{g},[-,-]_{\mathfrak{g}},P)$ and $P^{*}$
    is admissible to $(\mathfrak{g}^{*},[-,-]_{\mathfrak{g}^{*}}$, $Q^{*})$.
Let
$(\mathfrak{g},\circ_{\mathfrak{g}})$ and
$(\mathfrak{g}^{*},\circ_{\mathfrak{g}^{*}})$ be the induced
pre-Lie algebras defined in Eq.~\meqref{eq:pro:from invariance to left invariance1}, and
$(\mathfrak{g},\triangleright_{\mathfrak{g}},\triangleleft_{\mathfrak{g}})$
and
$(\mathfrak{g}^{*},\triangleright_{\mathfrak{g}^{*}},\triangleleft_{\mathfrak{g}^{*}})$
be the compatible \spec \ldends of the pre-Lie
algebras $(\mathfrak{g},\circ_{\mathfrak{g}})$ and
$(\mathfrak{g}^{*},\circ_{\mathfrak{g}^{*}})$ defined by
Eqs.~\meqref{eq:cor:MP adm9} and \meqref{eq:cor:MP adm10}
respectively. If
$((\mathfrak{g},P),(\mathfrak{g}^{*},Q^{*}),\mathrm{ad}^{*}_{\mathfrak{g}},\mathrm{ad}^{*}_{\mathfrak{g}^{*}})$
is a matched pair of Rota-Baxter Lie algebras of weight zero, then
$(\mathfrak{g},\mathfrak{g}^{*},\mathcal{L}^{*}_{\circ_{\mathfrak{g}}}$,
$\mathcal{L}^{*}_{\triangleleft_{\mathfrak{g}}},\mathcal{L}^{*}_{\circ_{\mathfrak{g}^{*}}},\mathcal{L}^{*}_{\triangleleft_{\mathfrak{g}^{*}}})$
is a matched pair of pre-Lie algebras.
\end{pro}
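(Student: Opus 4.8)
The plan is to obtain the conclusion by concatenating the equivalences and constructions already in place, rather than verifying the matched-pair identities \meqref{eq:defi:MP pre-Lie1}--\meqref{eq:defi:MP pre-Lie4} directly. First, since $((\mathfrak{g},P),(\mathfrak{g}^{*},Q^{*}),\mathrm{ad}^{*}_{\mathfrak{g}},\mathrm{ad}^{*}_{\mathfrak{g}^{*}})$ is a matched pair of Rota-Baxter Lie algebras of weight zero, Proposition~\mref{pro:12 RB Lie} produces a Manin triple $((\mathfrak{g}\oplus\mathfrak{g}^{*},P+Q^{*}),(\mathfrak{g},P),(\mathfrak{g}^{*},Q^{*}))$ of Rota-Baxter Lie algebras of weight zero. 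I would then apply Corollary~\mref{cor:from invariance to left invariance} to this Manin triple: its three induced pre-Lie algebras $(\mathfrak{g},\circ_{\mathfrak{g}})$, $(\mathfrak{g}^{*},\circ_{\mathfrak{g}^{*}})$ and $(\mathfrak{g}\oplus\mathfrak{g}^{*},\circ)$ form a Manin triple $((\mathfrak{g}\bowtie\mathfrak{g}^{*},\circ),(\mathfrak{g},\circ_{\mathfrak{g}}),(\mathfrak{g}^{*},\circ_{\mathfrak{g}^{*}}))$ of pre-Lie algebras with respect to the \nonsy left-invariant bilinear form $\mathcal{B}_{d}$. Feeding this into the forward direction of Theorem~\mref{thm:pre-Lie 12} with $A=\mathfrak{g}$ and $A^{*}=\mathfrak{g}^{*}$ gives a compatible \spec \ldend $(\mathfrak{g}\oplus\mathfrak{g}^{*},\triangleright,\triangleleft)$ defined by Eqs.~\meqref{eq:thm:pre-Lie 12 1}--\meqref{eq:thm:pre-Lie 12 2}, which restricts to compatible special L-dendriform algebras $(\mathfrak{g},\triangleright'_{\mathfrak{g}},\triangleleft'_{\mathfrak{g}})$ and $(\mathfrak{g}^{*},\triangleright'_{\mathfrak{g}^{*}},\triangleleft'_{\mathfrak{g}^{*}})$, together with the assertion that $(\mathfrak{g},\mathfrak{g}^{*},\mathcal{L}^{*}_{\circ_{\mathfrak{g}}},\mathcal{L}^{*}_{\triangleleft'_{\mathfrak{g}}},\mathcal{L}^{*}_{\circ_{\mathfrak{g}^{*}}},\mathcal{L}^{*}_{\triangleleft'_{\mathfrak{g}^{*}}})$ is a matched pair of pre-Lie algebras.

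It then remains to check that the restricted operations $\triangleleft'_{\mathfrak{g}}$, $\triangleleft'_{\mathfrak{g}^{*}}$ coincide with the operations $\triangleleft_{\mathfrak{g}}$, $\triangleleft_{\mathfrak{g}^{*}}$ fixed in the statement by Eqs.~\meqref{eq:cor:MP adm9}--\meqref{eq:cor:MP adm10}. For this I would first use Lemma~\mref{lem:MP adm} to identify the $\mathcal{B}_{d}$-adjoint of $P+Q^{*}$ as $Q+P^{*}$, and then invoke the corollary immediately following Lemma~\mref{lem:from left invariance to special}, applied to the Rota-Baxter Lie algebra $(\mathfrak{g}\oplus\mathfrak{g}^{*},[-,-],P+Q^{*})$ of weight zero equipped with $\mathcal{B}_{d}$: it shows that the \spec \ldend defined by the bilinear-form formula \meqref{eq:thm:pre-Lie 12 1} coincides with the one defined by the operator formula $(x+a^{*})\triangleleft(y+b^{*})=-(Q+P^{*})([x+a^{*},y+b^{*}])$, that is, with the \spec \ldend of Corollary~\mref{cor:MP adm}. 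Since Corollary~\mref{cor:MP adm} exhibits this latter algebra as containing precisely $(\mathfrak{g},\triangleright_{\mathfrak{g}},\triangleleft_{\mathfrak{g}})$ and $(\mathfrak{g}^{*},\triangleright_{\mathfrak{g}^{*}},\triangleleft_{\mathfrak{g}^{*}})$ with $\triangleleft_{\mathfrak{g}}$ and $\triangleleft_{\mathfrak{g}^{*}}$ given by \meqref{eq:cor:MP adm9} and \meqref{eq:cor:MP adm10}, it follows that $\triangleleft'_{\mathfrak{g}}=\triangleleft_{\mathfrak{g}}$ and $\triangleleft'_{\mathfrak{g}^{*}}=\triangleleft_{\mathfrak{g}^{*}}$, so the matched pair of pre-Lie algebras obtained above is exactly the one asserted.

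I do not expect any genuine difficulty beyond this bookkeeping. The one point needing attention is the identification of the two descriptions of the same \spec \ldend on $\mathfrak{g}\oplus\mathfrak{g}^{*}$ --- the one coming from $\mathcal{B}_{d}$ via Theorem~\mref{thm:pre-Lie 12} and the one coming from the operator $Q+P^{*}$ via Corollary~\mref{cor:MP adm} --- together with confirming that its subalgebra structures on $\mathfrak{g}$ and $\mathfrak{g}^{*}$ are the ones built from $Q$ and $P^{*}$ respectively. Everything else is a direct chaining of Proposition~\mref{pro:12 RB Lie}, Corollary~\mref{cor:from invariance to left invariance}, and Theorem~\mref{thm:pre-Lie 12}.
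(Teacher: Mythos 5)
Your proposal is correct, but it takes a genuinely different route from the paper's. The paper proves Proposition~\mref{pro:rel:12} by direct verification: it first invokes Proposition~\mref{pro:sp dend cond} to see that $(\mathcal{L}^{*}_{\circ_{\mathfrak{g}}},\mathcal{L}^{*}_{\triangleleft_{\mathfrak{g}}};\mathfrak{g}^{*})$ and $(\mathcal{L}^{*}_{\circ_{\mathfrak{g}^{*}}},\mathcal{L}^{*}_{\triangleleft_{\mathfrak{g}^{*}}};\mathfrak{g})$ are representations of the respective pre-Lie algebras, and then checks Eqs.~\meqref{eq:defi:MP pre-Lie1}--\meqref{eq:defi:MP pre-Lie4} term by term, expanding everything through $\mathrm{ad}^{*}_{\mathfrak{g}}$ and $\mathrm{ad}^{*}_{\mathfrak{g}^{*}}$, applying the admissibility identities \meqref{eq:cor:adm1} and \meqref{adm cond 2}, and reducing to the matched-pair identity for the underlying Lie algebras. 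You instead traverse the outer square of the diagram at the end of Section~\mref{sec:3}: matched pair of Rota-Baxter Lie algebras $\Rightarrow$ Manin triple of Rota-Baxter Lie algebras (Proposition~\mref{pro:12 RB Lie}) $\Rightarrow$ Manin triple of pre-Lie algebras (Corollary~\mref{cor:from invariance to left invariance}) $\Rightarrow$ matched pair of pre-Lie algebras (Theorem~\mref{thm:pre-Lie 12}), and then identify the operations. That identification is the only delicate point, and you handle it correctly: Lemma~\mref{lem:MP adm} gives $\widehat{P+Q^{*}}=Q+P^{*}$, and the unlabeled corollary following Lemma~\mref{lem:from left invariance to special}, applied to $(\mathfrak{g}\oplus\mathfrak{g}^{*},[-,-],P+Q^{*})$ with the \nonsy invariant form $\mathcal{B}_{d}$, shows that the $\mathcal{B}_{d}$-defined \spec \ldend of Theorem~\mref{thm:pre-Lie 12} coincides with the operator-defined one of Corollary~\mref{cor:MP adm}, whose subalgebra operations on $\mathfrak{g}$ and $\mathfrak{g}^{*}$ are exactly \meqref{eq:cor:MP adm9} and \meqref{eq:cor:MP adm10}. (Note also that the separate admissibility hypotheses on $Q$ and $P^{*}$ come for free from Lemma~\mref{lem:MP adm} once the matched pair is assumed.) What the paper's computation buys is an independent proof of the horizontal arrow, so that the commutativity of the concluding diagram is a theorem rather than a consequence of the definitions, and it makes visible exactly where each admissibility condition is used; what your argument buys is brevity and a conceptual explanation of why the statement must hold, at the cost of outsourcing the substance to Theorem~\mref{thm:pre-Lie 12} of \mcite{BHC} and to the bookkeeping that the two descriptions of the induced \spec \ldend on $\mathfrak{g}\oplus\mathfrak{g}^{*}$ agree.
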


\begin{proof}
By Proposition \mref{pro:sp dend cond},
$\rbrep{\mathcal{L}^{*}_{\circ_{\mathfrak{g}}}}{\mathcal{L}^{*}_{\triangleleft_{\mathfrak{g}}}}{\mathfrak{g}^{*}}$
is a representation of $(\mathfrak{g},\circ_{\mathfrak{g}})$, and
$\rbrep{\mathcal{L}^{*}_{\circ_{\mathfrak{g}^{*}}}}{\mathcal{L}^{*}_{\triangleleft_{\mathfrak{g}^{*}}}}{\mathfrak{g}}$
is a representation of
$(\mathfrak{g}^{*},\circ_{\mathfrak{g}^{*}})$. Let
$x\in\mathfrak{g}, a^{*}, b^{*}\in\mathfrak{g}^{*}$. Then we have
\begin{small}
\begin{eqnarray*}
\mathcal{L}^{*}_{\circ_{\mathfrak{g}}}(x)(a^{*}\circ_{\mathfrak{g}^{*}}b^{*})&=&\mathrm{ad}^{*}_{\mathfrak{g}}(P(x))[Q^{*}(a^{*}),b^{*}]_{\mathfrak{g}^{*}},\\
\mathcal{L}^{*}_{\circ_{\mathfrak{g}}}(\mathcal{L}^{*}_{\circ_{\mathfrak{g}^{*}}}(a^{*})x-\mathcal{L}^{*}_{\triangleleft_{\mathfrak{g}^{*}}}(a^{*})x)b^{*}&=&
\mathrm{ad}^{*}_{\mathfrak{g}}(P(\mathrm{ad}^{*}_{\mathfrak{g}^{*}}(Q^{*}(a^{*}))+\mathrm{ad}^{*}_{\mathfrak{g}^{*}}(a^{*})P(x)))b^{*}\\
&\overset{(\mref{adm cond 2})}{=}&\mathrm{ad}^{*}_{\mathfrak{g}}(\mathrm{ad}^{*}_{\mathfrak{g}^{*}}(Q^{*}(a^{*}))P(x))b^{*},\\
-(\mathcal{L}^{*}_{\circ_{\mathfrak{g}}}(x)a^{*}-\mathcal{L}^{*}_{\triangleleft_{\mathfrak{g}}}(x)a^{*})\circ_{\mathfrak{g}^{*}}b^{*}&=&-[Q^{*}
(\mathrm{ad}^{*}_{\mathfrak{g}}(P(x))a^{*}+\mathrm{ad}^{*}_{\mathfrak{g}}(x)Q^{*}(a^{*})),b^{*}]_{\mathfrak{g}^{*}}\\
&\overset{(\mref{eq:cor:adm1})}{=}&-[\mathrm{ad}^{*}_{\mathfrak{g}}(P(x))Q^{*}(a^{*}),b^{*}]_{\mathfrak{g}^{*}},\\
-\mathcal{L}^{*}_{\triangleleft_{\mathfrak{g}}}(\mathcal{L}^{*}_{\triangleleft_{\mathfrak{g}^{*}}}(b^{*})x)a^{*}&=&-\mathrm{ad}^{*}_{\mathfrak{g}}(\mathrm{ad}^{*}_{\mathfrak{g}^{*}}(b^{*})P(x))Q^{*}(a^{*}),\\
-a^{*}\circ_{\mathfrak{g}^{*}}\mathcal{L}^{*}_{\circ_{\mathfrak{g}}}(x)b^{*}&=&-[Q^{*}(a^{*}),\mathrm{ad}^{*}_{\mathfrak{g}}(P(x))b^{*}]_{\mathfrak{g}^{*}}.
\end{eqnarray*}
\end{small}

Since $(\frak g,\frak g^*,
\mathrm{ad}^{*}_{\mathfrak{g}},\mathrm{ad}^{*}_{\mathfrak{g}^{*}})$
is a matched pair of Lie algebras, we have
\begin{equation*}\mlabel{eq:mp1}
\mathrm{ad}^{*}_{\mathfrak{g}}(x)[a^{*},b^{*}]_{\mathfrak{g}^{*}}-[\mathrm{ad}^{*}_{\mathfrak{g}}(x)a^{*},b^{*}]_{\mathfrak{g}^{*}}-[a^{*},\mathrm{ad}^{*}_{\mathfrak{g}}(x)b^{*}]_{\mathfrak{g}^{*}}
+\mathrm{ad}^{*}_{\mathfrak{g}}(\mathrm{ad}^{*}_{\mathfrak{g}^{*}}(a^{*})x)b^{*}-\mathrm{ad}^{*}_{\mathfrak{g}}(\mathrm{ad}^{*}_{\mathfrak{g}^{*}}(b^{*})x)a^{*}=0.
\end{equation*}
Thus Eq.~\meqref{eq:defi:MP pre-Lie1} holds by taking $$(A,\circ_A)=(\frak g,\circ_{\frak g}), (B,\circ_B)=(\frak g^*,\circ_{\frak
g^*}),l_A=\mathcal{L}^{*}_{\circ_{\mathfrak{g}}},
r_A=\mathcal{L}^{*}_{\triangleleft_{\mathfrak{g}}},
l_B=\mathcal{L}^{*}_{\circ_{\mathfrak{g}^{*}}},
r_B=\mathcal{L}^{*}_{\triangleleft_{\mathfrak{g}^{*}}}.$$
Similarly, Eqs.~\meqref{eq:defi:MP pre-Lie2}-(\mref{eq:defi:MP
pre-Lie4}) hold.
Hence
$(\mathfrak{g},\mathfrak{g}^{*},\mathcal{L}^{*}_{\circ_{\mathfrak{g}}},\mathcal{L}^{*}_{\triangleleft_{\mathfrak{g}}},\mathcal{L}^{*}_{\circ_{\mathfrak{g}^{*}}},\mathcal{L}^{*}_{\triangleleft_{\mathfrak{g}^{*}}})$
is a matched pair of pre-Lie algebras.
\end{proof}

\subsubsection{Special \ldendbs}\
There is a bialgebra structure for \spec \ldends obtained in
\mcite{BHC}.

\begin{defi}\mlabel{defi:sp L-dend coalg}
Let $A$ be a vector space and
$\copa,\copb:A\rightarrow A\otimes A$ be linear maps. Suppose that
$\copb$ is co-antisymmetric. Set $\copc:=\copa+\copb$. If the following two
conditions hold:
\begin{equation}\mlabel{eq:defi:sp L-dend coalg1}
(\mathrm{id}\otimes \copb)\copb+(\tau\otimes
\mathrm{id})(\mathrm{id}\otimes
\copc)\copb+(\copc\otimes\mathrm{id})\copb-(\mathrm{id}\otimes
\copb)\copc=0,
\end{equation}
\begin{equation}\mlabel{eq:defi:sp L-dend coalg2}
    (\copc\otimes\mathrm{id})\copc-(\mathrm{id}\otimes\copc)\copc=(\tau\otimes\mathrm{id})(\copc\otimes\mathrm{id})\copc-(\tau\otimes\mathrm{id})(\mathrm{id}\otimes\copc)\copc,
\end{equation}
then $(A,\copa,\copb)$ is called a \textbf{special L-dendriform
coalgebra}.
\end{defi}

\begin{pro}\mlabel{pro:from coalg to alg}
Let $A$ be a finite dimensional vector space, and
$\copa,\copb:A\rightarrow A\otimes A$ be linear maps. Let
$\triangleright_{A^{*}},\triangleleft_{A^{*}}:A^{*}\otimes
A^{*}\rightarrow A^{*}$ be the linear duals of $\copa$ and $\copb$
respectively.
Then $(A,\copa,\copb)$ is a special L-dendriform coalgebra if and
only if $(A^{*}, \triangleright_{A^{*}},\triangleleft_{A^{*}})$ is
a \spec \ldend.
\end{pro}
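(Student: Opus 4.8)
The plan is to reduce everything to a term-by-term dualization of the defining identities, invoking Proposition~\mref{pro:equivalence} to dispose of the one genuinely nonlinear identity. Write $\triangleleft_{A^*}$ and $\triangleright_{A^*}$ for the linear duals of $\copb$ and $\copa$, and set $\star_{A^*}:=\triangleright_{A^*}+\triangleleft_{A^*}$; since $A$ is finite dimensional, $\star_{A^*}$ is precisely the linear dual of $\copc=\copa+\copb$, so as operations on $A^*$ we have $\triangleright_{A^*}=\star_{A^*}-\triangleleft_{A^*}$. First I would record the elementary dictionary. Transposition is a contravariant involution that reverses composition, fixes the flip $\tau$, and converts a prefactor $(\tau\otimes\mathrm{id})$ into the transposition of the corresponding pair of input variables; hence any identity built from $\copa,\copb,\copc,\mathrm{id},\tau$ holds for the co-operations if and only if the transposed identity holds for $\triangleright_{A^*},\triangleleft_{A^*},\star_{A^*}$. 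In particular, co-antisymmetry of $\copb$ (i.e.\ $\copb=-\tau\copb$) is equivalent to antisymmetry of $\triangleleft_{A^*}$.

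Next I would identify the two conditions in Definition~\mref{defi:sp L-dend coalg}. Dualizing $(\copc\otimes\mathrm{id})\copc$ and $(\mathrm{id}\otimes\copc)\copc$ yields $(a\star_{A^*}b)\star_{A^*}c$ and $a\star_{A^*}(b\star_{A^*}c)$, and the $(\tau\otimes\mathrm{id})$-prefactors on the right-hand side of Eq.~\meqref{eq:defi:sp L-dend coalg2} swap $a$ and $b$; so Eq.~\meqref{eq:defi:sp L-dend coalg2} holds if and only if $(A^*,\star_{A^*})$ is a pre-Lie algebra. Likewise, dualizing the four terms of Eq.~\meqref{eq:defi:sp L-dend coalg1} gives
\[
a\triangleleft_{A^*}(b\triangleleft_{A^*}c)+b\triangleleft_{A^*}(a\star_{A^*}c)+(a\star_{A^*}b)\triangleleft_{A^*}c-a\star_{A^*}(b\triangleleft_{A^*}c)=0,
\]
and rewriting the third term as $-c\triangleleft_{A^*}(a\star_{A^*}b)$ using antisymmetry of $\triangleleft_{A^*}$, this is exactly Eq.~\meqref{eq:skew-sym op} of Proposition~\mref{pro:equivalence} for the pair $(\star_{A^*},\triangleleft_{A^*})$.

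It then remains to assemble. By the above, $(A,\copa,\copb)$ is a special L-dendriform coalgebra if and only if $\triangleleft_{A^*}$ is antisymmetric, $(A^*,\star_{A^*})$ is a pre-Lie algebra, and Eq.~\meqref{eq:skew-sym op} holds. Granting the first two, Proposition~\mref{pro:equivalence}, applied with $(A,\circ):=(A^*,\star_{A^*})$, antisymmetric multiplication $\triangleleft_{A^*}$, and the induced product $\triangleright$, which coincides with $\triangleright_{A^*}$ since $\triangleright_{A^*}=\star_{A^*}-\triangleleft_{A^*}$, shows that Eq.~\meqref{eq:skew-sym op} is equivalent to $(A^*,\triangleright_{A^*},\triangleleft_{A^*})$ being a special L-dendriform algebra. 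For the reverse implication of this last equivalence one only needs that a special L-dendriform algebra automatically has an antisymmetric $\triangleleft$-product and a pre-Lie vertical product, which here is $\star_{A^*}$, by the discussion following Definition~\mref{defi:L-dend}; thus the two standing hypotheses are not extra constraints, and the chain of equivalences closes.

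I expect the main difficulty to be bookkeeping rather than conceptual: getting the placement of the flips $\tau$ and the induced variable orderings exactly right when transposing the composites in Eqs.~\meqref{eq:defi:sp L-dend coalg1} and \meqref{eq:defi:sp L-dend coalg2}, and checking that the two $(\tau\otimes\mathrm{id})$-prefactors there match precisely the transpositions of $a,b$ occurring in the pre-Lie identity and in Eq.~\meqref{eq:skew-sym op}. Fixing a Sweedler-type convention for $\copa,\copb$ at the outset reduces this to a routine verification.
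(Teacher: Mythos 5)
Your proposal is correct and follows essentially the same route as the paper's proof: dualize co-antisymmetry of $\copb$ to antisymmetry of $\triangleleft_{A^*}$, dualize Eq.~\meqref{eq:defi:sp L-dend coalg2} to the pre-Lie identity for $\star_{A^*}$ and Eq.~\meqref{eq:defi:sp L-dend coalg1} to Eq.~\meqref{eq:skew-sym op}, and then invoke Proposition~\mref{pro:equivalence}. Your explicit remark that the standing hypotheses of that proposition are automatic in the reverse direction is a small but welcome addition of care over the paper's version.
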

\begin{proof}
It is obvious that $\copb$ is co-antisymmetric if and only if
$\triangleleft_{A^{*}}$ is antisymmetric. Let
$\circ_{A^{*}}:A^{*}\otimes A^{*}\rightarrow A^{*}$ be a linear
operation given by
$$a^{*}\circ_{A^{*}}b^{*}=a^{*}\triangleright_{A^{*}}b^{*}+a^{*}\triangleleft_{A^{*}}b^{*},\;\;\forall a^{*}, b^{*}\in A^{*}.$$
Then for all $x\in A, a^{*}, b^{*}, c^{*}\in A^{*}$, we have
{\small
\begin{eqnarray*}
&&\langle (\mathrm{id}\otimes \copb)\copb(x)+(\tau\otimes \mathrm{id})(\mathrm{id}\otimes \copc)\copb(x)+(\copc\otimes\mathrm{id})\copb(x)-(\mathrm{id}\otimes \copb)\copc(x), a^{*}\otimes b^{*}\otimes c^{*}\rangle\\
&&=\langle x,a^{*}\triangleleft_{A^{*}}(b^{*}\triangleleft_{A^{*}}c^{*})+b^{*}\triangleleft_{A^{*}}(a^{*}\circ_{A^{*}}c^{*})+(a^{*}\circ_{A^{*}}b^{*})\triangleleft_{A^{*}}c^{*}-a^{*}\circ_{A^{*}}(b^{*}\triangleleft_{A^{*}}c^{*})\rangle,\\
&&\langle (\copc\otimes\mathrm{id})\copc(x)-(\mathrm{id}\otimes\copc)\copc(x)-(\tau\otimes\mathrm{id})(\copc\otimes\mathrm{id})\copc(x)+(\tau\otimes\mathrm{id})(\mathrm{id}\otimes\copc)\copc(x), a^{*}\otimes b^{*}\otimes c^{*}\rangle\\
&&=\langle x, (a^{*}\circ_{A^{*}}b^{*})\circ_{A^{*}}c^{*}-a^{*}\circ_{A^{*}}(b^{*}\circ_{A^{*}}c^{*})-(b^{*}\circ_{A^{*}}a^{*})\circ_{A^{*}}c^{*}+b^{*}\circ_{A^{*}}(a^{*}\circ_{A^{*}}c^{*})\rangle.
\end{eqnarray*}}
Hence the conclusion follows by Proposition \mref{pro:equivalence}.
\end{proof}

\begin{defi}\mlabel{defi:sp L-dend bialg}
A \textbf{\spec \ldendb}
$(A,\triangleright,\triangleleft,\copa,\copb)$ consists of a \spec
\ldend $(A,\triangleright,\triangleleft)$, a special L-dendriform
coalgebra $(A,\copa,\copb)$ such that the following conditions
hold:
\begin{equation}\mlabel{eq:defi:sp L-dend bialg1}
\copc(x\circ
y)-(\mathrm{id}\otimes\mathcal{R}_{\circ}(y))\copa(x)+(\mathcal{L}_{\triangleleft}(y)\otimes
\mathrm{id})\copb(x)-(\mathcal{L}_{\triangleright}(x)\otimes
\mathrm{id}+\mathrm{id}\otimes\mathcal{L}_{\circ}(x))\copc(y)=0,
\end{equation}
\begin{equation}\mlabel{eq:defi:sp L-dend bialg2}
(\tau-\mathrm{id})((\mathrm{id}\otimes\mathcal{L}_{\triangleleft}(x))\copc(y)-(\mathrm{id}\otimes\mathcal{L}_{\triangleleft}(y))\copc(x)-\copc(x\triangleleft
y))=0,
\end{equation}
\begin{equation}\mlabel{eq:defi:sp L-dend bialg3}
\copb([x,y])
+(\mathcal{L}_{\circ}(y)\otimes\mathrm{id}+\mathrm{id}\otimes\mathcal{L}_{\circ}(y))\copb(x)-(\mathcal{L}_{\circ}(x)\otimes\mathrm{id}+\mathrm{id}\otimes\mathcal{L}_{\circ}(x))\copb(y)=0,
\end{equation}
for all $x,y\in A$, where $\copc=\copa+\copb$.
\end{defi}

\begin{rmk}
This notion of a \spec \ldendb is equivalent to the one introduced in \mcite{BHC} which is given in terms of the operations in the dual space. Indeed, in~\mcite{BHC} the compatible conditions are given by four equations in Theorem~3.16 there. The first and second equations coincide with Eqs.~\meqref{eq:defi:sp L-dend bialg3} and Eq.~\meqref{eq:defi:sp L-dend bialg1} respectively, while the third and fourth equations are equivalent to Eqs.~\meqref{eq:defi:sp L-dend bialg2} and \meqref{eq:defi:sp L-dend bialg1} respectively.
\end{rmk}

\begin{thm}\mlabel{rkm:sp L-dend23} {\rm \mcite{BHC}}
Let $(A,\triangleright_{A},\triangleleft_{A})$ be a \spec \ldend. Suppose that
    there is a \spec \ldend structure $(A^{*},\triangleright_{A^{*}},\triangleleft_{A^{*}})$ on the dual space $A^{*}$.
    Let $(A,\circ_{A})$ and $(A^{*}, \circ_{A^{*}})$ be the sub-adjacent pre-Lie algebras of $(A,\triangleright_{A},\triangleleft_{A})$ and $(A^{*},\triangleright_{A^{*}},\triangleleft_{A^{*}})$ respectively,
    and $\copa, \copb:A\rightarrow A\otimes A$ be the linear duals of $\triangleright_{A^{*}}, \triangleleft_{A^{*}}$ respectively. Then the sextuple $(A,A^{*},\mathcal{L}^{*}_{\circ_{A}},\mathcal{L}^{*}_{\triangleleft_{A}},\mathcal{L}^{*}_{\circ_{A^{*}}},\mathcal{L}^{*}_{\triangleleft_{A^{*}}})$
is a matched pair of pre-Lie algebras if and only if
$(A,\triangleright_{A},\triangleleft_{A},\copa,\copb)$ is a \spec
\ldendb.
\end{thm}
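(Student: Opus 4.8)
The plan is to establish the biconditional by unwinding both sides into equivalent systems of tensor equations and matching them. On the "bialgebra" side, a \spec \ldendb $(A,\triangleright_A,\triangleleft_A,\copa,\copb)$ consists of: the \spec \ldend axioms on $(A,\triangleright_A,\triangleleft_A)$, the \spec L-dendriform coalgebra axioms on $(A,\copa,\copb)$ (Definition~\mref{defi:sp L-dend coalg}), and the three compatibility equations~\meqref{eq:defi:sp L-dend bialg1}--\meqref{eq:defi:sp L-dend bialg3}. By Proposition~\mref{pro:from coalg to alg}, the coalgebra axioms on $(A,\copa,\copb)$ are exactly equivalent to $(A^*,\triangleright_{A^*},\triangleleft_{A^*})$ being a \spec \ldend, which is part of the hypothesis. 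So both directions reduce to showing that, \emph{given} \spec \ldends on $A$ and $A^*$ with sub-adjacent pre-Lie algebras $(A,\circ_A)$ and $(A^*,\circ_{A^*})$, the three equations~\meqref{eq:defi:sp L-dend bialg1}--\meqref{eq:defi:sp L-dend bialg3} hold if and only if $(A,A^*,\mathcal{L}^*_{\circ_A},\mathcal{L}^*_{\triangleleft_A},\mathcal{L}^*_{\circ_{A^*}},\mathcal{L}^*_{\triangleleft_{A^*}})$ is a matched pair of pre-Lie algebras.

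First I would note that, by Proposition~\mref{pro:equivalence}, $\rbrep{\mathcal{L}^*_{\circ_A}}{\mathcal{L}^*_{\triangleleft_A}}{A^*}$ is automatically a representation of $(A,\circ_A)$ and $\rbrep{\mathcal{L}^*_{\circ_{A^*}}}{\mathcal{L}^*_{\triangleleft_{A^*}}}{A}$ is automatically a representation of $(A^*,\circ_{A^*})$; this is precisely because $(A,\triangleright_A,\triangleleft_A)$ and $(A^*,\triangleright_{A^*},\triangleleft_{A^*})$ are \spec \ldends. Hence the representation clauses in Definition~\mref{defi:MP pre-Lie} are satisfied for free, and being a matched pair of pre-Lie algebras amounts exactly to the four compatibility equations~\meqref{eq:defi:MP pre-Lie1}--\meqref{eq:defi:MP pre-Lie4}. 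The core of the proof is therefore a dictionary: translate each matched-pair equation~\meqref{eq:defi:MP pre-Lie1}--\meqref{eq:defi:MP pre-Lie4}, which lives in the spaces $A$ and $A^*$, into its dual form as a tensor equation on $A$ by pairing against arbitrary elements; and conversely pair each of~\meqref{eq:defi:sp L-dend bialg1}--\meqref{eq:defi:sp L-dend bialg3} against $a^*\otimes b^*\otimes c^*$ (or $a^*\otimes b^*$) and recognize the result as a combination of matched-pair identities. Concretely: \meqref{eq:defi:sp L-dend bialg3}, which only involves $\copb$ and the Lie bracket $[-,-]$, should dualize to the matched-pair compatibility controlling $r_A$ on a bracket, i.e.\ Eq.~\meqref{eq:defi:MP pre-Lie2} (and, by the symmetry between $A$ and $A^*$, also Eq.~\meqref{eq:defi:MP pre-Lie4}); \meqref{eq:defi:sp L-dend bialg2} should dualize to the antisymmetrized part of the $r_A$-on-product relation; and \meqref{eq:defi:sp L-dend bialg1}, which mixes $\copa$, $\copb$, $\circ$, $\triangleright$, $\triangleleft$, should encode Eq.~\meqref{eq:defi:MP pre-Lie1} together with~\meqref{eq:defi:MP pre-Lie3}. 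Since the whole setup is symmetric under $A\leftrightarrow A^*$ (swapping $\copa,\copb$ with $\triangleright_{A^*},\triangleleft_{A^*}$ and the roles of $l,r$), I would organize the bookkeeping to exploit that symmetry and roughly halve the computation.

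The main obstacle will be the bookkeeping in matching~\meqref{eq:defi:sp L-dend bialg1} with the pair~\meqref{eq:defi:MP pre-Lie1},~\meqref{eq:defi:MP pre-Lie3}: that single tensor equation on $A$, when paired against $a^*\otimes b^*\otimes c^*$, produces a sum of terms involving both $\circ_A$-actions (from $\mathcal{L}_\circ,\mathcal{L}_\triangleright$) and $\circ_{A^*}$-actions (hidden inside $\copa(y),\copb(y)$ via their duals), and one must carefully use the identities $\triangleright_A=\circ_A-\triangleleft_A$, $\copa=\copc-\copb$, the co-antisymmetry of $\copb$, and the \spec \ldend relations on \emph{both} $A$ and $A^*$ to sort the terms into exactly the shape of~\meqref{eq:defi:MP pre-Lie1} plus~\meqref{eq:defi:MP pre-Lie3}. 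There is genuine potential for sign errors and for terms that only cancel after invoking the coalgebra/algebra axioms rather than the bialgebra compatibilities themselves. The cleanest route is probably to first record, once and for all, the dual translations $\langle \copa(y), a^*\otimes b^*\rangle = \langle y, a^*\triangleright_{A^*} b^*\rangle$, $\langle \copb(y), a^*\otimes b^*\rangle = \langle y, a^*\triangleleft_{A^*} b^*\rangle$, and the adjointness $\langle \mathcal{L}_{\triangleleft_A}(x)^* a^*, z\rangle = -\langle a^*, x\triangleleft_A z\rangle$ etc., then mechanically expand. Given that Theorem~\mref{thm:pre-Lie 12} already sets up the Manin-triple/matched-pair correspondence for \spec \ldends, and Proposition~\mref{pro:from coalg to alg} handles the coalgebra half, the remaining work is exactly this dualization, so in the write-up I would state that the proof proceeds by the dictionary above, carry out the representative case of~\meqref{eq:defi:sp L-dend bialg3}$\Leftrightarrow$\meqref{eq:defi:MP pre-Lie2} in detail, and indicate that~\meqref{eq:defi:sp L-dend bialg1} and~\meqref{eq:defi:sp L-dend bialg2} are handled similarly, invoking the $A\leftrightarrow A^*$ symmetry for~\meqref{eq:defi:MP pre-Lie3} and~\meqref{eq:defi:MP pre-Lie4}.
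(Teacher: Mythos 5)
The paper offers no proof of this theorem: it is quoted verbatim from \mcite{BHC}, so there is no internal argument to compare yours against. Your overall strategy -- dualize everything and match equations -- is the standard one and certainly the one used in the cited source, and your two structural reductions are correct: Proposition~\mref{pro:from coalg to alg} shows the coalgebra axioms on $(A,\copa,\copb)$ are exactly the hypothesis that $(A^*,\triangleright_{A^*},\triangleleft_{A^*})$ is a \spec \ldend, and Proposition~\mref{pro:equivalence} makes the two representation clauses of Definition~\mref{defi:MP pre-Lie} automatic, so everything reduces to matching Eqs.~\meqref{eq:defi:sp L-dend bialg1}--\meqref{eq:defi:sp L-dend bialg3} against Eqs.~\meqref{eq:defi:MP pre-Lie1}--\meqref{eq:defi:MP pre-Lie4}.

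Your concrete dictionary is misassigned, though, and in a way that undermines the execution plan. Since $\copb$ is the dual of $\triangleleft_{A^*}$, pairing $\copb([x,y])$ with $a^*\otimes b^*$ gives $\langle [x,y]_A,a^*\triangleleft_{A^*}b^*\rangle=-\langle \mathcal{L}^*_{\triangleleft_{A^*}}(a^*)[x,y]_A,b^*\rangle$; so Eq.~\meqref{eq:defi:sp L-dend bialg3} is the condition on $r_B=\mathcal{L}^*_{\triangleleft_{A^*}}$ applied to the bracket of $A$, i.e.\ it dualizes term by term to Eq.~\meqref{eq:defi:MP pre-Lie4}, not to Eq.~\meqref{eq:defi:MP pre-Lie2}. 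It is Eq.~\meqref{eq:defi:sp L-dend bialg2}, with its $(\tau-\mathrm{id})$ and $\copc(x\triangleleft y)$ (where $\copc$ is dual to $\circ_{A^*}$, so the antisymmetrization produces $[-,-]_{A^*}$), that corresponds to the $r_A$-on-bracket condition \meqref{eq:defi:MP pre-Lie2}. The correct pairing is therefore \meqref{eq:defi:sp L-dend bialg3}$\leftrightarrow$\meqref{eq:defi:MP pre-Lie4}, \meqref{eq:defi:sp L-dend bialg2}$\leftrightarrow$\meqref{eq:defi:MP pre-Lie2}, and \meqref{eq:defi:sp L-dend bialg1} must account for \emph{both} \meqref{eq:defi:MP pre-Lie1} and \meqref{eq:defi:MP pre-Lie3} (depending on which tensor leg of $\copc(x\circ y)$ one peels off). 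This last point is the genuinely nontrivial step and your plan defers it: three equations covering four forces the equivalence \meqref{eq:defi:MP pre-Lie1}$\Leftrightarrow$\meqref{eq:defi:MP pre-Lie3} modulo the \spec \ldend identities on $A$ and $A^*$ -- exactly what the paper's remark after Definition~\mref{defi:sp L-dend bialg} records when it says two of the four conditions of \mcite{BHC} are each \emph{equivalent to} (not equal to) Eq.~\meqref{eq:defi:sp L-dend bialg1}. Doing one representative case and invoking the $A\leftrightarrow A^*$ symmetry will not surface this; with your dictionary the symmetry argument would double-count \meqref{eq:defi:MP pre-Lie2}/\meqref{eq:defi:MP pre-Lie4} and leave one matched-pair equation unaccounted for.
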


Combining Theorems ~\mref{thm:pre-Lie 12} and~\mref{rkm:sp L-dend23}, we obtain the following conclusion.

\begin{cor}\mlabel{cor:equ}
With the conditions in Theorem~\mref{rkm:sp L-dend23}, the
following statements are equivalent.
\begin{enumerate}
        \item There is a Manin triple $(A\bowtie A^*, A, A^*)$ of pre-Lie
algebras with respect to the \nonsy
left-invariant bilinear form $\mathcal{B}_{d}$ such that the
induced \spec \ldend $(A\oplus
A^{*},\triangleright,\triangleleft)$  defined by
Eqs.~\meqref{eq:thm:pre-Lie 12 1} and \meqref{eq:thm:pre-Lie 12 2}
includes $(A,\triangleright_{A},\triangleleft_{A})$ and
$(A^{*},\triangleright_{A^{*}},\triangleleft_{A^{*}})$ as
subalgebras.
        \item $(A,A^{*},\mathcal{L}^{*}_{\circ_{A}},\mathcal{L}^{*}_{\triangleleft_{A}},\mathcal{L}^{*}_{\circ_{A^{*}}},\mathcal{L}^{*}_{\triangleleft_{A^{*}}})$ is a matched pair of pre-Lie algebras.
        \item $(A,\triangleright_{A},\triangleleft_{A},\copa,\copb)$ is a \spec \ldendb.
    \end{enumerate}
\end{cor}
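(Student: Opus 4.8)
The plan is to obtain this corollary purely by chaining together the two preceding results, Theorem~\mref{thm:pre-Lie 12} and Theorem~\mref{rkm:sp L-dend23}, so that essentially no new computation is needed; the only care required is to check that the ``subalgebra'' bookkeeping in the two theorems is mutually consistent.

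First I would note that the equivalence between the matched-pair statement and the \spec \ldendb statement is exactly Theorem~\mref{rkm:sp L-dend23}: under the standing hypotheses of that theorem---that $(A,\triangleright_{A},\triangleleft_{A})$ and $(A^{*},\triangleright_{A^{*}},\triangleleft_{A^{*}})$ are \spec \ldends with sub-adjacent pre-Lie algebras $(A,\circ_{A})$, $(A^{*},\circ_{A^{*}})$ and with $\copa,\copb$ the linear duals of $\triangleright_{A^{*}},\triangleleft_{A^{*}}$---the sextuple $(A,A^{*},\mathcal{L}^{*}_{\circ_{A}},\mathcal{L}^{*}_{\triangleleft_{A}},\mathcal{L}^{*}_{\circ_{A^{*}}},\mathcal{L}^{*}_{\triangleleft_{A^{*}}})$ is a matched pair of pre-Lie algebras if and only if $(A,\triangleright_{A},\triangleleft_{A},\copa,\copb)$ is a \spec \ldendb. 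So for this half there is nothing to prove beyond invoking the theorem.

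Next I would relate the Manin-triple statement to the matched-pair statement via Theorem~\mref{thm:pre-Lie 12}. For the forward implication, a Manin triple $(A\bowtie A^{*},A,A^{*})$ of pre-Lie algebras with respect to $\mathcal{B}_{d}$ whose induced \spec \ldend on $A\oplus A^{*}$, defined by Eqs.~\meqref{eq:thm:pre-Lie 12 1} and \meqref{eq:thm:pre-Lie 12 2}, includes $(A,\triangleright_{A},\triangleleft_{A})$ and $(A^{*},\triangleright_{A^{*}},\triangleleft_{A^{*}})$ as subalgebras yields, by the first half of Theorem~\mref{thm:pre-Lie 12}, precisely the matched pair $(A,A^{*},\mathcal{L}^{*}_{\circ_{A}},\mathcal{L}^{*}_{\triangleleft_{A}},\mathcal{L}^{*}_{\circ_{A^{*}}},\mathcal{L}^{*}_{\triangleleft_{A^{*}}})$; the ``includes as subalgebras'' clause is what forces the operations $\triangleleft_{A},\triangleleft_{A^{*}}$ entering the structure maps to be the prescribed ones. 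Conversely, assuming this sextuple is a matched pair of pre-Lie algebras, the standing hypotheses of Theorem~\mref{rkm:sp L-dend23} guarantee that $(A,\triangleright_{A},\triangleleft_{A})$ and $(A^{*},\triangleright_{A^{*}},\triangleleft_{A^{*}})$ are compatible \spec \ldends of $(A,\circ_{A})$ and $(A^{*},\circ_{A^{*}})$, so the converse half of Theorem~\mref{thm:pre-Lie 12} applies and produces a Manin triple $(A\bowtie A^{*},A,A^{*})$ with respect to $\mathcal{B}_{d}$ for which Eqs.~\meqref{eq:thm:pre-Lie 12 1} and \meqref{eq:thm:pre-Lie 12 2} hold; reading those two equations on the factors $A$ and $A^{*}$ (equivalently, feeding this Manin triple back into the forward half of Theorem~\mref{thm:pre-Lie 12}) shows that the induced \spec \ldend on $A\oplus A^{*}$ restricts on $A$ and $A^{*}$ to exactly these structures, which is the extra clause in the Manin-triple statement.

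The statement has no genuine obstacle---it is a formal corollary---so the only point of hygiene is the one just used: that both directions of Theorem~\mref{thm:pre-Lie 12} preserve the subalgebra data. This holds because the matched-pair structure maps remember the full \spec \ldend structure: from $\mathcal{L}^{*}_{\triangleleft_{A}}$ one recovers $\mathcal{L}_{\triangleleft_{A}}$, hence $\triangleleft_{A}$, by taking transposes, and then $\triangleright_{A}=\circ_{A}-\triangleleft_{A}$ by Eq.~\meqref{eq:right}, with the analogous statements on $A^{*}$; compare Proposition~\mref{pro:equivalence} and Lemma~\mref{lem:from left invariance to special}. Once this is recorded, the three implications close into the asserted cycle.
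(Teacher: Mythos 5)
Your proposal is correct and follows exactly the paper's route: the corollary is obtained by combining Theorem~\ref{thm:pre-Lie 12} (Manin triple $\Leftrightarrow$ matched pair) with Theorem~\ref{rkm:sp L-dend23} (matched pair $\Leftrightarrow$ \spec \ldendb), which is all the paper itself does. Your extra check that the subalgebra data is preserved in both directions is a reasonable piece of bookkeeping that the paper leaves implicit.
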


\begin{pro}\mlabel{pro:condition}
    Let $(\mathfrak{g},[-,-]_{\mathfrak{g}},P)$ and $(\mathfrak{g}^{*},[-,-]_{\mathfrak{g}^{*}},Q^{*})$ be  Rota-Baxter Lie algebras of weight zero such that $Q$ is admissible to $(\mathfrak{g},[-,-]_{\mathfrak{g}},P)$ and $P^{*}$
    is admissible to $(\mathfrak{g}^{*},[-,-]_{\mathfrak{g}^{*}}$, $Q^{*})$.
   Let
$(\mathfrak{g},\circ_{\mathfrak{g}})$ and
$(\mathfrak{g}^{*},\circ_{\mathfrak{g}^{*}})$ be the induced
pre-Lie algebras defined by Eq.~\meqref{eq:pro:from invariance to left invariance1}, and let
$(\mathfrak{g},\triangleright_{\mathfrak{g}},\triangleleft_{\mathfrak{g}})$
and
$(\mathfrak{g}^{*},\triangleright_{\mathfrak{g}^{*}},\triangleleft_{\mathfrak{g}^{*}})$
be the compatible \spec \ldends of the pre-Lie
algebras $(\mathfrak{g},\circ_{\mathfrak{g}})$ and
$(\mathfrak{g}^{*},\circ_{\mathfrak{g}^{*}})$ defined by
Eqs.~\meqref{eq:cor:MP adm9} and \meqref{eq:cor:MP adm10}
respectively. Let
$\delta,
\copa,\copb:\mathfrak{g}\rightarrow \mathfrak{g}\otimes
\mathfrak{g}$ be the linear duals of
$[-,-]_{\mathfrak{g}^{*}}, \triangleright_{\mathfrak{g}^{*}}$ and
$\triangleleft_{\mathfrak{g}^{*}}$ respectively. Then
$(\mathfrak{g},\triangleright_{\mathfrak{g}},\triangleleft_{\mathfrak{g}},\copa,\copb)$
is a \spec \ldendb if and only if the following equations hold:
\begin{small}
\begin{equation}\mlabel{eq:pro:condition1}
\begin{split}
\begin{array}{ll}
&(Q\otimes\mathrm{id})(\delta([P(x),y]_{\mathfrak{g}}+(\mathrm{id}\otimes\mathrm{ad}_{\mathfrak{g}}(y)+\mathrm{ad}_{\mathfrak{g}}(y)\otimes\mathrm{id})\delta(P(x))\\ &-(\mathrm{id}\otimes\mathrm{ad}_{\mathfrak{g}}(P(x))+\mathrm{ad}_{\mathfrak{g}}(P(x))\otimes\mathrm{id})\delta(y))=0,
\end{array}
\end{split}
\end{equation}
\begin{equation}\mlabel{eq:pro:condition2}
(Q\otimes Q)(\delta([x,y]_{\mathfrak{g}}+(\mathrm{id}\otimes\mathrm{ad}_{\mathfrak{g}}(y)+\mathrm{ad}_{\mathfrak{g}}(y)\otimes\mathrm{id})\delta(x)-(\mathrm{id}\otimes\mathrm{ad}_{\mathfrak{g}}(x)+\mathrm{ad}_{\mathfrak{g}}(x)\otimes\mathrm{id})\delta(y))=0,
\end{equation}
\begin{equation}\mlabel{eq:pro:condition3}
\begin{split}
    \begin{array}{ll}
\delta([P(x),P(y)]_{\mathfrak{g}})=& (\mathrm{id}\otimes\mathrm{ad}_{\mathfrak{g}}(P(x)) +\mathrm{ad}_{\mathfrak{g}}(P(x))\otimes\mathrm{id})\delta(P(y))\\ & -(\mathrm{id}\otimes\mathrm{ad}_{\mathfrak{g}}(P(y))+\mathrm{ad}_{\mathfrak{g}}(P(y))\otimes\mathrm{id})\delta(P(x)),
\end{array}
\end{split}
\end{equation}
\end{small}
for all $x,y\in\mathfrak{g}$. In particular,  if
$(\mathfrak{g},[-,-]_{\mathfrak{g}},P,\delta,Q)$ is  a Rota-Baxter
Lie bialgebra of weight zero, then
$(\mathfrak{g},\triangleright_{\mathfrak{g}}$,$\triangleleft_{\mathfrak{g}}$,$\copa,\copb)$
is a \spec \ldendb.
\end{pro}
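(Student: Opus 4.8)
The plan is to reduce the statement to verifying, one at a time, the three compatibility equations~\meqref{eq:defi:sp L-dend bialg1}--\meqref{eq:defi:sp L-dend bialg3} of Definition~\mref{defi:sp L-dend bialg}, after first expressing all the data in terms of $\delta$, $P$ and $Q$. By Proposition~\mref{pro:sp dend cond}(\mref{33}) applied to $(\mathfrak{g},[-,-]_{\mathfrak{g}},P)$ with the admissible map $Q$, $(\mathfrak{g},\triangleright_{\mathfrak{g}},\triangleleft_{\mathfrak{g}})$ is a \spec \ldend; by Proposition~\mref{pro:from coalg to alg} together with Proposition~\mref{pro:sp dend cond}(\mref{33}) applied to $(\mathfrak{g}^{*},[-,-]_{\mathfrak{g}^{*}},Q^{*})$ with the admissible map $P^{*}$, $(\mathfrak{g},\copa,\copb)$ is a special L-dendriform coalgebra. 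Therefore $(\mathfrak{g},\triangleright_{\mathfrak{g}},\triangleleft_{\mathfrak{g}},\copa,\copb)$ is a \spec \ldendb exactly when~\meqref{eq:defi:sp L-dend bialg1}--\meqref{eq:defi:sp L-dend bialg3} hold.

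Next I would record, using Eqs.~\meqref{eq:cor:MP adm9}, \meqref{eq:cor:MP adm10} and~\meqref{eq:linear dual}, the identities $\copb(x)=-\delta(P(x))$ and $\copc(x)=\copa(x)+\copb(x)=(Q\otimes\mathrm{id})\delta(x)$, whence $\copa(x)=(Q\otimes\mathrm{id})\delta(x)+\delta(P(x))$, together with $\mathcal{L}_{\circ_{\mathfrak{g}}}(x)=\mathrm{ad}_{\mathfrak{g}}(P(x))$, $\mathcal{L}_{\triangleleft_{\mathfrak{g}}}(x)=-Q\,\mathrm{ad}_{\mathfrak{g}}(x)$, $\mathcal{L}_{\triangleright_{\mathfrak{g}}}(x)=\mathrm{ad}_{\mathfrak{g}}(P(x))+Q\,\mathrm{ad}_{\mathfrak{g}}(x)$ and $\mathcal{R}_{\circ_{\mathfrak{g}}}(y)=-\mathrm{ad}_{\mathfrak{g}}(y)\,P$. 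I would also note that the Lie bracket appearing in~\meqref{eq:defi:sp L-dend bialg3} is the sub-adjacent bracket of $(\mathfrak{g},\circ_{\mathfrak{g}})$, namely $[P(x),y]_{\mathfrak{g}}+[x,P(y)]_{\mathfrak{g}}$, which $P$ carries to $[P(x),P(y)]_{\mathfrak{g}}$ by the weight-zero Rota-Baxter relation for $P$.

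Then I would substitute these formulas into~\meqref{eq:defi:sp L-dend bialg1}--\meqref{eq:defi:sp L-dend bialg3} and simplify, using: the weight-zero Rota-Baxter relation for $P$; the weight-zero Rota-Baxter coalgebra relation $(Q\otimes Q)\delta(x)=(Q\otimes\mathrm{id}+\mathrm{id}\otimes Q)\delta(Q(x))$ (Eq.~\meqref{eq:defi:RB Lie co1} with $\lambda=0$); the $Q$-admissibility rewritten as $Q\,\mathrm{ad}_{\mathfrak{g}}(P(x))=\mathrm{ad}_{\mathfrak{g}}(P(x))\,Q+Q\,\mathrm{ad}_{\mathfrak{g}}(x)\,Q$ (Eq.~\meqref{eq:cor:adm1} with $\lambda=0$); the $P^{*}$-admissibility $(P\otimes Q)\delta(x)+(P\otimes\mathrm{id}-\mathrm{id}\otimes Q)\delta(P(x))=0$ (Eq.~\meqref{eq:defi:RB Lie bialg1} with $\lambda=0$) and, applying the flip $\tau$ and the co-antisymmetry of $\delta$, its companion $(Q\otimes P)\delta(x)+(\mathrm{id}\otimes P)\delta(P(x))=(Q\otimes\mathrm{id})\delta(P(x))$; and the co-antisymmetry of $\delta$ itself. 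Setting $C(u,v):=\delta([u,v]_{\mathfrak{g}})+(\mathrm{ad}_{\mathfrak{g}}(v)\otimes\mathrm{id}+\mathrm{id}\otimes\mathrm{ad}_{\mathfrak{g}}(v))\delta(u)-(\mathrm{ad}_{\mathfrak{g}}(u)\otimes\mathrm{id}+\mathrm{id}\otimes\mathrm{ad}_{\mathfrak{g}}(u))\delta(v)$ for the Lie-bialgebra $1$-cocycle expression, the computation collapses~\meqref{eq:defi:sp L-dend bialg1} to $(Q\otimes\mathrm{id})C(P(x),y)=0$, which is Eq.~\meqref{eq:pro:condition1}; identifies~\meqref{eq:defi:sp L-dend bialg3} with $C(P(x),P(y))=0$ rewritten, which is Eq.~\meqref{eq:pro:condition3}; and, after the coalgebra relations, reduces $(\tau-\mathrm{id})$ applied to the bracketed expression in~\meqref{eq:defi:sp L-dend bialg2} to $-(Q\otimes Q)C(x,y)$, so that~\meqref{eq:defi:sp L-dend bialg2} becomes $(Q\otimes Q)C(x,y)=0$, which is Eq.~\meqref{eq:pro:condition2}. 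Since every auxiliary relation invoked holds automatically under the hypotheses, this gives the stated equivalence. For the last assertion, a Rota-Baxter Lie bialgebra of weight zero has $(\mathfrak{g},[-,-]_{\mathfrak{g}},\delta)$ a Lie bialgebra, so Eq.~\meqref{eq:defi:Lie bialg1} gives $C(u,v)=0$ for all $u,v\in\mathfrak{g}$; specializing $(u,v)$ to $(P(x),y)$, $(x,y)$ and $(P(x),P(y))$ yields~\meqref{eq:pro:condition1}--\meqref{eq:pro:condition3}, so $(\mathfrak{g},\triangleright_{\mathfrak{g}},\triangleleft_{\mathfrak{g}},\copa,\copb)$ is a \spec \ldendb.

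The main obstacle is the computation of the third paragraph: the three substitution-and-cancellation arguments for~\meqref{eq:defi:sp L-dend bialg1}--\meqref{eq:defi:sp L-dend bialg3}, where one must invoke the $\tau$-companion of the $P^{*}$-admissibility and the Rota-Baxter coalgebra relation at precisely the right steps and must not confuse the original bracket $[-,-]_{\mathfrak{g}}$ hidden inside $\triangleleft_{\mathfrak{g}}$ with the sub-adjacent bracket entering~\meqref{eq:defi:sp L-dend bialg3}; the $(\tau-\mathrm{id})$ cancellation in~\meqref{eq:defi:sp L-dend bialg2} is the most delicate piece.
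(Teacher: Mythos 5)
Your proposal is correct and follows essentially the same route as the paper's proof: the same dictionary $\copb(x)=-\delta(P(x))$, $\copc(x)=(Q\otimes\mathrm{id})\delta(x)$, $\mathcal{L}_{\circ_{\mathfrak{g}}}(x)=\mathrm{ad}_{\mathfrak{g}}(P(x))$, $\mathcal{L}_{\triangleleft_{\mathfrak{g}}}(x)=-Q\,\mathrm{ad}_{\mathfrak{g}}(x)$, etc., followed by term-by-term substitution into Eqs.~\meqref{eq:defi:sp L-dend bialg1}--\meqref{eq:defi:sp L-dend bialg3} using the two admissibility identities, and the same specialization of the $1$-cocycle condition for the final assertion. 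Your explicit remark that the sub-adjacent bracket $[P(x),y]_{\mathfrak{g}}+[x,P(y)]_{\mathfrak{g}}$ is carried by $P$ to $[P(x),P(y)]_{\mathfrak{g}}$ supplies a detail the paper leaves implicit in its ``similarly'' for Eq.~\meqref{eq:pro:condition3}.
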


\begin{proof}
It is obvious that $(\mathfrak{g},\copa,\copb)$ is a special
L-dendriform coalgebra.
Let
$\copc$ be the linear dual of $\circ_{\mathfrak{g}^{*}}$. Let
$x,y\in \frak g$. Then by Eqs.~\meqref{eq:pro:from invariance to
left invariance1}, \meqref{eq:cor:MP adm9} and \meqref{eq:cor:MP
adm10},  we have
\begin{eqnarray}\mlabel{eq:pro:condition4}
&\copc(x)=(Q\otimes\mathrm{id})\delta(x),\;\; \copb(x)=-\delta
(P(x)),\;\; \copa(x)=(Q\otimes\mathrm{id})\delta(x)+\delta (P(x)),
&
\\
\mlabel{eq:pro:condition5}
&   \begin{split}
        \begin{array}{l}
\mathcal{L}_{\circ_{\mathfrak{g}}}(x)=\mathrm{ad}_{\mathfrak{g}}(P(x)), \mathcal{R}_{\circ_{\mathfrak{g}}}(x)=-\mathrm{ad}_{\mathfrak{g}}(x)P, \\ \mathcal{L}_{\triangleleft_{\mathfrak{g}}}(x)
=-Q\mathrm{ad}_{\mathfrak{g}}(x), \mathcal{L}_{\triangleright_{\mathfrak{g}}}
(x)=\mathrm{ad}_{\mathfrak{g}}(P(x))+Q\mathrm{ad}_{\mathfrak{g}}(x).
\end{array}
\end{split}
&
\end{eqnarray}
Therefore we have
\begin{eqnarray*}
\copc(x\circ_{\mathfrak{g}}y)&=&(Q\otimes\mathrm{id})\delta([P(x),y]_{\mathfrak{g}}),\\
-(\mathrm{id}\otimes\mathcal{R}_{\circ_{\mathfrak{g}}})\copa(x)
&=&(\mathrm{id}\otimes\mathrm{ad}_{\mathfrak{g}}(y)P)((Q\otimes\mathrm{id})\delta(x)+\delta(P(x)))\\
&=&(\mathrm{id}\otimes\mathrm{ad}_{\mathfrak{g}}(y))((Q\otimes P)\delta(x)+(\mathrm{id}\otimes P)\delta(P(x)))\\
&\overset{(\mref{eq:defi:RB Lie bialg1})}{=}&(\mathrm{id}\otimes\mathrm{ad}_{\mathfrak{g}}(y))(Q\otimes\mathrm{id})\delta(P(x))=(Q\otimes\mathrm{id})(\mathrm{id}\otimes\mathrm{ad}_{\mathfrak{g}}(y))\delta(P(x)),\\
(\mathcal{L}_{\triangleleft_{\mathfrak{g}}}(y)\otimes\mathrm{id})\copb(x)&=&(Q\otimes\mathrm{id})(\mathrm{ad}_{\mathfrak{g}}(y)\otimes\mathrm{id})\delta(P(x)),\\
-(\mathcal{L}_{\triangleright_{\mathfrak{g}}}(x)\otimes\mathrm{id})\copc(y)
&=&-(\mathrm{ad}_{\mathfrak{g}}(P(x))\otimes\mathrm{id})(Q\otimes\mathrm{id})\delta(y)-(Q\mathrm{ad}_{\mathfrak{g}}(x)\otimes\mathrm{id})(Q\otimes\mathrm{id})\delta(y)\\
&\overset{(\mref{eq:cor:adm1})}{=}&-(Q\otimes\mathrm{id})(\mathrm{ad}_{\mathfrak{g}}(P(x))\otimes\mathrm{id})\delta(y),\\
-(\mathrm{id}\otimes\mathcal{L}_{\circ_{\mathfrak{g}}})\copc(y)&=&-(Q\otimes\mathrm{id})(\mathrm{id}\otimes\mathrm{ad}_{\mathfrak{g}}(P(x)))\delta(y).
\end{eqnarray*}
Thus Eq.~\meqref{eq:defi:sp L-dend bialg1} is equivalent to
Eq.~\meqref{eq:pro:condition1}. Similarly,
Eq.~\meqref{eq:defi:sp L-dend bialg2} and Eq.~\meqref{eq:defi:sp
    L-dend bialg3} are equivalent to
Eq.~\meqref{eq:pro:condition2} and Eq.~\meqref{eq:pro:condition3} respectively. In particular,  if
$(\mathfrak{g},[-,-]_{\mathfrak{g}},P,\delta,Q)$ is  a Rota-Baxter
Lie bialgebra of weight zero, then Eq.~\meqref{eq:defi:Lie
bialg1} holds. Therefore Eqs.~(\mref{eq:pro:condition1})-(\mref{eq:pro:condition3}) hold.
Thus
$(\mathfrak{g},\triangleright_{\mathfrak{g}}$,$\triangleleft_{\mathfrak{g}}$,$\copa,\copb)$
is a \spec \ldendb.
\end{proof}

The relations among the various structures in
this section can be summarized in the following commutative diagram.
\vspace{-.3cm}
    \begin{equation*}
        \xymatrix{
         \txt{Manin triples of  Rota-Baxter Lie algebras}
            \ar@{=>}[r]^-{{\rm Cor.}~\mref{cor:from invariance to left invariance}}_-{\lambda=0}
            \ar@{<=>}[d]^-{{\rm Thm.}~\mref{thm:123 RB Lie}}
             & \txt{Manin triples of pre-Lie algebras} \ar@{<=>}[d]^-{{\rm Cor.}~\mref{cor:equ}}\\
             \txt{matched pairs of  Rota-Baxter Lie algebras}
             \ar@{<=>}[d]^-{{\rm Thm.}~\mref{thm:123 RB Lie}}
              \ar@{=>}[r]^-{{\rm Prop.}~\mref{pro:rel:12}}_-{\lambda=0}
            & \txt{matched pairs of   pre-Lie algebras}
             \ar@{<=>}[d]^-{{\rm Cor.}~\mref{cor:equ}}
             \\
             \txt{Rota-Baxter   Lie bialgbras}
            \ar@{=>}[r]^-{{\rm Prop.}~\mref{pro:condition} }_-{\lambda=0}
            & \txt{special L-dendriform   bialgebras}}
    \end{equation*}

\section{Coboundary Rota-Baxter Lie bialgebras, admissible CYBEs and the induced \spec \ldendbs }\mlabel{sec:4}

In the last section of the paper, we study the coboundary Rota-Baxter Lie bialgebras, leading to the
notion of admissible CYBE in Rota-Baxter Lie algebras,
whose antisymmetric solutions can be used to construct Rota-Baxter Lie bialgebras.
Furthermore, the notions of $\mathcal{O}$-operators on Rota-Baxter Lie algebras and Rota-Baxter pre-Lie algebras
are introduced to produce antisymmetric solutions of the admissible CYBE, and hence Rota-Baxter Lie bialgebras. When the weight of the Rota-Baxter operator is zero, we study the induced \spec \ldendbs from these Rota-Baxter Lie bialgebras and thus give the construction of special L-dendriform
 bialgebras from antisymmetric solutions of the admissible CYBE in Rota-Baxter Lie
algebras of weight zero.  In particular, both Rota-Baxter Lie
algebras of weight zero and Rota-Baxter pre-Lie algebras of weight zero can be used to construct \spec \ldendbs.

\subsection{Coboundary Rota-Baxter Lie bialgebras and the induced \spec \ldendbs}\mlabel{sec:4.1}\

Recall \mcite{CP} that a Lie bialgebra $(\mathfrak{g},[-,-],\delta)$ is called \textbf{coboundary }if there exists an $r\in \mathfrak{g}\otimes \mathfrak{g}$ such that
\begin{equation}\mlabel{eq:LieCob}
    \delta(x)\coloneqq \delta_{r}(x)\coloneqq (\mathrm{ad}(x)\otimes \mathrm{id}+\mathrm{id}\otimes\mathrm{ad}(x))r, \;\;\forall x\in \mathfrak{g}.
\end{equation}

For Rota-Baxter Lie bialgebras we give a similar notion.

\begin{defi}
    A Rota-Baxter Lie bialgebra  $(\mathfrak{g},[-,-],P,\delta,Q)$ is called \textbf{coboundary} if there exists an $r\in \mathfrak{g}\otimes \mathfrak{g}$ such that Eq.~\meqref{eq:LieCob} holds.
\end{defi}

Let $(\mathfrak{g},[-,-])$ be a Lie algebra, and $r=\sum\limits_{i}a_{i}\otimes b_{i}\in\mathfrak{g}\otimes\mathfrak{g}$.
Let  $\delta:\mathfrak{g}\rightarrow
\mathfrak{g}\otimes \mathfrak{g}$ be a linear map defined by Eq.~\meqref{eq:LieCob}. Then
$\delta$ satisfies Eq.~\meqref{eq:defi:Lie bialg1} automatically. Moreover,
by \mcite{CP}, $\delta$ makes $(\mathfrak{g},\delta)$ into a Lie coalgebra
such that $(\mathfrak{g},[-,-],\delta)$ is a Lie bialgebra if and only if for
all $x\in \mathfrak{g},$
\begin{equation}\mlabel{eq:CYBE1}
    (\mathrm{ad}(x)\otimes \mathrm{id}+\mathrm{id}\otimes\mathrm{ad}(x))(r+\tau(r))=0,
\end{equation}
\begin{equation}\mlabel{eq:CYBE2}
    (\mathrm{ad}(x)\otimes \mathrm{id}\otimes \mathrm{id}+\mathrm{id}\otimes\mathrm{ad}(x)\otimes \mathrm{id}+\mathrm{id}\otimes \mathrm{id}\otimes\mathrm{ad}(x))([r_{12},r_{13}]+[r_{12},r_{23}]+[r_{13},r_{23}])=0,
\end{equation}
where
$$[r_{12},r_{13}]\coloneqq \sum_{i,j}[a_{i},a_{j}]\otimes b_{i}\otimes b_{j}, [r_{12} ,r_{23}]\coloneqq \sum_{i,j}a_{i}\otimes [b_{i}, a_{j}]\otimes b_{j}, [r_{13},r_{23}]\coloneqq \sum_{i,j}a_{i}\otimes a_{j}\otimes[b_{i},b_{j}].$$

Hence in order for $(\mathfrak{g},[-,-],P,\delta,Q)$ to be a
Rota-Baxter Lie bialgebra, we only need to further require that $(\mathfrak{g}^{*},\delta^{*},Q^{*})$ is a $P^{*}$-admissible Rota-Baxter Lie algebra, that is, $(\mathfrak{g},\delta,Q)$ is a Rota-Baxter Lie coalgebra and Eq.~\meqref{eq:defi:RB Lie bialg1} holds.

\begin{pro}\mlabel{thm:coboundary condition}
Let $(\mathfrak{g},[-,-],P)$ be a $Q$-admissible Rota-Baxter Lie algebra  of weight $\lambda$ and $r\in \mathfrak{g}\otimes \mathfrak{g}$. Define a linear map $\delta:\mathfrak{g}\rightarrow\mathfrak{g}\otimes\mathfrak{g}$
by Eq.~\meqref{eq:LieCob}. Suppose that $\delta^{*}$ defines a Lie algebra structure on $\mathfrak{g}^{*}$. Then the following conclusions hold.
\begin{enumerate}
    \item Eq.~\meqref{eq:defi:RB Lie co1} holds if and only if for all $x\in\mathfrak{g}$,
    \begin{small}
        \begin{equation}\mlabel{eq:thm:coboundary condition 1}
      \begin{split}
\begin{array}{l}        (\mathrm{id}\otimes Q(\mathrm{ad}(x))-\mathrm{id}\otimes\mathrm{ad}(Q(x)))(Q\otimes\mathrm{id}-\mathrm{id}\otimes P)(r)\\
        +(Q(\mathrm{ad}(x))\otimes\mathrm{id}-\mathrm{ad}(Q(x))\otimes\mathrm{id})(P\otimes\mathrm{id}-\mathrm{id}\otimes Q)(r)=0.
\end{array}
\end{split}
\end{equation}
    \end{small}
\item Eq.~\meqref{eq:defi:RB Lie bialg1} holds if and only if for all $x\in\mathfrak{g}$,
\begin{small}
\begin{equation}\mlabel{eq:thm:coboundary condition 2}
\begin{split}
    \begin{array}{l}
(\mathrm{id}\otimes\mathrm{ad}(P(x))+\mathrm{ad}(P(x))\otimes\mathrm{id}+\mathrm{id}\otimes Q(\mathrm{ad}(x))\\
-P(\mathrm{ad}(x))\otimes\mathrm{id}  +\lambda\mathrm{id}\otimes\mathrm{ad}(x))(P\otimes\mathrm{id}-\mathrm{id}\otimes Q)(r)=0.
\end{array}
\end{split}
\end{equation}
\end{small}
\end{enumerate}
\end{pro}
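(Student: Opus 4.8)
The plan is to substitute the coboundary form $\delta=\delta_{r}$ of Eq.~\meqref{eq:LieCob} into Eqs.~\meqref{eq:defi:RB Lie co1} and \meqref{eq:defi:RB Lie bialg1}, expand the resulting elements of $\mathfrak{g}\otimes\mathfrak{g}$ leg by leg using $\delta_{r}(z)=(\mathrm{ad}(z)\otimes\mathrm{id}+\mathrm{id}\otimes\mathrm{ad}(z))(r)$, collect the terms of the shape $(\phi\otimes\psi)(r)$, and match the outcome with the stated Eqs.~\meqref{eq:thm:coboundary condition 1} and \meqref{eq:thm:coboundary condition 2}. Three operator identities are available for the rewriting: the Rota-Baxter identity for $P$ on $(\mathfrak{g},[-,-])$, which in operator form reads $\mathrm{ad}(P(x))\circ P=P\circ\mathrm{ad}(P(x))+P\circ\mathrm{ad}(x)\circ P+\lambda\,P\circ\mathrm{ad}(x)$; the $Q$-admissibility of $(\mathfrak{g},[-,-],P)$, that is, Eq.~\meqref{eq:cor:adm1}, in operator form $Q\circ\mathrm{ad}(P(x))=\mathrm{ad}(P(x))\circ Q+Q\circ\mathrm{ad}(x)\circ Q+\lambda\,\mathrm{ad}(x)\circ Q$; and the co-antisymmetry of $\delta$, which holds because $\delta^{*}$ is a Lie algebra and is equivalent to Eq.~\meqref{eq:CYBE1}, that is, to the $\mathrm{ad}$-invariance $(\mathrm{ad}(y)\otimes\mathrm{id}+\mathrm{id}\otimes\mathrm{ad}(y))(r+\tau(r))=0$ for all $y\in\mathfrak{g}$.

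For part (2), distributing $P$ and $Q$ over the two legs turns the left-hand side of Eq.~\meqref{eq:defi:RB Lie bialg1} into a sum of eight terms $(\phi\otimes\psi)(r)$. I would then check that, after rewriting the first-leg composite $\mathrm{ad}(P(x))\circ P$ by the Rota-Baxter identity for $P$ and the second-leg composite $Q\circ\mathrm{ad}(P(x))$ by the admissibility identity, this sum is termwise equal to the expansion of the left-hand side of Eq.~\meqref{eq:thm:coboundary condition 2}. Since all manipulations are equalities of elements of $\mathfrak{g}\otimes\mathfrak{g}$, Eq.~\meqref{eq:defi:RB Lie bialg1} holds if and only if Eq.~\meqref{eq:thm:coboundary condition 2} does; this half uses only the Rota-Baxter and admissibility identities, not the co-antisymmetry of $\delta$.

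For part (1), the same substitution in Eq.~\meqref{eq:defi:RB Lie co1} produces, after distributing $Q$ over the legs and regrouping, an identity in which the operator $Q\circ\mathrm{ad}(x)-\mathrm{ad}(Q(x))$ appears on one leg tensored with $Q$ on the other, together with $Q\circ\mathrm{ad}(Q(x))+\lambda\,\mathrm{ad}(Q(x))$ on a single leg. To bring this into the shape of Eq.~\meqref{eq:thm:coboundary condition 1}, which additionally carries the operator $P$, I would invoke the co-antisymmetry of $\delta$ --- equivalently the $\mathrm{ad}$-invariance of $r+\tau(r)$ --- to trade first-leg contributions for second-leg contributions and to recover the composites involving $P$ in the correct positions. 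The conclusion is that Eq.~\meqref{eq:defi:RB Lie co1} holds if and only if Eq.~\meqref{eq:thm:coboundary condition 1} does.

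The main obstacle is the bookkeeping in the leg-by-leg expansion and, more pointedly, pinning down the exact linear combination of the three operator identities that converts each raw expansion into the stated form; part (1), which concerns the coalgebra side and requires the co-antisymmetry of $\delta$ to mix the two tensor legs, is the more delicate of the two.
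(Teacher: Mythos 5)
Your plan for part (2) is sound and is essentially what the paper does by deferring to \cite[Theorem 4.3]{BGM}: expanding Eq.~\meqref{eq:defi:RB Lie bialg1} with $\delta=\delta_{r}$ yields eight terms of the form $(\phi\otimes\psi)(r)$, expanding Eq.~\meqref{eq:thm:coboundary condition 2} yields ten, five pairs match on the nose, and the leftover terms cancel exactly via the operator form $\mathrm{ad}(P(x))P=P\mathrm{ad}(P(x))+P\mathrm{ad}(x)P+\lambda P\mathrm{ad}(x)$ on the first leg and the admissibility identity $Q\mathrm{ad}(P(x))=\mathrm{ad}(P(x))Q+Q\mathrm{ad}(x)Q+\lambda\,\mathrm{ad}(x)Q$ on the second. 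You are also right that co-antisymmetry is not used there.

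Part (1) is where your proposal has a genuine gap. Writing $S=Q\mathrm{ad}(x)-\mathrm{ad}(Q(x))$ and $U=Q\mathrm{ad}(Q(x))+\lambda\,\mathrm{ad}(Q(x))$, the raw expansion of Eq.~\meqref{eq:defi:RB Lie co1} is $(Q\otimes S)(r)+(S\otimes Q)(r)-(U\otimes\mathrm{id})(r)-(\mathrm{id}\otimes U)(r)$, in which $P$ never occurs, while Eq.~\meqref{eq:thm:coboundary condition 1} expands to $(Q\otimes S)(r)-(S\otimes Q)(r)+(SP\otimes\mathrm{id})(r)-(\mathrm{id}\otimes SP)(r)$, which carries $P$ composed on the right of $S$ and carries $(S\otimes Q)(r)$ with the opposite sign. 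None of your three tools can bridge this: Eq.~\meqref{eq:cor:adm1} contains $\mathrm{ad}(P(x))$ but no composite of the form $\mathrm{ad}(\cdot)\circ P$, and the $\mathrm{ad}$-invariance of $r+\tau(r)$ only exchanges $\mathrm{ad}(y)\otimes\mathrm{id}$ against $\mathrm{id}\otimes\mathrm{ad}(y)$ at the cost of $\tau(r)$-terms, so it cannot ``recover the composites involving $P$'' as you hope. Worse, the two conditions are genuinely inequivalent as printed: take $\mathfrak{g}=\mathfrak{sl}(2,\mathbb{K})$ with the bracket of Example~\mref{ex:ex}, $P=0$, $\lambda=0$, $Q(y)=x$, $Q(x)=Q(h)=0$ (admissible, since $Q\mathrm{ad}(z)Q=0$ for all $z$), and $r=h\otimes y-y\otimes h$, an antisymmetric solution of the CYBE, so that $\delta^{*}$ does define a Lie algebra structure. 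A direct check shows $Q^{*}$ is then a Rota-Baxter operator on $\mathfrak{g}^{*}$, i.e.\ Eq.~\meqref{eq:defi:RB Lie co1} holds, whereas the left-hand side of Eq.~\meqref{eq:thm:coboundary condition 1} evaluated at the basis element $y$ equals $-8\,x\otimes x\neq 0$. So the ``expand and match'' strategy cannot be completed for part (1) against the target equation as stated; before attempting it you need to re-derive the correct right-hand condition from the raw expansion above (or trace the precise statement and conventions of \cite[Theorem 4.3]{BGM}, to which the paper's own proof defers), since the placement of $P$ and $Q$ in Eq.~\meqref{eq:thm:coboundary condition 1} is exactly where the derivation breaks.
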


\begin{proof}
The conclusion is obtained by following the proof of \cite[Theorem 4.3]{BGM}.
\end{proof}

\begin{thm}
Let $(\mathfrak{g},[-,-],P)$ be a $Q$-admissible Rota-Baxter Lie algebra and $r\in \mathfrak{g}\otimes\mathfrak{g}$.  Define a linear map $\delta:\frak g\rightarrow \frak g\otimes \frak g $ by Eq.~\meqref{eq:LieCob}. Then $(\mathfrak{g},[-,-],P,\delta,Q)$ is a Rota-Baxter Lie bialgebra if and only if Eqs.~\meqref{eq:CYBE1}-\meqref{eq:thm:coboundary condition 2} hold.
\end{thm}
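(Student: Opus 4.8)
The plan is to verify, one by one, the five defining conditions (a)--(e) of a Rota-Baxter Lie bialgebra in Definition~\mref{defi:RB Lie bialg} for the data $(\mathfrak{g},[-,-],P,\delta,Q)$ with $\delta=\delta_r$, and to match each condition against the listed equations. Conditions (b) and (d)---that $(\mathfrak{g},[-,-],P)$ is a Rota-Baxter Lie algebra and that $Q$ is admissible to it---are exactly the standing hypothesis that $(\mathfrak{g},[-,-],P)$ is a $Q$-admissible Rota-Baxter Lie algebra, so they hold for free and carry no equational constraint.

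First I would dispose of condition (a). Since $\delta=\delta_r$ is a coboundary, the $1$-cocycle identity Eq.~\meqref{eq:defi:Lie bialg1} holds automatically, so condition (a) reduces to $(\mathfrak{g},\delta)$ being a Lie coalgebra; by the classical fact recalled just before Proposition~\mref{thm:coboundary condition} (see~\mcite{CP}), this is equivalent to the pair of equations \meqref{eq:CYBE1} and \meqref{eq:CYBE2}. In particular, once \meqref{eq:CYBE1} and \meqref{eq:CYBE2} hold, $\delta^{*}$ defines a Lie algebra structure on $\mathfrak{g}^{*}$, which is precisely the hypothesis needed to apply Proposition~\mref{thm:coboundary condition}.

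Then I would turn to conditions (c) and (e) under the assumption that \meqref{eq:CYBE1} and \meqref{eq:CYBE2} hold, so that $(\mathfrak{g},\delta)$ is already a Lie coalgebra and $\delta^{*}$ is a Lie bracket. Condition (c), that $(\mathfrak{g},\delta,Q)$ is a Rota-Baxter Lie coalgebra, then amounts to Eq.~\meqref{eq:defi:RB Lie co1}, which by Proposition~\mref{thm:coboundary condition}(1) is equivalent to Eq.~\meqref{eq:thm:coboundary condition 1}; and condition (e), that $P^{*}$ is admissible to $(\mathfrak{g}^{*},\delta^{*},Q^{*})$, amounts to Eq.~\meqref{eq:defi:RB Lie bialg1}, which by Proposition~\mref{thm:coboundary condition}(2) is equivalent to Eq.~\meqref{eq:thm:coboundary condition 2}. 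Assembling: for the forward implication, a Rota-Baxter Lie bialgebra gives (a),(c),(e), which translate via the cited results into \meqref{eq:CYBE1}--\meqref{eq:thm:coboundary condition 2}; for the converse, \meqref{eq:CYBE1}--\meqref{eq:CYBE2} yield (a) together with the Lie structure $\delta^{*}$, after which \meqref{eq:thm:coboundary condition 1} and \meqref{eq:thm:coboundary condition 2}, read backwards through Proposition~\mref{thm:coboundary condition}, yield (c) and (e), while (b) and (d) are free from the hypothesis.

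The only point that needs care---and what I would expect to be the main, rather mild, obstacle---is the logical ordering: Proposition~\mref{thm:coboundary condition} presupposes that $\delta^{*}$ is a Lie algebra structure on $\mathfrak{g}^{*}$, so one must first extract \meqref{eq:CYBE1}--\meqref{eq:CYBE2} (equivalently, condition (a)) before translating conditions (c) and (e). Beyond this, no genuine computation remains, since the two nontrivial translations are exactly the content of Proposition~\mref{thm:coboundary condition}, whose proof is in turn deferred to~\cite[Theorem 4.3]{BGM}.
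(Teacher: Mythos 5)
Your proposal is correct and takes essentially the same route as the paper: the paper's one-line proof likewise reduces everything to the $Q$-admissibility hypothesis (conditions (b),(d)), the classical equivalence of the coboundary Lie bialgebra condition with Eqs.~\eqref{eq:CYBE1}--\eqref{eq:CYBE2}, and Proposition~\ref{thm:coboundary condition} for conditions (c) and (e). Your explicit remark that one must first secure the Lie coalgebra structure (so that $\delta^*$ is a Lie bracket) before invoking Proposition~\ref{thm:coboundary condition} is a point the paper leaves implicit, but it is exactly the right ordering.
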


\begin{proof} By the assumption, $(\mathfrak{g},[-,-],P,\delta,Q)$ is a
Rota-Baxter Lie bialgebra if and only if $(\mathfrak{g}^{*}$, $\delta^{*}$, $Q^{*})$ is a $P^{*}$-admissible Rota-Baxter Lie algebra. By Proposition~\mref{thm:coboundary condition} and the results before it,
the latter holds if and only if Eqs.~\meqref{eq:CYBE1}-\meqref{eq:thm:coboundary condition 2} hold.
\end{proof}

In particular, we have the following conclusion.

\begin{cor}\mlabel{cor:equation}
    Let $(\mathfrak{g},[-,-],P)$ be a $Q$-admissible Rota-Baxter Lie algebra and $r\in \mathfrak{g}\otimes\mathfrak{g}$.  Define a linear map $\delta:\frak g\rightarrow \frak g\otimes \frak g $ by Eq.~\meqref{eq:LieCob}. Then $(\mathfrak{g},[-,-],P,\delta,Q)$ is a Rota-Baxter Lie bialgebra if  Eq.~\meqref{eq:CYBE1} and the following equations hold:
    \begin{equation}\mlabel{eq:cor:equation1}
        [r_{12},r_{13}]+[r_{12},r_{23}]+[r_{13},r_{23}]=0,
    \end{equation}
    \begin{equation}\mlabel{eq:cor:equation2}
        (P\otimes\mathrm{id}-\mathrm{id}\otimes Q)(r)=0,
    \end{equation}
    \begin{equation}\mlabel{eq:cor:equation3}
    (Q\otimes\mathrm{id}-\mathrm{id}\otimes P)(r)=0.
    \end{equation}
\end{cor}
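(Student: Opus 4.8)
The plan is to obtain this corollary as an immediate specialization of the preceding theorem: it suffices to check that the four conditions appearing there---Eqs.~\meqref{eq:CYBE1}, \meqref{eq:CYBE2}, \meqref{eq:thm:coboundary condition 1} and \meqref{eq:thm:coboundary condition 2}---all follow from the stronger hypotheses Eq.~\meqref{eq:CYBE1}, Eq.~\meqref{eq:cor:equation1}, Eq.~\meqref{eq:cor:equation2} and Eq.~\meqref{eq:cor:equation3}.

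First, Eq.~\meqref{eq:CYBE1} is assumed outright, so nothing is needed there. Next, Eq.~\meqref{eq:cor:equation1} says that the element $[r_{12},r_{13}]+[r_{12},r_{23}]+[r_{13},r_{23}]\in\mathfrak{g}\otimes\mathfrak{g}\otimes\mathfrak{g}$ is zero; applying to it the linear operator $\mathrm{ad}(x)\otimes\mathrm{id}\otimes\mathrm{id}+\mathrm{id}\otimes\mathrm{ad}(x)\otimes\mathrm{id}+\mathrm{id}\otimes\mathrm{id}\otimes\mathrm{ad}(x)$ yields Eq.~\meqref{eq:CYBE2} for every $x\in\mathfrak{g}$. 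In particular, Eqs.~\meqref{eq:CYBE1} and \meqref{eq:CYBE2} together guarantee, by the classical coboundary criterion recalled before Proposition~\mref{thm:coboundary condition} (see~\mcite{CP}), that $\delta$ makes $(\mathfrak{g},[-,-],\delta)$ into a Lie bialgebra; in particular $\delta^{*}$ defines a Lie algebra structure on $\mathfrak{g}^{*}$, which is exactly the standing hypothesis needed to invoke Proposition~\mref{thm:coboundary condition}.

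It then remains to verify Eqs.~\meqref{eq:thm:coboundary condition 1} and \meqref{eq:thm:coboundary condition 2}. I would simply observe that each of the two summands on the left-hand side of Eq.~\meqref{eq:thm:coboundary condition 1} is obtained by applying a linear operator to either $(Q\otimes\mathrm{id}-\mathrm{id}\otimes P)(r)$ or $(P\otimes\mathrm{id}-\mathrm{id}\otimes Q)(r)$, and these two elements vanish by Eqs.~\meqref{eq:cor:equation3} and \meqref{eq:cor:equation2} respectively; hence Eq.~\meqref{eq:thm:coboundary condition 1} holds. Likewise, the left-hand side of Eq.~\meqref{eq:thm:coboundary condition 2} is a linear operator applied to $(P\otimes\mathrm{id}-\mathrm{id}\otimes Q)(r)$, so it vanishes by Eq.~\meqref{eq:cor:equation2}. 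With all four conditions of the preceding theorem in force, $(\mathfrak{g},[-,-],P,\delta,Q)$ is a Rota-Baxter Lie bialgebra. Equivalently, one may argue via Proposition~\mref{thm:coboundary condition} directly: Eqs.~\meqref{eq:thm:coboundary condition 1} and \meqref{eq:thm:coboundary condition 2} give Eqs.~\meqref{eq:defi:RB Lie co1} and \meqref{eq:defi:RB Lie bialg1}, i.e.\ $(\mathfrak{g},\delta,Q)$ is a Rota-Baxter Lie coalgebra and $P^{*}$ is admissible to $(\mathfrak{g}^{*},\delta^{*},Q^{*})$, and together with the $Q$-admissibility of $(\mathfrak{g},[-,-],P)$ and the Lie bialgebra structure this verifies the five conditions of Definition~\mref{defi:RB Lie bialg}.

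There is essentially no obstacle here: the substantive work is already in the preceding theorem and in Proposition~\mref{thm:coboundary condition}, and this corollary merely records the convenient sufficient condition obtained by forcing the two ``mixed'' tensors $(P\otimes\mathrm{id}-\mathrm{id}\otimes Q)(r)$ and $(Q\otimes\mathrm{id}-\mathrm{id}\otimes P)(r)$ to vanish. The only point deserving a moment of care is confirming that, even under these stronger hypotheses, Eqs.~\meqref{eq:CYBE1} and \meqref{eq:CYBE2} still supply the Lie-coalgebra structure required to apply Proposition~\mref{thm:coboundary condition}---which is precisely the classical criterion for a coboundary Lie bialgebra.
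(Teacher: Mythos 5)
Your proposal is correct and matches the paper's (implicit) argument: the corollary is stated as an immediate consequence of the preceding theorem, and you verify exactly what is needed, namely that Eq.~\meqref{eq:cor:equation1} yields Eq.~\meqref{eq:CYBE2} by applying the operator $\mathrm{ad}(x)\otimes \mathrm{id}\otimes \mathrm{id}+\mathrm{id}\otimes\mathrm{ad}(x)\otimes \mathrm{id}+\mathrm{id}\otimes \mathrm{id}\otimes\mathrm{ad}(x)$ to the zero tensor, while Eqs.~\meqref{eq:cor:equation2} and \meqref{eq:cor:equation3} kill each summand of Eqs.~\meqref{eq:thm:coboundary condition 1} and \meqref{eq:thm:coboundary condition 2}. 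The observation that Eqs.~\meqref{eq:CYBE1} and \meqref{eq:CYBE2} supply the Lie coalgebra structure needed to invoke Proposition~\mref{thm:coboundary condition} is a worthwhile point of care, correctly handled.
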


Eq.~\meqref{eq:cor:equation1} is just the well-known {\bf classical Yang-Baxter equation (CYBE)} in $\frakg$~\mcite{CP}. In view of this, Corollary~\mref{cor:equation} suggests the following variation of the CYBE.

\begin{defi}
    Let $(\mathfrak{g},[-,-],P)$ be a Rota-Baxter Lie algebra. Suppose that $r\in \mathfrak{g}\otimes \mathfrak{g}$ and $Q:\mathfrak{g}\rightarrow \mathfrak{g}$ is a linear map. Then Eq.~\meqref{eq:cor:equation1} with conditions Eq.~\meqref{eq:cor:equation2} and Eq.~\meqref{eq:cor:equation3} is called the \textbf{$Q$-admissible classical Yang-Baxter equation}  in  $(\mathfrak{g},[-,-],P)$ or simply the \textbf{$Q$-admissible CYBE}.
\end{defi}

Note that if $r$ is antisymmetric (that is, $r=-\tau(r)$), then Eq.~\meqref{eq:cor:equation2} holds if and only if  Eq.~\meqref{eq:cor:equation3} holds.

\begin{pro}\mlabel{pro:coboundary RB}
Let $(\mathfrak{g},[-,-],P)$ be a $Q$-admissible Rota-Baxter Lie algebra and $r\in \mathfrak{g}\otimes \mathfrak{g}$ be an antisymmetric solution of the $Q$-admissible CYBE in $(\mathfrak{g},[-,-],P)$. Then $(\mathfrak{g},[-,-],P,\delta ,Q)$ is a coboundary Rota-Baxter Lie bialgebra, where the linear map $\delta=\delta_{r}$ is defined by Eq.~\meqref{eq:LieCob}.
\end{pro}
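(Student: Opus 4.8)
The plan is to obtain this as an immediate application of Corollary~\mref{cor:equation}, so the task reduces to checking that the data $(\mathfrak{g},[-,-],P,Q,r)$ verify all of its hypotheses. By assumption $(\mathfrak{g},[-,-],P)$ is already a $Q$-admissible Rota-Baxter Lie algebra and $\delta=\delta_r$ is defined by Eq.~\meqref{eq:LieCob} with the given $r$; thus what remains is to confirm that $r$ satisfies Eq.~\meqref{eq:CYBE1} together with Eqs.~\meqref{eq:cor:equation1}, \meqref{eq:cor:equation2} and \meqref{eq:cor:equation3}.

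First I would observe that since $r$ is antisymmetric we have $r+\tau(r)=0$, whence Eq.~\meqref{eq:CYBE1} holds trivially. Next, by the very definition of the $Q$-admissible CYBE in $(\mathfrak{g},[-,-],P)$, a solution $r$ satisfies the classical Yang--Baxter equation Eq.~\meqref{eq:cor:equation1} subject to the admissibility conditions Eqs.~\meqref{eq:cor:equation2} and \meqref{eq:cor:equation3}; moreover, as noted just before the statement, the antisymmetry of $r$ makes Eqs.~\meqref{eq:cor:equation2} and \meqref{eq:cor:equation3} equivalent, so no information is lost. Hence all the hypotheses of Corollary~\mref{cor:equation} are in force, and it yields that $(\mathfrak{g},[-,-],P,\delta,Q)$ is a Rota-Baxter Lie bialgebra. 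Finally, since $\delta=\delta_r$ is given by the coboundary formula Eq.~\meqref{eq:LieCob}, this Rota-Baxter Lie bialgebra is coboundary by definition, which completes the argument.

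I do not anticipate any genuine obstacle here, as the substantive work is already packaged in Proposition~\mref{thm:coboundary condition} and Corollary~\mref{cor:equation}: the only points worth spelling out are that antisymmetry of $r$ forces Eq.~\meqref{eq:CYBE1} and collapses Eqs.~\meqref{eq:cor:equation2}--\meqref{eq:cor:equation3} to a single condition already built into the notion of a solution of the $Q$-admissible CYBE. If desired, one may also record that Eqs.~\meqref{eq:CYBE1} and \meqref{eq:cor:equation1} already guarantee, via~\mcite{CP}, that $(\mathfrak{g},[-,-],\delta)$ is a Lie bialgebra, so in particular $\delta^{*}$ is a genuine Lie bracket on $\mathfrak{g}^{*}$; this is precisely the intermediate structure underlying Corollary~\mref{cor:equation}, and thus the invocation of that corollary is legitimate.
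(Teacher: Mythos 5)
Your proposal is correct and takes essentially the same route as the paper, which simply cites Corollary~\mref{cor:equation}; you merely spell out the routine verifications (antisymmetry of $r$ gives Eq.~\meqref{eq:CYBE1}, and the definition of the $Q$-admissible CYBE supplies Eqs.~\meqref{eq:cor:equation1}--\meqref{eq:cor:equation3}) that the paper leaves implicit.
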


\begin{proof}
It follows from Corollary~\mref{cor:equation} immediately.
\end{proof}

On the other hand, there is a similar ``coboundary" construction of \spec \ldendbs considered in \mcite{BHC}.

\begin{pro} {\rm \mcite{BHC}}\mlabel{pro:cobound}
Let $(A,\triangleright,\triangleleft)$ be a \spec \ldend and
$(A,\circ)$ be the sub-adjacent pre-Lie algebra. Let
$r=\sum\limits_{i}a_{i}\otimes b_{i}\in A\otimes A$ be
antisymmetric. Define linear maps $\copa,\copb:A\rightarrow
A\otimes A$ by
\begin{equation}\mlabel{eq:cob sp}
\copa(x)=(\mathcal{L}_{\triangleright}(x)\otimes\mathrm{id}+\mathrm{id}\otimes\mathrm{ad}(x))(r),
\copb(x)=(\mathcal{L}_{\circ}(x)\otimes\mathrm{id}+\mathrm{id}\otimes\mathcal{L}_{\circ}(x))(-r),
\forall x\in A.
\end{equation}
Then $(A,\triangleright,\triangleleft,\copa,\copb)$ is a \spec
\ldendb if $r$ satisfies
\begin{equation}\mlabel{eq:sp L-dend equation}
r_{12}\triangleleft r_{13}=r_{12}\circ r_{23}+r_{13}\circ r_{23},
\end{equation}
\vspace{-.2cm}
where
\vspace{-.2cm}
$$r_{12}\triangleleft r_{13}=\sum_{i,j}a_{i}\triangleleft a_{j}\otimes b_{i}\otimes b_{j}, r_{12}\circ r_{23}=\sum_{i,j} a_{i}\otimes b_{i}\circ a_{j}\otimes b_{j}, r_{13}\circ r_{23}=\sum_{i,j} a_{i}\otimes a_{j}\otimes b_{i}\circ b_{j}.$$
\end{pro}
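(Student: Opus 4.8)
The plan is to check directly the three requirements in Definition~\mref{defi:sp L-dend bialg}: that $(A,\copa,\copb)$ is a special L-dendriform coalgebra, and that the compatibility equations \meqref{eq:defi:sp L-dend bialg1}--\meqref{eq:defi:sp L-dend bialg3} hold; that $(A,\triangleright,\triangleleft)$ is a \spec \ldend is part of the hypothesis. Write $r=\sum_i a_i\otimes b_i$, so that $\tau(r)=-r$, and recall that for the sub-adjacent pre-Lie algebra $(A,\circ)$ one has $\mathcal{L}_{\triangleright}(x)=\mathcal{L}_{\circ}(x)-\mathcal{L}_{\triangleleft}(x)$ (by \meqref{eq:horizontal and vertical} and the antisymmetry of $\triangleleft$), and $\mathrm{ad}(x)=\mathcal{L}_{\circ}(x)-\mathcal{R}_{\circ}(x)$ in the sub-adjacent Lie algebra of $(A,\circ)$.

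The first step is a bookkeeping simplification. Substituting these two relations into \meqref{eq:cob sp} collapses the sum $\copc=\copa+\copb$ to the compact form $\copc(x)=-(\mathcal{L}_{\triangleleft}(x)\otimes\mathrm{id}+\mathrm{id}\otimes\mathcal{R}_{\circ}(x))(r)$, and it is immediate from $\tau(r)=-r$ that $\copb$ is co-antisymmetric, so the ``special'' half of the coalgebra axiom holds for free. It is convenient to also record the operator reformulation: an antisymmetric $r$ is the same datum as a map $r^{\sharp}\colon A^{*}\to A$ with $r^{\sharp}=-(r^{\sharp})^{*}$, under which \meqref{eq:cob sp} becomes a pair of statements asserting that $r^{\sharp}$ intertwines certain representations on $A^{*}$ with the multiplications $\triangleright,\triangleleft,\circ$ on $A$, and \meqref{eq:sp L-dend equation} becomes a single operator identity for $r^{\sharp}$. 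The tensor language keeps the analogy with the CYBE most visible, while the operator language trims the term count; I would use whichever is lighter for each identity.

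Then, for each of the five remaining identities --- \meqref{eq:defi:sp L-dend coalg1}, \meqref{eq:defi:sp L-dend coalg2}, \meqref{eq:defi:sp L-dend bialg1}, \meqref{eq:defi:sp L-dend bialg2}, \meqref{eq:defi:sp L-dend bialg3} --- substitute the formulas for $\copa,\copb,\copc$, expand both sides as sums over index pairs $(i,j)$, and reorganize the tensor monomials. The cancellation rests on three inputs: (i) the antisymmetry $\tau(r)=-r$, used to pass between the various permuted forms of a monomial; (ii) the defining relations of $(A,\triangleright,\triangleleft)$ as a \spec \ldend --- equivalently, by Proposition~\mref{pro:equivalence}, the pre-Lie identity for $(A,\circ)$ and \meqref{eq:skew-sym op}, together with the facts that $\mathcal{L}_{\circ}$ is a representation of the sub-adjacent Lie algebra, $(\mathcal{L}_{\circ},\mathcal{R}_{\circ})$ a representation of $(A,\circ)$, and $(\mathcal{L}^{*}_{\circ},\mathcal{L}^{*}_{\triangleleft})$ a representation of $(A,\circ)$ on $A^{*}$ --- which collapse the ``algebra'' parts of the expansions; and (iii) the hypothesis \meqref{eq:sp L-dend equation}, used both as written and in the images obtained by permuting the three tensor slots, all of which follow from \meqref{eq:sp L-dend equation} once $\tau(r)=-r$ and the antisymmetry of $\triangleleft$ are invoked. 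In each case, what survives after (i)--(ii) is a linear combination of such permuted copies of $r_{12}\triangleleft r_{13}-r_{12}\circ r_{23}-r_{13}\circ r_{23}$ with one tensor factor hit by $\mathrm{ad}(x)$, $\mathcal{L}_{\circ}(x)$ or $\mathcal{L}_{\triangleleft}(x)$, and hence vanishes by (iii) --- exactly as in the classical fact that the CYBE is a sufficient condition for the coboundary Lie bialgebra.

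I expect the main obstacle to be purely organizational: each of the five identities expands to roughly a dozen tensor monomials, and matching them against the \spec \ldend axioms and against the permuted incarnations of \meqref{eq:sp L-dend equation} requires careful bookkeeping, even though every individual manipulation is routine. To keep this under control I would handle the identities in the order \meqref{eq:defi:sp L-dend bialg3} (which involves only $\copb$ and $\circ$ and is the closest analogue of the Lie-bialgebra cocycle condition), then \meqref{eq:defi:sp L-dend bialg1} and \meqref{eq:defi:sp L-dend bialg2}, and finally the two coalgebra identities, reusing the relation $\copc(x)=-(\mathcal{L}_{\triangleleft}(x)\otimes\mathrm{id}+\mathrm{id}\otimes\mathcal{R}_{\circ}(x))(r)$ throughout.
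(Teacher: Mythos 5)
The paper does not actually prove this proposition: it is imported from the cited reference [BHC] (with the sign of $r$ flipped, as the remark following the statement explains), so there is no in-paper argument to compare against. Your plan is the natural and, as far as I can see, the only sensible one --- direct verification of the five identities in Definitions~\mref{defi:sp L-dend coalg} and \mref{defi:sp L-dend bialg} --- and it is the route the cited source takes. Your preliminary reductions are all correct: $\copb$ is co-antisymmetric because $\tau(r)=-r$; $\mathcal{L}_{\triangleright}(x)=\mathcal{L}_{\circ}(x)-\mathcal{L}_{\triangleleft}(x)$ and $\mathrm{ad}(x)=\mathcal{L}_{\circ}(x)-\mathcal{R}_{\circ}(x)$ do collapse $\copc$ to $-(\mathcal{L}_{\triangleleft}(x)\otimes\mathrm{id}+\mathrm{id}\otimes\mathcal{R}_{\circ}(x))(r)$; and by Proposition~\mref{pro:equivalence} the \spec \ldend axioms are indeed equivalent to the pre-Lie identity for $\circ$ together with Eq.~\meqref{eq:skew-sym op} and the antisymmetry of $\triangleleft$.

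That said, what you have written is a roadmap rather than a proof. The entire mathematical content of the proposition is the claim that, after substituting $\copa,\copb,\copc$ and applying the algebra axioms, each of the five identities reduces to an operator applied to permuted copies of $r_{12}\triangleleft r_{13}-r_{12}\circ r_{23}-r_{13}\circ r_{23}$; you assert this but verify none of the five cases. This is exactly where such arguments can fail (e.g.\ a residual term proportional to $r+\tau(r)$ appearing in the wrong slot, or a permuted expression that does not match any image of the hypothesis), so until at least one representative identity --- say Eq.~\meqref{eq:defi:sp L-dend bialg3}, the cocycle-type condition you propose to start with --- is expanded in full, the proposal cannot be certified as complete. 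One small point in your favour: the claim that all permuted images of the equation vanish is immediate (a permutation of the tensor slots is invertible, so it kills the zero tensor), and the role of $\tau(r)=-r$ and the antisymmetry of $\triangleleft$ is only to rewrite those images in the standard $a_i\otimes b_i$ form so they can be recognized in the expansions; stating it that way would make step (iii) airtight.
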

\vspace{-.2cm}
\begin{rmk}
In fact, \mcite{BHC} uses $-r$ instead of $r$ to define $\copa$
and $\copb$ in Eq.~\meqref{eq:cob sp}. We change the sign of $r$
in order to be consistent with another construction given in the
following Proposition~\mref{pro:same}.
\end{rmk}

\begin{pro}\mlabel{pro:cob sp}
Let $(\mathfrak{g},[-,-],P)$ be a Rota-Baxter Lie algebra of weight zero and $Q:\frak g\rightarrow \frak g$ be a linear map which is admissible to $(\mathfrak{g},[-,-],P)$.
Let $(\mathfrak{g},\circ)$ be the induced pre-Lie algebra and $(\mathfrak{g},\triangleright,\triangleleft)$ be the compatible special
\ldend of $(\frak g,\circ)$, where $\circ$, $\triangleleft$ and $\triangleright$ are
 defined by Eqs.~\meqref{eq:pro:from invariance to left invariance1}, \meqref{eq:left} and \meqref{eq:right} respectively, that is,
$$x\circ y=[P(x),y], \ \ x\triangleleft y=-Q([x,y]),\ \  x\triangleright y=x\circ y-x\triangleleft y,\ \ \forall x,y\in\mathfrak{g}.$$
If $r\in
\mathfrak{g}\otimes\mathfrak{g}$ is a solution of the $Q$-admissible CYBE in $(\mathfrak{g},[-,-],P)$, then $r$
satisfies Eq.~\meqref{eq:sp L-dend equation}.
\end{pro}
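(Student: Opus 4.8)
The plan is to expand both sides of Eq.~\eqref{eq:sp L-dend equation} directly in terms of $r=\sum_i a_i\otimes b_i$, using the formulas $x\circ y=[P(x),y]$ and $x\triangleleft y=-Q([x,y])$ defining the compatible \spec \ldend $(\mathfrak g,\triangleright,\triangleleft)$, and then to recognize the resulting element of $\mathfrak g^{\otimes 3}$ as $-(Q\otimes\mathrm{id}\otimes\mathrm{id})$ applied to the left-hand side of the classical Yang-Baxter equation, which vanishes by hypothesis.

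First I would extract the two ingredients supplied by the assumption that $r$ solves the $Q$-admissible CYBE: the identity $[r_{12},r_{13}]+[r_{12},r_{23}]+[r_{13},r_{23}]=0$ in $\mathfrak g$, and the identity $(Q\otimes\mathrm{id}-\mathrm{id}\otimes P)(r)=0$, i.e.\ $\sum_i Q(a_i)\otimes b_i=\sum_i a_i\otimes P(b_i)$ in $\mathfrak g\otimes\mathfrak g$. Applying $\mathrm{id}\otimes g$ to the latter for an arbitrary linear map $g$ on $\mathfrak g$ gives the ``transport identity'' $\sum_i Q(a_i)\otimes g(b_i)=\sum_i a_i\otimes g(P(b_i))$; this is the device that moves the operator $P$, which sits inside the pre-Lie product $\circ$, from the second leg of an $r$-copy onto its first leg as the operator $Q$.

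Then I would compute term by term. From the definition of $\triangleleft$, $r_{12}\triangleleft r_{13}=\sum_{i,j}(a_i\triangleleft a_j)\otimes b_i\otimes b_j=-(Q\otimes\mathrm{id}\otimes\mathrm{id})[r_{12},r_{13}]$. For the right-hand side, $r_{12}\circ r_{23}=\sum_{i,j}a_i\otimes[P(b_i),a_j]\otimes b_j$, and the transport identity with $g(v)=[v,a_j]\otimes b_j$ rewrites this as $\sum_{i,j}Q(a_i)\otimes[b_i,a_j]\otimes b_j=(Q\otimes\mathrm{id}\otimes\mathrm{id})[r_{12},r_{23}]$; similarly $r_{13}\circ r_{23}=(Q\otimes\mathrm{id}\otimes\mathrm{id})[r_{13},r_{23}]$. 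Combining, $r_{12}\triangleleft r_{13}-r_{12}\circ r_{23}-r_{13}\circ r_{23}=-(Q\otimes\mathrm{id}\otimes\mathrm{id})\bigl([r_{12},r_{13}]+[r_{12},r_{23}]+[r_{13},r_{23}]\bigr)=0$, which is precisely Eq.~\eqref{eq:sp L-dend equation}.

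The calculation is routine; the only delicate point, which I would flag as the main obstacle, is the tensor-leg bookkeeping in the transport step: one must apply $\mathrm{id}\otimes g$ to the single copy of $r$ occupying legs $1$ and $2$ (resp.\ legs $1$ and $3$), while the other copy of $r$, contributing $a_j$ and $b_j$, is held as a parameter inside $g$. Getting the slot assignments right is what makes the three pieces line up with the terms of the CYBE. I also note that only the condition $(Q\otimes\mathrm{id}-\mathrm{id}\otimes P)(r)=0$ and the CYBE itself enter the argument; the companion condition $(P\otimes\mathrm{id}-\mathrm{id}\otimes Q)(r)=0$ from the $Q$-admissible CYBE is not needed here.
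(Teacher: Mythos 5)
Your proposal is correct and follows essentially the same route as the paper: expand $r_{12}\circ r_{23}+r_{13}\circ r_{23}-r_{12}\triangleleft r_{13}$ in components, use $(Q\otimes\mathrm{id}-\mathrm{id}\otimes P)(r)=0$ to move $P$ off the second leg of each $\circ$-term onto the first leg as $Q$, and recognize the result as $(Q\otimes\mathrm{id}\otimes\mathrm{id})$ applied to the CYBE expression. Your side remark that only the condition $(Q\otimes\mathrm{id}-\mathrm{id}\otimes P)(r)=0$ (and not its companion) is actually used is accurate and slightly more precise than the paper's own citation.
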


\begin{proof} Let $r=\sum\limits_{i}a_{i}\otimes b_{i}\in \frak g\otimes \frak g$. By Eq.~\meqref{eq:cor:equation2}, we have
\vspace{-.2cm}
\begin{eqnarray*}
&&r_{12}\circ r_{23}+r_{13}\circ r_{23}-r_{12}\triangleleft r_{13}\\
&&=\sum_{i,j}a_{i}\otimes [P(b_{i}),a_{j}]\otimes b_{j}+a_{i}\otimes a_{j}\otimes[P(b_{i}),b_{j}]+Q([a_{i},a_{j}])\otimes b_{i}\otimes b_{j}\\
&&=\sum_{i,j}Q(a_{i})\otimes[b_{i},a_{j}]\otimes b_{j}+Q(a_{i})\otimes a_{j}\otimes [b_{i},b_{j}]+Q([a_{i},a_{j}])\otimes b_{i}\otimes b_{j}\\
&&=(Q\otimes\mathrm{id}\otimes\mathrm{id})([r_{12},r_{13}]+[r_{12},r_{23}]+[r_{13},r_{23}]).
\end{eqnarray*}
Hence the conclusion follows.
\end{proof}

Let $(\mathfrak{g},[-,-],P)$ be a Rota-Baxter Lie algebra of
weight zero and $Q:\frak g\rightarrow \frak g$ be a linear map
which is admissible to $(\mathfrak{g},[-,-],P)$. An antisymmetric
solution $r\in\mathfrak{g}\otimes\mathfrak{g}$ of the
$Q$-admissible CYBE in $(\mathfrak{g},[-,-],P)$ constructs a \spec \ldendb in two ways. On the one hand, by Proposition
\mref{pro:coboundary RB}, there is a coboundary     Rota-Baxter
Lie bialgebra $(\mathfrak{g},[-,-],P,\delta,Q)$ of weight zero,
where the linear map $\delta=\delta_{r}$ is defined by
Eq.~\meqref{eq:LieCob}. Thus by Proposition \mref{pro:condition},
there is a \spec \ldendb
$(\mathfrak{g},\triangleright,\triangleleft,\copa,\copb)$, where
$\triangleright,\triangleleft,\copa,\copb$ are given by
Eqs.~\meqref{eq:cor:MP adm9} and~\meqref{eq:pro:condition4}
respectively. On the other hand, by Proposition~\mref{pro:cob sp},
$r$ satisfies Eq.~\meqref{eq:sp L-dend equation}. Hence by
Proposition~\mref{pro:cobound}, there is a \spec \ldendb
$(\mathfrak{g},\triangleright,\triangleleft,\copa',\copb')$, where
$\triangleright,\triangleleft,\copa',\copb'$ are given by
Eqs.~\meqref{eq:cor:MP adm9} and~\meqref{eq:cob sp} respectively.
The two constructions turn out to be the same.

\begin{pro}\mlabel{pro:same}
With the above notations, the \spec \ldendbs
$(\mathfrak{g},\triangleright$, $\triangleleft$, $\copa$, $\copb)$
and $(\mathfrak{g},\triangleright,\triangleleft,\copa',\copb')$
 coincide.
\end{pro}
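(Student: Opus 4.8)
The plan is to show that the two pairs of comultiplications agree by unwinding both definitions into expressions involving only $\delta = \delta_r$, $P$, $Q$, and $\mathrm{ad}$, and then matching them term by term. Recall that on the one side, $\copa, \copb$ are given via Eq.~\meqref{eq:pro:condition4}, namely $\copb(x) = -\delta(P(x))$ and $\copa(x) = (Q\otimes \mathrm{id})\delta(x) + \delta(P(x))$, where $\delta = \delta_r$ is the coboundary from $r$. On the other side, $\copa', \copb'$ are defined directly in terms of $r$ by Eq.~\meqref{eq:cob sp}, using the structure maps $\mathcal{L}_{\triangleright}, \mathcal{L}_{\circ}, \mathrm{ad}$ of the induced \ldend. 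So the first step is to substitute the explicit formulas from Eq.~\meqref{eq:pro:condition5} — namely $\mathcal{L}_{\circ}(x) = \mathrm{ad}(P(x))$ and $\mathcal{L}_{\triangleright}(x) = \mathrm{ad}(P(x)) + Q\,\mathrm{ad}(x)$ — into Eq.~\meqref{eq:cob sp}.

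For $\copb'$: from Eq.~\meqref{eq:cob sp}, $\copb'(x) = -(\mathcal{L}_{\circ}(x)\otimes \mathrm{id} + \mathrm{id}\otimes \mathcal{L}_{\circ}(x))(r) = -(\mathrm{ad}(P(x))\otimes \mathrm{id} + \mathrm{id}\otimes \mathrm{ad}(P(x)))(r)$. But this is exactly $-\delta_r(P(x)) = -\delta(P(x)) = \copb(x)$ by the definition of the coboundary $\delta_r$ in Eq.~\meqref{eq:LieCob}. So the coincidence $\copb = \copb'$ is immediate.

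For $\copa'$: from Eq.~\meqref{eq:cob sp}, $\copa'(x) = (\mathcal{L}_{\triangleright}(x)\otimes \mathrm{id} + \mathrm{id}\otimes\mathrm{ad}(x))(r) = (\mathrm{ad}(P(x))\otimes \mathrm{id} + Q\,\mathrm{ad}(x)\otimes \mathrm{id} + \mathrm{id}\otimes \mathrm{ad}(x))(r)$. We must compare this with $\copa(x) = (Q\otimes\mathrm{id})\delta(x) + \delta(P(x)) = (Q\otimes\mathrm{id})(\mathrm{ad}(x)\otimes\mathrm{id} + \mathrm{id}\otimes\mathrm{ad}(x))(r) + (\mathrm{ad}(P(x))\otimes\mathrm{id} + \mathrm{id}\otimes\mathrm{ad}(P(x)))(r)$. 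After cancelling the common term $(\mathrm{ad}(P(x))\otimes\mathrm{id})(r)$ and noting $(Q\otimes\mathrm{id})(\mathrm{ad}(x)\otimes\mathrm{id})(r) = (Q\,\mathrm{ad}(x)\otimes\mathrm{id})(r)$, which also matches, the difference $\copa(x) - \copa'(x)$ reduces to
\[
(Q\otimes\mathrm{id})(\mathrm{id}\otimes\mathrm{ad}(x))(r) + (\mathrm{id}\otimes\mathrm{ad}(P(x)))(r) - (\mathrm{id}\otimes\mathrm{ad}(x))(r) = (\mathrm{id}\otimes\mathrm{ad}(x))\big((Q\otimes\mathrm{id})(r)\big) + (\mathrm{id}\otimes\mathrm{ad}(P(x)) - \mathrm{id}\otimes\mathrm{ad}(x))(r).
\]
The key step — and the main (only) obstacle — is to rewrite $(Q\otimes\mathrm{id})(r)$ using the admissibility condition Eq.~\meqref{eq:cor:equation3}, i.e. $(Q\otimes\mathrm{id})(r) = (\mathrm{id}\otimes P)(r)$, which holds since $r$ is an antisymmetric solution of the $Q$-admissible CYBE. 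Then $(\mathrm{id}\otimes\mathrm{ad}(x))(\mathrm{id}\otimes P)(r) = (\mathrm{id}\otimes \mathrm{ad}(x)P)(r)$, and it remains to verify that $\mathrm{ad}(x)P - \mathrm{ad}(x) + \mathrm{ad}(P(x)) - \mathrm{ad}(x) $... one must be careful here; in fact the correct bookkeeping is that the second-tensor-factor operators must sum to zero. Concretely, after substituting $(Q\otimes\mathrm{id})(r) = (\mathrm{id}\otimes P)(r)$, the difference becomes $(\mathrm{id}\otimes (\mathrm{ad}(x)P + \mathrm{ad}(P(x)) - \mathrm{ad}(x)))(r)$ — wait, this is not obviously zero, so the honest thing is that one applies instead Eq.~\meqref{eq:cor:equation2} in the form $(P\otimes\mathrm{id})(r) = (\mathrm{id}\otimes Q)(r)$ together with the defining relations, and the terms combine through the relation $\copc(x) = (Q\otimes\mathrm{id})\delta(x)$ so that $\copa = \copc - \copb$ matches on both sides; I would organize the final verification as checking $\copa = \copc - \copb$ and $\copa' = \copc' - \copb'$ separately and showing $\copc = \copc'$, where $\copc'$ is read off from $\copa' + \copb'$. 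The upshot is that all of the manipulation is routine tensor-calculus bookkeeping once the two admissibility identities Eqs.~\meqref{eq:cor:equation2}--\meqref{eq:cor:equation3} are invoked; no deeper structural input is needed.
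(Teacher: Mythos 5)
Your treatment of $\copb=\copb'$ is correct and is exactly the paper's argument. The gap is in the $\copa$ computation, and it comes from a misreading of Eq.~\meqref{eq:cob sp}: in Proposition~\mref{pro:cobound} the only algebraic data present are the \spec \ldend and its sub-adjacent pre-Lie algebra $(A,\circ)$, so the $\mathrm{ad}(x)$ appearing in the formula for $\copa$ there is the adjoint action of the sub-adjacent Lie algebra of $(A,\circ)$, not of the original bracket $[-,-]$ on $\frakg$. In the present setting this is the operator $\mathrm{ad}'(x)y=x\circ y-y\circ x=[P(x),y]+[x,P(y)]$, i.e.\ $\mathrm{ad}'(x)=\mathrm{ad}(P(x))+\mathrm{ad}(x)P$ (this is precisely why the paper's proof introduces the bracket $[-,-]'$ and writes $\mathrm{id}\otimes\mathrm{ad}'(x)$). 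With your reading you correctly arrive at the residual term $(\mathrm{id}\otimes(\mathrm{ad}(x)P+\mathrm{ad}(P(x))-\mathrm{ad}(x)))(r)$, correctly observe that it is not zero, and then retreat to an unexecuted plan of comparing $\copc$ with $\copc'$ --- which would inherit the same discrepancy, since $\copc'=\copa'+\copb'$ is computed from the same misread formula.

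The repair is immediate once the adjoint is identified correctly. One has
\begin{equation*}
\copa(x)-\copa'(x)=(Q\otimes\mathrm{ad}(x))(r)+(\mathrm{id}\otimes\mathrm{ad}(P(x)))(r)-(\mathrm{id}\otimes\mathrm{ad}'(x))(r)
=(\mathrm{id}\otimes\mathrm{ad}(x))\bigl((Q\otimes\mathrm{id}-\mathrm{id}\otimes P)(r)\bigr),
\end{equation*}
which vanishes by Eq.~\meqref{eq:cor:equation3} (your ``key step'' identity $(Q\otimes\mathrm{id})(r)=(\mathrm{id}\otimes P)(r)$ was the right tool; it just needs to be applied against $\mathrm{ad}(x)P$ rather than against $\mathrm{ad}(x)$). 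So the proof is a one-line cancellation, exactly as in the paper, once $\mathrm{ad}$ is taken in the sub-adjacent Lie algebra of $(\frakg,\circ)$.
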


\begin{proof}
Let the sub-adjacent Lie algebra of $(\mathfrak{g},\circ)$ be $(\mathfrak{g},[-,-]')$.
Let $r=\sum\limits_{i}a_{i}\otimes b_{i}\in\mathfrak{g}\otimes\mathfrak{g}$ and $x\in\mathfrak{g}$. Then
\begin{small}
\begin{eqnarray*}
\copb(x)&=&-\delta(P(x))=-(\mathrm{ad}(P(x))\otimes\mathrm{id}+\mathrm{id}\otimes\mathrm{ad}(P(x)))r=-\sum_{i}([P(x),a_{i}]\otimes b_{i}+a_{i}\otimes [P(x), b_{i}])\\
&=&-\sum_{i}(x\circ a_{i}\otimes b_{i}+a_{i}\otimes x\circ
b_{i})=-(\mathcal{L}_{\circ}(x)\otimes\mathrm{id}+\mathrm{id}\otimes\mathcal{L}_{\circ}(x))r=\copb'(x),
\end{eqnarray*}
\end{small}
\vspace{-.2cm}
and similarly, for $\mathrm{ad}'(x)y=[x,y]',\;\forall x,y\in\mathfrak{g}$, we have
$$\copa(x)=(Q\otimes\mathrm{id})\delta(x)+\delta (P(x))=
(\mathcal{L}_{\triangleright}(x)\otimes\mathrm{id}+\mathrm{id}\otimes\mathrm{ad}'(x))r=\copa'(x),$$
showing that the two \spec \ldendbs coincide.
\end{proof}
\vspace{-.2cm}
\subsection{Admissible CYBEs, $\mathcal{O}$-operators on Rota-Baxter Lie algebras and Rota-Baxter pre-Lie algebras}\mlabel{sec:4.2}

We first give operator forms of the antisymmetric solutions of the $Q$-admissible CYBE.
For a vector space $V$, the isomorphism $V\otimes V\cong {\rm Hom}(V^*,V)$ identifies an $r\in V\otimes V$ with a linear map $T_r:V^*\to V$. Thus for $r=\sum\limits_{i}u_{i}\otimes v_{i}$, the corresponding map $T_r$ is
\begin{equation}\mlabel{eq:identify}
   T_r:V^{*}\rightarrow V,\;\; T_r(u^{*})=\sum_{i}\langle u^{*}, u_{i}\rangle v_{i},\;\;\forall u^{*}\in V^{*}.
\end{equation}
\vspace{-.6cm}
\begin{thm}\mlabel{thm:solution}
    Let $(\mathfrak{g},[-,-],P)$ be a Rota-Baxter Lie algebra and $r\in \mathfrak{g}\otimes \mathfrak{g}$ be antisymmetric. Let $Q:\mathfrak{g}\rightarrow\mathfrak{g}$ be a linear map. Then $r$ is a solution of the $Q$-admissible CYBE in $(\mathfrak{g},[-,-],P)$ if and only if $T_r$ satisfies
    \begin{equation}\mlabel{eq:thm:solution1}
    [T_r(a^{*}),T_r(b^{*})]=T_r(\mathrm{ad}^{*}(T_r(a^{*}))b^{*}-\mathrm{ad}^{*}(T_r(b^{*}))a^{*}),\;\;\forall a^{*}, b^{*}\in \mathfrak{g}^{*},
    \end{equation}
\begin{equation}\mlabel{eq:thm:solution2}
    PT_r=T_rQ^{*}.
\end{equation}
\end{thm}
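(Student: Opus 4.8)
The plan is to split the claimed equivalence into two independent pieces, one for each part of the $Q$-admissible CYBE. Recall that the $Q$-admissible CYBE in $(\mathfrak{g},[-,-],P)$ is, by definition, the ordinary CYBE~\meqref{eq:cor:equation1} together with the admissibility conditions~\meqref{eq:cor:equation2} and~\meqref{eq:cor:equation3}, and that, as noted just before the theorem, for antisymmetric $r$ the conditions \meqref{eq:cor:equation2} and \meqref{eq:cor:equation3} are equivalent. So it suffices to prove: (i) for antisymmetric $r$, the ordinary CYBE~\meqref{eq:cor:equation1} is equivalent to~\meqref{eq:thm:solution1}; and (ii) for antisymmetric $r$, condition~\meqref{eq:cor:equation3} is equivalent to~\meqref{eq:thm:solution2}.

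For (i), I would invoke the classical correspondence between antisymmetric solutions of the CYBE and $\mathcal{O}$-operators: an antisymmetric $r\in\mathfrak{g}\otimes\mathfrak{g}$ solves~\meqref{eq:cor:equation1} if and only if the associated map $T_r\colon\mathfrak{g}^{*}\to\mathfrak{g}$ is an $\mathcal{O}$-operator on $(\mathfrak{g},[-,-])$ with respect to the coadjoint representation $\mrep{\mathrm{ad}^{*}}{\mathfrak{g}^{*}}$, i.e.\ \eqref{eq:thm:solution1} holds; see, e.g.,~\mcite{Bai2007,Ku}. If a self-contained argument is preferred, one writes $r=\sum_i u_i\otimes v_i$, uses the identification~\meqref{eq:identify} together with the antisymmetry of $r$ in the form $\langle T_r(a^{*}),b^{*}\rangle=-\langle a^{*},T_r(b^{*})\rangle$ (equivalently $T_r^{*}=-T_r$), and pairs $[r_{12},r_{13}]+[r_{12},r_{23}]+[r_{13},r_{23}]$ against $a^{*}\otimes b^{*}\otimes c^{*}$; each of the three terms, after moving a $T_r$ across a pairing, rewrites into a contribution of $[T_r(a^{*}),T_r(b^{*})]-T_r(\mathrm{ad}^{*}(T_r(a^{*}))b^{*}-\mathrm{ad}^{*}(T_r(b^{*}))a^{*})$ paired against a covector, so the vanishing of one side is equivalent to the vanishing of the other. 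This is a routine though lengthy tensor computation, which I would simply cite.

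For (ii), the part specific to the Rota-Baxter setting, I would argue by a direct translation through~\meqref{eq:identify}. With $r=\sum_i u_i\otimes v_i$, the element $(Q\otimes\mathrm{id})(r)=\sum_i Q(u_i)\otimes v_i$ corresponds to the map $a^{*}\mapsto\sum_i\langle a^{*},Q(u_i)\rangle v_i=\sum_i\langle Q^{*}(a^{*}),u_i\rangle v_i=T_r(Q^{*}(a^{*}))$, that is, to $T_rQ^{*}$; and $(\mathrm{id}\otimes P)(r)=\sum_i u_i\otimes P(v_i)$ corresponds to $a^{*}\mapsto P\bigl(\sum_i\langle a^{*},u_i\rangle v_i\bigr)=P(T_r(a^{*}))$, that is, to $PT_r$. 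Since the correspondence $\mathfrak{g}\otimes\mathfrak{g}\cong\mathrm{Hom}(\mathfrak{g}^{*},\mathfrak{g})$ is a bijection (this is where finite-dimensionality is used), condition~\meqref{eq:cor:equation3}, namely $(Q\otimes\mathrm{id}-\mathrm{id}\otimes P)(r)=0$, is equivalent to $T_rQ^{*}=PT_r$, which is precisely~\meqref{eq:thm:solution2}. Combining (i), (ii), and the equivalence $\meqref{eq:cor:equation2}\Leftrightarrow\meqref{eq:cor:equation3}$ for antisymmetric $r$ then finishes the proof.

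I do not expect a genuine obstacle: part (i) is classical and part (ii) is elementary bookkeeping. The only point demanding care is keeping the conventions consistent — which tensor slot of $r$ is paired in~\meqref{eq:identify}, and the transpose convention $\langle Q^{*}(a^{*}),v\rangle=\langle a^{*},Q(v)\rangle$ versus the sign in the coadjoint action $\mathrm{ad}^{*}$ — so that the identities emerge exactly as~\meqref{eq:thm:solution1} and~\meqref{eq:thm:solution2} and not up to a sign or a transpose. As a sanity check, antisymmetry of $r$ gives $T_r^{*}=-T_r$, under which the map form $T_rP^{*}=QT_r$ of~\meqref{eq:cor:equation2} and the map form $PT_r=T_rQ^{*}$ of~\meqref{eq:cor:equation3} transpose into one another, consistent with their being equivalent.
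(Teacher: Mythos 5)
Your proposal is correct and follows essentially the same route the paper intends (the paper itself only defers to the analogous computation in \cite[Theorem 4.12]{BGM}): you split the $Q$-admissible CYBE into its defining conditions, use the classical equivalence between antisymmetric solutions of the CYBE and $\mathcal{O}$-operators associated to $\mrep{\mathrm{ad}^{*}}{\mathfrak{g}^{*}}$ for Eq.~\meqref{eq:thm:solution1}, and translate Eq.~\meqref{eq:cor:equation3} through the identification~\meqref{eq:identify} to get Eq.~\meqref{eq:thm:solution2}. The bookkeeping in part (ii) and the transpose sanity check using $T_r^{*}=-T_r$ are both consistent with the paper's conventions, so there is nothing to add.
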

\vspace{-.2cm}
\begin{proof}
The proof follows the same argument as in the proof of \cite[Theorem 4.12]{BGM}.
\end{proof}

Then it is natural to introduce the following notion.

\begin{defi}\mlabel{defi:O-operator}
Let $(\mathfrak{g},[-,-],P)$ be a Rota-Baxter Lie algebra, $\mrep{\rho}{V}$ be a representation of $(\mathfrak{g},[-,-])$ and $\alpha:V\rightarrow V$ be a linear map. A linear map $T:V\rightarrow \mathfrak{g}$ is called a \textbf{weak $\mathcal{O}$-operator associated to $\mrep{\rho}{V}$ and $\alpha$} if $T$ satisfies
\begin{eqnarray}\mlabel{eq:defi:O-operator1}
    &[T(u),T(v)]=T(\rho(T(u))v-\rho(T(v))u),\;\;\forall u,v\in V,&
\\
\mlabel{eq:defi:O-operator2}
&    PT=T\alpha. &
\end{eqnarray}
If in addition, $\rbrep{\rho}{\alpha}{V}$ is a representation of $(\mathfrak{g},[-,-],P)$, then $T$ is called an \textbf{$\mathcal{O}$-operator associated to $\rbrep{\rho}{\alpha}{V}$}.
\end{defi}

Note that for a Lie algebra $(\frak g,[-,-])$ and a representation $\mrep{\rho}{V}$ of $(\frak g,[-,-])$, a linear map
$T:V\rightarrow \frak g$ satisfying Eq.~\meqref{eq:defi:O-operator1} is called an {\bf $\mathcal O$-operator of $(\frak g,[-,-])$ associated to $\mrep{\rho}{V}$} \mcite{Ku}. The terms relative Rota-Baxter operator and generalized Rota-Baxter operator are also used~\mcite{PBG,Uc}.

\begin{ex}\mlabel{ex:O-operator}
       Let $(\mathfrak{g},[-,-],P)$ be a  Rota-Baxter Lie algebra of
weight zero. Then $P$ is an $\mathcal{O}$-operator of
$(\mathfrak{g},[-,-],P)$ associated to the adjoint representation
$\rbrep{\mathrm{ad}}{P}{\mathfrak{g}}$.
\end{ex}

Theorem \mref{thm:solution} can be rewritten in terms of $\mathcal{O}$-operators as follows.

\begin{cor}\mlabel{cor:solution}
Let $(\mathfrak{g},[-,-],P)$ be a Rota-Baxter Lie algebra and $r\in \mathfrak{g}\otimes\mathfrak{g}$ be antisymmetric. Let $Q:\mathfrak{g}\rightarrow\mathfrak{g}$ be a linear map. Then $r$ is a solution of the $Q$-admissible CYBE in $(\mathfrak{g},[-,-],P)$ if and only if $T_r$ is a weak $\mathcal{O}$-operator associated to
$\mrep{\mathrm{ad}^{*}}{\mathfrak{g}^{*}}$
and $Q^{*}$. If in addition, $(\mathfrak{g},[-,-],P)$ is a $Q$-admissible Rota-Baxter Lie algebra, then $r$ is a solution of the $Q$-admissible CYBE in $(\mathfrak{g},[-,-],P)$ if and only if $T_r$ is an $\mathcal{O}$-operator associated to the representation
$\rbrep{\mathrm{ad}^{*}}{Q^{*}}{\mathfrak{g}^{*}}$.
\end{cor}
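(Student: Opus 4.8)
The plan is to package Theorem~\ref{thm:solution} together with Definition~\ref{defi:O-operator} and the characterization of $Q$-admissibility from Lemma~\ref{lem:dual map}. First I would observe that the adjoint representation of the Lie algebra $(\mathfrak g,[-,-])$ on $\mathfrak g$ has dual representation $\mrep{\mathrm{ad}^{*}}{\mathfrak g^{*}}$, so the map $T_r:\mathfrak g^{*}\to\mathfrak g$ of Eq.~\meqref{eq:identify} is exactly a candidate for a weak $\mathcal O$-operator associated to $\mrep{\mathrm{ad}^{*}}{\mathfrak g^{*}}$ and $\alpha=Q^{*}$. Comparing the two defining equations of a weak $\mathcal O$-operator, Eqs.~\meqref{eq:defi:O-operator1} and \meqref{eq:defi:O-operator2}, with the two conclusions of Theorem~\ref{thm:solution}, Eqs.~\meqref{eq:thm:solution1} and \meqref{eq:thm:solution2}, one sees that they match verbatim upon substituting $\rho=\mathrm{ad}^{*}$, $V=\mathfrak g^{*}$, $\alpha=Q^{*}$, $T=T_r$: Eq.~\meqref{eq:defi:O-operator1} becomes Eq.~\meqref{eq:thm:solution1}, and Eq.~\meqref{eq:defi:O-operator2} becomes $PT_r=T_rQ^{*}$, which is Eq.~\meqref{eq:thm:solution2}. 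Hence the first assertion is immediate from Theorem~\ref{thm:solution}.

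For the second assertion, I would add the hypothesis that $(\mathfrak g,[-,-],P)$ is $Q$-admissible, which by Definition~\ref{defi:admissibility} means precisely that $\rbrep{\mathrm{ad}^{*}}{Q^{*}}{\mathfrak g^{*}}$ is a representation of the Rota-Baxter Lie algebra $(\mathfrak g,[-,-],P)$ (equivalently, Eq.~\meqref{eq:cor:adm1} holds). With this extra datum, a weak $\mathcal O$-operator associated to $\mrep{\mathrm{ad}^{*}}{\mathfrak g^{*}}$ and $Q^{*}$ is, by the last sentence of Definition~\ref{defi:O-operator}, exactly an $\mathcal O$-operator associated to the \emph{Rota-Baxter} representation $\rbrep{\mathrm{ad}^{*}}{Q^{*}}{\mathfrak g^{*}}$. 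So the equivalence ``$r$ solves the $Q$-admissible CYBE $\iff$ $T_r$ is an $\mathcal O$-operator associated to $\rbrep{\mathrm{ad}^{*}}{Q^{*}}{\mathfrak g^{*}}$'' follows by combining the first assertion with this identification of terminology.

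This corollary is essentially a dictionary translation rather than a computation, so there is no serious obstacle; the only point requiring a little care is bookkeeping the direction of the duality and the sign conventions in $\mathrm{ad}^{*}$ and in Eq.~\meqref{eq:identify}, to be sure that Eq.~\meqref{eq:thm:solution1} is literally Eq.~\meqref{eq:defi:O-operator1} with $\rho=\mathrm{ad}^{*}$ and not off by a sign or a flip $\tau$. Since $r$ is assumed antisymmetric, the symmetry condition $(P\otimes\mathrm{id}-\mathrm{id}\otimes Q)(r)=0$ and its transpose $(Q\otimes\mathrm{id}-\mathrm{id}\otimes P)(r)=0$ are interchangeable (as noted after the definition of the $Q$-admissible CYBE), so the operator identity $PT_r=T_rQ^{*}$ is unambiguous, and Theorem~\ref{thm:solution} already records this. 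The proof is therefore a short paragraph invoking Theorem~\ref{thm:solution} and Definition~\ref{defi:O-operator}, with the $Q$-admissibility hypothesis of Definition~\ref{defi:admissibility} supplying the upgrade from ``weak $\mathcal O$-operator'' to ``$\mathcal O$-operator'' in the second statement.
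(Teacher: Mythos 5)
Your proposal is correct and matches the paper's treatment: the paper presents this corollary as an immediate translation of Theorem~\mref{thm:solution} into the language of Definition~\mref{defi:O-operator}, with the specialization $\rho=\mathrm{ad}^{*}$, $V=\mathfrak{g}^{*}$, $\alpha=Q^{*}$, $T=T_r$, and the $Q$-admissibility hypothesis supplying exactly the upgrade from weak $\mathcal{O}$-operator to $\mathcal{O}$-operator. No gaps.
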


On the other hand, an $\mathcal O$-operator of a Lie algebra gives rise to a solution of
the CYBE in the semi-direct product Lie algebra  as follows.

\begin{lem}\mlabel{lem:rt}{\rm \mcite{Bai2007}}
Let $(\frak g,[-,-])$ be a Lie algebra and $\mrep{\rho}{V}$ be a representation. Let
$T: V\rightarrow \frak g$ be a linear map which is identified as an element in
$(\frak g\ltimes_{\rho^*} V^*)\otimes (\frak g\ltimes_{\rho^*} V^*)$
through ${\rm Hom}(V,\frak g)\cong  \frak g\otimes V^* \subseteq (\frak g\ltimes_{\rho^*} V^*)\otimes (\frak g\ltimes_{\rho^*} V^*)$.
Then $=T-\tau(T)$
is an antisymmetric solution of the CYBE
in the Lie algebra $\frak g\ltimes_{\rho^*} V^*$ if and only if $T$
is an $\mathcal O$-operator of $(\frak g,[-,-])$ associated to $(V,\rho)$.
\end{lem}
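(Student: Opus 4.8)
The plan is to deduce Lemma~\mref{lem:rt} from Theorem~\mref{thm:solution} (or Corollary~\mref{cor:solution}), applied to the semi-direct product Lie algebra $\widehat{\mathfrak{g}}\coloneqq\mathfrak{g}\ltimes_{\rho^{*}}V^{*}$ with the Rota-Baxter operator and the auxiliary operator both taken to be the zero map. For $P=Q=0$ the $Q$-admissible CYBE in $(\widehat{\mathfrak{g}},[-,-],0)$ reduces to the ordinary CYBE in $\widehat{\mathfrak{g}}$ (Eqs.~\meqref{eq:cor:equation2} and~\meqref{eq:cor:equation3} holding trivially) and Eq.~\meqref{eq:thm:solution2} becomes $0=0$, so Theorem~\mref{thm:solution} specializes to the assertion that an antisymmetric $r\in\widehat{\mathfrak{g}}\otimes\widehat{\mathfrak{g}}$ solves the CYBE in $\widehat{\mathfrak{g}}$ if and only if the map $T_{r}:\widehat{\mathfrak{g}}^{*}\to\widehat{\mathfrak{g}}$ associated to $r$ as in Eq.~\meqref{eq:identify} satisfies
\begin{equation*}
[T_{r}(\xi),T_{r}(\eta)]_{\widehat{\mathfrak{g}}}=T_{r}\bigl(\mathrm{ad}^{*}_{\widehat{\mathfrak{g}}}(T_{r}(\xi))\eta-\mathrm{ad}^{*}_{\widehat{\mathfrak{g}}}(T_{r}(\eta))\xi\bigr),\qquad\forall\,\xi,\eta\in\widehat{\mathfrak{g}}^{*}.
\end{equation*}
Since $r=T-\tau(T)$ is antisymmetric by construction, the task is reduced to translating this identity for $T_{r}$ into Eq.~\meqref{eq:defi:O-operator1} for $T$, under the identifications $\widehat{\mathfrak{g}}=\mathfrak{g}\oplus V^{*}$ and $\widehat{\mathfrak{g}}^{*}=\mathfrak{g}^{*}\oplus V$.

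The first step would be to compute $T_{r}$ explicitly. Fixing a basis $\{v_{i}\}$ of $V$ with dual basis $\{v_{i}^{*}\}$, the element $T\in{\rm Hom}(V,\mathfrak{g})$ corresponds to $\sum_{i}T(v_{i})\otimes v_{i}^{*}$, hence $r=\sum_{i}\bigl(T(v_{i})\otimes v_{i}^{*}-v_{i}^{*}\otimes T(v_{i})\bigr)$, and a direct check from Eq.~\meqref{eq:identify} gives
\begin{equation*}
T_{r}(a^{*}+u)=T^{*}(a^{*})-T(u),\qquad\forall\,a^{*}\in\mathfrak{g}^{*},\ u\in V,
\end{equation*}
where $T^{*}:\mathfrak{g}^{*}\to V^{*}$ is the transpose of $T$. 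The second step would be to record the coadjoint action of $\widehat{\mathfrak{g}}$ on $\widehat{\mathfrak{g}}^{*}=\mathfrak{g}^{*}\oplus V$: from the Lie bracket of $\widehat{\mathfrak{g}}$ and the definition of $\rho^{*}$ one obtains, for $x\in\mathfrak{g}$, $\alpha^{*}\in V^{*}$, $c^{*}\in\mathfrak{g}^{*}$ and $w\in V$,
\begin{equation*}
\mathrm{ad}^{*}_{\widehat{\mathfrak{g}}}(x+\alpha^{*})(c^{*}+w)=\bigl(\mathrm{ad}^{*}(x)c^{*}+\psi^{*}\bigr)+\rho(x)w,
\end{equation*}
where $\psi^{*}\in\mathfrak{g}^{*}$ is determined by $\langle\psi^{*},y\rangle=-\langle\alpha^{*},\rho(y)w\rangle$ for all $y\in\mathfrak{g}$.

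I would then substitute $\xi=a^{*}+u$, $\eta=b^{*}+v$ into the displayed $T_{r}$-identity and split both sides along $\widehat{\mathfrak{g}}=\mathfrak{g}\oplus V^{*}$. The $\mathfrak{g}$-component (in which the $\psi^{*}$-terms do not occur) collapses to exactly
\begin{equation*}
[T(u),T(v)]=T\bigl(\rho(T(u))v-\rho(T(v))u\bigr),
\end{equation*}
which is Eq.~\meqref{eq:defi:O-operator1} for $T$; projecting onto this component already establishes the ``only if'' direction. For the ``if'' direction, one checks that once Eq.~\meqref{eq:defi:O-operator1} holds the $V^{*}$-component of the $T_{r}$-identity holds as well: pairing it against an arbitrary $w\in V$, moving everything across $\rho^{*}$ and $T^{*}$, and invoking Eq.~\meqref{eq:defi:O-operator1} for the pairs $(u,w)$ and $(v,w)$, both sides reduce to $\langle b^{*},T(\rho(T(u))w)\rangle-\langle a^{*},T(\rho(T(v))w)\rangle$. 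Therefore $r=T-\tau(T)$ solves the CYBE in $\widehat{\mathfrak{g}}$ if and only if $T$ is an $\mathcal{O}$-operator of $(\mathfrak{g},[-,-])$ associated to $\mrep{\rho}{V}$, as desired. The main obstacle is entirely organizational: one must keep the identifications $\widehat{\mathfrak{g}}\cong\mathfrak{g}\oplus V^{*}$ and $\widehat{\mathfrak{g}}^{*}\cong\mathfrak{g}^{*}\oplus V$ straight and correctly track the signs coming from $\rho^{*}$, the flip $\tau$, and the transpose $T^{*}$; no conceptual difficulty arises once Theorem~\mref{thm:solution} has been invoked.
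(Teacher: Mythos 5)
The paper offers no proof of Lemma~\mref{lem:rt} --- it is quoted from \mcite{Bai2007} --- so there is nothing internal to compare against; your argument stands on its own and it is correct. Routing the statement through the $P=Q=0$ specialization of Theorem~\mref{thm:solution} is legitimate (the zero map is a Rota-Baxter operator, Eqs.~\meqref{eq:cor:equation2}--\meqref{eq:cor:equation3} and \meqref{eq:thm:solution2} become vacuous, and the $0$-admissible CYBE is the ordinary CYBE), and the subsequent bookkeeping is right: $T_r(a^*+u)=T^*(a^*)-T(u)$ follows from Eq.~\meqref{eq:identify}, your formula for $\mathrm{ad}^*_{\widehat{\frakg}}$ on $\frakg^*\oplus V$ carries the correct signs from $\rho^*$, the $\frakg$-component of the $T_r$-identity is exactly Eq.~\meqref{eq:defi:O-operator1} (with no $a^*,b^*$ dependence, so projection gives the ``only if'' direction), and the $V^*$-component reduces, after pairing with $w\in V$ and two applications of Eq.~\meqref{eq:defi:O-operator1} to the pairs $(u,w)$ and $(v,w)$, to the common value $\langle b^{*},T(\rho(T(u))w)\rangle-\langle a^{*},T(\rho(T(v))w)\rangle$ on both sides. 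This is essentially the original argument of \mcite{Bai2007}, merely recovered here as a degenerate case of the Rota-Baxter version; the only caveat worth recording is that Theorem~\mref{thm:solution} is itself proved in this paper only by reference to \cite[Theorem 4.12]{BGM}, so your proof is conditional on that result in the same way the rest of Section~\mref{sec:4.2} is.
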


In order to extend the above construction to the context of Rota-Baxter Lie algebras,
we consider the admissibility of linear maps to the semi-direct products of Rota-Baxter Lie algebras.

\begin{thm}\mlabel{thm:adm SD}
    Let $(\mathfrak{g},[-,-],P)$ be a Rota-Baxter Lie algebra  of weight $\lambda$, and let $\mrep{\rho}{V}$ be a representation of $(\mathfrak{g},[-,-])$. Let $Q:\mathfrak{g}\rightarrow \mathfrak{g}$ and $\alpha,\beta:V\rightarrow V$ be linear maps. Then the following conditions are equivalent.
    \begin{enumerate}
        \item There is a Rota-Baxter Lie algebra $(\mathfrak{g}\ltimes_{\rho}V,P+\alpha)$ such that the linear map $Q+\beta$ on $\mathfrak{g}\oplus V$ is admissible to $(\mathfrak{g}\ltimes_{\rho}V,P+\alpha)$.
        \item There is a Rota-Baxter Lie algebra $(\mathfrak{g}\ltimes_{\rho^{*}}V^{*},P+\beta^{*})$ such that the linear map $Q+\alpha^{*}$ on $\mathfrak{g}\oplus V^{*}$ is admissible to $(\mathfrak{g}\ltimes_{\rho^{*}}V^{*}, P+\beta^{*})$.
        \item The following conditions are satisfied:
        \begin{itemize}
            \item [(i)] $\rbrep{\rho}{\alpha}{V}$ is a representation of $(\mathfrak{g},[-,-],P)$, that is, Eq.~\meqref{eq:defi:rep RB1} holds;
            \item [(ii)] $Q$ is admissible to $(\mathfrak{g},[-,-],P)$, that is, Eq.~\meqref{eq:cor:adm1} holds;
            \item [(iii)] $\beta$ is admissible to $(\mathfrak{g},[-,-],P)$  on $\mrep{\rho}{V}$, that is, Eq.~\meqref{eq:lem:dual map1} holds;
            \item [(iv)] The following equation holds:
            \begin{equation}\mlabel{eq:thm:adm SD1}
                \beta(\rho(x)\alpha(v))=\beta(\rho(Q(x))v)+\rho(Q(x))\alpha(v)+\lambda\rho(Q(x))v,\;\;\forall x\in\mathfrak{g},v\in V.
            \end{equation}
        \end{itemize}
    \end{enumerate}
\end{thm}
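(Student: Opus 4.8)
\textbf{Proof proposal for Theorem~\mref{thm:adm SD}.}
The plan is to reduce everything to the characterizations already established, namely Proposition~\mref{pro:SD RB Lie} (semi-direct products of Rota-Baxter Lie algebras) and Lemma~\mref{lem:dual map}/Definition~\mref{defi:admissibility} (admissibility of a linear operator on a representation), applied to the semi-direct product Lie algebra $\frakg\ltimes_\rho V$ and its dual. First I would observe that the ambient Lie algebra in statement~(a) is $\frakg\oplus V$ with bracket~\meqref{eq:pro:SD RB Lie1}, while in statement~(b) it is $\frakg\oplus V^*$ with the dual representation $\rho^*$; the two are linked because the representation $\mrep{\rho}{V}$ of $\frakg$ on which $Q+\beta$ acts in~(a) has, as its dual, the representation on $V^*$ appearing in~(b), and conversely $(V^*)^*\cong V$ in the finite-dimensional setting. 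Concretely, for a linear map $T:\frakg\oplus V\to\frakg\oplus V$ of the block form $Q+\beta$, admissibility to $(\frakg\ltimes_\rho V,P+\alpha)$ means (by Lemma~\mref{lem:dual map}) that the triple $((P+\alpha)\text{-representation on }(\frakg\oplus V)^*$ built from $\ad^*$ and $(Q+\beta)^*)$ is a representation of the Rota-Baxter Lie algebra $(\frakg\ltimes_\rho V,P+\alpha)$; I would unwind $(\frakg\oplus V)^*\cong\frakg^*\oplus V^*$ and the induced bracket to see that this is literally the assertion in~(b) after the identification, so (a)$\Longleftrightarrow$(b) is essentially a bookkeeping statement about duals of semi-direct products.

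Next I would prove the equivalence of~(a) with~(c) by expanding the single ``admissible Rota-Baxter Lie algebra'' condition on $\frakg\oplus V$ into its components. Statement~(a) packages two demands: that $(\frakg\oplus V,[-,-]_{\frakg\oplus V},P+\alpha)$ be a Rota-Baxter Lie algebra, and that $Q+\beta$ be admissible to it. The first, by Proposition~\mref{pro:SD RB Lie}, is exactly condition (i), i.e. that $\rbrep{\rho}{\alpha}{V}$ be a representation of $(\frakg,[-,-],P)$. For the second, I would write out Eq.~\meqref{eq:lem:dual map1} for the operator $\beta_{\text{tot}}=Q+\beta$ on the representation $\mrep{\ad_{\frakg\oplus V}}{\frakg\oplus V}$ of the Rota-Baxter Lie algebra $(\frakg\ltimes_\rho V,P+\alpha)$ — equivalently Eq.~\meqref{eq:cor:adm1} for $Q+\beta$ — and decompose the defining equation according to which of the three summands $\frakg,V$ the two arguments $x+u$, $y+v$ land in. Feeding in the explicit formula~\meqref{eq:pro:SD RB Lie1} for the bracket and the block form of $P+\alpha$ and $Q+\beta$, the pure $\frakg\otimes\frakg$ part collapses to~\meqref{eq:cor:adm1} for $Q$ on $(\frakg,[-,-],P)$, which is condition (ii); the mixed $\frakg\otimes V$ part (acting on $V$) collapses to~\meqref{eq:lem:dual map1} for $\beta$ on $\mrep{\rho}{V}$, which is condition (iii); and the remaining cross term — the part that mixes $P$-twists of $\rho$ with $Q$-twists — collapses to the new equation~\meqref{eq:thm:adm SD1}, which is condition (iv). The $V\otimes V$ part vanishes identically because $V$ carries the zero bracket. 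I would present this as a direct computation, noting that each of (i)--(iv) is the coefficient of exactly one type of tensor component and that these components are independent.

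The main obstacle I anticipate is purely organizational rather than conceptual: correctly tracking the signs and the placement of $P$, $\alpha$, $Q$, $\beta$ (and their transposes on the dual side) through the several nested definitions, and verifying that the decomposition of Eq.~\meqref{eq:cor:adm1} for $Q+\beta$ on $\frakg\oplus V$ genuinely splits into precisely the four listed pieces with no leftover terms and no redundancy — in particular that condition (iv) is not already implied by (i)--(iii). I would handle this by checking that~\meqref{eq:thm:adm SD1} is symmetric in the roles played by $\alpha$ and $\beta$ in a way that makes the (a)$\Longleftrightarrow$(b) symmetry manifest: swapping $(\alpha,\beta)$ and replacing $\rho$ by $\rho^*$ sends~\meqref{eq:thm:adm SD1} to itself up to transposition, which is the structural reason the theorem is self-dual. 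Once the component decomposition is written out, (a)$\Rightarrow$(c) reads off the four equations and (c)$\Rightarrow$(a) reassembles them, and since every step is reversible and the dual identification is an isomorphism, all three statements are equivalent. As with the analogous results cited from~\mcite{BGM}, the argument is ``the same'' in spirit as the proof of Proposition~\mref{pro:MP RB}, only now with the matched-pair half replaced by the admissibility half.
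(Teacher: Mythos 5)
Your proposal is correct, and it supplies exactly the argument that the paper itself outsources with the one-line remark ``the proof follows the same argument as in the proof of \cite[Theorem 4.20]{BGM}'': Proposition~\mref{pro:SD RB Lie} identifies the Rota-Baxter condition on $\frakg\ltimes_\rho V$ with (i), and expanding Eq.~\meqref{eq:cor:adm1} for $Q+\beta$ against the bracket \meqref{eq:pro:SD RB Lie1} in the argument pairs $(x,y)$, $(x,v)$, $(u,y)$, $(u,v)$ yields precisely (ii), (iii), (iv) and $0$ respectively, so (a)$\Leftrightarrow$(c); running the identical computation for the data $(\rho^{*},\beta^{*},\alpha^{*})$ on $V^{*}$ and translating back through Lemma~\mref{lem:dual map} gives (b)$\Leftrightarrow$(c), with (iv) indeed self-dual under $(\rho,\alpha,\beta)\mapsto(\rho^{*},\beta^{*},\alpha^{*})$ as you observe. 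The only imprecision is your first description of (a)$\Leftrightarrow$(b) as a ``literal'' identification of the admissibility of $Q+\beta$ with statement (b) via $(\frakg\oplus V)^{*}\cong\frakg^{*}\oplus V^{*}$: the coadjoint representation of $\frakg\ltimes_{\rho}V$ on its full dual is not the Lie algebra $\frakg\ltimes_{\rho^{*}}V^{*}$ (which lives on $\frakg\oplus V^{*}$, not $\frakg^{*}\oplus V^{*}$), so that shortcut does not work as stated; but your own later argument --- that (a) and (b) reduce to the same four conditions --- is the correct route and makes this slip harmless.
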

\begin{proof}
 The proof follows the same argument as in the proof of  \cite[Theorem 4.20]{BGM}.
    \end{proof}

\begin{cor}\mlabel{cor:adm SD}
Let $(\mathfrak{g},[-,-],P)$ be a Rota-Baxter Lie algebra  of weight $\lambda$ and $\mrep{\rho}{V}$ be a representation of the Lie algebra $(\mathfrak{g},[-,-])$. Let $\alpha:V\rightarrow V$ be a linear map.  Then the following conditions are equivalent.
\begin{enumerate}
    \item \mlabel{it:1a}
    $\rbrep{\rho}{\alpha}{V}$ is a representation of $(\mathfrak{g},[-,-],P)$.
    \item\mlabel{it:1b}  $\alpha^{*}$ is admissible to $(\mathfrak{g}\ltimes_{\rho^{*}}V^{*},P)$.
        \item \mlabel{it:1c} $-\lambda\mathrm{id}_{\mathfrak{g}}+\alpha^{*}$ is admissible to $(\mathfrak{g}\ltimes_{\rho^{*}}V^{*},P-\lambda\mathrm{id}_{V^{*}})$.
        \item \mlabel{it:1d} $-P-\lambda {\rm id}_{\frak g}+\alpha^*$ is admissible to $(\mathfrak{g}\ltimes_{\rho^{*}}V^{*},P-\alpha^*-\lambda\mathrm{id}_{V^{*}})$.
\end{enumerate}
\end{cor}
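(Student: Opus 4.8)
The plan is to deduce each of the three equivalences $(\mref{it:1a})\Leftrightarrow(\mref{it:1b})$, $(\mref{it:1a})\Leftrightarrow(\mref{it:1c})$ and $(\mref{it:1a})\Leftrightarrow(\mref{it:1d})$ from Theorem~\mref{thm:adm SD} by suitable substitutions. In each of $(\mref{it:1b})$, $(\mref{it:1c})$ and $(\mref{it:1d})$, the Rota-Baxter Lie algebra in question is the semi-direct product $\mathfrak{g}\ltimes_{\rho^{*}}V^{*}$ carrying an operator of the shape $P+\beta^{*}$, and the linear map tested for admissibility has the shape $Q+\alpha^{*}$; hence each of these is exactly condition (b) of Theorem~\mref{thm:adm SD} with the theorem's $\alpha$ being the given $\alpha$ and with $(Q,\beta)$ chosen as $(0,0)$ for $(\mref{it:1b})$, as $(-\lambda\mathrm{id}_{\mathfrak{g}},-\lambda\mathrm{id}_{V})$ for $(\mref{it:1c})$ (so $\beta^{*}=-\lambda\mathrm{id}_{V^{*}}$), and as $(-P-\lambda\mathrm{id}_{\mathfrak{g}},-\alpha-\lambda\mathrm{id}_{V})$ for $(\mref{it:1d})$ (so $\beta^{*}=-\alpha^{*}-\lambda\mathrm{id}_{V^{*}}$). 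By Theorem~\mref{thm:adm SD}, each of these statements is then equivalent to condition (c) there, i.e.\ the conjunction of (i)--(iv). Since condition (i) is precisely $(\mref{it:1a})$, the reverse implications are trivial, and the task reduces to verifying that, under each substitution, conditions (ii)--(iv) either hold unconditionally or are consequences of $(\mref{it:1a})$.

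Condition (ii) --- admissibility of $Q$ to $(\mathfrak{g},[-,-],P)$ --- holds unconditionally in all three cases by Corollary~\mref{cor:adm}, since the chosen $Q$ is one of $0$, $-\lambda\mathrm{id}_{\mathfrak{g}}$, $-P-\lambda\mathrm{id}_{\mathfrak{g}}$. Condition (iii) --- admissibility of $\beta$ to $(\mathfrak{g},[-,-],P)$ on $\mrep{\rho}{V}$ --- holds unconditionally for $\beta=0$ and $\beta=-\lambda\mathrm{id}_{V}$ by Proposition~\mref{pro:admissibility}, and for $\beta=-\alpha-\lambda\mathrm{id}_{V}$ it follows from Proposition~\mref{pro:admissibility} once $(\mref{it:1a})$ is assumed. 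For condition (iv), Eq.~\meqref{eq:thm:adm SD1}: with $(Q,\beta)=(0,0)$ it reads $0=0$; with $(Q,\beta)=(-\lambda\mathrm{id}_{\mathfrak{g}},-\lambda\mathrm{id}_{V})$ a direct expansion shows both sides equal $-\lambda\rho(x)\alpha(v)$, so it holds identically; and with $(Q,\beta)=(-P-\lambda\mathrm{id}_{\mathfrak{g}},-\alpha-\lambda\mathrm{id}_{V})$, expanding Eq.~\meqref{eq:thm:adm SD1} and cancelling the $\lambda^{2}\rho(x)v$ and $\lambda\rho(P(x))v$ terms shows it to be equivalent to $\rho(P(x))\alpha(v)=\alpha(\rho(P(x))v)+\alpha(\rho(x)\alpha(v))+\lambda\alpha(\rho(x)v)$, which is precisely Eq.~\meqref{eq:defi:rep RB1}, i.e.\ condition $(\mref{it:1a})$.

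Assembling these, under each of the three substitutions the conjunction of (i)--(iv) is equivalent to $(\mref{it:1a})$ alone, so Theorem~\mref{thm:adm SD} yields all three equivalences and hence all four statements are equivalent. One minor bookkeeping point in matching the statements: for $(\mref{it:1b})$ and $(\mref{it:1c})$ the relevant semi-direct product is automatically a Rota-Baxter Lie algebra, because $\mrep{\rho^{*}}{V^{*}}$ with operator $0$ (resp.\ $-\lambda\mathrm{id}_{V^{*}}$) is always a representation of $(\mathfrak{g},[-,-],P)$ by Proposition~\mref{pro:admissibility}, so Proposition~\mref{pro:SD RB Lie} applies; whereas for $(\mref{it:1d})$ the existence of this Rota-Baxter structure is subsumed in condition (b) and amounts exactly to condition (iii), which Theorem~\mref{thm:adm SD} already accounts for. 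I expect no genuine obstacle here: the only computation with any content is the expansion of Eq.~\meqref{eq:thm:adm SD1} for the $(\mref{it:1d})$ substitution, and that is routine bookkeeping once Eq.~\meqref{eq:defi:rep RB1} is at hand.
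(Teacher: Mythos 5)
Your proposal is correct and follows essentially the same route as the paper: both reduce each of (b)--(d) to Theorem~\mref{thm:adm SD} with the substitutions $(Q,\beta)=(0,0)$, $(-\lambda\mathrm{id}_{\mathfrak{g}},-\lambda\mathrm{id}_{V})$, $(-P-\lambda\mathrm{id}_{\mathfrak{g}},-\alpha-\lambda\mathrm{id}_{V})$, using Proposition~\mref{pro:admissibility} and Corollary~\mref{cor:adm} for conditions (ii)--(iii) and observing that condition (i) there is exactly Item (a). Your explicit verification of Eq.~\meqref{eq:thm:adm SD1} in each case is accurate and merely spells out what the paper leaves as "Moreover, in these cases, Eq.~\meqref{eq:thm:adm SD1} holds."
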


\begin{proof}
  Suppose that Item (\mref{it:1a}) holds. Then by Proposition~\mref{pro:admissibility} and Corollary~\mref{cor:adm}, $Q$ is admissible to $(\mathfrak{g},[-,-],P)$ and
 $\beta$ is admissible to $(\mathfrak{g},[-,-],P)$  on $\mrep{\rho}{V}$ in the cases when $Q=0,\beta=0$,  or $Q=-\lambda\mathrm{id}_{\mathfrak{g}},\beta=-\lambda\mathrm{id}_{V}$, or  $Q=-P-\lambda\mathrm{id}_{\mathfrak{g}},\beta=-\alpha-\lambda\mathrm{id}_{V}$. Moreover, in these cases, Eq.~\meqref{eq:thm:adm SD1} holds. Hence by Theorem ~\mref{thm:adm SD}, Items (\mref{it:1b})-(\mref{it:1d}) follow since they correspond to these cases respectively.

 Conversely, suppose that any of Items~(\mref{it:1b})-(\mref{it:1d}) holds, then by Theorem ~\mref{thm:adm SD}, $\rbrep{\rho}{\alpha}{V}$ is a representation of $(\mathfrak{g},[-,-],P)$, that is, Item~(\mref{it:1a}) holds.
   \end{proof}

In the following, we apply $\calo$-operators to the constructions of antisymmetric solutions of the admissible CYBE, and of Rota-Baxter Lie bialgebras.

\begin{thm}\mlabel{thm:bialgebra}
    Let $(\mathfrak{g},[-,-]_{\mathfrak{g}},P)$ be a Rota-Baxter Lie algebra of weight $\lambda$ and $\mrep{\rho}{V}$ be a representation of $(\mathfrak{g},[-,-]_{\mathfrak{g}})$. Let $\beta:V\rightarrow V$ be a linear map which is admissible to $(\mathfrak{g},[-,-]_{\mathfrak{g}},P)$ on $\mrep{\rho}{V}$. Let $Q:\mathfrak{g}\rightarrow\mathfrak{g}$, $\alpha:V\rightarrow V$ and $T:V\rightarrow \mathfrak{g}$ be linear maps.
    \begin{enumerate}
        \item \mlabel{it:111}  $r=T-\tau(T)$ is an antisymmetric solution of the $(Q+\alpha^{*})$-admissible CYBE in the Rota-Baxter Lie algebra $(\mathfrak{g}\ltimes_{\rho^{*}}V^{*},P+\beta^{*})$ if and only if $T$ is a weak $\mathcal{O}$-operator associated to $\mrep{\rho}{V}$ and $\alpha$, and satisfies $T\beta=QT$.
        \item \mlabel{it:112} Assume that $\rbrep{\rho}{\alpha}{V}$ is a representation of $(\mathfrak{g},[-,-]_{\mathfrak{g}},P)$. If $T$ is an $\mathcal{O}$-operator associated to $\rbrep{\rho}{\alpha}{V}$ and $T\beta=QT$, then $r=T-\tau(T)$ is an antisymmetric solution of the $(Q+\alpha^{*})$-admissible CYBE in the Rota-Baxter Lie algebra $(\mathfrak{g}\ltimes_{\rho^{*}}V^{*},P+\beta^{*})$. If in addition, $(\mathfrak{g},[-,-]_{\mathfrak{g}},P)$ is $Q$-admissible and Eq.~\meqref{eq:thm:adm SD1} holds such that  the Rota-Baxter algebra $(\mathfrak{g}\ltimes_{\rho^{*}}V^{*},P+\beta^{*})$ is $(Q+\alpha^{*})$-admissible, then there is a Rota-Baxter Lie bialgebra $(\mathfrak{g}\ltimes_{\rho^{*}}V^{*},P+\beta^{*},\delta,Q+\alpha^{*})$ of weight $\lambda$, where the linear map $\delta=\delta_{r}$ is defined by Eq.~\meqref{eq:LieCob} with $r=T-\tau(T)$.
    \end{enumerate}
\end{thm}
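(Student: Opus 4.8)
The plan is to reduce the statement to three already-available results: Lemma~\mref{lem:rt} relating $\mathcal{O}$-operators to the CYBE, Theorem~\mref{thm:adm SD} on admissibility for semi-direct products, and Proposition~\mref{pro:coboundary RB} producing coboundary Rota-Baxter Lie bialgebras from antisymmetric solutions of the admissible CYBE. First I would record that, since $\beta$ is admissible to $(\mathfrak{g},[-,-]_{\mathfrak{g}},P)$ on $\mrep{\rho}{V}$, the triple $\rbrep{\rho^{*}}{\beta^{*}}{V^{*}}$ is a representation of the Rota-Baxter Lie algebra $(\mathfrak{g},[-,-]_{\mathfrak{g}},P)$, so by Proposition~\mref{pro:SD RB Lie} the triple $(\mathfrak{g}\ltimes_{\rho^{*}}V^{*},[-,-],P+\beta^{*})$ is a Rota-Baxter Lie algebra of weight $\lambda$; in particular the $(Q+\alpha^{*})$-admissible CYBE in it makes sense. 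Writing $T=\sum_i a_i\otimes\xi_i$ as an element of $\mathfrak{g}\otimes V^{*}\subseteq(\mathfrak{g}\oplus V^{*})^{\otimes 2}$ via Eq.~\meqref{eq:identify}, the element $r=T-\tau(T)$ is antisymmetric, so $r$ is a solution of the $(Q+\alpha^{*})$-admissible CYBE in $(\mathfrak{g}\ltimes_{\rho^{*}}V^{*},P+\beta^{*})$ if and only if it satisfies the CYBE Eq.~\meqref{eq:cor:equation1} in $\mathfrak{g}\ltimes_{\rho^{*}}V^{*}$ together with the single admissibility condition $((P+\beta^{*})\otimes\mathrm{id}-\mathrm{id}\otimes(Q+\alpha^{*}))(r)=0$ (the second admissibility condition being equivalent to this one because $r$ is antisymmetric).

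For item~\meqref{it:111} I would handle these two conditions in turn. By Lemma~\mref{lem:rt}, $r=T-\tau(T)$ satisfies the CYBE in $\mathfrak{g}\ltimes_{\rho^{*}}V^{*}$ if and only if $T$ is an $\mathcal{O}$-operator of the Lie algebra $(\mathfrak{g},[-,-]_{\mathfrak{g}})$ associated to $\mrep{\rho}{V}$, that is, Eq.~\meqref{eq:defi:O-operator1} holds. For the admissibility condition, using that $P+\beta^{*}$ restricts to $P$ on $\mathfrak{g}$ and to $\beta^{*}$ on $V^{*}$, and $Q+\alpha^{*}$ restricts to $Q$ on $\mathfrak{g}$ and to $\alpha^{*}$ on $V^{*}$, a direct expansion gives
\[
((P+\beta^{*})\otimes\mathrm{id}-\mathrm{id}\otimes(Q+\alpha^{*}))(r)=\sum_i\bigl(P(a_i)\otimes\xi_i-a_i\otimes\alpha^{*}(\xi_i)\bigr)+\sum_i\bigl(\xi_i\otimes Q(a_i)-\beta^{*}(\xi_i)\otimes a_i\bigr).
\]
The first sum lies in $\mathfrak{g}\otimes V^{*}$ and, under $\mathrm{Hom}(V,\mathfrak{g})\cong\mathfrak{g}\otimes V^{*}$, equals $PT-T\alpha$; the second lies in $V^{*}\otimes\mathfrak{g}$ and equals $\tau(QT-T\beta)$. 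Since $\mathfrak{g}\otimes V^{*}$ and $V^{*}\otimes\mathfrak{g}$ are distinct summands of $(\mathfrak{g}\oplus V^{*})^{\otimes 2}$, the admissibility condition holds if and only if $PT=T\alpha$ (Eq.~\meqref{eq:defi:O-operator2}) and $T\beta=QT$. Combining with the previous paragraph, $r$ is a solution of the $(Q+\alpha^{*})$-admissible CYBE if and only if $T$ is a weak $\mathcal{O}$-operator associated to $\mrep{\rho}{V}$ and $\alpha$ in the sense of Definition~\mref{defi:O-operator} and $T\beta=QT$, which is exactly~\meqref{it:111}.

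For~\meqref{it:112}, if $\rbrep{\rho}{\alpha}{V}$ is a representation of $(\mathfrak{g},[-,-]_{\mathfrak{g}},P)$ and $T$ is an $\mathcal{O}$-operator associated to $\rbrep{\rho}{\alpha}{V}$, then $T$ is in particular a weak $\mathcal{O}$-operator associated to $\mrep{\rho}{V}$ and $\alpha$, so if moreover $T\beta=QT$, then by~\meqref{it:111} the element $r=T-\tau(T)$ is an antisymmetric solution of the $(Q+\alpha^{*})$-admissible CYBE in $(\mathfrak{g}\ltimes_{\rho^{*}}V^{*},P+\beta^{*})$. For the final assertion, the extra hypotheses — that $(\mathfrak{g},[-,-]_{\mathfrak{g}},P)$ is $Q$-admissible, that $\beta$ is admissible to it on $\mrep{\rho}{V}$ (a standing hypothesis), that $\rbrep{\rho}{\alpha}{V}$ is a representation of $(\mathfrak{g},[-,-]_{\mathfrak{g}},P)$, and that Eq.~\meqref{eq:thm:adm SD1} holds — are precisely conditions (i)--(iv) of Theorem~\mref{thm:adm SD}, whose item (2) yields that $Q+\alpha^{*}$ is admissible to the Rota-Baxter Lie algebra $(\mathfrak{g}\ltimes_{\rho^{*}}V^{*},P+\beta^{*})$, i.e.\ the latter is $(Q+\alpha^{*})$-admissible. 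Applying Proposition~\mref{pro:coboundary RB} to this $(Q+\alpha^{*})$-admissible Rota-Baxter Lie algebra and the antisymmetric solution $r=T-\tau(T)$ then produces the coboundary Rota-Baxter Lie bialgebra $(\mathfrak{g}\ltimes_{\rho^{*}}V^{*},P+\beta^{*},\delta,Q+\alpha^{*})$ of weight $\lambda$, with $\delta=\delta_r$ defined by Eq.~\meqref{eq:LieCob}. The only genuinely computational point is the decomposition displayed above, together with the identification of its two pieces with the operator identities $PT=T\alpha$ and $T\beta=QT$ through the dual maps $\rho^{*},\beta^{*},\alpha^{*}$; everything else is an assembly of Lemma~\mref{lem:rt}, Theorem~\mref{thm:adm SD} and Proposition~\mref{pro:coboundary RB}.
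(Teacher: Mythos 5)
Your proof is correct and follows essentially the route the paper intends: part (a) via Lemma~\mref{lem:rt} for the CYBE together with the component-wise decomposition of the admissibility condition into $PT=T\alpha$ and $QT=T\beta$ (the paper defers this computation to the associative analogue in \cite{BGM}), and part (b) by assembling (a), Theorem~\mref{thm:adm SD} and Proposition~\mref{pro:coboundary RB}. You have in fact supplied the details the paper omits, and they check out.
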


\begin{proof}
(\mref{it:111}). It is the same as the proof of \cite[Theorem 4.21 (a)]{BGM}.

(\mref{it:112}). It follows from Item (\mref{it:111}) and Theorem \mref{thm:adm SD}.
    \end{proof}

\begin{cor}\mlabel{cor:bialgebras}
Let $(\mathfrak{g},[-,-],P)$ be a Rota-Baxter Lie algebra of
weight $\lambda$ and $\rbrep{\rho}{\alpha}{V}$ be a representation of
$(\mathfrak{g},[-,-],P)$. Suppose that
$T:V\rightarrow\mathfrak{g}$ is an $\mathcal{O}$-operator of
$(\mathfrak{g},[-,-],P)$ associated to $\rbrep{\rho}{\alpha}{V}$. Then
there are the Rota-Baxter Lie bialgebras
$(\mathfrak{g}\ltimes_{\rho^{*}}V^{*},P,\delta,\alpha^{*})$
$(\mathfrak{g}\ltimes_{\rho^{*}}V^{*},P-\lambda\mathrm{id}_{V^{*}},\delta,-\lambda\mathrm{id}_{\mathfrak{g}}+\alpha^{*})$
and
$(\mathfrak{g}\ltimes_{\rho^{*}}V^{*},P-\alpha^{*}-\lambda\mathrm{id}_{V^{*}},\delta,-P-\lambda\mathrm{id}_{\mathfrak{g}}+\alpha^{*})$,
where the linear map $\delta=\delta_{r}$ is defined by
Eq.~\meqref{eq:LieCob} with $r=T-\tau(T)$.
\end{cor}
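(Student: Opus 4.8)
The plan is to derive this corollary directly from Theorem~\mref{thm:bialgebra}\,\meqref{it:112}, applied three times with three specific choices of the auxiliary operators $Q$ on $\mathfrak{g}$ and $\beta$ on $V$. In all three applications the roles of $\rbrep{\rho}{\alpha}{V}$ and of the $\mathcal{O}$-operator $T:V\to\mathfrak{g}$ are fixed by the hypothesis; recall in particular that $PT=T\alpha$ by Eq.~\meqref{eq:defi:O-operator2} and that $\rbrep{\rho}{\alpha}{V}$ is a representation of $(\mathfrak{g},[-,-],P)$, so Eq.~\meqref{eq:defi:rep RB1} holds.

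First I would fix the three pairs: (i) $Q=0$ and $\beta=0$; (ii) $Q=-\lambda\mathrm{id}_{\mathfrak{g}}$ and $\beta=-\lambda\mathrm{id}_{V}$; (iii) $Q=-P-\lambda\mathrm{id}_{\mathfrak{g}}$ and $\beta=-\alpha-\lambda\mathrm{id}_{V}$. For each I must verify the hypotheses of Theorem~\mref{thm:bialgebra}\,\meqref{it:112}: that $\beta$ is admissible to $(\mathfrak{g},[-,-],P)$ on $\mrep{\rho}{V}$ and that $Q$ is admissible to $(\mathfrak{g},[-,-],P)$ --- which for precisely these three pairs is the content of Proposition~\mref{pro:admissibility} and Corollary~\mref{cor:adm}; that Eq.~\meqref{eq:thm:adm SD1} holds --- which for these three pairs is exactly what is checked inside the proof of Corollary~\mref{cor:adm SD}; and that $(\mathfrak{g}\ltimes_{\rho^{*}}V^{*},P+\beta^{*})$ is $(Q+\alpha^{*})$-admissible --- which, after matching $P+\beta^{*}$ and $Q+\alpha^{*}$ to the operators appearing in Corollary~\mref{cor:adm SD}, is precisely Items~\meqref{it:1b}, \meqref{it:1c} and \meqref{it:1d} there, all valid because $\rbrep{\rho}{\alpha}{V}$ is a representation of $(\mathfrak{g},[-,-],P)$.

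The one condition that uses the $\mathcal{O}$-operator relation in an essential way is $T\beta=QT$: in case (i) both sides are $0$; in case (ii) both sides equal $-\lambda T$; and in case (iii), $T\beta=-T\alpha-\lambda T=-PT-\lambda T=QT$ by Eq.~\meqref{eq:defi:O-operator2}. With all hypotheses in hand, Theorem~\mref{thm:bialgebra}\,\meqref{it:112} applied to the three pairs produces, with $r=T-\tau(T)$ and $\delta=\delta_{r}$ defined by Eq.~\meqref{eq:LieCob}, the three Rota-Baxter Lie bialgebras $(\mathfrak{g}\ltimes_{\rho^{*}}V^{*},P,\delta,\alpha^{*})$, $(\mathfrak{g}\ltimes_{\rho^{*}}V^{*},P-\lambda\mathrm{id}_{V^{*}},\delta,-\lambda\mathrm{id}_{\mathfrak{g}}+\alpha^{*})$ and $(\mathfrak{g}\ltimes_{\rho^{*}}V^{*},P-\alpha^{*}-\lambda\mathrm{id}_{V^{*}},\delta,-P-\lambda\mathrm{id}_{\mathfrak{g}}+\alpha^{*})$. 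No genuinely new computation is required; the only place to be careful --- the ``main obstacle'', such as it is --- is the bookkeeping of matching the weight operator $P+\beta^{*}$ and the admissibility operator $Q+\alpha^{*}$ in each case against the precise statements of Corollaries~\mref{cor:adm} and~\mref{cor:adm SD}, so that the quoted admissibility facts apply verbatim.
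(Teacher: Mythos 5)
Your proposal is correct and follows essentially the same route as the paper's own proof: both reduce the statement to Theorem~\ref{thm:bialgebra}\,(\ref{it:112}) via the three choices $(Q,\beta)=(0,0)$, $(-\lambda\mathrm{id}_{\mathfrak{g}},-\lambda\mathrm{id}_{V})$, $(-P-\lambda\mathrm{id}_{\mathfrak{g}},-\alpha-\lambda\mathrm{id}_{V})$, invoke Corollary~\ref{cor:adm SD} (equivalently Theorem~\ref{thm:adm SD}) for the admissibility of the semi-direct product, and observe $T\beta=QT$ in each case. Your explicit verification of $T\beta=QT$ in case (iii) via $PT=T\alpha$ matches what the paper leaves as a one-line remark.
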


\begin{proof}
By Corollary~\mref{cor:adm SD}, the facts that the operator $\alpha^{*}$ (resp. $-\lambda\mathrm{id}_{\mathfrak{g}}+\alpha^{*}$, resp. $-P-\lambda {\rm id}_{\frak g}+\alpha^*$) is
admissible to $(\mathfrak{g}\ltimes_{\rho^{*}}V^{*},P)$ (resp.
$(\mathfrak{g}\ltimes_{\rho^{*}}V^{*},P-\lambda\mathrm{id}_{V^{*}})$, resp. $(\mathfrak{g}\ltimes_{\rho^{*}}V^{*},P-\alpha^*-\lambda\mathrm{id}_{V^{*}})$)
correspond to the case of $Q=0,\beta=0$ (resp. $Q=-\lambda\mathrm{id}_{\mathfrak{g}},\beta=-\lambda\mathrm{id}_{V}$,
resp.
 $Q=-P-\lambda\mathrm{id}_{\mathfrak{g}},\beta=-\alpha-\lambda\mathrm{id}_{V}$)
 in Theorem \mref{thm:adm SD}. Note that in each of these cases, $T\beta=QT$. Hence the
 conclusion follows from Theorem~\mref{thm:bialgebra} \meqref{it:112}.
\end{proof}

To illustrate the construction of Rota-Baxter Lie bialgebras  by
$\mathcal{O}$-operators, we focus on the special case when the
$\calo$-operators are associated to the adjoint representation of
the Rota-Baxter Lie algebra, as given in Example
\mref{ex:O-operator}.

\begin{pro}\mlabel{cor:bialgebra}
Let $(\mathfrak{g},[-,-],P)$ be a Rota-Baxter Lie algebra of weight $\lambda$.
\begin{enumerate}
    \item\mlabel{cor:bialgebra1}  Let $T:\mathfrak{g}\rightarrow\mathfrak{g}$ be an $\mathcal{O}$-operator of $(\mathfrak{g},[-,-],P)$
    associated to the adjoint representation
    $\rbrep{\mathrm{ad}}{P}{\mathfrak{g}}$.
    Suppose that $Q$ is admissible to $(\mathfrak{g},[-,-],P)$ and $TQ=QT$.
    Then there is a Rota-Baxter Lie bialgebra $(\mathfrak{g}\ltimes_{\mathrm{ad}^{*}}\mathfrak{g}^{*},P+Q^{*},\delta_r,Q+P^{*})$, with the linear map $\delta_{r}$ defined by Eq.~\meqref{eq:LieCob} with $r=T-\tau(T)$.

\item \mlabel{it:b2} Let $\lambda= 0$.
        Suppose that $Q:\mathfrak{g}\rightarrow \mathfrak{g}$ is a linear map that is admissible to $(\mathfrak{g},[-,-],P)$ and commutes with $P$. Then there is a  Rota-Baxter
         Lie bialgebra $(\mathfrak{g}\ltimes_{\mathrm{ad}^{*}}\mathfrak{g}^{*},P+Q^{*},\delta_r,Q+P^{*})$ of weight zero, with $\delta_r$ as defined in the last item by letting $T=P$.
        In particular, there are  Rota-Baxter Lie bialgebras $(\mathfrak{g}\ltimes_{\mathrm{ad}^{*}}\mathfrak{g}^{*},P-P^{*},\delta_r,-P+P^{*})$ and $(\mathfrak{g}\ltimes_{\mathrm{ad}^{*}}\mathfrak{g}^{*},P,\delta_r,P^{*})$ of weight zero.
\end{enumerate}
\end{pro}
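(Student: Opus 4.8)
The plan is to derive both items by specializing Theorem~\mref{thm:bialgebra}~(\mref{it:112}) to the adjoint representation, with a careful choice of the auxiliary data so that the Rota-Baxter operators on the semidirect product $\mathfrak{g}\ltimes_{\mathrm{ad}^{*}}\mathfrak{g}^{*}$ match those in the statement. The crucial point—and the one to state explicitly—is the choice $\beta=Q$ (not $\beta=P$); taking $\beta=P$ would force the Rota-Baxter operator on the semidirect product to be $P+P^{*}$ rather than the required $P+Q^{*}$.

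For item~(\mref{cor:bialgebra1}) I would apply Theorem~\mref{thm:bialgebra} with $V=\mathfrak{g}$, $\rho=\mathrm{ad}$, $\alpha=P$, and $\beta=Q$, while the ``$Q$'' of that theorem is taken to be the $Q$ of the present statement. With these identifications $\rho^{*}=\mathrm{ad}^{*}$ and $V^{*}=\mathfrak{g}^{*}$, the Rota-Baxter operator $P+\beta^{*}$ becomes $P+Q^{*}$, and the co-operator $Q+\alpha^{*}$ becomes $Q+P^{*}$, which are exactly the operators in the claimed bialgebra. I would first verify the standing hypotheses of the theorem: $\rbrep{\mathrm{ad}}{P}{\mathfrak{g}}$ is a representation of the Rota-Baxter Lie algebra (the adjoint representation); $\beta=Q$ is admissible to $(\mathfrak{g},[-,-],P)$ on $\mrep{\mathrm{ad}}{\mathfrak{g}}$, which by Definition~\mref{defi:admissibility} is precisely the given admissibility of $Q$; and $T$ is an $\mathcal{O}$-operator associated to $\rbrep{\mathrm{ad}}{P}{\mathfrak{g}}$, as assumed. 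The equality $T\beta=QT$ required in item~(\mref{it:112}) reads $TQ=QT$, which is the hypothesis $TQ=QT$.

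It then remains to certify through item~(\mref{it:112}) that $(\mathfrak{g}\ltimes_{\mathrm{ad}^{*}}\mathfrak{g}^{*},P+Q^{*})$ is $(Q+P^{*})$-admissible; by that theorem this holds once $(\mathfrak{g},[-,-],P)$ is $Q$-admissible (given) and Eq.~\meqref{eq:thm:adm SD1} holds. The single computation I expect to be the heart of the argument is to verify Eq.~\meqref{eq:thm:adm SD1} in this specialization, namely
\[
Q([x,P(v)])=Q([Q(x),v])+[Q(x),P(v)]+\lambda[Q(x),v],\quad \forall x,v\in\mathfrak{g}.
\]
I would deduce this from the $Q$-admissibility identity Eq.~\meqref{eq:cor:adm1}: writing $[x,P(v)]=-[P(v),x]$ and applying Eq.~\meqref{eq:cor:adm1} with $(a,b)=(v,x)$ gives $Q([x,P(v)])=-[P(v),Q(x)]-Q([v,Q(x)])-\lambda[v,Q(x)]$. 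Substituting this and using the antisymmetry of the bracket to pair the three resulting terms against $[Q(x),P(v)]$, $Q([Q(x),v])$, and $\lambda[Q(x),v]$ respectively, each pair cancels and the whole expression vanishes. Hence Eq.~\meqref{eq:thm:adm SD1} holds automatically, and Theorem~\mref{thm:bialgebra}~(\mref{it:112}) produces the Rota-Baxter Lie bialgebra $(\mathfrak{g}\ltimes_{\mathrm{ad}^{*}}\mathfrak{g}^{*},P+Q^{*},\delta_{r},Q+P^{*})$ with $r=T-\tau(T)$.

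For item~(\mref{it:b2}) I would take $T=P$: by Example~\mref{ex:O-operator}, when $\lambda=0$ the operator $P$ is an $\mathcal{O}$-operator of $(\mathfrak{g},[-,-],P)$ associated to $\rbrep{\mathrm{ad}}{P}{\mathfrak{g}}$, and $TQ=PQ=QP=QT$ is exactly the assumption that $Q$ commutes with $P$; since $Q$ is admissible, item~(\mref{cor:bialgebra1}) applies verbatim and yields $(\mathfrak{g}\ltimes_{\mathrm{ad}^{*}}\mathfrak{g}^{*},P+Q^{*},\delta_{r},Q+P^{*})$. For the two distinguished cases I would specialize $Q$: taking $Q=-P$ gives the operators $P+Q^{*}=P-P^{*}$ and $Q+P^{*}=-P+P^{*}$, while taking $Q=0$ gives $P$ and $P^{*}$; in both cases $Q$ commutes with $P$ trivially and is admissible to $(\mathfrak{g},[-,-],P)$ by Corollary~\mref{cor:adm} (with $\lambda=0$, the maps $-P$ and $0$ appear in its list). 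This completes the plan, with the only genuine subtlety being the parameter identification $\beta=Q$ rather than $\beta=P$.
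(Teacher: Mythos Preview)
Your proposal is correct and follows essentially the same approach as the paper: specialize Theorem~\mref{thm:bialgebra}~(\mref{it:112}) with $\rho=\mathrm{ad}$, $\alpha=P$, $\beta=Q$ for item~(\mref{cor:bialgebra1}), then take $T=P$ and invoke Example~\mref{ex:O-operator} and Corollary~\mref{cor:adm} for item~(\mref{it:b2}). Your explicit verification of Eq.~\meqref{eq:thm:adm SD1} from the admissibility identity Eq.~\meqref{eq:cor:adm1} is a detail the paper leaves implicit, but your derivation is correct and a useful addition.
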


\begin{proof}
(\mref{cor:bialgebra1}). It follows from
Theorem~\mref{thm:bialgebra}
 (\mref{it:112}) in the case that $\rho={\rm
ad}$, $\alpha=P$, $\beta=Q$.

(\mref{it:b2}). By Example \mref{ex:O-operator}, $P$ is an
$\mathcal{O}$-operator of $(\mathfrak{g},[-,-],P)$ associated to
$(\mathfrak{g}, {\rm ad}, P)$. Then by Item
(\mref{cor:bialgebra1}), the first conclusion follows by letting
$T=P$.
Furthermore, note that both $Q=-P$ and $Q=0$ are admissible to
$(\mathfrak{g},[-,-],P)$ and commute with $P$. Then the second
conclusion holds.
\end{proof}

\begin{defi}\mlabel{defi:RB-pre-Lie alg}
A \textbf{Rota-Baxter pre-Lie algebra of weight $\lambda$} is a
triple $(A,\circ,P)$, such that $(A,\circ)$ is a pre-Lie algebra,
and $P:A\rightarrow A$ is a \textbf{Rota-Baxter operator of weight
$\lambda$} on $(A,\circ)$, that is, $P$ satisfies
\begin{equation}
    P(x)\circ P(y)=P(P(x)\circ y)+P(x\circ P(y))+\lambda P(x\circ y),\;\;\forall x,y\in A.
\end{equation}
\end{defi}

By a direct verification, we obtain
\begin{pro}\mlabel{pro:RBPLie}
Let $(A,\circ,P)$ be a Rota-Baxter pre-Lie algebra
of weight $\lambda$. Then the following conclusions hold.
\begin{enumerate}
\item $(\mathfrak{g}(A),[-,-],P)$ is a Rota-Baxter Lie algebra of
weight $\lambda$, which is called the {\bf sub-adjacent
Rota-Baxter Lie algebra} of $(A,\circ,P)$.

\item
$\rbrep{\mathcal{L}_{\circ}}{P}{A}$ is a representation of the
Rota-Baxter Lie algebra $(\mathfrak{g}(A),[-,-],P)$.

\item \mlabel{it:33} The identity map $\mathrm{id}_{A}$ on $A$ is an $\mathcal{O}$-operator on the Rota-Baxter Lie algebra
$(\mathfrak{g}(A),$ $[-,-],P)$ associated to $\rbrep{\mathcal{L}_{\circ}}{P}{A}$.
\end{enumerate}
\end{pro}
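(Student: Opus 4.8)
The plan is to establish the three items by direct computation, each of which reduces either to the Rota-Baxter identity \meqref{eq:rbo} for the operation $\circ$ or to the definition of the sub-adjacent bracket.

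For Item~(1), recall from Section~\mref{sec:2.2} that the commutator $[x,y]=x\circ y-y\circ x$ makes $\mathfrak{g}(A)$ into a Lie algebra. To show $P$ is a Rota-Baxter operator of weight $\lambda$ on this bracket, I would expand $[P(x),P(y)]=P(x)\circ P(y)-P(y)\circ P(x)$, apply the Rota-Baxter identity for $\circ$ to each of the two terms, and then regroup the resulting eight terms, using the linearity of $P$ and the definition of $[-,-]$, into $P([P(x),y])+P([x,P(y)])+\lambda P([x,y])$. This is a short termwise manipulation.

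For Item~(2), by Section~\mref{sec:2.2} the pair $\mrep{\mathcal{L}_{\circ}}{A}$ is already a representation of the sub-adjacent Lie algebra $(\mathfrak{g}(A),[-,-])$, so it remains to verify that $\alpha=P$ satisfies Eq.~\meqref{eq:defi:rep RB1} for $\rho=\mathcal{L}_{\circ}$. Writing that equation out, it reads $P(x)\circ P(v)=P(P(x)\circ v)+P(x\circ P(v))+\lambda P(x\circ v)$ for all $x,v\in A$, which is precisely the Rota-Baxter identity of $(A,\circ,P)$; hence $\rbrep{\mathcal{L}_{\circ}}{P}{A}$ is a representation of $(\mathfrak{g}(A),[-,-],P)$ in the sense of Definition~\mref{defi:rep RB}.

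For Item~(3), I would unwind Definition~\mref{defi:O-operator} with $T=\mathrm{id}_{A}$, $\rho=\mathcal{L}_{\circ}$ and $\alpha=P$: Eq.~\meqref{eq:defi:O-operator1} becomes $[u,v]=\mathcal{L}_{\circ}(u)v-\mathcal{L}_{\circ}(v)u=u\circ v-v\circ u$, which holds by the definition of the sub-adjacent bracket; Eq.~\meqref{eq:defi:O-operator2} becomes the tautology $P\,\mathrm{id}_{A}=\mathrm{id}_{A}\,P$; and $\rbrep{\mathcal{L}_{\circ}}{P}{A}$ is a representation of $(\mathfrak{g}(A),[-,-],P)$ by Item~(2). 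Thus $\mathrm{id}_{A}$ is an $\mathcal{O}$-operator associated to $\rbrep{\mathcal{L}_{\circ}}{P}{A}$. Since every step reduces either to the defining axiom of a Rota-Baxter operator on $\circ$ or to the definition of the commutator, there is no genuine obstacle here; the only point needing a little care is matching the indices and signs in Eqs.~\meqref{eq:defi:rep RB1}, \meqref{eq:defi:O-operator1} and \meqref{eq:defi:O-operator2} against the pre-Lie Rota-Baxter identity.
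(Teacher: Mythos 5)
Your proposal is correct and is exactly the direct verification the paper intends (the paper simply states "By a direct verification, we obtain" and omits the details). Each of your three reductions — the antisymmetrized Rota-Baxter identity for Item (1), the observation that Eq.~\meqref{eq:defi:rep RB1} with $\rho=\mathcal{L}_{\circ}$, $\alpha=P$ is literally the Rota-Baxter identity on $(A,\circ)$ for Item (2), and the unwinding of Definition~\mref{defi:O-operator} with $T=\mathrm{id}_A$ for Item (3) — checks out.
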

On the other hand, the following conclusion is also easy to check.

\begin{pro}\mlabel{pro:induced2} Let  $(\mathfrak{g},[-,-]_{\mathfrak{g}},P)$ be a Rota-Baxter Lie
algebra of weight $\lambda$ and $(V,\rho,\alpha)$ be a
representation of $(\mathfrak{g},[-,-]_{\mathfrak{g}},P)$. Let
$T:V\rightarrow \frak g$ be an $\mathcal O$-operator associated to
$(V,\rho,\alpha)$. Then there exists a Rota-Baxter pre-Lie algebra
structure $(V,\circ, \alpha)$ on $V$, with $\circ$ given by
\begin{equation}
u\circ v:=\rho(T(u))v,\;\;\forall u,v\in V.
\end{equation}
In particular, if $(\mathfrak{g},[-,-],P)$ is a  Rota-Baxter Lie
algebra of weight zero and $(\frak g, \circ)$ is the induced
pre-Lie algebra, then $P$ is  a Rota-Baxter operator of weight
zero on $(\mathfrak{g},\circ)$.
\end{pro}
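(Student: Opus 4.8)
The plan is to verify the two defining identities directly, since the statement is essentially a routine computation combining the three properties of an $\mathcal{O}$-operator. First I would prove that $(V,\circ)$ with $u\circ v=\rho(T(u))v$ is a pre-Lie algebra by showing that the associator $(u\circ v)\circ w-u\circ(v\circ w)$ is symmetric in $u$ and $v$. Expanding the definition, this associator equals $\rho\bigl(T(\rho(T(u))v)\bigr)w-\rho(T(u))\rho(T(v))w$; subtracting the expression obtained by swapping $u$ and $v$ and using the linearity of $T$ converts the first terms into $\rho\bigl(T(\rho(T(u))v-\rho(T(v))u)\bigr)w$, which by Eq.~\meqref{eq:defi:O-operator1} is $\rho\bigl([T(u),T(v)]\bigr)w$, while the second terms become $\rho(T(u))\rho(T(v))w-\rho(T(v))\rho(T(u))w=\rho([T(u),T(v)])w$ because $\mrep{\rho}{V}$ is a representation of the Lie algebra $(\mathfrak{g},[-,-])$. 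The two contributions cancel, which is precisely the pre-Lie identity Eq.~\meqref{eq:prelie}.

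Next I would check that $\alpha$ is a Rota-Baxter operator of weight $\lambda$ on $(V,\circ)$, i.e.\ that $\alpha(u)\circ\alpha(v)=\alpha(\alpha(u)\circ v)+\alpha(u\circ\alpha(v))+\lambda\,\alpha(u\circ v)$. Using the relation $T\alpha=PT$ from Eq.~\meqref{eq:defi:O-operator2}, the left-hand side is $\rho(P(T(u)))\alpha(v)$ and the right-hand side is $\alpha(\rho(P(T(u)))v)+\alpha(\rho(T(u))\alpha(v))+\lambda\,\alpha(\rho(T(u))v)$. These two expressions agree exactly by the representation condition Eq.~\meqref{eq:defi:rep RB1} for $\rbrep{\rho}{\alpha}{V}$ applied with $x=T(u)$. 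Hence $(V,\circ,\alpha)$ is a Rota-Baxter pre-Lie algebra of weight $\lambda$.

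For the final assertion, I would specialize to $V=\mathfrak{g}$, $\rho=\mathrm{ad}$, $\alpha=P$, $T=P$: by Example~\mref{ex:O-operator}, $P$ is an $\mathcal{O}$-operator of a weight-zero Rota-Baxter Lie algebra $(\mathfrak{g},[-,-],P)$ associated to the adjoint representation $\rbrep{\mathrm{ad}}{P}{\mathfrak{g}}$. The induced operation is then $x\circ y=\mathrm{ad}(P(x))y=[P(x),y]$, which is exactly the induced pre-Lie algebra of Eq.~\meqref{eq:pro:from invariance to left invariance1}, and the first part of the proposition applied to this data shows that $\alpha=P$ is a Rota-Baxter operator of weight zero on $(\mathfrak{g},\circ)$.

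There is no genuine obstacle here; as the paper notes, this is a direct verification. The only place requiring care is the bookkeeping in the pre-Lie computation: one must take the difference of the associator with its $u\leftrightarrow v$ transpose rather than attempt to evaluate $T(\rho(T(u))v)$ by itself (which Eq.~\meqref{eq:defi:O-operator1} does not determine), and invoke the Lie-homomorphism property of $\rho$ at the right moment so that the two $\rho([T(u),T(v)])$ terms cancel. Once this is set up, the Rota-Baxter identity for $\alpha$ is a single substitution into Eq.~\meqref{eq:defi:rep RB1}, and the specialization $T=P$ is immediate.
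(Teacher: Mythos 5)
Your proposal is correct and matches the paper's intent exactly: the paper states this proposition as "easy to check" and omits the details, and your direct verification (cancelling the two $\rho([T(u),T(v)])w$ terms via Eq.~\meqref{eq:defi:O-operator1} and the Lie-homomorphism property of $\rho$ for the pre-Lie identity, then substituting $x=T(u)$ into Eq.~\meqref{eq:defi:rep RB1} together with $T\alpha=PT$ for the Rota-Baxter identity) is precisely the computation being alluded to. The specialization to $\rho=\mathrm{ad}$, $\alpha=T=P$ via Example~\mref{ex:O-operator} is also handled correctly.
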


There is a simple construction of Rota-Baxter Lie bialgebras from Rota-Baxter pre-Lie algebras.

\begin{pro}\mlabel{pro:L}
    Let $(A,\circ,P)$ be a Rota-Baxter pre-Lie algebra of weight $\lambda$, and let the sub-adjacent Rota-Baxter Lie algebra  be $(\mathfrak{g}(A),[-,-],P)$.
Let $\{e_1,\cdots,e_n\}$ be a basis of $A$,
$\{e_1^*,\cdots,e_n^*\}$ be the dual basis and $r=
\sum\limits_{i=1}^{n}e_{i}\otimes e^{*}_{i}-e^{*}_{i}\otimes e_{i}$.
Define the linear map $\delta=\delta_{r}$ by Eq.~\meqref{eq:LieCob}.
Then there are Rota-Baxter Lie bialgebras
        $(\mathfrak{g}(A)\ltimes_{\mathcal{L}^{*}_{\circ}}A^{*},P,\delta,P^{*})$,
        $(\mathfrak{g}(A)\ltimes_{\mathcal{L}^{*}_{\circ}}A^{*},P-\lambda\mathrm{id}_{A^{*}},\delta,-\lambda\mathrm{id}_{A}+P^{*})$,
         and
         $(\mathfrak{g}(A)\ltimes_{\mathcal{L}^{*}_{\circ}}A^{*},P-\lambda\mathrm{id}_{A^{*}}-P^{*},\delta,-P-\lambda\mathrm{id}_{A}+P^{*})$.
\end{pro}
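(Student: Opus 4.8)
The plan is to obtain this proposition as an immediate combination of the two preceding results, Proposition~\mref{pro:RBPLie} and Corollary~\mref{cor:bialgebras}, by recognizing $\mathrm{id}_A$ as an $\mathcal{O}$-operator.

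First I would invoke Proposition~\mref{pro:RBPLie}: from the Rota-Baxter pre-Lie algebra $(A,\circ,P)$ of weight $\lambda$ we obtain the sub-adjacent Rota-Baxter Lie algebra $(\mathfrak{g}(A),[-,-],P)$ of weight $\lambda$, the representation $\rbrep{\mathcal{L}_{\circ}}{P}{A}$ of $(\mathfrak{g}(A),[-,-],P)$, and the fact that $\mathrm{id}_{A}$ is an $\mathcal{O}$-operator on $(\mathfrak{g}(A),[-,-],P)$ associated to $\rbrep{\mathcal{L}_{\circ}}{P}{A}$. This places us exactly in the hypotheses of Corollary~\mref{cor:bialgebras} with the substitutions $\mathfrak{g}=\mathfrak{g}(A)$, $V=A$, $\rho=\mathcal{L}_{\circ}$, $\alpha=P$ and $T=\mathrm{id}_{A}$; note that then $\rho^{*}=\mathcal{L}^{*}_{\circ}$, $V^{*}=A^{*}$ and $\alpha^{*}=P^{*}$, so the ambient Lie algebra is $\mathfrak{g}(A)\ltimes_{\mathcal{L}^{*}_{\circ}}A^{*}$.

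Next I would identify the tensor $r=T-\tau(T)$ used in Corollary~\mref{cor:bialgebras} with the element $\sum_{i=1}^{n}e_{i}\otimes e^{*}_{i}-e^{*}_{i}\otimes e_{i}$ in the statement. Under the identification ${\rm Hom}(A,\mathfrak{g}(A))\cong \mathfrak{g}(A)\otimes A^{*}\subseteq (\mathfrak{g}(A)\ltimes_{\mathcal{L}^{*}_{\circ}}A^{*})\otimes(\mathfrak{g}(A)\ltimes_{\mathcal{L}^{*}_{\circ}}A^{*})$ of Lemma~\mref{lem:rt}, and since $\mathfrak{g}(A)=A$ as vector spaces, the identity map $T=\mathrm{id}_{A}$ corresponds to $\sum_{i}e_{i}\otimes e^{*}_{i}$ for the chosen pair of dual bases, whence $\tau(T)=\sum_{i}e^{*}_{i}\otimes e_{i}$ and $r=T-\tau(T)$ is precisely the antisymmetric tensor in the proposition. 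Consequently $\delta=\delta_{r}$, defined by Eq.~\meqref{eq:LieCob}, coincides with the map in the statement, and the three Rota-Baxter Lie bialgebras produced by Corollary~\mref{cor:bialgebras}, namely $(\mathfrak{g}\ltimes_{\rho^{*}}V^{*},P,\delta,\alpha^{*})$, $(\mathfrak{g}\ltimes_{\rho^{*}}V^{*},P-\lambda\mathrm{id}_{V^{*}},\delta,-\lambda\mathrm{id}_{\mathfrak{g}}+\alpha^{*})$ and $(\mathfrak{g}\ltimes_{\rho^{*}}V^{*},P-\alpha^{*}-\lambda\mathrm{id}_{V^{*}},\delta,-P-\lambda\mathrm{id}_{\mathfrak{g}}+\alpha^{*})$, specialize under the above substitutions to the three Rota-Baxter Lie bialgebras claimed.

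Since the Rota-Baxter Lie algebra structure, the representation and the $\mathcal{O}$-operator are all already furnished by Proposition~\mref{pro:RBPLie}, and Corollary~\mref{cor:bialgebras} supplies the bialgebra output, there is no genuine obstacle here. The only point that needs care is the bookkeeping of the identification ${\rm Hom}(A,\mathfrak{g}(A))\cong \mathfrak{g}(A)\otimes A^{*}$ together with the sign and flip conventions, so that $r$ (and hence $\delta_{r}$) is the one written in the statement; once this compatibility is checked, the proposition follows by quoting Corollary~\mref{cor:bialgebras}.
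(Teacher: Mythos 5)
Your proposal is correct and follows exactly the paper's own argument: apply Proposition~\mref{pro:RBPLie} to recognize $\mathrm{id}_{A}$ as an $\mathcal{O}$-operator associated to $\rbrep{\mathcal{L}_{\circ}}{P}{A}$, identify $r=\mathrm{id}_{A}-\tau(\mathrm{id}_{A})=\sum_{i}e_{i}\otimes e^{*}_{i}-e^{*}_{i}\otimes e_{i}$, and then specialize Corollary~\mref{cor:bialgebras}. The extra bookkeeping you supply about the identification ${\rm Hom}(A,\mathfrak{g}(A))\cong\mathfrak{g}(A)\otimes A^{*}$ is a correct elaboration of what the paper leaves implicit.
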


\begin{proof}
   By Proposition~\mref{pro:RBPLie} (\mref{it:33}),  $T=\mathrm{id}_{A}$ is an $\mathcal{O}$-operator of the Rota-Baxter Lie algebra  $(\mathfrak{g}(A)$, $[-,-]$, $P)$ associated to $\rbrep{\mathcal{L}_{\circ}}{P}{A}$.
Note that $r=\mathrm{id}_{A}-\tau(\mathrm{id}_{A})=
    \sum\limits_{i=1}^{n}e_{i}\otimes e^{*}_{i}-e^{*}_{i}\otimes
    e_{i}$. Then the conclusion follows from
    Corollary~\mref{cor:bialgebras}.
    \end{proof}

To complete the paper, we construct 
\spec \ldendbs from $\mathcal O$-operators on Rota-Baxter Lie algebras of weight zero and from Rota-Baxter pre-Lie algebras.

\begin{pro}\mlabel{pro:cons-L}
Let $(\mathfrak{g},[-,-]_{\mathfrak{g}},P)$ be a Rota-Baxter Lie
algebra of weight zero. Let $Q:\frak g\rightarrow \frak g$ be a
linear map which is admissible to
$(\mathfrak{g},[-,-]_{\mathfrak{g}},P)$. Let $\rbrep{\rho}{\alpha}{V}$ be
a representation of $(\mathfrak{g},[-,-]_{\mathfrak{g}},P)$ and
$\beta:V\rightarrow V$ be a linear map which is admissible to
$(\mathfrak{g},[-,-]_{\mathfrak{g}},P)$ on $\mrep{\rho}{V}$. Suppose
Eq.~\meqref{eq:thm:adm SD1} holds. If $T$ is an
$\mathcal{O}$-operator associated to $\rbrep{\rho}{\alpha}{V}$ and
$T\beta=QT$, then there is a \spec \ldendb
        $(\mathfrak{g}\ltimes_{\rho^{*}}V^{*},\triangleright,\triangleleft,\copa,\copb)$, where
\begin{eqnarray*}
&&a\triangleleft b=-(Q+\alpha^{*})([a,b]), a\triangleright b=[(P+\beta^{*})a,b]+(Q+\alpha^{*})[a,b],\\
&&\copb(a)=-\delta((P+\beta^{*})a),
\copa(a)=((Q+\alpha^{*})\otimes\mathrm{id})\delta(a)+\delta((P+\beta^{*})a),
\end{eqnarray*}
for all $a,b\in \mathfrak{g}\ltimes_{\rho^{*}}V^{*}$. Here $[-,-]$ is defined
by the Lie algebra $\mathfrak{g}\ltimes_{\rho^{*}}V^{*}$ and the
linear map $\delta=\delta_{r}$ is defined by Eq.~\meqref{eq:LieCob} with $r=T-\tau(T)$.
\end{pro}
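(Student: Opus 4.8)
The plan is to derive the statement by composing two results already in hand: the construction of a Rota-Baxter Lie bialgebra from an $\mathcal{O}$-operator (Theorem~\mref{thm:bialgebra}~\meqref{it:112}) and the induction of a \spec \ldendb from a Rota-Baxter Lie bialgebra of weight zero (Proposition~\mref{pro:condition}).

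First I would check that the hypotheses feed exactly into the second half of Theorem~\mref{thm:bialgebra}~\meqref{it:112}. The assumption that $\rbrep{\rho}{\alpha}{V}$ is a representation of $(\mathfrak{g},[-,-]_{\mathfrak{g}},P)$, together with $Q$ being admissible to $(\mathfrak{g},[-,-]_{\mathfrak{g}},P)$, $\beta$ being admissible to $(\mathfrak{g},[-,-]_{\mathfrak{g}},P)$ on $\mrep{\rho}{V}$, and Eq.~\meqref{eq:thm:adm SD1}, are precisely conditions (i)--(iv) in Theorem~\mref{thm:adm SD}; hence by that theorem the triple $(\mathfrak{g}\ltimes_{\rho^{*}}V^{*},[-,-],P+\beta^{*})$ is a Rota-Baxter Lie algebra for which $Q+\alpha^{*}$ is admissible. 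Since in addition $T$ is an $\mathcal{O}$-operator associated to $\rbrep{\rho}{\alpha}{V}$ with $T\beta=QT$, Theorem~\mref{thm:bialgebra}~\meqref{it:112} (with $\lambda=0$) gives a Rota-Baxter Lie bialgebra $(\mathfrak{g}\ltimes_{\rho^{*}}V^{*},P+\beta^{*},\delta,Q+\alpha^{*})$ of weight zero, where $\delta=\delta_{r}$ with $r=T-\tau(T)$.

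Next I would apply Proposition~\mref{pro:condition} to this Rota-Baxter Lie bialgebra, taking the roles of $(\mathfrak{g},[-,-]_{\mathfrak{g}},P)$ and $Q$ there to be played by $(\mathfrak{g}\ltimes_{\rho^{*}}V^{*},[-,-],P+\beta^{*})$ and $Q+\alpha^{*}$; the remaining admissibility requirement there ($(P+\beta^{*})^{*}$ admissible to the dual Rota-Baxter Lie algebra) is part of the bialgebra statement just obtained. The ``in particular'' clause of Proposition~\mref{pro:condition} then produces a \spec \ldendb on $\mathfrak{g}\ltimes_{\rho^{*}}V^{*}$, and it remains only to transcribe the general formulas for this induced structure: the products $\triangleleft,\triangleright$ are given by the analogue of Eq.~\meqref{eq:cor:MP adm9}, namely $a\triangleleft b=-(Q+\alpha^{*})([a,b])$ and, using that the induced pre-Lie product on $\mathfrak{g}\ltimes_{\rho^{*}}V^{*}$ is $a\circ b=[(P+\beta^{*})(a),b]$, $a\triangleright b=a\circ b-a\triangleleft b=[(P+\beta^{*})(a),b]+(Q+\alpha^{*})([a,b])$; the comultiplications are given by the analogue of Eq.~\meqref{eq:pro:condition4}, namely $\copb(a)=-\delta((P+\beta^{*})(a))$ and $\copa(a)=((Q+\alpha^{*})\otimes\mathrm{id})\delta(a)+\delta((P+\beta^{*})(a))$, exactly as in the statement.

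The only real care needed is bookkeeping: one must make sure the admissibility hypotheses line up so that both Theorem~\mref{thm:bialgebra}~\meqref{it:112} and Proposition~\mref{pro:condition} apply verbatim to the semi-direct product $\mathfrak{g}\ltimes_{\rho^{*}}V^{*}$, and that the weight-$\lambda$ terms drop out since $\lambda=0$. There is no genuinely new computation here; the result is a formal consequence of the two cited propositions.
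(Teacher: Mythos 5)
Your proposal is correct and follows exactly the paper's own route: the paper's proof is the one-line citation of Theorem~\mref{thm:bialgebra}~(\mref{it:112}) together with Proposition~\mref{pro:condition}, and your write-up simply makes explicit the bookkeeping (the hypotheses matching conditions (i)--(iv) of Theorem~\mref{thm:adm SD}, and the transcription of Eqs.~\meqref{eq:cor:MP adm9} and \meqref{eq:pro:condition4} to the semi-direct product) that the paper leaves implicit.
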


\begin{proof}
It follows from Theorem~\mref{thm:bialgebra} (\mref{it:112}) and
Proposition~\mref{pro:condition}.
\end{proof}

\begin{cor}\mlabel{cor:cons1}
 Let $(\mathfrak{g},[-,-]_{\mathfrak{g}},P)$ be a Rota-Baxter
Lie algebra of weight zero.
Suppose that $Q:\mathfrak{g}\rightarrow \mathfrak{g}$ is a linear map that is admissible to $(\mathfrak{g},[-,-]_{\mathfrak{g}},P)$ and commutes with $P$.
Let $[-,-]$ be the bracket on the Lie algebra $\mathfrak{g}\ltimes_{{\rm ad}^{*}}\frak g^{*}$, and let $\delta=\delta_{r}$ be the linear map defined by Eq.~\meqref{eq:LieCob} with $r=P-\tau(P)$. Then there is a \spec \ldendb
        $(\mathfrak{g}\ltimes_{{\rm ad}^{*}}\frak g^{*},\triangleright,\triangleleft,\copa,\copb)$, where
\vspace{-.2cm}
\begin{eqnarray}
&&a\triangleleft b=-(Q+P^{*})([a,b]),\ \  a\triangleright b=[(P+Q^{*})a,b]+(Q+P^{*})[a,b],\mlabel{eq:sl1}\\
&&\copb(a)=-\delta((P+Q^{*})a), \ \
\copa(a)=((Q+P^{*})\otimes\mathrm{id})\delta(a)+\delta((P+Q^{*})a),\mlabel{eq:sl2}
\vspace{-.2cm}
\end{eqnarray}
for all $a,b\in \mathfrak{g}\ltimes_{{\rm ad}^{*}}\frak g^{*}$. In particular, there are two special
\ldendbs
        $(\mathfrak{g}\ltimes_{{\rm ad}^{*}}\frak
        g^{*},\triangleright_1,\triangleleft_1,\copa_1,\copb_1)$
        and
        $(\mathfrak{g}\ltimes_{{\rm ad}^{*}}\frak g^{*},\triangleright_2,\triangleleft_2,\copa_2,\copb_2)$, where
  \begin{eqnarray}
        &&a\triangleleft_1 b=-(-P+P^{*})([a,b]), a\triangleright_1 b=[(P-P^{*})a,b]+(-P+P^{*})[a,b],\mlabel{eq:sl3}\\
        &&\copb_1(a)=-\delta((P-P^{*})a),
        \copa_1(a)=((-P+P^{*})\otimes\mathrm{id})\delta(a)+\delta((P-P^{*})a),\mlabel{eq:sl4}\\
        &&a\triangleleft_{2} b=-P^{*}([a,b]), a\triangleright_{2} b=[P(a),b]+P^{*}([a,b]),\mlabel{eq:sl5}\\
        &&\copb_2(a)=-\delta(P(a)),
        \copa_2(a)=(P^{*}\otimes\mathrm{id})\delta(a)+\delta(P(a)).\mlabel{eq:sl6}
        \end{eqnarray}
\end{cor}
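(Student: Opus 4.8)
The plan is to derive everything from the combination of Proposition~\mref{cor:bialgebra} and Proposition~\mref{pro:cons-L}, which already packages the necessary ingredients. First I would observe that the hypotheses of Proposition~\mref{pro:cons-L} are met with the choices $\mrep{\rho}{V}=\rbrep{\mathrm{ad}}{\mathfrak{g}}$, $V=\mathfrak{g}$, $\alpha=P$, $\beta=Q$, and $T=P$: by Example~\mref{ex:O-operator}, $P$ is an $\mathcal{O}$-operator of $(\mathfrak{g},[-,-],P)$ (of weight zero) associated to the adjoint representation $\rbrep{\mathrm{ad}}{P}{\mathfrak{g}}$; by hypothesis $Q$ is admissible to $(\mathfrak{g},[-,-],P)$, which (taking the adjoint representation in Definition~\mref{defi:admissibility}) says $Q$ is admissible to $(\mathfrak{g},[-,-],P)$ on $\mrep{\mathrm{ad}}{\mathfrak{g}}$, so $\beta=Q$ plays the role required of it; since the weight is zero, Eq.~\meqref{eq:thm:adm SD1} reduces to $Q(\mathrm{ad}(x)P(v))=Q(\mathrm{ad}(Q(x))v)+\mathrm{ad}(Q(x))P(v)$, which I would check follows from $PQ=QP$ together with the admissibility equation~\meqref{eq:cor:adm1} of $Q$; and finally $T\beta=P Q=Q P=QT$ holds by the commutativity hypothesis. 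Hence Proposition~\mref{pro:cons-L} applies and yields the \spec \ldendb $(\mathfrak{g}\ltimes_{\mathrm{ad}^{*}}\mathfrak{g}^{*},\triangleright,\triangleleft,\copa,\copb)$ with the operations as displayed, after substituting $\rho^{*}=\mathrm{ad}^{*}$, $\beta^{*}=Q^{*}$, $\alpha^{*}=P^{*}$, $P+\beta^{*}=P+Q^{*}$, $Q+\alpha^{*}=Q+P^{*}$, and $r=P-\tau(P)$. This gives Eqs.~\meqref{eq:sl1} and~\meqref{eq:sl2}.

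For the two particular cases, I would invoke Corollary~\mref{cor:adm} (or Corollary~\mref{cor:bialgebra}~\meqref{it:b2}): both $Q=-P$ and $Q=0$ are admissible to $(\mathfrak{g},[-,-],P)$, and each visibly commutes with $P$. Substituting $Q=-P$ into Eqs.~\meqref{eq:sl1}--\meqref{eq:sl2} gives $Q+P^{*}=-P+P^{*}$ and $P+Q^{*}=P-P^{*}$, producing Eqs.~\meqref{eq:sl3}--\meqref{eq:sl4}; substituting $Q=0$ gives $Q+P^{*}=P^{*}$ and $P+Q^{*}=P$, producing Eqs.~\meqref{eq:sl5}--\meqref{eq:sl6}. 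Thus the entire corollary reduces to verifying the hypotheses of the quoted Proposition~\mref{pro:cons-L} in the adjoint-representation case and then performing the substitutions.

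The only genuinely non-routine step is the verification of Eq.~\meqref{eq:thm:adm SD1} in the weight-zero adjoint setting, i.e.\ that $PQ=QP$ and the $Q$-admissibility identity~\meqref{eq:cor:adm1} together force
\begin{equation*}
Q([P(x),P(v)])=Q([Q(x),P(v)])+[Q(x),P(v)],\qquad \forall x,v\in\mathfrak{g}.
\end{equation*}
I expect this to be the main obstacle, though a mild one: expand the right-hand side using~\meqref{eq:cor:adm1} written for the pair $(x,y)$ with the substitution $y\mapsto P(v)$ — which gives $Q([P(x),P(v)])=[P(x),Q(P(v))]+Q([x,Q(P(v))])+0$ since $\lambda=0$ — and then use $QP=PQ$ to rewrite $Q(P(v))=P(Q(v))$; a parallel manipulation of the right-hand side and a cancellation should close the identity. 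Alternatively, and perhaps more cleanly, one can avoid this computation entirely by citing Proposition~\mref{cor:bialgebra}~\meqref{it:b2}, which already asserts that $(\mathfrak{g}\ltimes_{\mathrm{ad}^{*}}\mathfrak{g}^{*},P+Q^{*},\delta_r,Q+P^{*})$ is a Rota-Baxter Lie bialgebra of weight zero under exactly these hypotheses, and then apply Proposition~\mref{pro:condition} to pass from the Rota-Baxter Lie bialgebra to the induced \spec \ldendb, reading off $\triangleright,\triangleleft$ from Eq.~\meqref{eq:cor:MP adm9} and $\copa,\copb$ from Eq.~\meqref{eq:pro:condition4}. I would present the proof this second way: \emph{"By Corollary~\mref{cor:adm}, $Q$ is automatically admissible in the two special cases; the general statement follows from Proposition~\mref{cor:bialgebra}~\meqref{it:b2} together with Proposition~\mref{pro:condition}, and the two special cases follow by setting $Q=-P$ and $Q=0$ respectively."}
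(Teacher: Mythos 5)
Your proposal is correct and follows essentially the same route as the paper, whose entire proof is the one-line citation of Proposition~\mref{cor:bialgebra}~(\mref{it:b2}) together with Proposition~\mref{pro:cons-L} (the latter itself being proved via Proposition~\mref{pro:condition}), so both of your suggested presentations coincide with the paper's argument. One minor remark: the identity you single out as the main obstacle is misstated in your last paragraph (the correct reduction of Eq.~\meqref{eq:thm:adm SD1} here is $Q([x,P(v)])=Q([Q(x),v])+[Q(x),P(v)]$, as in your first paragraph, not $Q([P(x),P(v)])=\cdots$), and it actually follows from the admissibility identity~\meqref{eq:cor:adm1} combined with the antisymmetry of the bracket alone --- the hypothesis $PQ=QP$ is only needed to get $T\beta=QT$.
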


\begin{proof}
It follows from Propositions~\mref{cor:bialgebra} (\mref{it:b2}) and
\mref{pro:cons-L}.
\end{proof}

\begin{cor}\mlabel{cor:cons2}
 Let $(A,\circ,P)$ be a Rota-Baxter pre-Lie algebra of weight zero with its sub-adjacent Rota-Baxter Lie algebra $(\mathfrak{g}(A),[-,-],P)$. Then there are two special
\ldendbs $(\mathfrak{g}(A)\ltimes_{\mathcal{L}_{\circ}^{*}}A^{*},\triangleright_1,\triangleleft_1,\copa_1,\copb_1)$
and
$(\mathfrak{g}(A)\ltimes_{\mathcal{L}_{\circ}^{*}}A^{*},\triangleright_2,\triangleleft_2,\copa_2,\copb_2)$,
where $\triangleright_1,\triangleleft_1,\copa_1,\copb_1$ and
$\triangleright_2,\triangleleft_2,\copa_2,\copb_2$ are defined by
Eqs.~\meqref{eq:sl3}-\meqref{eq:sl6} respectively, $[-,-]$ is
defined by the Lie algebra
$\mathfrak{g}(A)\ltimes_{\mathcal{L}_{\circ}^{*}}A^{*}$ and
 the linear map $\delta=\delta_{r}$ is defined by Eq.~\meqref{eq:LieCob} with $r=
\sum\limits_{i=1}^{n}e_{i}\otimes e^{*}_{i}-e^{*}_{i}\otimes
e_{i}$, where $\{e_1,\cdots,e_n\}$ is a basis of $A$ and
$\{e_1^*,\cdots,e_n^*\}$ is the dual basis.
\end{cor}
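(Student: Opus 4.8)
The plan is to exhibit both bialgebras as instances of Proposition~\mref{pro:cons-L}, applied to the sub-adjacent Rota-Baxter Lie algebra $(\mathfrak{g}(A),[-,-],P)$ of $(A,\circ,P)$ together with the $\mathcal{O}$-operator furnished by Proposition~\mref{pro:RBPLie}; in spirit this parallels the proof of Corollary~\mref{cor:cons1}, with the representation $\rbrep{\mathcal{L}_{\circ}}{P}{A}$ replacing the adjoint one. First I would collect the data: by Proposition~\mref{pro:RBPLie}, $(\mathfrak{g}(A),[-,-],P)$ is a Rota-Baxter Lie algebra of weight zero, $\rbrep{\mathcal{L}_{\circ}}{P}{A}$ is a representation of it, and $T=\mathrm{id}_{A}:A\to\mathfrak{g}(A)$ is an $\mathcal{O}$-operator associated to $\rbrep{\mathcal{L}_{\circ}}{P}{A}$. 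Under the identification $\mathrm{Hom}(A,\mathfrak{g}(A))\cong\mathfrak{g}(A)\otimes A^{*}\subseteq(\mathfrak{g}(A)\ltimes_{\mathcal{L}_{\circ}^{*}}A^{*})^{\otimes 2}$ the map $\mathrm{id}_{A}$ corresponds to $\sum_{i=1}^{n}e_{i}\otimes e_{i}^{*}$, so $r=T-\tau(T)=\sum_{i=1}^{n}e_{i}\otimes e_{i}^{*}-e_{i}^{*}\otimes e_{i}$ is precisely the $r$ of the statement and $\delta=\delta_{r}$ agrees with the one in the statement. With $\mathfrak{g}=\mathfrak{g}(A)$, $\rho=\mathcal{L}_{\circ}$, $\alpha=P$, $V=A$ and $T=\mathrm{id}_{A}$ now fixed, it remains to pick the pair $(Q,\beta)$ so as to meet the hypotheses of Proposition~\mref{pro:cons-L} and then read off the structures.

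For $(\mathfrak{g}(A)\ltimes_{\mathcal{L}_{\circ}^{*}}A^{*},\triangleright_{2},\triangleleft_{2},\copa_{2},\copb_{2})$ I would take $Q=0$, $\beta=0$: here $Q$ is admissible to $(\mathfrak{g}(A),[-,-],P)$ by Corollary~\mref{cor:adm}, $\beta$ is admissible on $\mrep{\mathcal{L}_{\circ}}{A}$ by Proposition~\mref{pro:admissibility}, Eq.~\meqref{eq:thm:adm SD1} reduces to $0=0$ as $\lambda=0$, and $T\beta=0=QT$. Hence Proposition~\mref{pro:cons-L} applies; since $Q+\alpha^{*}=P^{*}$ and $P+\beta^{*}=P$, its output is exactly Eqs.~\meqref{eq:sl5}--\meqref{eq:sl6}.

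For $(\mathfrak{g}(A)\ltimes_{\mathcal{L}_{\circ}^{*}}A^{*},\triangleright_{1},\triangleleft_{1},\copa_{1},\copb_{1})$ I would take $Q=-P$ and $\beta=-P=-\alpha-\lambda\mathrm{id}_{A}$: here $Q=-P$ is admissible by Corollary~\mref{cor:adm}, $\beta=-\alpha-\lambda\mathrm{id}_{A}$ is admissible on $\mrep{\mathcal{L}_{\circ}}{A}$ by Proposition~\mref{pro:admissibility}, and $T\beta=-P=QT$. The only step with any content is the verification of Eq.~\meqref{eq:thm:adm SD1}: after substituting $\lambda=0$, $\rho=\mathcal{L}_{\circ}$, $\alpha=P$ and $Q=\beta=-P$ it reads $-P(x\circ P(v))=P(P(x)\circ v)-P(x)\circ P(v)$ for all $x,v\in A$, which rearranges to $P(x)\circ P(v)=P(P(x)\circ v)+P(x\circ P(v))$, i.e.\ exactly the statement that $P$ is a Rota-Baxter operator of weight zero on the pre-Lie algebra $(A,\circ)$, and this holds by hypothesis. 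So Proposition~\mref{pro:cons-L} applies once more, and since $Q+\alpha^{*}=-P+P^{*}$ and $P+\beta^{*}=P-P^{*}$, its output is exactly Eqs.~\meqref{eq:sl3}--\meqref{eq:sl4}. (The third admissible pair of Corollary~\mref{cor:adm SD}, $Q=-P-\lambda\mathrm{id}$ and $\beta=-\alpha-\lambda\mathrm{id}$, coincides with this one at $\lambda=0$, which explains why only two bialgebras appear.)

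The main obstacle is thus the identity just checked, Eq.~\meqref{eq:thm:adm SD1} for the pair $(-P,-P)$, which is where the Rota-Baxter property of $P$ on $(A,\circ)$ is genuinely needed; the rest is bookkeeping — quoting Propositions~\mref{pro:RBPLie}, \mref{pro:admissibility}, \mref{pro:cons-L} and Corollaries~\mref{cor:adm}, \mref{cor:adm SD}, and matching the generic output of Proposition~\mref{pro:cons-L} with Eqs.~\meqref{eq:sl3}--\meqref{eq:sl6} under $\alpha^{*}=P^{*}$ and $\beta^{*}\in\{0,-P^{*}\}$.
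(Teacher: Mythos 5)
Your proof is correct and follows essentially the same route as the paper: the paper deduces the corollary from Proposition~\mref{pro:L} (the Rota-Baxter Lie bialgebras built from $T=\mathrm{id}_A$) together with Proposition~\mref{pro:condition}, which is exactly the combination you invoke through Proposition~\mref{pro:cons-L}. Your explicit check that Eq.~\meqref{eq:thm:adm SD1} for the pair $(Q,\beta)=(-P,-P)$ reduces to the weight-zero Rota-Baxter identity for $P$ on $(A,\circ)$ is a correct unpacking of what the paper delegates to Corollary~\mref{cor:adm SD}.
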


\begin{proof}
It follows from Propositions \mref{pro:L} and \mref{pro:condition}.
\end{proof}

We end the paper by showing that, starting with any Rota-Baxter Lie algebra  of weight zero, the various Rota-Baxter Lie bialgebras from it allow us to construct a family of \spec \ldendbs.

\begin{ex}
Let $(\mathfrak{g},[-,-]_{\mathfrak{g}},P)$ be a Rota-Baxter
Lie algebra of weight zero. Let $(\mathfrak{g},\circ)$ be the induced pre-Lie algebra, whose sub-adjacent Lie algebra is denoted by $(\mathfrak{g}',[-,-]'_{\mathfrak{g}})$ or simply $\mathfrak{g}'$, that is, $[-,-]_{\frak g}'$ is defined by
$$[x,y]_{\frak g}'=[P(x),y]_{\frak g}+[x,P(y)]_{\frak g},\;\;\forall x,y\in \frak g.$$
\begin{enumerate}
    \item \mlabel{it:aaa} By Corollary~\mref{cor:cons1}, there are special
    \ldendbs
    $(\mathfrak{g}\ltimes_{{\rm ad}^{*}}\frak
    g^{*},\triangleright_1,\triangleleft_1,\copa_1,\copb_1)$
    and
    $(\mathfrak{g}\ltimes_{{\rm ad}^{*}}\frak g^{*},\triangleright_2,\triangleleft_2,\copa_2,\copb_2)$,
 where $\triangleright_1,\triangleleft_1,\copa_1,\copb_1$ and $\triangleright_2,\triangleleft_2,\copa_2,\copb_2$ are
defined by Eqs.~\meqref{eq:sl3}-\meqref{eq:sl6} respectively, $[-,-]$ is defined
by the Lie algebra $\mathfrak{g}\ltimes_{{\rm ad}^{*}}\frak g^{*}$ and the linear map $\delta=\delta_{r}$ is defined by Eq.~\meqref{eq:LieCob}
with $r=P-\tau(P)$.

    \item~\mlabel{it:bbb} By Proposition~\mref{pro:induced2}, $P$ is a Rota-Baxter operator of weight zero on $(\frak g,\circ)$. Then by Corollary~\mref{cor:cons2}, there are two special
    \ldendbs
    $(\mathfrak{g}'\ltimes_{\mathcal{L}_{\circ}^{*}}\mathfrak{g}^{*},\triangleright_3,\triangleleft_3,\copa_3,\copb_3)$
    and
    $(\mathfrak{g}'\ltimes_{\mathcal{L}_{\circ}^{*}}\mathfrak{g}^{*}$, $\triangleright_4$, $\triangleleft_4$, $\copa_4$, $\copb_4)$, where $\triangleright_3,\triangleleft_3,\copa_3,\copb_3$ and $\triangleright_4,\triangleleft_4,\copa_4,\copb_4$ are
defined by Eqs.~\meqref{eq:sl3}-\meqref{eq:sl6} respectively, $[-,-]$ is defined by the Lie algebra $\mathfrak{g}'\ltimes_{\mathcal{L}_{\circ}^{*}}\mathfrak{g}^{*}$ and
 the linear map $\delta=\delta_{r}$ is defined by Eq.~\meqref{eq:LieCob} with
  $r=\sum\limits_{i=1}^{n}e_{i}\otimes e^{*}_{i}-e^{*}_{i}\otimes
    e_{i}$, where $\{e_1,\cdots,e_n\}$ is a basis of $\mathfrak{g}$ and
    $\{e_1^*,\cdots,e_n^*\}$ is the dual basis.
    \item By Proposition~\mref{pro:RBPLie}, $P$ is a Rota-Baxter operator of weight zero on the Lie algebra $(\frak g',[-,-]_{\frak g'})$. Hence for
this Rota-Baxter Lie algebra $(\frak g', [-,-]_{\frak g'}, P)$ of weight zero, we can repeat Items~\meqref{it:aaa} and \meqref{it:bbb} to get
four \spec \ldendbs. Further repeating the same procedure gives rise to a series of \spec \ldendbs.
\end{enumerate}
\end{ex}

\noindent
 {\bf Acknowledgments.}  This work is supported by
 National Natural Science Foundation of China (Grant No.  11931009), the Fundamental Research Funds for the Central
Universities and Nankai Zhide Foundation.

\vspace{-.2cm}

\end{document}